\numberwithin{equation}{section}
\DeclareMathOperator{\id}{Id}
\newcommand{\norm}[1]{\left\|#1\right\|}
\newcommand{\crochet}[1]{\left\langle{#1}\right\rangle}
\newcommand{\curly}[1]{\left\{{#1}\right\}}
\newcommand{\verts}[1]{\left|{#1}\right|}
\newcommand{\linf}[1]{|{#1}|_{\ell^\infty}}
\newcommand{\normm}[1]{{\left\vert\kern-0.15ex\left\vert\kern-0.15ex\left\vert#1\right\vert\kern-0.15ex\right\vert\kern-0.15ex\right\vert}}
\newcommand{\ol}{\overline}
\newcommand{\wt}{\widetilde}
\newcommand{\wh}{\widehat}
\newcommand{\inv}{^{-1}}
\newcommand{\eu}{\mathrm{e}}
\newcommand{\iu}{\mathrm{i}}
\newcommand{\du}{\mathrm{d}}
\newcommand{\Z}{\mathbb{Z}}
\newcommand{\R}{\mathbb{R}}
\newcommand{\C}{\mathbb{C}}
\newcommand{\Q}{\mathbb{Q}}
\newcommand{\N}{\mathbb{N}}
\newcommand{\T}{\mathbb{T}}
\newcommand{\D}{\mathbb{D}}
\newcommand{\CC}{\mathcal{C}}
\newcommand{\CU}{\mathcal{U}}
\newcommand{\CP}{\mathcal{P}}
\newcommand{\CD}{\mathcal{D}}
\newcommand{\CG}{\mathcal{G}}
\newcommand{\CH}{\mathcal{H}}
\newcommand{\CQ}{\mathcal{Q}}
\newcommand{\CT}{\mathcal{T}}
\newcommand{\CO}{\mathcal{O}}
\newcommand{\CA}{\mathcal{A}}
\newcommand{\CR}{\mathcal{R}}
\newcommand{\sF}{\mathscr{F}}
\newcommand{\sH}{\mathscr{H}}
\newcommand{\sS}{\mathscr{S}}
\newcommand{\fg}{\mathfrak{g}}
\newcommand{\fF}{\mathfrak{F}}
\newcommand{\fN}{\mathfrak{N}}
\newtheorem{theorem}{Theorem}[section]
\newtheorem{lemma}[theorem]{Lemma}
\newtheorem{corollary}[theorem]{Corollary}
\newtheorem{proposition}[theorem]{Proposition}
\theoremstyle{definition}
\newtheorem{question}{Question}
\theoremstyle{remark}
\newtheorem*{remark}{Remark}
\title{Deformations of the Standard Map
       with Prescribed Actions and Lyapunov Exponents}
\author{Yunzhe Li} 
\date{}
\begin{document}

\maketitle

\begin{abstract}
We construct nontrivial deformations of the standard map which preserve the symplectic actions, respectively the Lyapunov exponents, of infinitely many periodic orbits accumulating to an invariant curve. 
The proof uses a resonant normal-form construction to obtain a sequence of periodic orbits accumulating on an invariant curve with a Liouville rotation number.
Within these normal forms we capture the dependence of these periodic orbits on the resonant Fourier coefficients of the dynamics on the invariant curve and, using the contraction mapping principle, obtain a suitable deformation achieving the prescribed spectral data associated with this sequence of orbits.

The result can be viewed as a symplectic twist-map analogue of a length spectral \textit{nonrigidity} phenomenon for Riemannian manifolds and convex billiards, and it motivates the existence problem for similar ``partially length-isospectral'' deformations of strictly convex billiard tables.
\end{abstract}

\section{Introduction}\label{sec:main results}
Let $\T = \R / \Z$ and let $V: \T \to \R$ be a $\CC^1$ function.
The \textit{(Chirikov) standard map} $\mathscr{F}$ associated with the \textit{potential} $V$ is the dynamical system on $\T \times \R$ defined by 
\begin{equation}\label{eq:std map}
    \begin{aligned}
        \mathscr{F}: \T \times \R &\to \T \times \R,\\
        (x,y) &\mapsto \bigl(x + y + \dot V(x),\, y + \dot V(x)\bigr),
    \end{aligned}
\end{equation}
where $\dot V := \frac{dV}{dx}$.

Suppose $(x,y)$ is periodic under $\mathscr{F}$ with (minimal) period $q \ge 1$ and write $(x_k, y_k) = \sF^k(x,y)$ for $k = 0,1,\ldots, q-1$.
Then the \textit{symplectic action} of this periodic orbit is defined as 
\begin{equation}\label{eq:action functional}
     \sum_{k = 0}^{q-1} \Bigl[\frac{1}{2}\bigl(x_k - x_{k+1}\bigr)^2 + V(x_k)\Bigr]
\end{equation}
with indices taken modulo~$q$.
Let $\du_{(x,y)} \sF^q$
denote the differential of the iterated map $\sF^q$ at $(x,y)$, viewed as a linear operator on the tangent space of $\T \times \R$ at $(x,y)$.
The \textit{Lyapunov exponent} of the periodic orbit $(x_k, y_k)_{k = 0}^{q-1}$ is defined to be
\begin{equation}\label{eq:lyap exp}
    \det\bigl( \du_{(x,y)}\sF^q - \id \bigr).
\end{equation}

The primary goal of this paper is to construct examples of nontrivial deformations of the potential $V$ that preserve the symplectic actions (respectively, the Lyapunov exponents) of infinitely many periodic orbits.
In particular, we prove:
\begin{theorem}\label{thm:main1}
    There exists a nontrivial analytic family of standard maps $(\sF_\tau)_{\tau}$ such that, for each $\tau$, the map $\sF_\tau$ has infinitely many dynamically distinct 
    \footnote{We say that two periodic orbits are \textit{dynamically distinct} if the orbits do not coincide as sets (they are not just reparametrizations of one another, and one is not a repetition of the other).}
    periodic orbits depending analytically on $\tau$, and the symplectic action of each of these periodic orbits is constant in~$\tau$.
    The same conclusion holds with `symplectic action' replaced by `Lyapunov exponent'.
\end{theorem}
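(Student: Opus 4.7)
The plan is to work in a perturbative regime where, for a small-amplitude analytic potential $V_0$, one can arrange that the standard map $\sF_{V_0}$ admits an analytic invariant curve $\Gamma$ on which the dynamics conjugates to a rotation with a sufficiently Liouvillian rotation number $\alpha$. The Liouville condition on $\alpha$ supplies an infinite sequence of rational convergents $p_n/q_n \to \alpha$ with arbitrarily fast approximation rate. Near each such resonance I would apply a resonant Birkhoff normal form to $\sF^{q_n}$ in a shrinking neighborhood of $\Gamma$, conjugating it to an integrable twist plus a $1/q_n$-periodic resonant correction plus a remainder that is exponentially small in $q_n$. The Poincar\'e--Birkhoff theorem (or a direct variational argument) then produces, for each $n$, a pair of periodic orbits $\CO_n^\pm$ of minimal period $q_n$ as critical points of the associated resonant generating function; these are dynamically distinct across different $n$ by the distinctness of the periods. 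By the envelope theorem applied to the discrete action functional, the first-order variation of the symplectic action $A_n(V)$, and of the determinantal Lyapunov functional $L_n(V)$, along each $\CO_n^\pm$ depends on $\delta V$ only through the resonant Fourier coefficients $\hat{\delta V}(kq_n)$, $k \ge 1$, with higher-order corrections controlled by the normal-form remainder.

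With this normal-form picture in hand, I would look for the family in the form $V_\tau = V_0 + \tau H + \Phi(\tau)$, where $H$ is a fixed nonzero analytic function whose Fourier spectrum is supported away from the sparse resonant set $\fF = \{kq_n : k,n \ge 1\}$, and $\Phi(\tau)$ is a correction whose Fourier spectrum is contained in $\fF$. Nontriviality of the family is built in this way, since $H$ contributes at frequencies where $\Phi$ vanishes by construction. Imposing $A_n(V_\tau) = A_n(V_0)$ (or $L_n(V_\tau) = L_n(V_0)$) for every $n$ becomes a system of equations in the Fourier blocks of $\Phi$, and the normal-form analysis makes this system \emph{block-diagonally dominant}: at each $n$, the leading-order relation between $\hat\Phi$ on the block $\{kq_n\}_{k\ge 1}$ and the corresponding spectral quantities is a linearly invertible map, while the coupling between blocks at different $q_n$ is suppressed both by the exponential Fourier decay of analytic potentials and by the rapid growth of $q_n$. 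I would recast the problem as a fixed-point equation $\Phi = \CT(\tau,\Phi)$ on a Banach space of analytic potentials equipped with a Fourier-decay norm tuned to $(q_n)$, and apply the Banach contraction principle to obtain an analytic correction $\Phi(\tau)$ for small $|\tau|$. Analytic dependence of each $\CO_n^\pm(\tau)$ on $\tau$ then follows from the implicit function theorem applied inside each resonant zone.

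The main obstacle is making the contraction estimate uniform over all resonances $n$: one has to balance simultaneously the width of the $n$-th resonant zone (which shrinks in $n$), the exponentially small but $n$-dependent normal-form remainder (whose exponent is limited by the number of Birkhoff steps the separation $|\alpha - p_n/q_n|$ permits), and the cumulative cross-coupling between different resonant blocks. The Liouville condition on $\alpha$ is precisely what allows this balancing: by taking $\alpha$ approximable by rationals faster than any polynomial rate in $q_n$, one obtains arbitrarily generous margins in resonant widths and in remainder decay at every level, which can be absorbed into a single weighted-Fourier norm. The entire scheme adapts verbatim to the Lyapunov case once $A_n$ is replaced by $\det(\du\sF^{q_n} - \id)$; the leading-order linearization, which up to normalization is the second derivative at the critical point of the resonant effective potential, remains invertible, so the same contraction argument delivers the deformation with prescribed Lyapunov exponents.
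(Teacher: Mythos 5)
There is a genuine gap in the deformation ansatz, and it is the central point on which the paper's construction turns. You propose to deform the potential directly, $V_\tau = V_0 + \tau H + \Phi(\tau)$, and then to apply a $q_n$-resonant Birkhoff normal form near the curve $\Gamma$ for each $\tau$. But the whole normal-form picture you rely on — the near-equidistribution of the $(p_n,q_n)$-orbits near $\Gamma$, the cleanness of the resonant block structure, the validity of the envelope-theorem computation at every $\tau$ rather than only at $\tau=0$ — presupposes that $\sF_{V_\tau}$ continues to possess an invariant circle with the \emph{same} rotation number $\omega$ for all small $\tau$. For a Diophantine rotation number this could be obtained from KAM, but the rapid rational approximation you need (Liouville-type) is precisely the condition under which KAM \emph{fails}; the paper points this out explicitly when discussing why the billiard analogue, Question~\ref{Q:accumulating isospec}, is out of reach. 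Your proposal offers no mechanism for producing the invariant curve for $\sF_{V_\tau}$ when $\tau\ne 0$, and once the curve is lost you also lose the reference point around which the resonant normal form is constructed, the equidistribution base point, and the small-divisor control in the normal form iteration.

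The paper's key structural move, which your proposal omits, is to \emph{reverse the parameterization}: rather than deforming $V$ and asking whether the curve survives, one deforms the conjugacy $\phi$ of the RIC dynamics to the rigid rotation $R_\omega$, and then reconstructs $V$ from $(\omega,\phi)$ via equation~\eqref{eq:RIC potential outline}. The invariant curve with rotation number $\omega$ is then built in for every member of the family, with no persistence argument required, and the resonant normal-form data are expressed explicitly in terms of the $q\Z$-Fourier modes of $\phi$ rather than of $V$. This is what makes the block-diagonal fixed-point scheme genuinely closed: the map $\phi \mapsto \bigl(\langle \partial_x g_{q_j}\rangle_{q_j}^{N_j}\bigr)_j$ is well-posed on the whole family because the adapted coordinates exist uniformly. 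Two further points. First, your nontriviality device (a fixed nonresonant $H$) is replaceable by something simpler once you work in $\phi$: the paper just varies one resonant parameter $\alpha_1$ and fixes the others, with the explicit formula~\eqref{eq:main thm action} guaranteeing constancy of the actions for $j\geq 2$. Second, the envelope theorem governs the variation of the minimal action because the minimal orbit is a critical point of the action functional, but it does not apply to $\det(\du\sF^{q_n}-\id)$, which is not a critical value of anything; the paper computes the Lyapunov exponent directly from the Jacobian of the normal form~\eqref{eq:rnf gq} (see Lemma~\ref{lem:equidistr orb}), so the ``adapts verbatim'' claim in your last sentence does not hold without a separate argument.
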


The standard map is a prototypical example of an \textit{exact symplectic twist map}.
It is a classical fact that for every coprime pair $(p,q)$ of integers with $q > 0$, a $(p,q)$-periodic orbit of an exact symplectic twist map can be given by a minimum of the \textit{action functional} 
\begin{equation*}
    (x_k)_{k = 0}^{q-1} \mapsto \sum_{k = 0}^{q-1} S(x_k ,x_{k+1}), \qquad (x_q = x_0 + p),
\end{equation*}
where $S$ is a generating function of the exact symplectic map.
The minimizing sequence $(x_k)_{k = 0}^{q-1}$ corresponds to the first coordinate of a periodic orbit of period $q$ lifted to a suitable universal cover.
Such orbits are called \textit{minimal orbits}.

For the standard map with potential $V$, the generating function can be taken as
\begin{equation}\label{eq:std map gen func intro}
    S(x,x') = \frac{1}{2}(x - x')^2 + V(x).
\end{equation}
We refer to Section~\ref{sec:notation} for more details on exact symplectic maps and the definition of $(p,q)$-periodic orbits.

Given an exact symplectic map $F$ with a choice of generating function, \textit{Mather's beta function} is defined to be the continuous function $\beta: \R \to \R$ such that
\begin{equation}\label{eq:beta func intro}
    \beta\Big( \frac{p}{q}\Big) = \frac{1}{q} \min\{\;\text{symplectic actions of $(p,q)$-periodic orbits}\;\}.
\end{equation}
For a potential $V: \T \to \R$, let $\beta_V$ denote Mather's beta function associated with the standard map defined by the generating function \eqref{eq:std map gen func intro}.
We now formulate the second version of our main result.
\begin{theorem}\label{thm:main beta}
    Let $\omega \in \R \setminus \Q$ be such that 
    \begin{equation*}
        \verts{\omega - \frac{p}{q}} \leq q^{-70} \eu^{-2\pi q}
    \end{equation*}
    for infinitely many coprime pairs of integers $(p,q)$.
    Then there exists a nontrivial real-analytic family of potentials $(V_\tau: \T \to \R)_{\tau}$ such that, for a sequence $p_j/ q_j \to \omega$, the value 
    \begin{equation*}
        \beta_{V_\tau}\Big(\frac{p_j}{q_j}\Big)
    \end{equation*}
    is constant in $\tau$ for each $j$.
\end{theorem}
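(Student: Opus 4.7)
The plan is to deduce Theorem~\ref{thm:main beta} directly from Theorem~\ref{thm:main1} by identifying the periodic orbits preserved under the deformation with the Aubry--Mather minimizers that compute Mather's $\beta$-function. By definition~\eqref{eq:beta func intro}, $q\,\beta_V(p/q)$ equals the minimum symplectic action over $(p,q)$-periodic orbits of the standard map with potential $V$, and by Aubry--Mather theory this minimum is realized on a well-ordered Birkhoff $(p,q)$-orbit. It therefore suffices to produce an analytic family $(V_\tau)$ along which, for a sequence $p_j/q_j \to \omega$, an Aubry--Mather minimizing $(p_j,q_j)$-orbit persists analytically in~$\tau$ with constant symplectic action.

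Such a family is exactly what Theorem~\ref{thm:main1} produces when the invariant curve carrying the accumulating sequence of periodic orbits has rotation number $\omega$. The Liouville bound $|\omega - p/q| \le q^{-70}\eu^{-2\pi q}$ is the quantitative arithmetic hypothesis required for the resonant normal form underlying Theorem~\ref{thm:main1} to converge in a shrinking neighborhood of this curve: for infinitely many coprime pairs $(p_j, q_j)$ the $q_j$-th resonance is resolved by the normal form, producing a Birkhoff pair of $(p_j, q_j)$-periodic orbits (a minimizer and a minimax) that accumulate on the curve as $j \to \infty$. Applying the construction of Theorem~\ref{thm:main1}, one obtains a nontrivial real-analytic family of potentials $(V_\tau)$ along which the \emph{minimizing} orbit of each Birkhoff pair persists analytically in $\tau$ with a symplectic action $A_j$ constant in~$\tau$.

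What remains, and is the main obstacle, is to verify that $A_j = q_j\,\beta_{V_\tau}(p_j/q_j)$ for every $\tau$ in the family, not merely at $\tau = 0$. At $\tau = 0$ the identification is classical: for $p/q$ sufficiently close to the rotation number of an invariant curve, any Aubry--Mather $(p,q)$-minimizer is forced by the well-ordering property into the Birkhoff annulus bounded by the curve and its translates, and hence coincides with the minimizing orbit of the Birkhoff pair produced by the normal form. Propagating this identification to all $\tau$ requires three ingredients: (i)~persistence, along the family, of a neighborhood of the invariant curve in which all sufficiently low-action $(p_j, q_j)$-orbits are trapped; (ii)~uniqueness, up to the cyclic $\Z/q_j$-action, of the Aubry--Mather $(p_j, q_j)$-minimizer in that neighborhood, so its analytic continuation in $\tau$ must coincide with the orbit preserved by Theorem~\ref{thm:main1}; and (iii)~continuity of $\tau \mapsto \beta_{V_\tau}(p_j/q_j)$, so that the upper bound $q_j\,\beta_{V_\tau}(p_j/q_j) \le A_j$ coming from the preserved orbit is matched by a lower bound inherited from the Aubry--Mather minimum at $\tau = 0$.

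The hardest point is ingredient~(i): the Liouville hypothesis is not strong enough for classical KAM persistence of the invariant curve at $\omega$, so the confinement of low-action orbits near the curve must instead be read off from the normal-form estimates that already underlie Theorem~\ref{thm:main1}. Concretely, I would extract from those estimates a uniform-in-$\tau$ neighborhood $U_j$ of the curve containing every $(p_j, q_j)$-orbit of action close to $A_j$, and then combine this with the Aubry--Mather uniqueness in~(ii) to conclude that the preserved orbit is the global $(p_j, q_j)$-minimizer for all $\tau$; this yields $\beta_{V_\tau}(p_j/q_j) = A_j/q_j$ for every $j$ in the sequence, as required.
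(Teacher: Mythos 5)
Your proposal sets off from Theorem~\ref{thm:main1} and then has to fight to re-establish, for all~$\tau$, that the preserved orbits are the Aubry--Mather minimizers. The paper never faces this problem because it deduces Theorem~\ref{thm:main beta} from Theorem~\ref{thm:main} part~(2), which \emph{already} asserts (in its final clause) that the orbits $\xi_j$ ``can be chosen as minimal orbits'' for the whole analytic family of potentials, together with an explicit formula \eqref{eq:main thm action} for their action. The minimality claim in Theorem~\ref{thm:main} is what is proved in Section~\ref{sec:equidistr}: Lemma~\ref{lem:2 orb} constructs two geometrically distinct $(p_j,q_j)$-orbits, Corollary~\ref{cor:only 2} shows these are the \emph{only} $(p_j,q_j)$-orbits of $F$ in the real cylinder for every real $\phi$, and the sign of $\sigma_j > 0$ in \eqref{eq:CAj est} identifies which of the pair has smaller action. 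This renders your ingredients (i)--(iii) redundant: with exactly two real $(p_j,q_j)$-orbits for every $\tau$, the minimizer is the one with smaller action and nothing can ``swap'' as $\tau$ varies, without needing confinement estimates, uniqueness-in-a-neighborhood, or an argument by continuity of $\beta$.

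The genuine gap in what you wrote is therefore that the hardest step is left as an intention (``I would extract from those estimates a uniform-in-$\tau$ neighborhood~$U_j$\dots''), and the extraction would essentially re-derive Corollary~\ref{cor:only 2} from scratch, a nontrivial task involving the $q$-resonant normal form and the Rouch\'e-type counting argument. Also, your worry about ``classical KAM persistence of the invariant curve at $\omega$'' is misplaced: the invariant curve is not obtained by a perturbative argument but is built in by Lemma~\ref{lem:std map with RIC}, which reconstructs the potential directly from the prescribed restricted dynamics $\mathrm{Id}+\phi$; it exists for every $\tau$ by construction. If you intend to keep your route, you need a concrete argument replacing Corollary~\ref{cor:only 2}, but the cleaner fix is to cite Theorem~\ref{thm:main} part~(2) rather than Theorem~\ref{thm:main1}.
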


Theorems~\ref{thm:main1} and~\ref{thm:main beta} are immediate consequences of Theorem~\ref{thm:main}, which we now state.
We refer to Section~\ref{sec:notation} for the notation.

\begin{theorem}\label{thm:main}
    Let $\omega \in \R \setminus \Q$ and let $(p_j, q_j)$ be a sequence of coprime pairs of integers such that $p_j/q_j \to \omega$ as $j \to \infty$, with $q_{j+1} > 4 q_j$ and $q_1$ sufficiently large.
    Then the following statements hold.
    \begin{enumerate}
        \item Suppose
        \begin{equation}\label{eq:main dioph condition}
            \verts{\omega - \frac{p_j}{q_j}} \leq q_j^{-70}.
        \end{equation}
        Then there exists a real-analytic map 
        \begin{equation*}
            \alpha \mapsto \phi_\alpha
        \end{equation*}
        sending any sequence $\alpha = (\alpha_j)_{j \geq 1}$ of real numbers with $|\alpha_j| < 1/4$ to a real-analytic function $\phi_\alpha: \T \to \R$ with $\|\dddot \phi_\alpha\| \leq 1$, where $\|\;\cdot\;\|$ is the Fourier-weighted norm defined in Section~\ref{sec:notation}, such that the standard map associated with a potential $V_\alpha$ satisfying
        \begin{equation}\label{eq:V_alpha}
            \dot V_\alpha\bigl(t+\phi_\alpha(t)\bigr) \;=\; \phi_\alpha(t+\omega)-2\phi_\alpha(t)+\phi_\alpha(t-\omega)
        \end{equation}
        has a sequence of $(p_j, q_j)$-periodic orbits $(\xi_j)_{j \geq 1}$, each depending analytically on $\alpha$, and the Lyapunov exponent of $\xi_j$ is
        \begin{equation}\label{eq:main thm lyap}
            - (2 + \alpha_j)\, q_j \Bigl( \frac{p_j}{q_j} - \omega \Bigr)^2 \eu^{-2\pi q_j}.
        \end{equation}

        \item Suppose
        \begin{equation}\label{eq:main Liouv condition}
            \verts{\omega - \frac{p_j}{q_j}} \leq q_j^{-70} \eu^{-2\pi q_j}.
        \end{equation}
        Then there exist $\epsilon_0 > 0$ and a real-analytic map 
        \begin{equation*}
            \alpha \mapsto \phi_\alpha
        \end{equation*} 
        sending any sequence $\alpha = (\alpha_j)_{j \geq 1}$ of real numbers with $|\alpha_j| < \epsilon_0$ to a real-analytic function $\phi_\alpha: \T \to \R$ with analytic norm $\|\dddot\phi_\alpha\| \leq 1$ such that the unique standard map associated with a potential $V_\alpha$, which depends analytically on $\alpha$ and satisfies \eqref{eq:V_alpha},
        has a sequence of $(p_j, q_j)$-periodic orbits $(\xi_j)_{j \geq 1}$, each depending analytically on $\alpha$, and the symplectic action of $\xi_j$ is 
        \begin{equation}\label{eq:main thm action}
            \frac{q_j \bigl(\frac{p_j}{q_j} - \omega\bigr)^2}{2(1 + f_1^*)} \bigg[ 1 - \frac{(2 + \alpha_j)\eu^{-2\pi q_j} }{2\pi^2 q_j^3} \bigg]
             + \bigl[\omega - (\phi_\alpha \dot\phi_\alpha^+)^*\bigr]\, p_j,
        \end{equation}
        where $\phi_\alpha^+ := \phi_\alpha(\cdot + \omega)$ and $f_1^*$ and $(\phi_\alpha \dot\phi_\alpha^+)^*$ denote the averages over $\T$ of the functions
        \begin{equation*}
           f_1 = (1 + \dot\phi_\alpha)^{-1}(1 + \dot\phi_\alpha^+)^{-1} - 1
           \quad\text{and}\quad
           \phi_\alpha \dot\phi_\alpha^+,
        \end{equation*}
        respectively.
        Moreover, the map $\alpha \mapsto \phi_\alpha$ can be constructed so that the quantities $f_1^*$ and $(\phi_\alpha \dot\phi_\alpha^+)^*$ are both constant in~$\alpha$ and for each $j$ the orbit $\xi_j$ can be chosen as a minimal orbit, i.e., an orbit which minimizes the symplectic action among all $(p_j, q_j)$-orbits.
    \end{enumerate}
\end{theorem}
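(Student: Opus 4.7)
My plan is to parameterize the family not by the potential $V$ directly but by the embedding $\phi$. Setting $x(t) = t+\phi(t)$ and $y(t) = x(t) - x(t-\omega) = \omega + \phi(t) - \phi(t-\omega)$, equation \eqref{eq:V_alpha} is precisely the condition that the curve $\Gamma_\phi = \curly{(x(t),y(t)) : t \in \T}$ is invariant under $\sF_\alpha$ and that its restriction is conjugate to the rigid rotation by $\omega$ via $t \mapsto (x(t),y(t))$. The normalization $\|\dddot\phi\| \leq 1$ forces $\dot\phi > -1$, so $t \mapsto x(t)$ is a circle diffeomorphism, $\dot V_\alpha$ is uniquely recovered on $\T$ from \eqref{eq:V_alpha}, and $V_\alpha$ depends analytically on $\phi$. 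The problem thus reduces to constructing an analytic map $\alpha \mapsto \phi_\alpha$ producing the prescribed spectral data for $\sF_\alpha$.

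For each fixed $j$ with $(p,q) = (p_j,q_j)$, I would analyze $G := T_{-p} \circ \sF_\alpha^q$ near $\Gamma_\phi$, where $T_{-p}$ shifts horizontally to lift to the relevant cover. On $\Gamma_\phi$ itself, $G$ is the rigid rotation by $q(\omega - p/q)$, which is small by the Diophantine hypothesis. A single resonant averaging step in the symplectic coordinates adapted to $\phi$ expresses $G$, up to an exponentially small remainder, as the time-$q$ flow of a Hamiltonian with kinetic part $s^2/(2(1+f_1(t)))$ and effective potential whose oscillatory leading term is $2\,\mathrm{Re}\bigl(\wh\phi(q)\,\eu^{2\pi\iu q t}\bigr) = O(\eu^{-2\pi q})$. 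Conditions \eqref{eq:main dioph condition} and \eqref{eq:main Liouv condition} are calibrated so that this resonant term dominates: in part~(1) the Lyapunov exponent \eqref{eq:main thm lyap} combines a drift $q(\omega - p/q)^2$ with the second derivative of the resonant potential, while in part~(2) the leading action is insensitive to $\omega - p/q$, forcing the stronger Liouville bound in order to expose the $\eu^{-2\pi q}/q^3$ correction.

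The $(p,q)$-periodic orbits of $\sF_\alpha$ near $\Gamma_\phi$ then correspond to critical points of the effective potential, and a direct computation expresses their Lyapunov exponents and actions as explicit functions of $\wh\phi(q_j)$ and the averaged quantities $f_1^*$, $(\phi \dot\phi^+)^*$. To invert and prescribe $\alpha = (\alpha_j)_j$, I would set up the nonlinear operator $\phi \mapsto (\alpha_j(\phi))_j$ on a weighted sequence space indexed by the resonant Fourier coefficients $\wh\phi(q_j)$; the hypothesis $q_{j+1} > 4q_j$ guarantees that these modes lie in well-separated dyadic frequency blocks, so the operator is a small perturbation of a diagonal bijection $\wh\phi(q_j) \mapsto \alpha_j$, and the analytic implicit function theorem produces $\alpha \mapsto \phi_\alpha$ with the required regularity. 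To enforce the final clause that $f_1^*$ and $(\phi\dot\phi^+)^*$ are constant in $\alpha$, I would reserve two low-frequency modes, say $\wh\phi(1)$ and $\wh\phi(2)$, as auxiliary parameters and solve for them simultaneously by a two-dimensional implicit function step.

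The principal obstacle is the normal-form bookkeeping under a Liouville $\omega$: no KAM-style convergent iteration is available, so one must isolate the leading resonant term $\wh\phi(q_j)\eu^{2\pi\iu q_j t}$ after a bounded number of averaging steps, with an error strictly smaller than the target sensitivities in \eqref{eq:main thm lyap} and \eqref{eq:main thm action}. The polynomial exponents $q_j^{-70}$ in the Diophantine hypotheses are precisely the buffer needed to dominate the finite-order remainders. Minimality of $\xi_j$ is then a soft Aubry--Mather check: modulo exponentially small error, the effective potential is a Morse function on $\T$ with a unique minimum, and selecting $\xi_j$ at that minimum identifies it as a $(p_j, q_j)$-orbit minimizing the symplectic action.
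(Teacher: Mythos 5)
Your high-level strategy — parameterizing by $\phi$ rather than $V$, writing a resonant normal form adapted to the $(p_j,q_j)$ approximants, reading the spectral data off the $q_j$-th Fourier mode $\wh\phi(q_j)$, inverting via a fixed-point/implicit-function argument on the sequence of resonant coefficients, and reserving low-frequency modes to pin down $f_1^*$ and $(\phi\dot\phi^+)^*$ — does match the architecture of the paper's proof. The last step, selecting the minimizing orbit by a Morse-type count, is also what the paper does (though it requires a separate geometric argument, Corollary~\ref{cor:only 2}, to show that these are the \emph{only} $(p_j,q_j)$-orbits, not just the only ones near the invariant curve).

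However, the central claim in your normal-form step — that a \emph{single} resonant averaging step expresses $G$ up to an \emph{exponentially} small remainder, with the exponent $q_j^{-70}$ serving as a ``buffer to dominate the finite-order remainders'' — does not survive under condition~\eqref{eq:main dioph condition} and is a genuine gap affecting all of part~(1). After any bounded number of averaging steps, the Taylor remainder near $y = p/q - \omega$ is only \emph{polynomially} small: of size $\sim q^{\nu_0}|\omega - p/q|^3 \sim q^{\nu_0 - 210}$ with a fixed $\nu_0$. The resonant signal you want to isolate has size $\sim |\omega - p/q|^2\,\eu^{-2\pi q}$, which is exponentially small in $q$. So under a polynomial Diophantine hypothesis, the remainder always dominates the signal for large $q$, and no adjustment of the exponent $70$ fixes this. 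The paper's fix is a second, \emph{iterative} change of coordinates (Proposition~\ref{prop:rnf2}) with $\sim q$ steps, producing an exponentially small nonresonant part. This also explains the asymmetry between the two hypotheses that your sketch elides: Lyapunov exponents are coordinate-independent, so the non-explicit $\sim q$-step normal form suffices and part~(1) can get away with $|\omega - p_j/q_j| \le q_j^{-70}$; the symplectic action, by contrast, is only controlled through the \emph{explicit} one-step (nonsymplectic) normal form, for which the Taylor remainder must already beat the exponential scale, forcing the Liouville condition~\eqref{eq:main Liouv condition} in part~(2). Your proposal is essentially viable for part~(2), but as written part~(1) would fail at the averaging estimate.
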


We provide some informal clarification of the statement of Theorem~\ref{thm:main}.
The analyticity of the map $\alpha \mapsto \phi_\alpha$ means the following: if $\alpha^\tau = (\alpha_j^\tau)_{j\ge1}$ is a family of sequences parametrized by $\tau \in \C$ such that, for each $j$, the function $\tau \mapsto \alpha_j^\tau$ is analytic, then the resulting function $\phi_{\alpha^\tau}$ depends analytically on~$\tau$.
Moreover, if $\alpha_j \in \R$ for all $j$, then $\phi_\alpha$ is real-valued on $\T$.
For complex-valued sequences $(\alpha_j)$, analogous results hold for the complexified standard map with a complex-valued potential.

Given any real-analytic $\phi_\alpha$ satisfying the smallness assumptions in Theorem~\ref{thm:main}, one can always solve \eqref{eq:V_alpha} for $V_\alpha$ up to an additive constant (see Section~\ref{sec:RIC coord}).
Therefore, there is indeed a unique standard map associated with each sequence $\alpha$ via the map $\alpha \mapsto \phi_\alpha$.
In part (2) of Theorem \ref{thm:main}, the additive constant is fixed by the condition \eqref{eq:main thm action} and this constant depends analytically on $(\alpha_j)$.

By our construction in Section~\ref{sec:RIC coord}, each such standard map admits a rotational invariant curve (RIC) of rotation number $\omega$.
The sequence of $(p_j,q_j)$-periodic orbits accumulates onto this invariant curve as $j \to \infty$.
The function $\phi_\alpha$ gives the restricted dynamics on this RIC in the sense that the map $\id + \phi_\alpha: \T \to \T$ is a circle diffeomorphism that conjugates the rigid rotation $R_{\omega}$ to the restriction of the dynamics to this RIC.

In our construction, the function $\phi_\alpha$ will be chosen as a \textit{lacunary} Fourier series in the sense that, for all $|k| \geq q_1$, the $k$th Fourier coefficient $\hat \phi_k$ vanishes unless $k \in \pm\{ q_j, 2q_j\}$ for some $j \geq 1$.

The proof of Theorem~\ref{thm:main} uses Picard iterations to construct the function $\phi_\alpha$ as the fixed point of a certain contracting map.
Section~\ref{sec:outline} gives a more detailed outline of its proof and explains the proofs of Theorems~\ref{thm:main1}–\ref{thm:main beta} using Theorem~\ref{thm:main}.
After the preparatory Sections~\ref{sec:notation}–\ref{sec:RIC coord}, the main technical step is the construction of resonant normal forms (Theorem~\ref{thm:rnf} in Section~\ref{sec:rnf}), which are suited to the study of $(p,q)$-periodic orbits when $p/q$ approximates $\omega$ sufficiently well.
The proof of the main theorem, Theorem~\ref{thm:main}, is formally completed in Section~\ref{sec:equidistr} modulo the proof of Theorem~\ref{thm:rnf}, which is in turn proved in the last Section~\ref{sec:rnf proof}.

\section{Motivation and background}\label{sec:backgrounds}

The results in this paper are motivated by questions of \emph{spectral rigidity} in dynamical systems. 
Broadly speaking, one would like to understand to what extent a dynamical system is determined, up to an appropriate notion of conjugacy, by various spectral invariants attached to its periodic orbits.
In this paper, we focus on the \textit{deformational} rigidity, namely, we study if a dynamical system can be nontrivially deformed without changing certain spectral invariants.

\paragraph{Laplace spectral rigidity and its dynamical analogues}

The modern spectral rigidity problem goes back to Kac's famous question ``Can one hear the shape of a drum?''~\cite{Kac}.
Concretely, if two bounded domains in $\R^2$ have identical Laplace spectra (with appropriate boundary conditions), must the domains be congruent?
More generally, one asks how much geometric information about a Riemannian manifold $(M,g)$ is encoded in the spectrum of its Laplace operator.

It is now known that the answer to Kac's question, in full generality, is negative: for instance, Gordon, Webb and Wolpert~\cite{GWW}, using the method of Sunada~\cite{Sunada}, constructed isospectral nonisometric pairs of planar domains.
However, the question remains open for smooth strictly convex planar domains.

There are natural dynamical counterparts to the Laplace spectral rigidity.
In the setting of exact symplectic maps, a central object is the \emph{action spectrum} of the map, while in the setting of geodesic flows, the analogous object is the (marked) \emph{length spectrum} of closed geodesics.
Another dynamically relevant object for a general differentiable dynamical system is the \emph{Lyapunov spectrum}.

More precisely, given a differentiable dynamical system $T \colon M \to M$, we define its \emph{Lyapunov spectrum} to be the collection of Lyapunov exponents of all periodic orbits of $T$.
This can be regarded as an analogue in spirit to the length spectrum for geodesic flows.
This analogy appears explicitly in holomorphic dynamics, where McMullen~\cite{McMullen} considers holomorphic maps $f : \Delta \to \Delta$ on the unit disk and studies the quantities
\begin{equation*}
    \bigl|\log (f^q)'(z)\bigr|,
\end{equation*}
where $z$ belongs to a $q$-periodic cycle of $f$.
These log-multipliers are shown to exhibit properties closely analogous to the \emph{lengths} of closed geodesics on hyperbolic surfaces.

In the setting of expanding maps on the circle, a classical result of Shub and Sullivan~\cite{ShubSullivan} shows that the \emph{marked} Lyapunov spectrum determines the map up to \emph{smooth} conjugacy.
It is well known that every expanding map on the circle is topologically conjugate to the linear model $x \mapsto d x$ for some degree $d \in \Z^\times$.
Such a conjugacy provides a natural symbolic coding of periodic orbits, and hence a canonical \emph{marking} of the Lyapunov spectrum.
We also refer to the work of Drach and Kaloshin \cite{DK} for recent progress on an \emph{unmarked} version of this rigidity phenomenon and for a detailed historical review of the problem therein.

Our setting is quite different from these uniformly expanding or hyperbolic examples.
The standard map is a conservative dynamical system given by an \textit{exact symplectic twist map} defined on a cylinder, analogous to the dynamics of strictly convex billiards.
One of the goals of this paper is to exhibit a weak form of \emph{nonrigidity} for the Lyapunov spectrum in the sense that the Lyapunov exponents from certain sequences of periodic orbits of the standard map can be preserved under nontrivial deformations.

\paragraph{Symplectic maps and the action spectrum}
Another major aspect of this paper concerns the \emph{action spectrum}.
Let $I \subset \R$ be an interval and $\sF: \T \times I \to \T \times I$ be an exact symplectic map with respect to the symplectic $1$-form $y \du x$.
With a fixed choice of generating function, each periodic orbit of $\sF$ carries a well-defined \emph{symplectic action} obtained by evaluating the corresponding action functional over the orbit. 
Explicitly, this is obtained by summing the generating function over the points of the orbit.
The \emph{action spectrum} of $\sF$ is then the collection of symplectic actions of all periodic orbits of $\sF$.
In the case of the standard map, the action of a periodic orbit is given explicitly by the action functional~\eqref{eq:action functional} introduced in Section~\ref{sec:main results}.
Likewise, each periodic orbit can be associated with the eigenvalues of its linearized Poincar\'e map; in our two-dimensional setting this can be conveniently encoded by the \emph{Lyapunov exponent}~\eqref{eq:lyap exp}.

Guillemin and Melrose~\cite{GM} proposed to study such spectral data for exact symplectic maps and asked, in particular, whether the action spectrum (or jointly with the eigenvalues of the linearized Poincar\'e maps of periodic orbits) determines the map, at least locally and up to symplectic conjugation.
This is a symplectic analogue of Kac's question.

Our main result shows that the answer is negative if one only prescribes a \emph{subset} of the spectral data.
More precisely, Theorem~\ref{thm:main1} constructs a nontrivial analytic family of standard maps $(\sF_\tau)_{\tau\in(-\epsilon,\epsilon)}$ for which there exists an \emph{infinite} sequence of dynamically distinct periodic orbits whose actions or Lyapunov exponents~\eqref{eq:lyap exp} are independent of~$\tau$.

\paragraph{Length spectrum rigidity}

The action spectrum is closely related to the analogous notion of the \emph{length spectrum} of a Riemannian manifold.
Let $(M,g)$ be a compact Riemannian manifold.
Its \emph{length spectrum} is the collection of lengths of closed geodesics of $(M,g)$, while the \emph{marked length spectrum} records, for each free homotopy class, the length of the corresponding minimal closed geodesic (when it exists).
The classical length spectral rigidity problem asks whether the (marked) length spectrum determines $(M,g)$ up to isometry.

There has been substantial progress in settings where the dynamics is uniformly hyperbolic.
For example, for closed surfaces of negative curvature, the works of Croke and Otal~\cite{Croke,Otal} show that the marked length spectrum determines the metric up to isometry.
More recently, Guillarmou and Lefeuvre~\cite{GuillarmouLefeuvre} established \emph{local} marked length spectral rigidity for a broad class of Anosov manifolds in any dimension.
These results provide strong evidence that, for systems with uniformly hyperbolic dynamics, the marked length spectrum is a very robust invariant.

\paragraph{Spectral rigidity of convex billiards}
The dynamics of the billiard map associated with a strictly convex planar domain $\Omega \subset \R^2$ with smooth boundary provides a rather different setting from uniformly hyperbolic systems.
The billiard map is an exact symplectic twist map on a phase cylinder $\T \times [0, \pi]$ in suitable coordinates, and its action functional coincides with (minus) the Euclidean length of the corresponding broken geodesic in $\Omega$.
Consequently, up to sign, the action spectrum of the billiard map agrees with the length spectrum of periodic billiard trajectories, and one may ask whether this spectrum determines the domain $\Omega$.
We refer to the monograph of Kozlov and Treshchëv~\cite{kozlov1991billiards} for general background on billiard dynamics.

To describe a natural marking of the billiard spectrum, we recall the notion of \emph{rotation numbers} for twist maps.

Let $I \subset \R$ be an interval and $F: \T \times I \to \T \times I$ be continuous.
For a periodic orbit $(x,y)$ under $F$ with minimal period $q$, and a lift $\wt F$ of $F$ to the universal cover $\R \times I$, there exists an integer $p\in\Z$ such that
\begin{equation*}
  \tilde F^{q}(\tilde x,y)=(\tilde x+p,\,y).
\end{equation*}
Another lift $\tilde F'$ of $F$ satisfies $\tilde F'=\tilde F+(n,0)$ for some $n\in\Z$, hence
\begin{equation*}
  \tilde F'^{\,q}(\tilde x,y)=(\tilde x+p+qn,\, y),
\end{equation*}
so $p$ is determined modulo $q$ by the orbit.
There is therefore a unique choice of lift $\tilde F$ for which $p\in\{0,1,\dots,q-1\}$.
We refer to such an orbit as a \emph{$(p,q)$–periodic orbit}, or simply a \emph{$(p,q)$–orbit} of~$F$.

When $F$ is an exact symplectic twist map and $(p,q)$ is a coprime pair, a $(p,q)$–orbit that minimizes the symplectic action among all $(p,q)$–orbits is called a \textit{minimal orbit}, or sometimes an \textit{Aubry--Mather orbit}, due to its central role in Aubry--Mather theory.
The number $p/q$ is then called the \textit{rotation number} of this minimal orbit.\footnote{
However, the notion of rotation number does not apply to nonminimal periodic orbits in general: for instance, there may exist $(2p,2q)$–orbits that are not minimal.
In this paper we will therefore use the more flexible notion of $(p,q)$–orbits most of the time, without referring to rotation numbers.
}

In the billiard setting, one should keep in mind that a minimal orbit \emph{maximizes} the total Euclidean length of the broken geodesic representing the periodic billiard trajectory.
The \textit{marked length spectrum} of a billiard domain is then defined to be the map that associates to each rational number $p/q$ the length of a minimal orbit with rotation number $p/q$.

Several rigidity results are known in this billiard setting.  
In particular, a theorem of De Simoi, Kaloshin and Wei~\cite{DKW} proves \emph{deformational} length spectral rigidity within the class of nearly circular, $\Z^2$-symmetric domains.
Their proof relies on a detailed analysis of the first-order variation, under boundary deformations, of the lengths of a sequence of symmetric periodic orbits with rotation number $1/q$, $q \to \infty$, together with a delicate inversion of the resulting linearized operator.

However, the method of~\cite{DKW} strongly exploits the symmetry assumptions.
In particular, any deformation of the boundary which is ``antisymmetric'' to first order produces vanishing first-order variations of the lengths \textit{and} Lyapunov exponents of every symmetric periodic orbit, and thus cannot be detected at the linear level.
This leaves open the possibility that there might exist nontrivial deformations which preserve the lengths of infinitely many periodic trajectories.
Our construction for the standard map can be interpreted as an explicit realization of such a phenomenon in the symplectic twist-map setting.

\paragraph{Rigidity by spectral data of accumulating periodic orbits}

Besides the result of De Simoi, Kaloshin and Wei, we review some other circumstances where the spectral data of an accumulating sequence of periodic orbits is successfully exploited to yield rigidity results.

In the work of Huang, Kaloshin and Sorrentino~\cite{HKS}, they show that, for a generic billiard domain, the eigendata of minimal orbits can be recovered from the marked length spectrum.
In fact, one recovers the Lyapunov exponent of a minimal orbit of rotation number $p/q$ using only the lengths of minimal orbits whose rotation numbers converge to $p/q$.

We also mention work of De Simoi, Kaloshin and Leguil~\cite{DKL}, which considers dispersing billiard systems consisting of three scatterers with axial symmetry and shows that the marked length spectrum determines the geometry of the scatterers.
Here, the marking is given by the symbolic sequence of scatterers hit by the orbit over one period.
In that work, the authors consider the lengths of a sequence of periodic orbits converging to a homoclinic orbit of a particular $2$-periodic orbit.
Using the spectral data from this sequence, one reconstructs the Taylor series of the boundaries of the scatterers at the end-points of the $2$-periodic orbit.
This result has been extended by Osterman~\cite{Osterman}, who considers the same setting without symmetry assumptions and shows that the marked length spectrum determines the analytic conjugacy class of the billiard map.

Notably, these positive results mentioned above make use not of the \emph{full} spectrum, but only of a countable subset corresponding to a sequence of orbits accumulating on some invariant ``boundary'' or ``axis of symmetry'' of the dynamics.
On the other hand, our main result shows that, in the standard-map setting, spectral data associated with certain accumulating sequences of periodic orbits are \emph{not} deformationally rigid.
This motivates the following subtle question about which countable subsets of the spectrum suffice to determine the underlying dynamics.

\begin{question}\label{Q:accumulating isospec}
    Does there exist a nonisometric deformation of a smooth strictly convex planar billiard domain which preserves the lengths of infinitely many periodic orbits accumulating to the boundary, to a periodic orbit, or to an invariant curve?
\end{question}

There is a positive answer to a \emph{nondeformative} version of Question~\ref{Q:accumulating isospec}: Buhovsky and Kaloshin~\cite{BK} constructed a pair of billiard domains such that the marked length spectra of the two domains coincide along a sequence of rotation numbers $1/q_j$ with $q_j \to \infty$.

The methods in this current paper do not answer Question~\ref{Q:accumulating isospec}.
The key issue is that the proof of our main theorem, Theorem~\ref{thm:main}, relies crucially on the special structure of the standard map: one can explicitly construct potentials for a standard map admitting an invariant curve with a prescribed dynamics restricted on that curve.
In the setting of billiards, the standard tool for obtaining invariant curves is KAM theory, but the Liouville-type arithmetic condition~\eqref{eq:main Liouv condition} required in our construction is incompatible with the usual Brjuno-type conditions needed for KAM theory.

\paragraph{Mather's beta function}

Another spectral invariant often considered is Mather's beta function and its infinite jet at particular points.
Given a general exact symplectic twist map and a choice of generating function, Mather's beta function $\beta$ is defined, as in~\eqref{eq:beta func intro}, to be a continuous function whose value at a rational number $p/q$ is $1/q$ times the action of the minimal orbits of rotation number $p/q$.
It is known that $\beta$ is convex on $\Q$ and therefore extends continuously, by density, to an interval of $\R$ called the \emph{rotation interval} of the map.

If Mather's beta function admits an asymptotic expansion at $\omega$ of the form
\begin{equation}\label{eq:inf jets}
    \beta(\alpha) \sim \sum_{k \geq 0} c_k (\alpha - \omega)^k,
\end{equation}
we call the sequence $(c_k)$ the \emph{infinite jet} of $\beta$ at $\omega$, by analogy with the jets of differentiable functions.
More precisely, the asymptotics~\eqref{eq:inf jets} means that for any $N \in \N$ and any sequence $\alpha_j \to \omega$,
\begin{equation*}
    \frac{1}{(\alpha_j - \omega)^{N}}
    \bigg(\beta(\alpha_j) -  \sum_{k = 0}^{N} c_k (\alpha_j - \omega)^k\bigg) \to 0
    \quad \text{as } j \to \infty.
\end{equation*}
We refer to~\cite{Siburg,MatherForni} for more details on Mather's beta function.

In the billiard setting, Mather's beta function admits an infinite jet at $0$, which is commonly known as the Marvizi--Melrose invariants~\cite{MarviziMelrose}.
Hence, if a deformation of a billiard domain preserves a sequence of orbits accumulating to the boundary, then this deformation preserves all Marvizi--Melrose invariants.
In particular, Buhovsky and Kaloshin~\cite{BK} used this observation to construct nonisometric pairs of domains with identical Marvizi--Melrose invariants.

We recall that a \emph{rotational invariant curve (RIC)} of a map $F: \T \times I \to \T \times I$ is the graph
\begin{equation*}
  C=\{(x,h(x)):\ x\in\T\},
\end{equation*}
of a continuous function $h:\T\to\R$, such that $C$ is invariant under $F$.
Using Birkhoff normal forms, one can show that if the symplectic map is sufficiently smooth and it has an RIC whose restricted dynamics conjugates to a rigid rotation $x \mapsto x + \omega$ by a \emph{Diophantine} number (i.e.\ $\omega$ is irrational and $\sup_{p\in\Z} |q\omega-p|^{-1}$ grows at most polynomially in $q$), then Mather's beta function admits an infinite jet at $\omega$.
Thus, if a deformation of the system preserves a sequence of orbits accumulating to such an invariant curve, then this deformation preserves the infinite jet of Mather's beta function at~$\omega$.

This leads to the following question, which is in some sense weaker than Question~\ref{Q:accumulating isospec}.

\begin{question}\label{Q:jet isospec}
    Does there exist a nonisometric deformation of a smooth strictly convex planar billiard domain which preserves the infinite jet of Mather's beta function (when it exists) at some $\omega \in \R$?
\end{question}

We remark that Mather \cite{Mather1990} proved that Mather's beta function is differentiable at every \emph{irrational} $\omega$ and generically fails to be differentiable at rational $\omega$.
In fact, Mather's beta function is differentiable at $p/q$ if and only if there exists an RIC conjugate to a rigid rotation by $p/q$.
However, it is not clear in general whether Mather's beta function has an infinite jet at a frequency $\omega$ that is irrational but not Diophantine.
Hence, our result Theorem~\ref{thm:main beta} does not provide information relevant to Question~\ref{Q:jet isospec}, due to the strong arithmetic condition \eqref{eq:main Liouv condition}.
This motivates a further regularity question for Mather's beta function.

\begin{question}\label{Q:beta-jet-existence}
    Under what conditions does Mather's beta function admit an infinite jet at a given $\omega \in \R$?
\end{question}

\section{An outline of the proof}\label{sec:outline}

We first explain how Theorem~\ref{thm:main1} and Theorem~\ref{thm:main beta} follow from Theorem~\ref{thm:main}.

To obtain Theorem~\ref{thm:main1} from Theorem~\ref{thm:main}, we consider a one-parameter analytic family $\alpha^\tau = (\alpha_j^\tau)_{j\ge1}$ of sequences such that $\alpha_j^\tau$ is constant in $\tau$ for all $j\ge2$, and only the first parameter $\alpha_1^\tau$ varies.
Applying Theorem~\ref{thm:main} to this family with a choice of $\phi_\alpha$ keeping the averaged quantities $f_1^*$ and $(\phi_\alpha \dot\phi_\alpha^+)^*$ constant in $\alpha$, we deduce from the explicit formula~\eqref{eq:main thm action} that the action (or Lyapunov exponent) of each $(p_j,q_j)$–orbit with $j\ge2$ is constant in~$\tau$.
Since the pairs $(p_j,q_j)$ are coprime and the $q_j$ are strictly increasing, these periodic orbits are dynamically distinct, yielding Theorem~\ref{thm:main1}.

Finally, Theorem~\ref{thm:main beta} follows by combining Theorem~\ref{thm:main} with the definition of Mather's beta function.
Indeed, by Theorem~\ref{thm:main}, the sequence of $(p_j, q_j)$-orbits can be chosen as minimal orbits. Hence, they control the value of Mather's beta function at $p_j/q_j$. For a suitable choice of the family $\alpha^\tau$ as above (again varying only $\alpha_1^\tau$), the values of $\beta_{V_\tau}(p_j/q_j)$ remain constant in~$\tau$ for the whole subsequence $p_j/q_j\to\omega$, which is precisely the statement of Theorem~\ref{thm:main beta}.

We now outline the proof of the main result, Theorem~\ref{thm:main}.

\medskip
\noindent\textbf{Step 1. Parameterization by a rotational invariant curve.}
The starting point is the elementary observation that \emph{a rotational invariant curve (RIC) with rotation number $\omega$ determines the standard map}. 
More precisely, suppose there exist a circle diffeomorphism $u:\T\to\T$ and a smooth map $v:\T\to\R$ such that
\begin{equation*}
    \sF(u(t), v(t)) = (u(t+\omega),\, v(t+\omega))
\end{equation*}
for all $t\in\T$.
Writing $u = \id + \phi$ with $\phi:\T\to\R$, and expressing the standard map in the coordinates $(u,v)$, one obtains an explicit relation
\begin{equation}\label{eq:RIC potential outline}
    \dot V\bigl(t+\phi(t)\bigr) 
    \;=\; \phi(t+\omega)-2\phi(t)+\phi(t-\omega)
\end{equation}
between the potential $V$ and the function $\phi$ describing the RIC (see Section~\ref{sec:RIC coord} for details).
In particular, once $\phi$ and $\omega$ are fixed, the potential $V$ is determined by \eqref{eq:RIC potential outline} up to an additive constant.

Thus, for fixed $\omega$, the class of standard maps admitting an RIC of rotation number $\omega$ and whose dynamics on the RIC is conjugate to the rigid rotation $R_\omega$ is parametrized by the function $\phi:\T\to\R$.
In the proof of Theorem~\ref{thm:main} we work inside this class: we first construct a family of functions $\phi_\alpha$, and then define the corresponding potentials $V_\alpha$ by solving~\eqref{eq:RIC potential outline}, choosing the additive constant appropriately.
To facilitate computation, we first consider coordinates adapted to this RIC, where the standard map writes 
\begin{equation}\label{eq:outline adapted coord}
    \begin{pmatrix}
        x \\ y 
    \end{pmatrix}
    \mapsto
    \begin{pmatrix}
        x + \omega + y + f(x,y) \\ y + g(x,y)
    \end{pmatrix}
\end{equation}
where the RIC corresponds to $\{y= 0\}$ and the functions $f$ and $g$ depend implicitly on $(\omega, \phi)$.

\medskip
\noindent\textbf{Step 2. The heuristic and a $q$–resonant normal form near the RIC}
Fix a suitable $\omega\in\R\setminus\Q$.
We choose a sequence of coprime integer pairs $(p_j,q_j)$ with $q_j>0$ and $p_j/q_j\to\omega$ as $j\to\infty$, subject to the arithmetic condition \eqref{eq:main dioph condition} in Theorem~\ref{thm:main}.

The guiding heuristic is that, for each such rational approximation $p_j/q_j$, the \emph{$q_j$-th Fourier mode of $\phi$ should dominate the spectral data of $(p_j,q_j)$–periodic orbits} when $p_j/q_j$ approximates $\omega$ sufficiently well.
Here, by \emph{spectral data} we mean either the symplectic actions or the Lyapunov exponents of these periodic orbits.
The idea is that if $q_j$ and $q_k$ are very far apart, then the contribution of the $q_k$–th Fourier mode to the dynamics of $(p_j,q_j)$–orbits should be averaged out because the corresponding periods are incompatible.
This viewpoint is inspired by resonant averaging techniques in Hamiltonian dynamics (see, for example,~\cite{BounemouraNiederman}).
The heuristic above is made precise by a resonant normal-form construction described in Section~\ref{sec:rnf}.
Adapting the method of~\cite{Martín_2016} to the standard map, we show that, in a complex neighbourhood of the line $\{y = p/q - \omega\}$, the standard map \eqref{eq:outline adapted coord} can be conjugated to a \emph{$q$–resonant normal form}.
More concretely, in these $q$-dependent coordinates the map is conjugate to a map (defined after re-centering $y$-coordinate at $p/q - \omega$) of the form
\begin{equation}\label{eq:outline rnf}
    \begin{pmatrix}
        x \\ y 
    \end{pmatrix}
    \mapsto
    \begin{pmatrix}
        x + p/q + y \\ y + g_q(x,y)
    \end{pmatrix},
\end{equation}
where $g_q(x,0)$ is \emph{almost} $1/q$–periodic in $x$.

We consider the (dominant) \emph{resonant part} $\langle g_q\rangle_q^N$ of $g_q$, obtained by truncating its Fourier series to modes of order at most $N$ that are multiples of~$q$.
Theorem~\ref{thm:rnf} shows that, for $N$ not too large and $|\omega - p/q|$ small enough, the resonant part $\langle g_q\rangle_q^N$ can be well-approximated by the resonant part of an explicit expression in terms of the leading Taylor expansion $f_1(x)$ of $f(x,y)$ at $y = 0$, which in turn depends rather explicitly on~$\phi$.
By further analysis in Section \ref{sec:picard}, we formalize the heuristic in Lemma~\ref{lem:CP}: the dynamics governing the $(p,q)$–periodic orbits depend mainly on the Fourier coefficients of $\phi$ at orders $kq$ and only weakly on the remaining coefficients.

\medskip
\noindent\textbf{Step 3. A fixed-point argument for the resonant data.}
In Section~\ref{sec:picard} we use a Picard iteration scheme to construct $\phi_\alpha$ so that the resonant normal forms $g_{q_j}$ associated with the approximants $(p_j,q_j)$ have a prescribed structure.

Using the estimates from Theorem~\ref{thm:rnf}, we show that for each $q$ the map $\phi \mapsto \langle \partial_x g_q\rangle_q^N$ is close, in a suitable analytic norm and with appropriate scaling, to the map $\phi \mapsto \langle \dddot\phi\rangle_q^N$ (the resonant part of the third derivative of $\phi$), provided $q$ is large.
Because the periods $q_j$ grow sufficiently fast, the influence of the $q_k$–th resonant block on the $q_j$–th resonant block is very small when $k\neq j$.
This near-decoupling allows us to prescribe the resonant parts $\langle g_{q_j}\rangle_{q_j}^{N_j}$ \emph{simultaneously} for the infinite sequence $(p_j,q_j)$ by solving a fixed-point equation for $\phi$.

This fixed-point problem is encoded in a nonlinear operator on a Banach space of analytic functions, and we prove that this operator is a contraction.
The resulting contraction mapping theorem is formulated as Theorem~\ref{thm:fx pt} and yields an \textit{inversion} of the map
\begin{equation}\label{eq:outline rnf to phi}
    \phi \mapsto \Big(\langle \partial_x g_{q_j}(\cdot, 0) \rangle_{q_j}^{N_j} \Big)_{j \geq 1}.
\end{equation}
In other words, Theorem~\ref{thm:fx pt} allows us to control the resonant parts of the $q_j$–resonant normal forms $g_{q_j}$ essentially independently by tuning~$\phi$.

\medskip
\noindent\textbf{Step 4. Construction of periodic orbits and computation of spectral data.}
In Section~\ref{sec:equidistr} we apply the general fixed-point scheme proved in Section~\ref{sec:picard} to construct and analyse $(p_j,q_j)$–periodic orbits explicitly.
For a sequence of the parameters $\sigma_j$, we apply the inverse of \eqref{eq:outline rnf to phi} to the choice $N_j = 2q_j$ and 
\begin{equation*}
    \langle \partial_x g_{q_j}(\cdot, 0) \rangle_{q_j}^{N_j} \;=\; \sigma_j\,\eu^{-2\pi q_j}\bigl(p_j/q_j - \omega\bigr)^2 \cos(2\pi q_j x).
\end{equation*}
Using $g_{q_j} \approx \langle g_{q_j}\rangle_{q_j}^{N_j}$, one can construct a $(p_j,q_j)$–periodic orbit that closely shadows the \textit{equidistributed} orbits $(k p_j/q_j, 0)_{k=0,\ldots,q_j-1}$.

By analysing directly the Jacobian matrix of the normal form~\eqref{eq:outline rnf} along this periodic orbit, we obtain an explicit expression for the Lyapunov exponent in terms of the parameter~$\sigma_j$.
Furthermore, using the near-equidistribution property of the orbit together with the structure of the normal form, we derive a corresponding estimate for the symplectic action of the orbit, again expressed in terms of~$\sigma_j$.
In the final step, we perform a second Picard-type iteration, this time in a suitable Banach of sequences $(\sigma_j)$, to adjust the parameters $\sigma_j$ so that the Lyapunov exponent (or, in the second part of Theorem~\ref{thm:main}, the action) of the $j$–th periodic orbit attains the prescribed value.

\section{Notations and basic definitions}\label{sec:notation}

For $b>0$, let $\D_b$ denote the open disc in $\C$ of radius $b$ centred at the origin.  
We write $\T := \R/\Z$ for the real $1$-torus. For $a>0$, define the complex neighbourhood
\begin{equation*}
  \T_a := \{\, z \in \C/\Z : |\Im z| < a \,\}.
\end{equation*}
If $x\in\T_a$ and $z\in\C$, we write $x+z$ for the class in $\C/\Z$ obtained by adding representatives of $x$ and $z$. In particular, adding a \emph{real} number preserves~$\T_a$.

For $\omega\in\R$, the \emph{rigid rotation by $\omega$} is
\begin{equation*}
    \begin{aligned}
        R_\omega : \T_a &\longrightarrow \T_a, \\ 
        x &\longmapsto x + \omega .
    \end{aligned}
\end{equation*}
For a function $u: \T_a \to \C$, we shall denote by
\begin{equation*}
    u^\pm := u \circ R_{\pm \omega}
\end{equation*}
the translations of $u$ by an $\pm\omega$-rotation.

We write $A\lesssim B$ to mean that $A \le c\,B$ for some universal constant $c>0$ (independent of any parameters involved in $A$ and $B$). 

For a function $u:\T_a \to \C$, we denote by $\dot u$, $\ddot u$, and $\dddot u$ its first, second, and third derivatives with respect to~$x$.

\paragraph{Analytic norms.}

Let $u:\T_a \to \C$ be analytic. Its supremum norm on $\T_a$ is
\begin{equation*}
    |u|_a := \sup\{\,|u(x)| : x \in \T_a\,\}.
\end{equation*}
Expanding $u$ into its Fourier series
\begin{equation*}
  u(x) \;=\; \sum_{k\in\Z} \wh u_k\, \eu^{2\pi \iu k x},
  \qquad
  \wh u_k \;=\; \int_{\T} u(x)\,\eu^{-2\pi \iu k x}\,\du x,
\end{equation*}
(where $\du x$ denotes the normalized Lebesgue measure on $\T$), we define the \emph{Fourier-weighted norm}
\begin{equation*}
  \|u\|_{a} := \sum_{k \in \Z} |\wh u_k|\, \eu^{2\pi a |k|}.
\end{equation*}
The spaces
\begin{equation*}
  \{u:\T_a\to\C : |u|_a<\infty\}
  \quad\text{and}\quad
  \{u:\T_a\to\C : \|u\|_a<\infty\}
\end{equation*}
are Banach spaces. Moreover, the $|\cdot|_a$-space is a Banach algebra, i.e.\ $|fg|_a \le |f|_a\,|g|_a$, and (see Lemma~\ref{lem:banach-alg} in the Appendix) the $\|\cdot\|_a$-space is also a Banach algebra.
We also introduce 
\begin{equation*}
    \|u \|_a' := \|\dddot u\|_a.
\end{equation*}

Since we often take $a = 1$, we suppress the subscript in this case and write simply
\begin{equation*}
    |u| := |u|_1, \qquad  \|u\| := \|u\|_1 \qquad \text{and} \qquad \|u\|' := \|u \|_1'. 
\end{equation*}

For $f : \T_a \times \D_b \to \C$ analytic, set
\begin{equation*}
    |f|_{a,b} := \sup \{\,|f(x,y)| : x\in \T_a,\ y \in \D_b\,\}.
\end{equation*}

\paragraph{Resonant projections in Fourier space.}

Let $q\in\N$ and $u: \T_a \to \C$ be analytic. We write $\crochet{u}_q$ for the projection of $u$ onto the $q\Z$-modes (the $1/q$-periodization), and $\curly{u}_q := u - \crochet{u}_q$ for the complementary part:
\begin{equation*}
    \crochet{u}_q(x) \;=\; \sum_{\substack{k \in \Z \\ q \mid k}} \wh u_k \,\eu^{2\pi \iu k x}
    \;=\; \frac{1}{q}\sum_{j = 0 }^{q-1} u\Bigl(x + \frac{j}{q}\Bigr),
\end{equation*}
\begin{equation*}
    \curly{u}_q(x) \;=\; \sum_{\substack{k \in \Z \\ q \nmid k}} \wh u_k \,\eu^{2\pi \iu k x}.
\end{equation*}
For $N\in\N$, define the low/high-frequency truncations
\begin{equation*}
    \crochet{u}^{N}(x) := \sum_{\substack{k \in \Z \\ |k| \le N}} \wh u_k \,\eu^{2\pi \iu k x},
    \qquad
    \crochet{u}^{>N}(x) := \sum_{\substack{k \in \Z \\ |k| > N}} \wh u_k \,\eu^{2\pi \iu k x},
\end{equation*}
and combine them as
\begin{equation*}
    \crochet{u}_q^{N}(x) := \sum_{\substack{k \in \Z \\ q \mid k,\ |k| \le N}} \wh u_k \,\eu^{2\pi \iu k x}.
\end{equation*}
We also use
\begin{equation*}
    u^* := \int_\T u(x)\,\du x, 
    \qquad 
    u^\bullet := u - u^*,
\end{equation*}
for the average and zero-average parts of $u$.

For $f: \T_a \times \D_b \to \C$ analytic and fixed $y\in\D_b$, let $\wh f(y)_k$ be the $k$th Fourier coefficient of $x\mapsto f(x,y)$:
\begin{equation*}
  \wh f(y)_k \;=\; \int_{\T} f(x,y)\,\eu^{-2\pi \iu k x}\,\du x.
\end{equation*}
We apply the same resonant projection notations $\crochet{f}_q$, $\crochet{f}^N$, $f^*$, $f^\bullet$ etc., in the $x$-variable (with $y$ fixed), e.g.
\begin{equation*}
    \crochet{f}_q^N(x,y) \;=\; \sum_{\substack{k \in \Z \\ q \mid k,\ |k| \le N}} \wh f(y)_k \,\eu^{2\pi \iu k x}; \qquad f^*(y) = \int_\T f(x,y) \du x .
\end{equation*}
\paragraph{Exact symplectic maps and actions.}
We recall that a $\CC^1$ map $\sF: \T \times \R \to \T \times \R$ is said to be \textit{exact symplectic} if the differential $1$-form $\sF ^*(y \du x) - y\du x$ is exact, i.e. 
\begin{equation}\label{eq:exact symp maps}
    \sF ^*(y \du x) - y\du x = \du \sS
\end{equation}
for some function $\sS$ on $\T \times \R$.
The function $\sS$, which is unique up to an additive constant, is called a \textit{generating function} of $\mathscr{F}$.
In particular, taking the exterior derivative of \eqref{eq:exact symp maps} shows that $\sF$ preserves the canonical area form $\du x \du y$.
Let a generating function $\sS$ be fixed.
Let $(\sF^k(x_0, y_0))_{k = 0}^{q-1}$ be a $q$-periodic orbit of $\sF$.
Then the \textit{(symplectic) action} of this orbit is defined as 
\begin{equation*}
    \sum_{k = 0}^{q-1} \sS(\sF^k(x_0, y_0)).
\end{equation*}
Denote $(x', y') = \sF(x,y)$. 
If $x'$ is a monotone function in $x$ for each fixed $y$, then $\sF$ is called a \textit{twist map} and, in this case, the lift of the map $\T \times \R \ni (x,y) \mapsto (x, x') \in \T^2$ is a diffeomorphism $\R^2 \to \R^2$.
Hence, one can lift $\sS: \T \times \R \to \R$ to $\R^2$ and then consider $\sS$ as a function in the $(x, x')$-variables. 
The $(x,x')$-coordinates are sometimes called the \textit{Lagrangian coordinates}.
If $\sF$ is the standard map associated with a potential $V(x)$, then its generating function can be given by 
\begin{equation}\label{eq:std map gen func}
    \sS(x,x') = \frac{1}{2} (x - x')^2 + V(x).
\end{equation}
We recall that the notion of $(p,q)$-periodic orbits and minimal orbits have been defined in Section~\ref{sec:backgrounds}.
Given a coprime pair $(p,q)$, Birkhoff proved that there exists at least two dynamically distinct $(p,q)$-orbits.
Besides a minimal orbit with rotation number $p/q$, another $(p,q)$-periodic orbit can be obtained using a `mountain pass' argument (see for e.g., \cite[Ch. 9]{Katok_Hasselblatt_1995}).
It is customary to call this additional $(p,q)$-periodic orbit a \textit{minimax} orbit.

\section{Invariant curves and adapted coordinates}\label{sec:RIC coord}

\paragraph{Recovering the potential from an RIC.}
Let $\sF$ be the standard map \eqref{eq:std map} associated with a potential $V$.
A special feature of~\eqref{eq:std map} is that it is uniquely determined by any one of its rotational invariant curves that conjugate to rigid rotations (if it has any). 
We explain this now.

Fix $\omega\in\R$. 
Suppose $\sF$ has a rotational invariant curve (RIC) in the sense defined in Section~\ref{sec:backgrounds} and assume furthermore that the dynamics on $C$ is conjugate to the rigid rotation $R_\omega: x \mapsto x + \omega$. 
Concretely, there exists a circle diffeomorphism $u:\T\to\T$ and a map $v:\T\to\R$ such that the map $t\mapsto (u(t),v(t))$ parametrizes $C$ and
\begin{equation}\label{eq:RIC}
    \sF(u(t), v(t)) \;=\; \bigl(u(t+\omega),\, v(t+\omega)\bigr)\quad\text{for all }t\in\T.
\end{equation}

Write $u(t)=t+\phi(t)$ with a $\CC^1$ function $\phi:\T\to\R$; then $u$ is an orientation preserving circle diffeomorphism iff $\dot\phi>-1$. 
By choosing a lift, we consider $u$ and $v$ as functions on $\R$ with $ u(t+1)= u(t)+1$ and $v(t + 1) = v(t)$, and interpret~\eqref{eq:RIC} using lifts so that equalities are in $\R$ for the $x$-component. From~\eqref{eq:std map} and~\eqref{eq:RIC},
\begin{equation}\label{eq:v-u-relation}
  v(t+\omega) \;=\;  u(t+\omega)- u(t), 
  \qquad 
  \dot V\bigl(u(t)\bigr) \;=\; v(t+\omega)-v(t).
\end{equation}

Substituting $u=\id+\phi$ and eliminating $v$, we obtain
\begin{equation}\label{eq:RIC potential}
    \dot V\bigl(t+\phi(t)\bigr) \;=\; \phi(t+\omega)-2\phi(t)+\phi(t-\omega)\,.
\end{equation}
Thus, the pair $(\omega,\phi)$ determines $\dot V$ on $\T$, and hence determines $V$ up to an additive constant.

\paragraph{Existence of $V$ for a given conjugacy.}
Let $u=\id+\phi$ with $\dot\phi>-1$ and set $\phi^\pm:=\phi\circ R_{\pm\omega}$ as in Section \ref{sec:notation}. Define
\begin{equation*}
  w(x)\;:=\; \bigl(\phi^+ -2\phi + \phi^-\bigr)\circ u^{-1}(x).
\end{equation*}
We claim $\int_\T w(x)\,\du x=0$, which implies that there exists $V:\T\to\R$ (unique up to an additive constant) such that $\dot V=w$ and hence~\eqref{eq:RIC potential} holds.

Indeed, using the change of variables $x=u(t)$ and that $\int_\T(\phi^+ -2\phi+\phi^-)\,\du t=0$ and $\int_\T 2 \phi(t) \dot\phi(t) \du t = 0$,
\begin{align*}
\int_\T w(x)\,\du x
&= \int_\T \bigl(\phi^+(t) -2\phi(t)+\phi^-(t)\bigr)\,(1+\dot\phi(t))\,\du t \\
&= \int_\T \bigl(\phi^+(t) -2\phi(t)+\phi^-(t)\bigr)\,\dot\phi(t)\,\du t \\
&= \int_\T \phi(t+\omega)\,\dot\phi(t)\,\du t \;+\; \int_\T \phi(t - \omega)\,\dot\phi(t)\,\du t \\
&= -\int_\T \dot\phi(t+\omega)\,\phi(t)\,\du t \;+\; \int_\T \phi(t)\,\dot\phi(t + \omega)\,\du t\;=\; 0,
\end{align*}
where in the last line we used an integration by parts on the first term.

We summarize:

\begin{lemma}\label{lem:std map with RIC}
For any $\omega \in \R$ and any $\phi:\T\to\R$ with $\dot \phi>-1$, there exists a potential $V$ (unique up to an additive constant) such that the standard map associated with $V$ admits an RIC whose restricted dynamics conjugates to $R_\omega$ by the diffeomorphism  $u=\id+\phi$.
\end{lemma}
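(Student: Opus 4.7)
The plan is to reverse-engineer the derivation that precedes the lemma statement. Given $\omega$ and $\phi: \T \to \R$ with $\dot\phi > -1$, the map $u = \id + \phi$ is an orientation-preserving $\CC^1$ circle diffeomorphism, so $u^{-1}$ is a well-defined $\CC^1$ map. I would define
\[
    w(x) := \bigl(\phi^+ - 2\phi + \phi^-\bigr) \circ u^{-1}(x),
\]
which is exactly the function that the right-hand side of \eqref{eq:RIC potential} becomes after the change of variables $x = u(t)$. A potential $V: \T \to \R$ with $\dot V = w$ exists, and is unique up to an additive constant, precisely when $\int_\T w\,\du x = 0$.

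The heart of the argument is therefore checking this mean-zero condition. The natural move is to change variables $x = u(t)$; since $\du x = (1+\dot\phi)\,\du t$, one gets
\[
\int_\T w\,\du x \;=\; \int_\T \bigl(\phi^+ - 2\phi + \phi^-\bigr)(1+\dot\phi)\,\du t.
\]
The ``$1$'' part contributes $\int_\T(\phi^+ - 2\phi + \phi^-)\,\du t$, which vanishes by translation invariance of $\du t$. It remains to show that
\[
    \int_\T \phi^+\dot\phi\,\du t \;+\; \int_\T \phi^-\dot\phi\,\du t \;-\; 2\int_\T \phi\dot\phi\,\du t \;=\; 0.
\]
The last term vanishes because $\phi\dot\phi = \tfrac12 (\phi^2)'$ is a total derivative on $\T$; for the remaining two, integration by parts in the first term followed by a translation of the variable shows that the two cross terms cancel against each other. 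This is the one genuine calculation.

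Once $V$ is obtained, the identity $\dot V(u(t)) = \phi^+(t) - 2\phi(t) + \phi^-(t)$ is exactly \eqref{eq:RIC potential}. To close the loop, I would define $v(t) := u(t) - u(t-\omega) = \omega + \phi(t) - \phi(t-\omega)$ and substitute $(u(t), v(t))$ into the formula \eqref{eq:std map} for $\sF$: the first coordinate reduces to $u(t) + v(t) = u(t+\omega)$ by the definition of $v$, and the second reduces to $v(t) + \dot V(u(t)) = v(t+\omega)$ by \eqref{eq:RIC potential}. This exhibits $\{(u(t), v(t)) : t \in \T\}$ as an RIC of the standard map associated with $V$, with restricted dynamics conjugated to $R_\omega$ via $u$. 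The only real obstacle in the whole argument is the mean-zero computation above; uniqueness of $V$ up to a constant and the verification of the RIC property are purely bookkeeping, and no regularity beyond $\CC^1$ of $\phi$ is needed.
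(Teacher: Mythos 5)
Your proposal is correct and follows essentially the same route as the paper: you define the same $w$, verify the mean-zero condition by the same change of variables, the same splitting into the translation-invariant part and the $\dot\phi$-part, the same observation that $\phi\dot\phi$ is a total derivative, and the same integration-by-parts-plus-translation cancellation of the cross terms. The only cosmetic difference is that you explicitly close the loop by substituting $(u(t),v(t))$ back into $\sF$ to confirm the RIC property, whereas the paper treats this as already established by the derivation of \eqref{eq:RIC potential} from \eqref{eq:RIC} earlier in the section.
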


\paragraph{Standard maps in adapted coordinates.}
Let $\sF$ be the unique standard map having an RIC whose restricted dynamics conjugates to $R_\omega$ by $\id + \phi$ as in Lemma~\ref{lem:std map with RIC}.

To facilitate later calculations, we introduce coordinates on $\T \times \R$ adapted to this RIC.
We parametrize the RIC by $t \mapsto (u(t), v(t))$ as in \eqref{eq:RIC}.
Consider the area-preserving change of coordinates $\sH$ that `straightens' the invariant curve,
\begin{equation}\label{eq:RIC coord}
    \sH(x,y) = \Big(u(x), v(x) + \frac{y}{1 + \dot \phi(x)}\Big).
\end{equation} 
Hence $\sF \circ \sH(x,0) = \sH(x + \omega, 0)$.
Define $F := \sH\inv \circ \sF \circ \sH$ and functions $f, g: \T \times \R \to \R$ so that
\begin{equation}\label{eq:F RIC}
 F(x,y) = (x + \omega + y + f(x,y),\ y + g(x,y)).    
\end{equation}
A straightforward substitution of $v(\cdot + \omega) = u(\cdot + \omega) - u(\cdot)$ and $u = \id + \phi$ yields a set of equations characterizing $f$ and $g$:
\begin{equation}\label{eq:fg}
    \begin{cases}    
        y + f(x,y)  = \frac{y}{1+ \dot\phi(x)} + \phi(x+\omega) - \phi(x + \omega + y + f(x,y)) \\
        y+g(x,y) = \big(y + f(x,y) + \phi(x + y + f(x,y)) - \phi(x)\big)\big(1 + \dot\phi(x + \omega + y + f(x,y))\big).
    \end{cases}
\end{equation}
Setting $y = 0$ in \eqref{eq:fg} gives $f(x,0) = g(x,0) = 0$, as expected since $F(x,0) = (x + \omega, 0)$.

In the sequel, we will call $F$ the \textit{standard map associated with $(\omega, \phi)$ in the adapted coordinates}.

\paragraph{Actions of periodic orbits}
Next, we consider the generating function of the standard map $ F =\sH\inv \circ \sF \circ \sH$ associated with $(\omega, \phi)$ in the adapted coordinates as defined in the preceding paragraph.
Since $\sH$ and $\sF$ are area-preserving, the map $F$ is area preserving.
Since $F$ preserves $\T \times \{0\}$, the integral of the differential 1-form $F^*(y \du x) - y \du x$ over the circle $\T \times \{0\}$ vanishes.
Thus $F^*(y \du x) - y \du x$ belongs to the zero cohomology class by Poincar\'e duality, and is therefore exact.
It follows that $F$ is an exact symplectic map with respect to the symplectic 1-form $y \du x$.
Let us now find a generating function of $F$ in terms of $\phi$.

Note that the differential $1$-form $\sH^*(y \du x) - y \du x$ is closed by area preservation.
Since the de Rham cohomology group $H^1(\T \times \R)$ is generated by the class of $\du x$ where $x$ parametrizes $\T$, we can write 
\begin{equation}\label{eq:pullback 1 form}
    \sH^*(y \du x) = y \du x + \lambda \du x + \du \Sigma
\end{equation}
for some $\lambda \in \R$ and $\Sigma: \T \times \R \to \R$.
Let $\sS$ be a generating function of the standard map $\sF$.
Using $\sF^*(y \du x) - y \du x = \du \sS$ and \eqref{eq:pullback 1 form}, we compute:
\begin{equation}\label{eq:F*ydx}
    \begin{aligned}
        F^*(y \du x)
        =& F^*(\sH^*(y \du x) - (\lambda \du x + \du \Sigma)) \\
        =& \sH^* \sF^* (y \du x) - F^*(\lambda \du x + \du \Sigma) \\
        =& \sH^* (y \du x) + \sH^* (\du \sS) - F^*(\lambda \du x + \du \Sigma) \\
        =& y \du x + \lambda \du x + \du \Sigma + \sH^* (\du \sS) - F^*(\lambda \du x + \du \Sigma)
    \end{aligned}
\end{equation}
whereas $\lambda$ can be computed by integrating $\sH^*(y \du x)$ over the circle $\T \times \{0\}$ and using \eqref{eq:RIC coord}, \eqref{eq:v-u-relation} and the relation $u = \id + \phi$:
\begin{equation*}
    \begin{aligned}
        \lambda =& \int_\T \lambda \du x = \int_{\T \times \{0\}}  \sH^*(y \du x)
        = \int_\T v(x) (1 + \dot \phi(x)) \du x \\
        =& \int_\T (\omega + \phi(x) - \phi(x - \omega)) (1 + \dot \phi(x)) \du x 
        =  \omega - \int_\T \phi(x - \omega)\dot \phi(x) \du x
        =  \omega - \int_\T\phi \dot\phi^+.
    \end{aligned}
\end{equation*}
Substituting into \eqref{eq:F*ydx}, we obtain a generating function $S$ of $F$:
\begin{equation}\label{eq:RIC gen func}
    S = \sS \circ \sH + \bigg(\omega - \int_\T\phi \dot\phi^+\bigg)(x - x\circ F) + (\Sigma - \Sigma \circ F)
\end{equation}
where $x - x\circ F$ is interpreted using a lift of $F$. By \eqref{eq:F RIC}, we have simply $x - x\circ F = -(\omega + y + f(x,y))$.
Suppose $(F^k(x_0, y_0))_{k = 0}^{q-1}$ is a $(p, q)$-periodic orbit of $F$.
Then the sum of $x - x\circ F$ over this orbit is exactly $-p$ by the definition of $(p,q)$-periodic orbits (see Section \ref{sec:notation}), and the sum of the third term in \eqref{eq:RIC gen func} vanishes by a telescopic sum.
Hence, the symplectic action of this orbit with respect to the generating function \eqref{eq:RIC gen func} writes 
\begin{equation}\label{eq:RIC action}
    \sum_{k = 0}^{q-1} S \circ F^k(x_0, y_0) = -\bigg(\omega - \int_\T\phi \dot\phi^+\bigg)p + \sum_{k = 0}^{q-1} \sS \circ \sH  \circ F^k(x_0, y_0).
\end{equation}
Observe that the term $\sum_{k = 0}^{q-1} \sS \circ \sH (F^k(x_0, y_0))$ is the symplectic action of the corresponding periodic orbit under the original standard map $\sF$, with a generating function $\sS$ in the form of \eqref{eq:std map gen func} for a potential $V$ satisfying \eqref{eq:RIC potential}.
Looking forward, we will actually evaluate via \eqref{eq:RIC action} the $\sS$-action $\sum_{k = 0}^{q-1} \sS \circ \sH (F^k(x_0, y_0))$ in terms of the $S$-action $\sum_{k = 0}^{q-1} S(F^k(x_0, y_0))$ associated with the map $F$ in adapted coordinates.
To this end, we now derive a separate formula for $S$.

By taking the derivative of the first line of \eqref{eq:fg} in $y$, it is easy to see that $\partial f / \partial y > -1 $ as long as $\dot\phi > -1$ over $\T$.
Therefore, the map $F$ is a twist map.
After lifting $S$ to $\R^2$ and transform to the Lagrangian coordinates $(x, x')$, we deduce from $F^* (y \du x) - y \du x = \du S$ that, in a neighbourhood of $\{y = 0\}$,
\begin{equation}\label{eq:S def}
    F(x,y) = (x', y') \quad \text{if and only if} \quad \frac{\partial S}{\partial x}(x,x') = -y \quad \text{and} \quad \frac{\partial S}{\partial x'}(x,x') = y'.
\end{equation}
Using \eqref{eq:S def}, and regarding $x'(x,y)$ as a function in terms of $(x,y)$, we can calculate the derivatives of $S(x,x'(x,y))$ in $x$ and $y$ as follows.
\begin{equation*}
    \frac{\partial S}{\partial x}(x,y) = \frac{\partial S}{\partial x} + \frac{\partial S}{\partial x'} \frac{\partial x'}{\partial x} = y' \frac{\partial x'}{\partial x} - y 
    \quad \text{and} \quad 
    \frac{\partial S}{\partial y}(x,y) = \frac{\partial S}{\partial x'} \frac{\partial x'}{\partial y} = y' \frac{\partial x'}{\partial y}.
\end{equation*}
Substituting $x' = x + \omega + y + f(x,y)$ and $y' = y + g(x,y)$, we then obtain explicitly 
\begin{equation}\label{eq:RIC gen func-fg}
    \frac{\partial S}{\partial x }(x,y) = (y + g(x,y))\Big( 1 + \frac{\partial f}{\partial x}(x,y) \Big) - y; \qquad  \frac{\partial S}{\partial y }(x,y) = (y + g(x,y))\Big( 1 + \frac{\partial f}{\partial y}(x,y) \Big).
\end{equation}
Hence, the $F$-generating function $S$ can be obtained by integrating the second equation of \eqref{eq:RIC gen func-fg} in $y$.

In particular, since $g(x,0) = 0$, we see from the first equation in \eqref{eq:RIC gen func-fg} that $S$ is constant along $\{y=0\}$.
By adjusting the additive constant of the potential $V$ solving \eqref{eq:RIC potential}, we may adjust $\sS$ (and hence $S$) by an additive constant so that \eqref{eq:RIC gen func} vanishes identically on the RIC.
This choice fixes $\sS$ and $S$ for a given pair $(\omega,\phi)$; we call $\sS$ the \emph{normalized generating function} associated with $(\omega,\phi)$ in the original coordinates, and $S$ the corresponding normalized generating function in the adapted coordinates.

\section{Resonant coordinates}\label{sec:rnf}

The main result of this section is Theorem~\ref{thm:rnf}, where we obtain a `$q$–resonant normal form' for the standard map $F$ associated with $(\omega,\phi)$ in the adapted coordinates constructed in Section~\ref{sec:RIC coord}.
The normal form is valid when $\omega$ is sufficiently well approximated by a rational number $p/q$ with large denominator~$q$.

The purpose of this normal form is to capture the dynamics of $F$ in a small neighbourhood of the circle
\begin{equation*}
  \T \times \{p/q - \omega\}
\end{equation*}
and to show that, roughly speaking, the dynamics there depends strongly on the resonant Fourier modes of $\phi$, i.e.\ on the Fourier coefficients $\hat\phi_k$ with $q\mid k$, and is only weakly influenced by the nonresonant modes.
Localization near $\T \times \{p/q - \omega\}$ is important because the dynamics there is close to a rigid rotation in $x$ by the rational angle $p/q$ and, at least when $\phi$ is small, all $(p,q)$–periodic orbits of $F$ are expected to lie nearby.

The following theorem is the technical core of the paper.
For technical reasons, we consider the complexified standard maps in this theorem: we can obviously define an analytic map $\sF$ using the formula \eqref{eq:std map} with a complex analytic potential $V$, which can in turn be given by a real $\omega$ and a complex analytic function $\phi:\T_1 \to \C$ using the construction of Section~\ref{sec:RIC coord}.
By abuse of notation we will still call the resulting map $F$ the standard map associated with $(\omega, \phi)$ in adapted coordinates, even though one may not be able to iterate $F$ on the real cylinder $\T \times \R$ indefinitely unless $\phi$ is real-valued.
The formulae \eqref{eq:fg} describing the map $F$ still hold, of course.
The corresponding normalized generating function $S$ will be in general complex valued and the `symplectic actions' obtained by summing over periodic orbits are complex quantities.
We refer to Section~\ref{sec:notation} for the notation.

\begin{theorem}\label{thm:rnf}
    Fix $\nu_0 := 64$, $\nu \geq \nu_0 + 6$ and $\kappa \in \Z^+$.
    Let $\omega \in \R \setminus \Q$ and let $\phi: \T_1 \to \C$ be analytic, of zero average, with $\|\dddot\phi\| \le 1$.
    Let $F$ be the standard map associated with $(\omega,\phi)$ in adapted coordinates.
    Suppose $q$ is sufficiently large and
    \begin{equation}\label{eq:om arith cond}
        \Big|\omega - \frac{p}{q}\Big| \leq q^{-\nu} \qquad \text{for some } p \in \Z.
    \end{equation}
    Then there exists an analytic change of coordinates
    \begin{equation*}
        H_q : \T_a \times \D_b \longrightarrow \T_1 \times \C,
        \qquad a = 1 - \frac{3}{q}, \quad b = \frac{1}{5} \Big|\omega - \frac{p}{q}\Big|,
    \end{equation*}
    with the following properties.
    \begin{enumerate}
        \item The map $H_q$ is close to the translation by $p/q - \omega$ in $y$ in the sense that 
        \begin{equation}\label{eq:h12}
            H_q: (x,y) \longmapsto \bigl(x + h_{q,1}(x,y),\, y + p/q - \omega + h_{q,2}(x,y)\bigr)
        \end{equation}
        where 
        \begin{equation}\label{eq:h12 est}
            |h_{q,1}|_{a, b} \lesssim q \verts{\omega - \frac{p}{q}}; \quad |h_{q,2}|_{a,b} < \frac{1}{10} \verts{\omega - \frac{p}{q}}.
        \end{equation}

        \item Defining $F_q = H_q^{-1} \circ F \circ H_q$, we have
        \begin{equation}\label{eq:rnf gq}
            F_q :
            \begin{pmatrix}
                x \\ y 
            \end{pmatrix}
            \longmapsto
            \begin{pmatrix}
                x + p/q + y \\
                y + g_q(x,y)
            \end{pmatrix}
        \end{equation}
        for some analytic $g_q : \T_a \times \D_b \to \C$ such that, with $N := \kappa q$,
        \begin{align}
            &|g_q|_{a,b} \;\lesssim\; \Big|\omega - \frac{p}{q}\Big|^2, \label{eq:rnf concl 1}\\
            &\bigl|g_q - \langle g_q \rangle_q^N\bigr|_{1/q,b}
                \;\lesssim\;
                \eu^{6\pi \kappa}\,\eu^{-2\pi (\kappa+1) q}
                \Big|\omega - \frac{p}{q}\Big|^2. \label{eq:rnf concl 2}
        \end{align}

        \item For $|y| < b$, the averaged part of $g_q(\cdot,\; y)$ satisfies
        \begin{equation}\label{eq:gq star}
            |g_q^*(y)|
            \;\lesssim\;
            q^{\nu_0 - \nu}\;
            \sup_{\Im x = 0}|g_q^\bullet(x, y)|.
        \end{equation}

        \item Writing
        \begin{equation}\label{eq:f1}
            f_1(x)
             := \frac{1}{(1 + \dot\phi(x))(1 + \dot\phi(x + \omega))} - 1,
        \end{equation}
        we have the comparison estimate
        \begin{equation}\label{eq:rnf res compare}
            \Big\|
              \Big\langle
                 g_q(\cdot , 0)
                 + \frac{1}{2} \Big(\frac{p}{q} - \omega \Big)^2
                   \frac{\langle \dot f_1 \rangle_q}{1 + \langle f_1 \rangle_q}
              \Big\rangle_q^{N}
            \Big\|
            \;\lesssim\;
            \kappa\,\eu^{4\pi \kappa}\,q^{\nu_0 - \nu}
            \Big|\omega - \frac{p}{q}\Big|^2.
        \end{equation}

        \item Let $S$ be the normalized generating function associated with $(\omega,\phi)$ in adapted coordinates (see the end of Section~\ref{sec:RIC coord}), and define
        \begin{equation*}
          S_q := S \circ H_q.
        \end{equation*}
        Then
        \begin{equation}\label{eq:rnf S compare}
            \bigg|
               S_q(x,y)
               - \Big(\frac{p}{q} - \omega + y\Big)^2
                 \frac{1+f_1(x)}{2\bigl(1 + \langle f_1 \rangle_q\bigr)^2}
            \bigg|
            \;\lesssim\;
            q^{\nu_0}\,\Big|\omega - \frac{p}{q}\Big|^3,
            \qquad (x,y) \in \T_{a} \times \D_b.
        \end{equation}
    \end{enumerate}
    Moreover, the change of coordinates $H_q$ depends analytically on $\phi$ and is real-analytic when $\phi$ is real-valued.
\end{theorem}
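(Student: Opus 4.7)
The plan is to construct $H_q$ via a Picard iteration on the conjugacy equation $F \circ H_q = H_q \circ F_q$, where $F_q$ has the normal form \eqref{eq:rnf gq}. First, I would translate the $y$-coordinate by $\Delta := p/q - \omega$ to center the resonant circle at $y = 0$, obtaining $F'(x,y) = (x + p/q + y + \tilde f(x,y),\, y + \tilde g(x,y))$ with $\tilde f(x,y) = f(x, y + \Delta)$ and $\tilde g(x,y) = g(x, y + \Delta)$. Two structural facts drive the estimates: implicit differentiation of the first line of \eqref{eq:fg} at $y = 0$ gives $\partial_y f(x, 0) = f_1(x)$, so $\tilde f(x, 0) = \Delta f_1(x) + O(\Delta^2)$; and area-preservation of $F$ combined with $f(\cdot, 0) = g(\cdot, 0) = 0$ forces $\partial_y g(x, 0) = 0$, hence $\tilde g(x, 0) = O(\Delta^2)$. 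The vanishing of this first-order term in $\tilde g$ is what ultimately makes $g_q$ of order $\Delta^2$ rather than $\Delta$.

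Next, I would seek $H_q(x, y) = (x + h_1(x, y),\, y + \Delta + h_2(x, y))$ and rewrite the conjugacy equation as the system
\begin{equation*}
    h_1 \circ F_q - h_1 = h_2 + \tilde f \circ (\id + (h_1, h_2)),
    \qquad
    h_2 \circ F_q - h_2 + g_q = \tilde g \circ (\id + (h_1, h_2)).
\end{equation*}
At leading order the relevant cohomological operator is $L : h \mapsto h \circ R_{p/q} - h$, whose Fourier multipliers $\eu^{2\pi \iu k p/q} - 1$ vanish iff $q \mid k$ and satisfy $|\eu^{2\pi \iu k p/q} - 1| \geq 2\sin(\pi/q) \gtrsim 1/q$ on $q$-non-resonant modes by coprimality of $(p, q)$. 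Thus $L^{-1}$ is bounded by $\lesssim q$ on the non-resonant part with no loss of analytic strip. The natural normalization places all $q$-resonant terms (which $L$ cannot remove) into $g_q$, together with the constraint $\langle h_1 \rangle_q = 0$. The system can then be solved by Picard iteration in the Banach space with norm $\|\cdot\|_a$, $a = 1 - 3/q$: each step loses a factor $q$ from $L^{-1}$ but gains the factor $|\Delta| \leq q^{-\nu}$ from the already-small perturbation, yielding a contraction ratio $\lesssim q^{1 - \nu}$, extremely small when $\nu \geq 70$. The $3/q$ analyticity loss is distributed across a bounded number of iterations, while the exponential decay of the Fourier coefficients of $\phi$ (from $\|\phi\| \leq 1$ on $\T_1$) supplies the high-frequency tail $|k| > N = \kappa q$ of size $\lesssim \eu^{-2\pi(\kappa + 1)q}$ required for \eqref{eq:rnf concl 2}. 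Estimate \eqref{eq:gq star} on the zero-mean part of $g_q$ follows by exploiting one further iteration step for the $q\Z$-modes alone, whose improvement is governed by the arithmetic smallness $q^{\nu_0 - \nu}$.

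The hardest part is the explicit comparison \eqref{eq:rnf res compare}, which must match $\langle g_q(\cdot, 0)\rangle_q$ to $-\tfrac{1}{2}\Delta^2 \langle \dot f_1 \rangle_q / (1 + \langle f_1 \rangle_q)$ up to the stated error. I would compute $\partial_y^2 g(x, 0)$ by differentiating the area-preservation identity $\det \du F \equiv 1$ once in $y$ (using $\partial_y g(\cdot, 0) = 0$ and $\partial_x g(\cdot, 0) = \partial_x \partial_y g(\cdot, 0) = 0$), yielding $\partial_y^2 g(x, 0) = -\dot f_1(x)$ and hence $\tilde g(x, 0) = -\tfrac{1}{2}\Delta^2 \dot f_1(x) + O(\Delta^3)$. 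The $q$-resonant projection $\langle g_q(\cdot, 0)\rangle_q$ at leading order equals $\langle \tilde g(\cdot, 0) \rangle_q$ corrected by $h_2$-contributions; the $q$-resonant part of $h_2(\cdot, 0)$ is forced by the cohomological obstruction to equal $-\Delta \langle f_1 \rangle_q + O(\Delta^2)$, which feeds back through the $(\Delta + h_2)^2$ structure in the Taylor expansion of $\tilde g(x, \cdot)$ and, after a geometric-series resummation across Picard iterations, yields the $(1 + \langle f_1 \rangle_q)$ denominator. The generating-function estimate \eqref{eq:rnf S compare} then follows by integrating \eqref{eq:RIC gen func-fg} in $y$ from $y = 0$ (where $S \equiv 0$ by the normalization of Section~\ref{sec:RIC coord}) to obtain the leading term $\tfrac{1}{2}(y + \Delta)^2(1 + f_1(x))$; composition with $H_q$ and resonant expansion introduce the $(1 + \langle f_1 \rangle_q)^2$ denominator by the same mechanism. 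The chief obstacle throughout is balancing small-divisor losses against the Diophantine gain while extracting explicit leading Taylor coefficients with enough precision; the condition $\nu \geq \nu_0 + 6 = 70$ provides ample headroom, but all constants must be kept explicit to reach the quantitative estimates.
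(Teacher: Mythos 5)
Your proposal takes a genuinely different route from the paper: the paper constructs $H_q = \Theta\circ(\id+\Sigma)$ in two stages — an explicit Taylor-series coordinate change $\Theta$ (Proposition~\ref{prop:rnf1}) followed by a $q$-step averaging iteration $\id+\Sigma$ (Proposition~\ref{prop:rnf2}) — whereas you propose a single Picard iteration on the conjugacy equation $F\circ H_q = H_q\circ F_q$. However, the proposal has a decisive gap in handling \eqref{eq:gq star}. You claim that estimate follows from ``one further iteration step for the $q\Z$-modes alone,'' but the $q\Z$-modes — and in particular the zero Fourier mode $g_q^*$ — lie in the kernel of the cohomological operator $u\mapsto u\circ R_{p/q}-u$. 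No choice of $h_1,h_2$ can change the resonant Fourier coefficients of $g_q$; they are the obstruction, not the unknown. Estimate \eqref{eq:gq star} is not a cohomological statement at all: it is a relation between the average $g_q^*(y)$ and the oscillating part $g_q^\bullet(\cdot,y)$, and the paper proves it (Section~\ref{sec:rnf part 3}) by a completely separate argument using the exactness of the original standard map — one computes the pushed-forward area form $H_q^*(\du x\,\du y)$ and applies the vanishing-of-signed-area identity \eqref{eq:signed area = 0}. Without some input from area preservation, \eqref{eq:gq star} has no reason to hold; this is an essential ingredient your proposal lacks.

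Two further concerns. First, for \eqref{eq:rnf concl 2} you assert that the exponential smallness of $g_q-\langle g_q\rangle_q^N$ is supplied by the decay of the Fourier coefficients of $\phi$, but this controls only the high-frequency resonant tail $\langle g_q\rangle_q^{>N}$; the low-frequency nonresonant part $\{g_q\}_q$ can a priori be of polynomial size $\sim q^{\nu_0}|\omega-p/q|^3$, and the paper needs the $q$-step iterative averaging of Proposition~\ref{prop:rnf2} to push it down to $\eu^{-2\pi Mq}|\omega-p/q|^2$. If instead you intend to make $\{g_q\}_q$ vanish exactly by convergence of the full Picard iteration, that is a stronger claim than the paper proves and requires demonstrating convergence (the resonant constraint couples $\langle h_2\rangle_q$ back into $g_q$, and your remark that the analyticity loss is ``distributed across a bounded number of iterations'' is inconsistent with needing exact vanishing). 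Second, the resonant comparison \eqref{eq:rnf res compare} — which you resolve by a ``geometric-series resummation across Picard iterations'' — is the heart of the matter: the paper's remark at the top of Section~\ref{sec:rnf proof} explains that doing the iteration without first performing the explicit Taylor reduction of Step~1 ``will introduce an `error' of comparable size to the resonant part of the original map, and it seems difficult to analyze this `error' term explicitly.'' Your proposal does not address this difficulty.
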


\begin{remark}
    The bound ``$q$ sufficiently large'' depends only on $\nu$ and $\kappa$.
    In particular, it is independent of $(\omega,\phi)$.
\end{remark}

\begin{remark}
    The analytic dependence of $H_q$ on $\phi$ means more precisely as follows. Suppose $\phi^\tau(x)$ is jointly analytic in $x \in \T_1$ and a complex parameter $\tau$ such that $\phi = \phi^\tau$ satisfies the conditions of Theorem~\ref{thm:rnf}.
    Then the corresponding coordinate transform $H_q = H^\tau_q$ satisfies that $H_q^\tau(x,y)$ is jointly analytic in $(\tau, x, y)$.
\end{remark}

In the rest of this paper, we adopt the following terminology.
Given a standard map $F$ associated with $(\omega,\phi)$ in adapted coordinates and a $q$ satisfying the assumptions of Theorem~\ref{thm:rnf}, we call the coordinates defined by $H_q$ the \emph{$q$–resonant coordinates}.
We call the function $g_q$ in \eqref{eq:rnf gq} the \emph{$q$–resonant normal form} of $F$.
More generally, for a function $g$ we call $\langle g \rangle_q^N$ the \emph{$(q,N)$–resonant part} (or simply the \emph{resonant part}) of $g$, and we call $g - \langle g \rangle_q^N$ the \emph{nonresonant part}.
We call the map $F_q = H_q\inv \circ F \circ H_q$ the \emph{standard map associated with $(\omega, \phi)$ in $q$-resonant coordinates}.

The proof of Theorem~\ref{thm:rnf} is rather technical and will be given in Section~\ref{sec:rnf proof}.

\section{Controlling resonant parts by Picard iteration}\label{sec:picard}

We shall apply Theorem \ref{thm:rnf} to a sequence of coprime pairs of integers $(p_j, q_j)$ with $q_j > 0$ such that $p_j/q_j$ approximates $\omega$ sufficiently fast, obtaining accordingly a sequence of resonant normal forms $g_{q_j}$ from Theorem \ref{thm:rnf}.
With $N_j = \kappa q_j$ for a fixed $\kappa$, we shall approximate the  zero-average $q_j$-resonant part $\langle g^\bullet_{q_j}\rangle_{q_j}^{N_j}$ by the corresponding $q_j$-resonant part $\langle \phi \rangle_{q_j}^{N_j}$ of the function $\phi$ which defines the standard map.
Then, using a Picard iteration we find a choice of $\phi$ for which the individual resonant part $\langle g^\bullet_{q_j}\rangle_{q_j}^{N_j}$ of the map takes desired forms.

\begin{remark} 
    To prove the results in this section, the only conclusions needed from Theorem~\ref{thm:rnf} are \eqref{eq:rnf res compare} and the analytic dependence of $g_q$ on $\phi$.
    The other conclusions of Theorem~\ref{thm:rnf} are not used in this Section.
\end{remark}

We begin by defining certain  Banach spaces suitably adapted to our problem.
Let $\CA$ denote the Banach space of analytic functions on $\T_1$ with zero average and finite $\|\cdot\|$-norm.
For an integer $n > 0$, define a Banach subspace $\CA_n \subset \CA$ by 
\begin{equation*}
    \CA_n = \{u \in \CA \mid \hat u_k = 0 \text{ for all } |k| < n\}.
\end{equation*}
Let $\CA'$ be the subspace of $\CA$ consisting of functions $u$ such that $\dddot u \in \CA$ and put $\|u\|' := \|\dddot u\|$.
Since elements of $\CA'$ have zero average, the space $\CA'$ is a Banach space with respect to the $\|\cdot \|'$-norm.
Similarly, we also define the Banach subspaces $\CA'_n = \CA_n \cap \CA'$ with induced  $\|\cdot\|'$-norm.

We now state the main result of this section.
\begin{theorem}\label{thm:fx pt}
    For any $\epsilon > 0$ and $\kappa \in \Z^+$, 
    there exists $n \in \Z^+$ such that the following holds.
    For any $\omega \in \R \setminus \Q$ and any sequence $(p_j, q_j)$ of coprime integer pairs satisfying, for some $\nu \geq 70$, that 
    \begin{equation}\label{eq:CP assumptions}
        \verts{\omega - \frac{p_j}{q_j}} \leq q_j^{-\nu}, \quad q_{j+1} > 2\kappa q_j \quad \text{and} \quad q_j > n,
    \end{equation}
    and for any $\phi_0 \in \CA'_n$ and $h \in \CA$ such that $h = \sum_{j} \crochet{h}_{q_j}^{N_j}$ with $N_j = \kappa q_j$ and
    \begin{equation}\label{eq:h+phi0 0}
        \|h\| + 2\|\phi_0\|' < 1-2\epsilon,
    \end{equation}
    there exists a unique analytic function $\phi_1 \in \CA'_n$ depending analytically on $(h, \phi_0)$ such that $\phi_1 = \sum_j \langle \phi_1 \rangle_{q_j}^{N_j}$, $\|\phi_1\|' \leq 1 - \epsilon - \|\phi_0\|'$ and
    \begin{equation}\label{eq:fx pt 0}
        \bigg\langle\frac{\partial g_{q_j}}{\partial x}(\cdot, 0)\bigg \rangle_{q_j}^{N_j} = \Big(\frac{p_j}{q_j} - \omega \Big)^2 \crochet{h}_{q_j}^{N_j}  \quad \text{for} \quad j \geq 1
    \end{equation}
    where $g_{q_j}$ is the $q_j$-resonant normal form as constructed in Theorem~\ref{thm:rnf} with $\phi := \phi_0 + \phi_1$ and the chosen $\omega$, $\kappa$ and $\nu$.
\end{theorem}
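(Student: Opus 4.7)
The plan is to recast equation \eqref{eq:fx pt 0} as a Banach fixed-point problem for $\phi_1$ inside the resonant subspace of $\CA'_n$, and then invoke the contraction mapping principle. Set $P_j := \crochet{\cdot}_{q_j}^{N_j}$ and $P := \sum_j P_j$. The hypothesis $q_{j+1} > 2\kappa q_j = 2 N_j$ forces the Fourier supports of the individual $P_j$ to be pairwise disjoint (for $l>j$ any nonzero mode in $\mathrm{Im}(P_l)$ has absolute value $\geq q_l > N_j$, while for $l<j$ every mode has absolute value $\leq \kappa q_l < q_j$ and so is not a nonzero multiple of $q_j$). Hence $P$ is a bounded projection on $\CA$ and on $\CA'_n$; the triple derivative $T := \partial_x^3$ is an isometric isomorphism $\CA'_n \to \CA_n$ commuting with $P$; and by hypothesis $\phi_1 \in P\CA'_n$ and $h \in P\CA_n$.

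For $\phi := \phi_0 + \phi_1$ in the ball $\{\|\phi\|' \leq 1 - \epsilon\}$, Theorem~\ref{thm:rnf} applies at every $q = q_j$. Differentiating \eqref{eq:rnf res compare} in $x$ (which costs at most a factor $2\pi N_j = 2\pi\kappa q_j$ in the $\|\cdot\|$ norm on the truncated Fourier support) and dividing by $(p_j/q_j - \omega)^2$ yields
\begin{equation*}
    \mathcal F_j(\phi) := \Bigl(\tfrac{p_j}{q_j} - \omega\Bigr)^{-2}\crochet{\partial_x g_{q_j}(\cdot,0)}_{q_j}^{N_j}
    = -\tfrac{1}{2}\,\partial_x\Bigl\langle \tfrac{\crochet{\dot f_1}_{q_j}}{1 + \crochet{f_1}_{q_j}}\Bigr\rangle_{q_j}^{N_j} + E_j(\phi),
\end{equation*}
with $\|E_j(\phi)\| \lesssim \kappa^2 \eu^{4\pi\kappa} q_j^{\nu_0 - \nu + 1}$, summable in $j$ because $\nu \geq 70$. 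Taylor-expanding $f_1 = -(\dot\phi + \dot\phi^+) + N(\phi)$ with $N(\phi)$ an explicit analytic remainder satisfying $\|N(\phi)\| \lesssim \|\dot\phi\|^2$ via the Banach algebra property (Lemma~\ref{lem:banach-alg}), expanding the denominator as a geometric series, and using $\eu^{2\pi\iu k\omega} = 1 + O(\kappa q_j^{1-\nu})$ for $q_j\mid k$, $|k|\leq N_j$ to replace $\crochet{\dddot\phi^+}_{q_j}^{N_j}$ by $\crochet{\dddot\phi}_{q_j}^{N_j}$, I rewrite $\mathcal F_j(\phi) = P_j\dddot\phi + \mathcal N_j(\phi)$. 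Summing in $j$ and using $P\dddot\phi = P\dddot\phi_0 + \dddot\phi_1$, the desired equation becomes the fixed-point equation
\begin{equation*}
    \phi_1 = \Psi(\phi_1) := T^{-1}\bigl(h - P\dddot\phi_0 - \mathcal N(\phi_0 + \phi_1)\bigr), \qquad \mathcal N := \sum_j \mathcal N_j,
\end{equation*}
on the closed ball $B_r := \{\phi_1 \in P\CA'_n : \|\phi_1\|' \leq r\}$, with $r := 1 - \epsilon - \|\phi_0\|'$.

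To bound $\|\mathcal N(\phi)\| = \sum_j \|\mathcal N_j(\phi)\|$ (equal because of disjoint Fourier supports), the crucial observation is that the contributions to $\mathcal N_j$ fall into two classes: (i) terms of the form $P_j u(\phi)$ for a function $u(\phi)$ \emph{independent} of $j$ (for instance, the nonlinear piece $\ddot N(\phi)$ produced by the linear-in-$\crochet{f_1}_{q_j}$ expansion), for which the disjoint-support identity gives $\sum_j \|P_j u\| = \|Pu\| \leq \|u(\phi)\|\lesssim \|\phi\|'^2/n^c$; and (ii) terms involving powers $\crochet{f_1}_{q_j}^k$ with $k\ge 2$ in the geometric expansion of the denominator, where the genuinely $j$-dependent Fourier-decay bound $\|\crochet{f_1^\bullet}_{q_j}\|\lesssim \|\phi\|'/q_j^2$ (derivable from $|\wh{f_1}_k|\eu^{2\pi|k|}\lesssim \|\phi\|'/|k|^2$) more than compensates for the $\partial_x^2$ loss of $(2\pi N_j)^2$ on the truncated support and yields contributions summing to $O(\kappa^2\|\phi\|'^2/q_1)$. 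Combined with the $\phi^+$-versus-$\phi$ discrepancy bound $O(\kappa q_1^{1-\nu}\|\phi\|')$ and the Theorem error $\sum_j\|E_j\|\lesssim \kappa^2\eu^{4\pi\kappa}q_1^{\nu_0-\nu+1}$, this gives $\|\mathcal N(\phi)\|\leq \epsilon/2$ on $B_r$ once $n$ is chosen large enough (depending only on $\epsilon$ and $\kappa$). A parallel bound on the Fréchet derivative of $\mathcal N$, extracted from the analytic dependence of $g_{q_j}$ on $\phi$ (final assertion of Theorem~\ref{thm:rnf}) via Cauchy's formula in Banach spaces, provides a Lipschitz constant strictly less than $1$. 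Hence $\Psi$ maps $B_r$ into itself and is a contraction; Banach's fixed-point theorem produces the unique $\phi_1\in B_r$, and analytic dependence on $(h,\phi_0)$ follows from the uniform contraction combined with the analyticity of $g_{q_j}$ in $\phi$.

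The principal obstacle is precisely the clean separation of $j$-independent from $j$-dependent nonlinear contributions described above, and in particular establishing the per-block Fourier-decay estimate $\|\crochet{f_1^\bullet}_{q_j}\|\lesssim \|\phi\|'/q_j^2$ rather than only a uniform bound; without this extra decay, the $(2\pi N_j)^2$ loss from $\partial_x^2$ on the truncated support would be lethal. The arithmetic margin $\nu \geq 70 > \nu_0 + 2$ gives a comfortable $q_j^{-5}$ decay for both the Theorem~\ref{thm:rnf} error and the $\phi^+$-versus-$\phi$ discrepancy, while the $n^{-c}$ smallness of $\|N(\phi)\|/\|\phi\|'^2$ (via $\|\dot\phi\|\leq (2\pi n)^{-2}\|\phi\|'$) handles the genuine nonlinearity.
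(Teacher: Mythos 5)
Your proposal follows essentially the same approach as the paper: cast \eqref{eq:fx pt 0} as a Banach fixed-point problem on the resonant subspace $P\CA_n'$, feed in the comparison estimate \eqref{eq:rnf res compare} of Theorem~\ref{thm:rnf}, show the remaining nonlinear operator $\mathcal N$ is $\epsilon$-small in $\|\cdot\|$, and control its Fr\'echet derivative by a Cauchy estimate to get a contraction. The paper packages the smallness bound as a separate Lemma~\ref{lem:CP}, but the logical skeleton is the same.

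The one place where you diverge from the paper in substance is the mechanism for summing the ``denominator'' corrections over $j$. The paper introduces an auxiliary truncation level $M_j = 2N_j = 2\kappa q_j$ and replaces $\langle f_1\rangle_{q_j}$ by $\langle f_1\rangle_{q_j}^{M_j}$, so that the $j$-indexed blocks $\langle \dot f_1\rangle_{q_j}^{M_j}$ have \emph{pairwise disjoint} Fourier supports (via $q_{j+1} > 2\kappa q_j = M_j$) and the sum $\sum_j \|\CP_j\|$ collapses to $\lesssim\|\dot f_1\|\lesssim 1/n$ without any per-block decay; the price is an extra truncation-error term $\CP'_j$, which is exponentially small. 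You instead keep the untruncated $\langle f_1\rangle_{q_j}$ and argue via the per-block decay $\|\langle f_1^\bullet\rangle_{q_j}\|\lesssim\|\phi\|'/q_j^2$. That decay estimate is correct (it is exactly $\|\ddot f_1\|\lesssim\|\phi\|'$ read Fourier-mode-by-mode), and together with the disjointness of the projected supports $q_j\mid k$, $|k|\le N_j$ it does give a summable bound; so your route is viable and arguably bypasses the $M_j$ device. However, your narration of the estimate is a bit off: the ``$\partial_x^2$ loss of $(2\pi N_j)^2$'' is not actually present -- the only derivative taken after applying Theorem~\ref{thm:rnf} is the single $\partial_x$ in \eqref{eq:CP def}, costing at most $2\pi N_j$, and that is already accounted for in the $E_j$ error term -- and your split into ``$j$-independent'' versus ``powers $\langle f_1\rangle_{q_j}^k$ with $k\ge 2$'' needs a further decomposition $\langle f_1\rangle_{q_j} = f_1^* + \langle f_1^\bullet\rangle_{q_j}$ before either the disjoint-support or decay argument can bite (the constant $f_1^*$ carries no $q_j$ decay, so it must be peeled off into the class-(i) bucket). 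These are filling-in-the-details issues rather than a wrong approach; once that peeling is made explicit, the estimate closes up exactly as you sketch.
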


\begin{remark}
    The meaning of analytic dependence of $\phi_1$ on $(h, \phi_0)$ is in the sense analogous to the second remark following Theorem~\ref{thm:rnf}, i.e., if $h^\tau(x)$ and $\phi_0^\tau(x)$ are both analytic jointly in $x \in \T_1$ and a complex parameter $\tau$ such that $h = h^\tau$ and $\phi_0 = \phi_0^\tau$ satisfy the conditions of Theorem~\ref{thm:fx pt}, then the corresponding function $\phi_1 = \phi_1^\tau$ has the property that $\phi_1^\tau(x)$ is jointly analytic in $x$ and $\tau$.
\end{remark}

Theorem~\ref{thm:fx pt} will be proved by combining the usual Picard iteration (Banach fixed point theorem) with the quantitative estimates from the following lemma.

\begin{lemma}\label{lem:CP}
    For any $\epsilon > 0$ and $\kappa \in \Z^+$, 
    there exists $n \in \Z^+$ such that the following holds. Let $\omega$ and $(p_j, q_j)$ be as in Theorem~\ref{thm:fx pt} such that \eqref{eq:CP assumptions} holds for some $\nu \geq 66$.
    For any $\phi \in \CA_n'$ with $\|\phi\|' \leq 1$,
    let
    \begin{equation*}
        F_{q_j}:
        \begin{pmatrix}
            x \\y
        \end{pmatrix} \mapsto 
        \begin{pmatrix}
            x + p_j/q_j + y \\ y + g_{q_j}(x,y)
        \end{pmatrix}, \qquad g_{q_j}: \T_{1 - 3/q_j} \times \D_{|\omega - p_j/q_j|/5} \to \C
    \end{equation*}
    be the standard map associated with $(\omega, \phi)$ in the $q_j$-resonant coordinates as constructed by applying Theorem~\ref{thm:rnf} with $q = q_j$ and define
    \begin{equation}\label{eq:CP def}
        \CP(\phi):= \sum_{j = 1}^\infty \crochet{\Big(\frac{p_j}{q_j} - \omega \Big)^{-2} \frac{\partial g_{q_j}}{\partial x} (\cdot, 0 ) - \dddot\phi}_{q_j}^{N_j} \quad \text{where} \quad N_j = \kappa q_j.
    \end{equation}
    Then it holds that $\CP(\phi): \T_1 \to \C$ is an analytic function with
    \begin{equation}\label{eq:CP eps}
        \|\CP(\phi)\| < \epsilon,   
    \end{equation}
    and the operator $\CP: \CA_n' \to \CA$ is analytic.
    Moreover, for $\|\phi\|' < 1$, the operator norm of the derivative $d_\phi \CP: \CA_n' \to \CA$ satisfies
    \begin{equation}\label{eq:CP der bound}
        \|d_\phi \CP\| < \frac{\epsilon}{1 - \|\phi\|'}.
    \end{equation}
\end{lemma}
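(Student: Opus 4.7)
The plan is to invoke the quantitative estimate~\eqref{eq:rnf res compare} from Theorem~\ref{thm:rnf} and chain it with a Taylor-type expansion identifying $-\tfrac12\partial_x[\crochet{\dot f_1}_q/(1+\crochet{f_1}_q)]$ with $\crochet{\dddot\phi}_q$ at leading order in $\phi$, the discrepancy being small and summable in $q = q_j$.  A recurring tool is the disjointness of the Fourier supports $\{q_j\Z\cap[-N_j, N_j]\}_{j\geq 1}$ (guaranteed by $q_{j+1} > 2\kappa q_j$), which makes $\|\CP(\phi)\|$ equal to the sum of individual norms and, for any $q$-independent function $w$, yields $\sum_j\|\crochet{w}_{q_j}^{N_j}\| \leq \|w\|$.

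I first differentiate \eqref{eq:rnf res compare} in $x$. Since both sides have Fourier support in $q\Z \cap \{|k| \leq N\}$ with $N = \kappa q$, differentiation costs at most a factor $2\pi N$ in the $\|\cdot\|$-norm, and after dividing by $(p_j/q_j - \omega)^2$ I obtain
\begin{equation*}
\Big\|\Big\crochet{(p_j/q_j-\omega)^{-2}\partial_x g_{q_j}(\cdot,0) + \tfrac12\partial_x\bigl[\crochet{\dot f_1}_{q_j}/(1+\crochet{f_1}_{q_j})\bigr]}_{q_j}^{N_j}\Big\| \lesssim \kappa^2 e^{4\pi\kappa}\,q_j^{\nu_0+1-\nu}.
\end{equation*}
Next, using $f_1=(1+\dot\phi)^{-1}(1+\dot\phi^+)^{-1}-1$ and the bounds $\|\dot\phi\|\lesssim n^{-2}$, $\|\ddot\phi\|\lesssim n^{-1}$ that follow from $\phi\in\CA_n'$ with $\|\dddot\phi\|\leq 1$, I Taylor-expand $f_1, \dot f_1, \ddot f_1$ and apply the quotient rule to write
\begin{equation*}
-\tfrac12\partial_x\bigl[\crochet{\dot f_1}_q/(1+\crochet{f_1}_q)\bigr] = \tfrac12\crochet{\dddot\phi + \dddot\phi^+}_q + R_q,
\end{equation*}
where $R_q$ collects the nonlinear remainders. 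Finally, the Diophantine condition yields $|e^{2\pi i k\omega} - 1| \leq 2\pi\kappa q^{1-\nu}$ whenever $q \mid k$ and $|k| \leq N$, hence $\|\crochet{\dddot\phi^+ - \dddot\phi}_q^N\| \leq 2\pi\kappa q^{1-\nu}\|\dddot\phi\|$.

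Combining these three estimates and summing over $j$,
\begin{equation*}
\|\CP(\phi)\| \lesssim \sum_j\bigl(\kappa^2 e^{4\pi\kappa} q_j^{\nu_0+1-\nu} + \kappa q_j^{1-\nu}\bigr) + \sum_j\|\crochet{R_{q_j}}_{q_j}^{N_j}\|.
\end{equation*}
The first two series are geometric tails dominated by $q_1^{\nu_0+1-\nu}$ (using $\nu\geq 66 > \nu_0+1 = 65$ and $q_{j+1}\geq 2q_j$), and therefore tend to $0$ as $n$, hence $q_1$, grows.  For the nonlinear sum, I decompose each product $\crochet{u}_q\crochet{v}_q$ appearing in $R_q$ via the identity $\crochet{u}_q\crochet{v}_q = \crochet{uv}_q - \crochet{\curly{u}_q \curly{v}_q}_q$, expand $1/(1+\crochet{f_1}_q)$ as a geometric series in $\crochet{f_1}_q$ (of size $\lesssim n^{-2}$), and iterate.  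Terms of the form $\crochet{w}_q^N$ with $w$ a fixed, $q$-independent function of $\phi$ give $\sum_j\|\crochet{w}_{q_j}^{N_j}\| \leq \|w\| \lesssim n^{-1}$ by the disjoint-support identity; residual terms of type $\crochet{\curly{u}_q\curly{v}_q}_q$ need a little more care: using that for each pair of nonzero $(k_1, k_2)$ the number of indices $j$ with $q_j \mid k_1+k_2$ is $O(\log|k_1+k_2|)$ (since $q_{j+1} > 2\kappa q_j$), combined with the fact that the Fourier coefficients of $\dot\phi, \ddot\phi$ absorb a logarithmic weight thanks to $\phi\in\CA_n'$, one obtains $\lesssim n^{-1}$ for these contributions as well.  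Choosing $n$ sufficiently large then gives $\|\CP(\phi)\| < \epsilon$.

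Analyticity of $\CP:\CA_n'\to\CA$ follows from the analytic dependence of $g_{q_j}$ on $\phi$ (last assertion of Theorem~\ref{thm:rnf}) combined with the uniform convergence of the defining series established above.  The derivative bound~\eqref{eq:CP der bound} is the standard Banach-space Cauchy estimate: for $\phi$ with $\|\phi\|' < 1$ and any unit $\psi\in\CA_n'$, the analytic function $t\mapsto \CP(\phi + t\psi)$ is defined and bounded by $\epsilon$ on the disc $|t| < 1 - \|\phi\|'$, giving $\|d_\phi\CP(\psi)\| \leq \epsilon/(1 - \|\phi\|')$.  The main obstacle I anticipate is the bookkeeping of the nonlinear remainders $R_{q_j}$: rather than bounding each $\|R_{q_j}\|$ crudely, one must exploit the product structure and the disjointness of the resonant bands to ensure $\sum_j\|\crochet{R_{q_j}}_{q_j}^{N_j}\| \to 0$ as $n \to \infty$.
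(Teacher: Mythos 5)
Your proposal follows essentially the same high-level plan as the paper's proof: (i) use the comparison estimate \eqref{eq:rnf res compare}, paying a factor $2\pi N_j$ for differentiation; (ii) linearize $f_1 = (1+\dot\phi)^{-1}(1+\dot\phi^+)^{-1}-1$ so that the resonant data is compared to $\frac12(\dddot\phi + \dddot\phi^+)$; (iii) replace $\dddot\phi^+$ by $\dddot\phi$ via the Diophantine estimate on $|e^{2\pi\iu k\omega}-1|$ for $q\mid k$, $|k|\leq N$; (iv) sum over $j$ using disjointness of resonant bands; and (v) deduce \eqref{eq:CP der bound} from \eqref{eq:CP eps} by a Cauchy estimate, which is exactly the paper's argument. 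The analyticity step via uniform convergence of the series and analytic dependence of $g_{q_j}$ on $\phi$ is also the paper's route (it runs Montel--Ascoli on Fourier partial sums to justify it, but the underlying idea matches yours).

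Where you diverge is in the bookkeeping of the nonlinear remainder $R_q$, and this is the one place where your proposal is shakier than it needs to be. The paper inserts an intermediate truncation level $M_j = 2N_j = 2\kappa q_j$: it first compares $\partial_x g_{q_j}$ to $-\frac12\partial_x[\langle\dot f_1\rangle_{q_j}^{M_j}/(1+\langle f_1\rangle_{q_j}^{M_j})]$ (the $M_j$-truncated quotient), then compares the truncated quotient to $\ddot f_1$, and only at the final step passes from $\ddot f_1$ to $\dddot\phi$. The point is that because $q_{j+1} > 2\kappa q_j = M_j$, the nonzero Fourier supports of $\langle\dot f_1\rangle_{q_j}^{M_j}$ are pairwise disjoint across $j$, so $\sum_j\|\langle\dot f_1\rangle_{q_j}^{M_j}\|\leq\|\dot f_1\|\lesssim\|\ddot\phi\|\lesssim 1/n$ falls out immediately, and the tails $\langle\cdot\rangle_{q_j}^{>M_j}$ are exponentially small in $q_j$ and summable. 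Your proposed identity $\langle u\rangle_q\langle v\rangle_q = \langle uv\rangle_q - \langle\{u\}_q\{v\}_q\rangle_q$ together with a logarithmic multiplicity count and a log-weight absorption by $\dot\phi,\ddot\phi$ can in principle be made to work, but the argument as written is imprecise: the terms you actually need to sum carry the $N_j$-truncation $\langle\cdot\rangle_{q_j}^{N_j}$ (it is built into the definition of $\CP$), and once you use it the constraint $q_j\mid k_1+k_2$, $|k_1+k_2|\leq N_j = \kappa q_j$ forces $q_j\in[|k_1+k_2|/\kappa,|k_1+k_2|]$; since $q_{j+1}>2\kappa q_j$, at most $O_\kappa(1)$ indices $j$ satisfy this, not $O(\log)$. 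So the log-weight machinery is both unnecessary and (because you state it without the truncation) not quite a proof. Either adopt the paper's $M_j$-splitting, which cleanly decouples the levels, or at least carry the $\langle\cdot\rangle_{q_j}^{N_j}$ truncation through the convolution-support count to obtain the $O(1)$ multiplicity. With that repair the argument closes; without it the remainder sum is not actually controlled.
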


\begin{proof}[Proof of Theorem~\ref{thm:fx pt} assuming Lemma~\ref{lem:CP}]
    Let $n$ be given by Lemma~\ref{lem:CP} and take $\omega$ and $(p_j, q_j)$ satisfying the conditions of Theorem~\ref{thm:fx pt}.
    Note that the required conclusion \eqref{eq:fx pt 0} is equivalent to 
    \begin{equation}\label{eq:fx pt eq prelim}
        \dddot \phi_1 + \sum_{j \geq 1} \langle \dddot \phi_0 \rangle_{q_j}^{N_j} = h - \CP(\phi_0 + \phi_1)
    \end{equation}
    where $\CP$ is as defined in \eqref{eq:CP def}.
    In order to cast the problem into a fixed point problem, let $\mathscr{D}: u \mapsto \dot u$ denote the derivative map. 
    Then $\mathscr{D}^3: \CA' \to \CA$ is a Banach space isometry. 
    Take the inverse $\mathscr{D}^{-3}: \CA \to \CA'$ and define
    \begin{equation}\label{eq:fx pt map}
        \begin{aligned}
            \CH_{h, \phi_0}: \CA_n' \to & \CA_n'\\
            \phi_1 \mapsto& \mathscr{D}^{-3} \big[ h - \CP(\phi_0 + \phi_1) - \sum_{j \geq 1} \langle \dddot \phi_0 \rangle_{q_j}^{N_j} \big].
        \end{aligned}
    \end{equation}
    Then finding $\phi_1$ satisfying \eqref{eq:fx pt eq prelim} is equivalent to finding a fixed point of $\CH_{h, \phi_0}$.

    Fix $\phi_0 \in \CA'_n$ and $h \in \CA$ satisfying \eqref{eq:h+phi0 0} and fix $r = 1 - \epsilon - \|\phi_0\|'$.
    
    Then, for $\|\phi_1\|' \leq r$ we have $\|\phi_0 + \phi_1\|' \leq 1 - \epsilon < 1$. 
    By the definition of $\|\cdot\|'$-norm, the estimate \eqref{eq:CP eps} of Lemma~\ref{lem:CP} and the assumption \eqref{eq:h+phi0 0}, 
    \begin{equation*}
        \|\CH_{h, \phi_0}(\phi_1)\|' \leq \| h\| + \| \CP(\phi_0 + \phi_1)\| + \|\phi_0\|' < \|h\| + \epsilon + \|\phi_0\|' < r.
    \end{equation*}
    Taking the derivative of~\eqref{eq:fx pt map} in $\phi_1$ and using the linearity of $\mathscr{D}^{-3}$, we have $d_{\phi_1}\CH_{h, \phi_0} = -\mathscr{D}^{-3} \circ d_{\phi_0 + \phi_1} \CP $ and hence we deduce from \eqref{eq:CP der bound} and $\|\phi_1\|' \leq r$ that the operator norm of the linear map $d_{\phi_1}\CH_{h, \phi_0}: \CA'_n \to \CA'_n$ satisfies 
    \begin{equation*}
        \|d_{\phi_1}\CH_{h, \phi_0}\| = \sup_{\|\delta\|' = 1} \|\mathscr{D}^{-3}[ d_{\phi_0 + \phi_1} \CP(\delta)]\|'  = \| d_{\phi_0 + \phi_1} \CP\| < \frac{\epsilon}{1-\|\phi_0 + \phi_1\|'} \leq 1.
    \end{equation*}
    Note also that by definition of $\CP$ and the condition on $h$, we have $\CH_{h, \phi_0}(\phi_1) = \sum_{j \geq 1} \langle \CH_{h, \phi_0}(\phi_1) \rangle_{q_j}^{N_j}$.
    Hence, the contracting mapping principle applies to $\CH_{h, \phi_0}$ on the closed set 
    \begin{equation*}
        \bigg\{u \in \CA_{n}' \ : \ \|u\|' \leq r, \ u = \sum_{j \geq 1} \langle u \rangle_{q_j}^{N_j}   \bigg\}
    \end{equation*}
    and yields a unique fixed point therein.
    Moreover, by analytic dependence of $\CP$ on $\phi$ as shown in Lemma~\ref{lem:CP}, the fixed point depends analytically on $h$ and $\phi_0$.
    The proof is finished.
\end{proof}

In the rest of this section, we focus on the proof of Lemma~\ref{lem:CP}.
Recall that Theorem~\ref{thm:rnf} gives a comparison estimate \eqref{eq:rnf res compare}.
The main idea for the proof of Lemma~\ref{lem:CP} will be to give a similar comparison estimate between the resonant part $\langle\dddot\phi\rangle_{q_j}^{N_j}$ of $\dddot\phi$ and the function
\begin{equation*}
    -\frac{1}{2}\frac{\partial}{\partial x} \bigg\langle \frac{\langle \dot f_1 \rangle_{q_j}}{1 + \langle f_1 \rangle_{q_j}}\bigg\rangle_{q_j}^{N_j}.
\end{equation*}
Since $f_1$ depends explicitly on $\phi$ via \eqref{eq:f1}, we can compute an estimate directly.

\begin{proof}[Proof of Lemma \ref{lem:CP}]
    Let $\epsilon > 0$ and $\kappa \in \Z_{> 0}$ be given.
    Consider a sequence $(p_j, q_j)$ of coprime integer pairs with $q_j > 0$ satisfying the assumptions \eqref{eq:CP assumptions} with some $n > 0$ to be fixed later and $\nu \geq 70$.
    Taking $n$ sufficiently large, we can apply Theorem~\ref{thm:rnf} to any $\phi \in \CA'$ with $\|\phi\|' \leq 1$ and each $q_j$ and accordingly we can define $\CP(\phi)$ as in \eqref{eq:CP def} using the sequence $(g_{q_j})_{j \geq 1}$ of $q_j$-resonant normal forms furnished by Theorem~\ref{thm:rnf}.
    We will take $\phi \in \CA_n'$.
    To study the series on the right hand side of the definition \eqref{eq:CP def}, we shall take $M_j = 2N_j = 2\kappa q_j$ and decompose $\CP(\phi) = \wt \CP(\phi) + \sum_{j \geq 1} \big(\CP_j(\phi) + \CP'_j(\phi)\big)$
    where
    \begin{equation*}
        \begin{aligned}
            \wt \CP(\phi) =& - \sum_{j \geq 1} \crochet{ \frac{1}{2}\ddot f_1  + \dddot\phi}_{q_j}^{N_j}, \\
            \CP_j(\phi) =& - \frac{1}{2}\crochet{ \frac{\du}{\du x} \frac{\langle \dot f_1 \rangle_{q_j}^{M_j}}{1 + \langle f_1 \rangle_{q_j}^{M_j}} -  \ddot f_1 }_{q_j}^{N_j}, \\
        \CP_j'(\phi) =& \frac{\du}{\du x} \crochet{ \Big(\frac{p_j}{q_j} - \omega\Big)^{-2} g_{q_j}(\cdot, 0) + \frac{1}{2} \frac{\langle \dot f_1 \rangle_{q_j}^{M_j}}{1 + \langle f_1 \rangle_{q_j}^{M_j}}}_{q_j}^{N_j}
        \end{aligned}
    \end{equation*}
    and $f_1: \T_1 \to \C$ is as defined in \eqref{eq:f1}.
    We shall analyse $\wt \CP$, $\CP_j$ and $\CP'_j$ separately.

    We start with $\wt \CP$.
    By definition of $\|\cdot\|'$-norm, the assumptions $\|\phi\|' \leq 1$ and $\phi \in \CA_n'$ imply
    \begin{equation*}
        \|\ddot \phi\| = \sum_{|k| \geq  n} (2\pi |k|)^2 |\hat \phi_k| e^{2\pi |k|} 
        \leq \frac{1}{2\pi n} \sum_{|k| \geq  n} (2\pi |k|)^3 |\hat \phi_k| e^{2\pi |k|} 
        = \frac{\|\phi\|'}{2\pi n}.
    \end{equation*}
    Similarly, we have $\|\dot\phi\| \leq (2\pi n)^{-2}$.
    From the explicit expression $(1 + f_1) = ((1+\dot\phi)(1 + \dot\phi^+))\inv$, a direct calculation shows that
    \begin{equation*}
        \frac{1}{2}f_1 + \dot\phi + \frac{1}{2}\CD_\omega \dot\phi = \frac{1}{2}(f_1  + \dot\phi + \dot\phi^+)
        = \frac{\dot\phi^2 + (\dot\phi^+)^2 + \dot\phi \dot\phi^+ + \dot\phi^2 \dot\phi^+  + \dot\phi ( \dot\phi^+)^2}{2(1+\dot\phi) (1+\dot\phi^+)}.
    \end{equation*}
    By differentiating the above expression twice and using $|\dot\phi| \leq (2\pi n)^{-2}$, $|\ddot\phi| \leq (2\pi n)^{-1}$ and $\|\dddot\phi\| \leq 1$, it is straightforward to verify that the right hand side of the above equation has $\|\cdot\|$-norm bounded by $\lesssim 1/n$.
    On the other hand, by Lemma~\ref{lem:fourier MVT},
    \begin{equation*}
        \|\langle \CD_\omega \dddot\phi \rangle_{q_j}^{N_j}\| = \|\langle \CD_{\omega - p_j/q_j} \dddot\phi\rangle_{q_j}^{N_j}\| \leq \verts{\omega - \frac{p_j}{q_j}} \norm{\Big \langle \frac{\du}{\du x} \dddot\phi \Big\rangle_{q_j}^{N_j}} \lesssim N_j \verts{\omega - \frac{p_j}{q_j}} \|\dddot\phi\|.
    \end{equation*}
    Thus, using $N_j = \kappa q_j$ we obtain
    \begin{equation*}
        \|\tilde \CP(\phi) \| \lesssim \frac{1}{n} + \sum_j N_j \verts{\omega -\frac{p_j}{q_j}} \|\dddot\phi\| \lesssim \frac{1}{n} + \kappa \sum_j q_j \verts{\omega -\frac{p_j}{q_j}}.
    \end{equation*}
    Since $|\omega - p_j/q_j| \leq q_j^{-\nu}$ and $q_{j+1} > 2\kappa q_j > n$, we can bound $q_j |\omega - p_j/q_j| < [(2\kappa)^{j-1} n]^{-\nu + 1}$ and then bound the series above by a converging geometric series to deduce 
    \begin{equation}\label{eq:barCP ineq}
        \|\tilde \CP(\phi) \| \lesssim \frac{1}{n} + \frac{\kappa n^{-\nu + 1}}{1 - (2\kappa)^{-\nu + 1}} .
    \end{equation}

    Next, we consider $\CP_j$.
    By a straightforward computation, we have 
    \begin{equation*}
        \CP_j(\phi) = \frac{1}{2}\crochet{ \frac{(\langle \dot f_1\rangle_{q_j}^{M_j})^2}{(1+\langle f_1 \rangle_{q_j}^{M_j})^2}  + \frac{\langle \ddot f_1\rangle_{q_j}^{M_j}\langle  f_1\rangle_{q_j}^{M_j}}{1 + \langle f_1\rangle_{q_j}^{M_j}}}_{q_j}^{N_j}.
    \end{equation*}
    Using $\|\dot\phi\| \leq (2\pi n)^{-2}$ and the formula \eqref{eq:f1} for $f_1$, the denominators above are uniformly bounded away from $0$.
    Since $\|\dot f_1\| \lesssim \|\ddot\phi\|$ and  $\|\ddot f_1\| \lesssim \|\dddot\phi\| \leq 1$, 
    $$\|\CP_j(\phi)\| \lesssim \|\langle \dot f_1\rangle_{q_j}^{M_j}\|.$$
    By assumption $q_{j+1} > M_j$, the orders of nonzero Fourier coefficients of $\langle \dot f_1\rangle_{q_j}^{M_j}$ are \textit{disjoint} subsets of $\Z$. 
    Hence, it follows from the definition of $\|\cdot\|$-norm and the definition of $\CP_j$ that 
    \begin{equation}\label{eq:CPj ineq}
        \Big\|\sum_{j \geq 1} \CP_j(\phi)\Big\| = \sum_{j \geq 1} \|\CP_j(\phi)\|  \lesssim \sum_{j \geq 1} \|\langle \dot f_1\rangle_{q_j}^{M_j}\| \lesssim \| \dot f_1\| \lesssim \|\ddot\phi\| \lesssim \frac{1}{n}.
    \end{equation}
    
    To analyse $\CP'_j(\phi)$, we start with the following decomposition 
    \begin{equation*}
        \CP'_j(\phi) = \frac{\du}{\du x} \crochet{\Big(\omega - \frac{p_j}{q_j}\Big)^{-2} g_{q_j}(\cdot, 0) + \frac{1}{2} \frac{\langle \dot f_1 \rangle_{q_j}}{1 + \langle f_1 \rangle_{q_j}}}_{q_j}^{N_j} + \frac{1}{2}\frac{\du}{\du x} \crochet{\frac{\langle \dot f_1 \rangle_{q_j}^{M_j}}{1 + \langle f_1 \rangle_{q_j}^{M_j}} - \frac{\langle \dot f_1 \rangle_{q_j}}{1 + \langle f_1 \rangle_{q_j}}}_{q_j}^{N_j}.
    \end{equation*}
    By the estimate \eqref{eq:rnf res compare} of Theorem \ref{thm:rnf} and $N_j = \kappa q_j$, the first term is bounded by 
    \begin{equation}\label{eq:CP' ineq 1}
    \begin{aligned}
        &\norm{\frac{\du}{\du x} \crochet{\Big(\omega - \frac{p_j}{q_j}\Big)^{-2} g_{q_j}(\cdot, 0) + \frac{1}{2} \frac{\langle \dot f_1 \rangle_{q_j}}{1 + \langle f_1 \rangle_{q_j}}}_{q_j}^{N_j}} \\
        \leq& 2\pi N_j \norm{\crochet{\Big(\omega - \frac{p_j}{q_j}\Big)^{-2} g_{q_j}(\cdot, 0) + \frac{1}{2} \frac{\langle \dot f_1 \rangle_{q_j}}{1 + \langle f_1 \rangle_{q_j}}}_{q_j}^{N_j}}\\
        \lesssim& \kappa^2 \eu^{4\pi \kappa} q_j^{\nu_0 - \nu + 1}
    \end{aligned}
    \end{equation}
    with $\nu_0 = 64$ as in Theorem \ref{thm:rnf},
    whereas the second term writes 
    \begin{equation*}
    \begin{aligned}
        \frac{1}{2}\frac{\du}{\du x} \crochet{\frac{\langle \dot f_1 \rangle_{q_j}^{M_j}}{1 + \langle f_1 \rangle_{q_j}^{M_j}} - \frac{\langle \dot f_1 \rangle_{q_j}}{1 + \langle f_1 \rangle_{q_j}}}_{q_j}^{N_j}
        = &\frac{1}{2} \frac{d^2}{dx^2} \crochet{\log(1 + \langle f_1 \rangle_{q_j}^{M_j}) -  \log(1 + \langle f_1 \rangle_{q_j})}_{q_j}^{N_j}\\
    =& \frac{1}{2} \frac{d^2}{dx^2} \int_0^1 \crochet{ \frac{\langle f_1 \rangle_{q_j}^{>M_j}}{1 + \langle f_1 \rangle_{q_j}^{M_j} + t \langle f_1 \rangle_{q_j}^{>M_j}} }_{q_j}^{N_j}dt.
    \end{aligned}
    \end{equation*}
    By the first statement of Lemma~\ref{lem:Fourier sep} and the definition  $M_j = 2N_j = 2\kappa q_j$,
    \begin{equation*}
        \begin{aligned}
            \Bigg\|\crochet{ \frac{\langle f_1 \rangle_{q_j}^{>M_j}}{1 + \langle f_1 \rangle_{q_j}^{M_j} + t \langle f_1 \rangle_{q_j}^{>M_j}} }_{q_j}^{N_j}\Bigg\| 
            \leq& (2\kappa + 1) \eu^{-4\pi (M_j - N_j)} \Bigg\|\frac{1}{1 + \langle f_1 \rangle_{q_j}^{M_j} + t \langle f_1 \rangle_{q_j}^{>M_j}}\Bigg\| \|\langle f_1 \rangle_{q_j}^{>M_j}\| \\
            \lesssim& \kappa \eu^{-4\pi \kappa q_j}.
        \end{aligned}
    \end{equation*}
    Substituting into the integral above, we obtain
    \begin{equation*}
        \norm{\frac{1}{2}\frac{\du}{\du x} \crochet{\frac{\langle \dot f_1 \rangle_{q_j}^{M_j}}{1 + \langle f_1 \rangle_{q_j}^{M_j}} - \frac{\langle \dot f_1 \rangle_{q_j}}{1 + \langle f_1 \rangle_{q_j}}}_{q_j}^{N_j}} \lesssim N_j^2 \kappa \eu^{-4\pi \kappa q_j} \lesssim \kappa^3 q_j^2 \eu^{-4\pi \kappa q_j}.
    \end{equation*}
    Combining with \eqref{eq:CP' ineq 1} and summing over $j \geq 1$, we obtain
    \begin{equation*}
    \Big\|\sum_{j \geq 1} \CP'_j(\phi)\Big\| \lesssim \sum_{j \geq 1} \kappa^2 \eu^{4\pi \kappa} q_j^{\nu_0 - \nu + 1} + \kappa^3 q_j^2 \eu^{-4\pi \kappa q_j} .
    \end{equation*}
    Using the assumption $\nu \geq \nu_0 + 6$ and $q_j > (2\kappa)^{j-1} n$ and bounding the series in the right hand side above by convergent geometric series,
    \begin{equation}\label{eq:CP' ineq}
        \Big\|\sum_{j \geq 1} \CP'_j(\phi)\Big\| \lesssim \frac{\kappa^2 \eu^{4\pi \kappa} }{1 - (2\kappa)^{-\nu + \nu_0 + 1}} n^{-\nu + \nu_0 + 1} + \kappa^3 n^2 \eu^{-4\pi \kappa n}.
    \end{equation}
    Consolidating \eqref{eq:barCP ineq}, \eqref{eq:CPj ineq} and \eqref{eq:CP' ineq}, we arrive at 
    \begin{equation}\label{eq:CP ineq}
        \|\CP(\phi)\| \lesssim \frac{1}{n} + \frac{\kappa^2 \eu^{4\pi \kappa} }{1 - (2\kappa)^{-\nu + \nu_0 + 1}} n^{-\nu + \nu_0 + 1} + \kappa^3 n^2 \eu^{-4\pi \kappa n}.
    \end{equation}
    In particular, the right hand side of \eqref{eq:CP def} is an absolutely convergent series in $\CA$. Thus $\CP(\phi)$ is well-defined as an element in $\CA$.
    Taking $n$ sufficiently big, we can make the right hand side of \eqref{eq:CP ineq} smaller than any given positive number and thus obtain \eqref{eq:CP eps}.
    Since $\nu_0 = 64$ and we assume $\nu \geq 70$, we have $n^{-\nu + \nu_0 + 1} \leq n^{-5}$ and thus the `sufficiently big' condition on $n$ depends only on $\epsilon$ and $\kappa$, not on $\nu$.

    Next, we consider the analyticity $\phi \mapsto \CP(\phi)$.
    Let $\phi^\tau \in \CA'$ depend analytically on some complex parameter $\tau \in \C$.
    Suppose $\|\phi^\tau\| '\leq 1$ for all $|\tau| \leq R$ so that $\CP(\phi^\tau)$ is defined.
    We consider $\CP(\phi^\tau)$ as the point-wise limit of the partial sum of its Fourier series
    \begin{equation*}
        \CP^K(\phi^\tau)(x) := \sum_{|k| \leq K} \widehat{\CP(\phi^\tau)}_k \eu^{2\pi \iu k x}.
    \end{equation*}
    Indeed, by the absolute convergence of \eqref{eq:CP def} established above, if $|\tau| \leq R$ then the sequence $\CP^K(\phi^\tau)$ converges in $\CA$ as $K \to \infty$.
    In particular, for each fixed $\tau$, the set $\{\CP^K(\phi^\tau) \mid K \geq 1\}$ is a  relatively compact subset in $\CA$ .
    On the other hand,  the function $\CP^K(\phi^\tau): \T_1 \to \C$ has finitely many nonzero Fourier coefficients and, by analytic dependence of $\langle g_{q_j}(\cdot, 0)\rangle_{q_j}^{N_j}$ on $\phi^\tau$, each of these coefficients depends analytically on $\tau$.
    It follows that $\CP^K(\phi^\tau)$ is an analytic function in $\tau$ taking values in $\CA$.
    Hence, by Montel's theorem, the family  of analytic functions $\{\tau \mapsto \CP^K(\phi^\tau) \}_{K \geq 1}$ is equicontinuous, and, by Ascoli's theorem, the convergence $\CP^K(\phi^\tau) \to \CP(\phi^\tau)$ as $K \to \infty$ is uniform over $\tau$.
    It follows that $\CP(\phi^\tau)$  depends analytically in $\tau$ since it is the uniform limit of a sequence of analytic functions.
    We have thus shown that $\CP: \CA_n' \to \CA$ is analytic.

    Finally, we show \eqref{eq:CP der bound}. 
    Put $\phi^\tau = \phi + \tau \delta$ for some $\phi, \delta \in \CA'_n$ with $\|\phi\|' < 1$.
    Then we have that $\CP(\phi^\tau)$ is analytic for $|\tau| < \frac{1 - \|\phi\|'}{\|\delta\|'}$.
    Applying Cauchy's estimate to the derivative $\frac{d}{d\tau} \CP(\phi^\tau)|_{\tau = 0}$ and using \eqref{eq:CP eps}, we obtain 
    \begin{equation*}
        \|d_{\phi} \CP(\delta)\| 
        < \Big(\frac{1 - \|\phi\|'}{\|\delta\|'}\Big)\inv \epsilon = \frac{\epsilon}{1 - \|\phi\|'} \|\delta\|'.
    \end{equation*}
    Since $\delta$ can be  chosen arbitrarily from an open ball, it follows that $\|d_\phi \CP\| \leq \epsilon (1 - \|\phi\|')\inv$ in operator norm.
    The proof of the Lemma is finished.
\end{proof}

\begin{remark}
        In the proof of Theorem \ref{thm:fx pt}, one can avoid dealing with analytic functions taking values in an infinite dimensional Banach space  by employing a slightly different argument.
        Let $\hat u_k(z)$ be the $k$-th Fourier coefficient of $\CP(\phi + z \delta)$ where $\phi, \delta \in \CA'$ such that $\|\phi\|' + \|\delta\|' < 1$.
        By analytic dependence of $g_{q_j}$'s, each $\hat u_k(z)$ is an analytic function in $z$ for $\|\phi + z \delta\|' < 1$.
        Let us denote
        \begin{equation*}
            G(\phi, \delta): x \mapsto \sum_k \hat u_k'(0) e^{2\pi \iu k x}.
        \end{equation*}
        We shall prove that $G(\phi, \cdot):\CA' \to \CA$ is a bounded linear map and it is in fact equal to $\du_{\phi} \CP$.
        
        Fix $\theta_k\in \R$ such that $|\hat u_k(1) - \hat u_k(0) -  \hat u_k'(0)| = (\hat u_k(1) - \hat u_k(0) -  \hat u_k'(0)) e^{\iu \theta_k}$.
        Consider 
        \begin{equation}\label{eq:H series}
            H(z) = \sum_{k \in \Z} \hat u_k(z) e^{\iu \theta_k} e^{2\pi |k|}.
        \end{equation}
        By the same argument using Montel's theorem and Ascoli's theorem as in the proof of Lemma~\ref{lem:CP}, one can show that the series \eqref{eq:H series}  converges uniformly on compact subsets of the open disc of radius $(1 - \|\phi\|')/\|\delta\|'$ and therefore defines a complex analytic function in $z$, which is uniformly bounded by $\epsilon$.
        Similarly, the function $z \mapsto \sum_k \hat u_k(z) e^{2\pi |k|}$ is analytic and uniformly bounded by $\epsilon$ over the same domain.
        Then, by Cauchy's estimate  we have $\|G(\phi, \delta)\| \leq \epsilon (1 - \|\phi\|')\inv \|\delta\|'$.
        Hence, $G(\phi, \cdot)$ is bounded in operator norm by $\epsilon (1 - \|\phi\|')\inv$.
        Next, by the choice of $\theta_k$, the mean value theorem and Cauchy's estimate, 
        \begin{equation*}
            \begin{aligned}
                \|\CP(\phi + \delta) - \CP(\phi) - G(\phi, \delta)\| 
                =& \sum_k |\hat u_k(1) - \hat u_k(0) - \hat u_k'(0)| e^{2\pi |k|} \\
                =& \sum_k (\hat u_k(1) - \hat u_k(0) - \hat u_k'(0)) \eu^{\iu \theta_k} e^{2\pi |k|} \\
                =& H(1) - H(0) - H'(0) \\
                =& \int_0^1 H''(t) (1 - t) dt\\
                \leq& \frac{\epsilon}{2(\frac{1 - \|\phi\|'}{\|\delta\|'} - 1)^2}\\
                \lesssim& \frac{\epsilon \|\delta\|'^2}{(1 - \|\phi\|' - \|\delta\|')^2}.
            \end{aligned}
        \end{equation*}
        Assuming $\|\delta\|'$ is sufficiently small ($\|\delta\|' < (1 - \|\phi\|')/2$ suffices), we obtain $\|\CP(\phi + \delta) - \CP(\phi) - G(\phi, \delta)\| \lesssim \epsilon (1 - \|\phi\|')^{-2} \|\delta\|'^2$.
        This shows that $G(\phi, \cdot)$ is indeed the derivative $d_\phi \CP$ and thus we have $\|d_\phi \CP\| \leq \epsilon (1 - \|\phi\|')\inv$ as required.
\end{remark}

\section{Analysis of almost equidistributed periodic orbits: the end of the proof of Theorem~\ref{thm:main}}\label{sec:equidistr}

We will apply Theorem~\ref{thm:fx pt} to construct a sequence of $(p_j, q_j)$-periodic orbits of a standard map with well-controlled symplectic actions and eigendata, and then complete the proof of Theorem~\ref{thm:main}.

More precisely, we start with the following lemma. 
We recall that the functional space $\CA_n'$ and the norm $\|\cdot\|'$ have been defined at the beginning of Section~\ref{sec:picard}.

\begin{lemma}\label{lem:2 orb}
    The following statements hold for all sufficiently large $n > 0$.

    Let $\omega \in \R \setminus \Q$ and assume there exists a sequence
    $(p_j, q_j)_{j \geq 1}$ of coprime integer pairs such that, for some
    $\nu \geq 70$,
    \begin{equation}\label{eq:equidistr arithm}
        \Bigl|\omega - \frac{p_j}{q_j}\Bigr|
        \leq q_j^{-\nu}, 
        \qquad  
        q_{j+1} > 4q_j,
        \qquad 
        q_1 > n.
    \end{equation}
    Then, for every $\phi_0 \in \CA_n'$ and every sequence
    $(\sigma_j)_{j \geq 1}$ of complex numbers satisfying
    \begin{equation*}
      |\sigma_j| q_j \geq 1
      \quad\text{for all } j
      \qquad\text{and}\qquad
      \sum_{j \geq 1} |\sigma_j|
      + 2\|\phi_0\|' \leq \frac12,
    \end{equation*}
    there exists $\phi_1 \in \CA_n'$, depending analytically on $\phi_0$ and on
    $(\sigma_j)_{j \geq 1}$, with the following properties.

    Let $\phi := \phi_0 + \phi_1$ and let $F$ be the standard map associated
    with $(\omega,\phi)$ in the adapted coordinates.

    For each $j \geq 1$ there exist two distinguished $(p_j, q_j)$-periodic orbits of $F$ in a complex neighborhood of $\T \times \R$. These orbits depend analytically on $\phi_0$ and $(\sigma_j)_{j \geq 1}$
    and satisfy:

    \begin{enumerate}
        \item 
        The two orbits are geometrically distinct.
        If $(\sigma_j)_j$ and $\phi_0$ are real valued, then the two orbits lie in $\T\times \R$ and they are the \emph{only} $(p_j,q_j)$-periodic orbits of $F$ in $\T \times \R$.
        In particular, one of them is a (globally) minimal orbit and the other
        is a minimax orbit.

        \item 
        Let $S$ be the normalized generating function associated with $F$ in adapted coordinates, and
        define the rescaled symplectic actions of the two $(p_j,q_j)$-orbits by
        \begin{equation}\label{eq:CAj def}
            \CA_{j,\pm}
            := \frac{1}{q_j (\omega - p_j/q_j)^2}
               \sum_{k = 0}^{q_j - 1}
               S \circ F^k(\xi_{j,\pm}),
        \end{equation}
        where $\xi_{j,+}$ and $\xi_{j,-}$ are any points on the two orbits,
        respectively.
        Then
        \begin{equation}\label{eq:CAj est}
            \Biggl|
              \CA_{j,\pm}
              - \frac{1}{2(1 + f_1^*)}
                \Bigl(
                  1 \mp \frac{2\sigma_j \eu^{-2\pi q_j}}{(2\pi q_j)^2}
                \Bigr)
            \Biggr|
            \;\lesssim\;
            q_j^{\nu_0} \Bigl|\omega - \frac{p_j}{q_j}\Bigr|
            + q_j^{\nu_0 - \nu} \eu^{-2\pi q_j},
        \end{equation}
        where $f_1 : \T_1 \to \C$ is defined in terms of $\phi = \phi_0 + \phi_1$
        as in~\eqref{eq:f1}, $f_1^*$ denotes its average over $\T$, and $\nu_0 = 64$.

        \item 
        Define the rescaled Lyapunov exponents of these orbits by
        \begin{equation}\label{eq:Lamj def}
            \Lambda_{j,\pm}
            := -\frac{1}{q_j^2 (\omega - p_j/q_j)^2}
               \det\bigl(d_{\xi_{j,\pm}} F^{q_j} - \id\bigr),
        \end{equation}
        where $\xi_{j,\pm}$ are as in \textup{(2)}.
        Then
        \begin{equation}\label{eq:Lamj est}
            \bigl|
              \Lambda_{j,\pm} \mp \sigma_j \eu^{-2\pi q_j}
            \bigr|
            < \frac{1}{50} |\sigma_j| \eu^{-2\pi q_j}.
        \end{equation}
    \end{enumerate}
\end{lemma}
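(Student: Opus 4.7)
The overall strategy is to apply Theorem~\ref{thm:fx pt} to engineer the $q_j$-resonant normal forms of $F$ so that their dominant resonant parts take a prescribed pendulum-like form, and then to read off the two $(p_j, q_j)$-periodic orbits as the two equilibria of this pendulum. As a first step, I would set
\begin{equation*}
    h(x) \;:=\; \sum_{j \geq 1} \sigma_j \eu^{-2\pi q_j} \cos(2\pi q_j x),
\end{equation*}
observe that $\|h\| = \sum_j |\sigma_j|$ and that $\langle h \rangle_{q_j}^{N_j} = \sigma_j \eu^{-2\pi q_j} \cos(2\pi q_j x)$ by lacunarity of supports (using $\kappa = 2$, $N_j = 2q_j$, and the hypothesis $q_{j+1} > 4q_j$), and apply Theorem~\ref{thm:fx pt} with this $h$, the given $\phi_0$, and some $\epsilon \in (0,1/4)$ chosen so that $\|h\| + 2\|\phi_0\|' < 1 - 2\epsilon$ follows from $\sum|\sigma_j| + 2\|\phi_0\|' \leq 1/2$. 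This produces $\phi_1 \in \CA_n'$, depending analytically on $(\phi_0,(\sigma_j))$, such that with $\phi := \phi_0 + \phi_1$ and $F$ the associated standard map in adapted coordinates, the $q_j$-resonant normal form $g_{q_j}$ furnished by Theorem~\ref{thm:rnf} satisfies
\begin{equation*}
    \langle g_{q_j}(\cdot, 0)\rangle_{q_j}^{N_j}(x) \;=\; g_{q_j}^*(0) + \frac{\sigma_j \eu^{-2\pi q_j}(p_j/q_j - \omega)^2}{2\pi q_j}\sin(2\pi q_j x),
\end{equation*}
with the residual average controlled by $|g_{q_j}^*(0)| \lesssim q_j^{\nu_0 - \nu}(p_j/q_j - \omega)^2$ via~\eqref{eq:gq star}.

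Next, I would construct the periodic orbits directly in the $q_j$-resonant coordinates, where $F_{q_j}(x,y) = (x + p_j/q_j + y,\, y + g_{q_j}(x,y))$. Since $\gcd(p_j, q_j) = 1$ forces $\{k p_j/q_j \bmod 1\}_{k=0}^{q_j-1}$ to coincide with $\{k/q_j\}_{k=0}^{q_j-1}$, an orbit that shadows the equidistributed orbit $(x_0 + k p_j/q_j, 0)$ satisfies $\langle g_{q_j}\rangle_{q_j}(x_k, 0) = \langle g_{q_j}\rangle_{q_j}(x_0, 0)$ exactly along the sequence, by the $1/q_j$-periodicity of the $q_j$-periodization. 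A $(p_j, q_j)$-orbit corresponds to a solution of the two equations $\sum_k y_k = 0$ and $\sum_k g_{q_j}(x_k, y_k) = 0$. To leading order these reduce, respectively, to a small shift of $y_0$ absorbing the residuals from $g_{q_j}^*(0)$ and from the nonresonant tail bounded by~\eqref{eq:rnf concl 2}, together with an equation of the form $\sin(2\pi q_j x_0) = C_j$ for some (possibly large, complex) constant $C_j$. On the complex cylinder $\T_a$ this always has exactly two solutions modulo $1/q_j$, near $x_0^+ \approx 0$ and $x_0^- \approx 1/(2q_j)$; each is promoted to an exact periodic orbit $\xi_{j,\pm}$ by a quantitative Newton iteration on the two-variable system, which simultaneously gives analytic dependence of $\xi_{j,\pm}$ on $(\phi_0,(\sigma_j))$ and confines them to the complex neighborhood where $F_{q_j}$ is defined. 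In the real-valued case, the two orbits are real by reality of the defining equations and analytic continuation, and they exhaust the $(p_j, q_j)$-orbits of $F$ in $\T\times\R$ by a standard twist-map/Aubry--Mather uniqueness argument confining all such orbits to the neighborhood of the RIC where the normal form applies: the hyperbolic one is the minimal orbit, the elliptic one the minimax orbit.

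The eigendata and actions are then extracted along the constructed orbits. For the Lyapunov exponent, since $\partial_x g_{q_j}^{\mathrm{res}}(\cdot, 0)$ is $1/q_j$-periodic, it takes the \emph{constant} value $\pm\sigma_j \eu^{-2\pi q_j}(p_j/q_j - \omega)^2$ along each orbit (the sign determined by $\cos(2\pi q_j x_0^\pm) = \pm 1$). The Jacobians $M_k := d_{(x_k,y_k)} F_{q_j}$ are therefore, to leading order, all equal to a single matrix $M$ with characteristic polynomial $\lambda^2 - 2\lambda + 1 \mp \sigma_j \eu^{-2\pi q_j}(p_j/q_j - \omega)^2$; expanding $\det(M^{q_j} - \id) = 2 - \tr(M^{q_j})$ to leading order gives $\Lambda_{j,\pm} = \pm \sigma_j \eu^{-2\pi q_j}(1 + o(1))$, with the $\tfrac{1}{50}$ bound \eqref{eq:Lamj est} obtained by controlling $M_k - M$ via \eqref{eq:rnf concl 1}-\eqref{eq:rnf concl 2}. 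For the action, substitute~\eqref{eq:rnf S compare} into \eqref{eq:CAj def}, expand $(p_j/q_j - \omega + y_k)^2 = (p_j/q_j - \omega)^2 + 2(p_j/q_j - \omega)y_k + y_k^2$, use $\sum y_k = 0$ to kill the linear term, and replace $\langle f_1 \rangle_{q_j}$ by $f_1^*$ up to exponentially small Fourier tails. The constant piece averages $1+f_1$ over the equispaced abscissae to yield the leading $\frac{1}{2(1 + f_1^*)}$, while the quadratic piece $\sum y_k^2$ is evaluated using the explicit pendulum representation $y_k - y_0 = \sum_{\ell < k} g_{q_j}^{\mathrm{res}}(x_\ell, 0)$ together with $\sum y_k = 0$; summing the square of a sinusoid with amplitude $\sigma_j \eu^{-2\pi q_j}(p_j/q_j - \omega)^2/(2\pi q_j)$ along $q_j$ equispaced points gives exactly the stated correction $\mp 2\sigma_j \eu^{-2\pi q_j}/[(2\pi q_j)^2(1 + f_1^*)]$.

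The main obstacle I anticipate is the bookkeeping between several competing scales in the action computation: the pendulum contribution $\sum y_k^2$ has size $\sim \sigma_j \eu^{-2\pi q_j}(p_j/q_j - \omega)^2/q_j^2$, only marginally larger than the cumulative errors from the nonresonant remainder of $g_{q_j}$, the residual $g_{q_j}^*$, the discrepancy $\langle f_1 \rangle_{q_j} - f_1^*$, and the remainder in~\eqref{eq:rnf S compare}. Each of these must be bounded by $q_j^{\nu_0}|\omega - p_j/q_j| + q_j^{\nu_0 - \nu}\eu^{-2\pi q_j}$, which is precisely the regime for which the arithmetic hypothesis $\nu \ge 70$ (giving $\nu - \nu_0 \ge 6$) and the lower bound $|\sigma_j| q_j \ge 1$ combine to make the correction cleanly identified by the estimate \eqref{eq:CAj est}.
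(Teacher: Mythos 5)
Your opening moves are exactly those of the paper: take $h(x)=\sum_j\sigma_j\eu^{-2\pi q_j}\cos(2\pi q_j x)$, apply Theorem~\ref{thm:fx pt} with $\kappa=2$, $N_j=2q_j$, $\epsilon=1/4$, and read the orbits off the $q_j$-resonant normal form near the zeros of $\fg_{q_j}(\cdot,0)\approx\tfrac{\sigma_j\eu^{-2\pi q_j}}{2\pi q_j}(p_j/q_j-\omega)^2\sin(2\pi q_j x)$. Your Lyapunov argument via a constant-Jacobian approximation and $\det(M^{q_j}-\id)=2-\tr(M^{q_j})$ is a plausible variant of the paper's use of $\dot\Gamma$ from Lemma~\ref{lem:gam}, though the error bookkeeping is more delicate than you suggest; I would not flag that as a defect.

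There is, however, a genuine gap in your account of the action estimate \eqref{eq:CAj est}, specifically in where the $\mp 2\sigma_j\eu^{-2\pi q_j}/(2\pi q_j)^2$ correction comes from. You attribute it to $\sum_k y_k^2$ from a pendulum-like oscillation of the orbit, but this mechanism fails for two independent reasons. First, the magnitude is wrong: the two distinguished orbits shadow the \emph{equilibria} of the reduced pendulum, i.e.\ the zeros of $\fg_{q_j}(\cdot,0)$, not an oscillating solution, so the $y$-components in $q_j$-resonant coordinates are forced to be tiny — the paper's Lemma~\ref{lem:equidistr orb} gives $|\wt y^k_\pm|\lesssim q_j^3\eu^{-6\pi q_j}|\omega-p_j/q_j|^2$, so that $\sum_k (\wt y^k_\pm)^2/(q_j(\omega-p_j/q_j)^2)\lesssim q_j^6\eu^{-12\pi q_j}|\omega-p_j/q_j|^2$, which is vastly smaller than the target $\sigma_j\eu^{-2\pi q_j}/q_j^2\gtrsim \eu^{-2\pi q_j}/q_j^3$. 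Second, the sign structure is wrong: for real orbits $\sum_k y_k^2\ge 0$ for \emph{both} $\pm$, so it cannot produce an effect of opposite sign on the two orbits.

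The correct mechanism, which the paper identifies, lives entirely in the constant piece of $(p_j/q_j-\omega+y)^2\cdot\frac{1+f_1}{2(1+\langle f_1\rangle_{q_j})^2}$ evaluated along the two orbits. Because the denominator is $1/q_j$-periodic, averaging over the equispaced abscissae $\{x^*_\pm + k/q_j\}$ gives $\frac{1}{2(1+\langle f_1\rangle_{q_j}(x^*_\pm))}$, and the value $\langle f_1\rangle_{q_j}(x^*_\pm)=f_1^*+\langle f_1^\bullet\rangle_{q_j}(x^*_\pm)$ is \emph{different} for the two orbits: since $x^*_-= (2q_j)^{-1}-x^*_+$ with $x^*_+\approx 0$, the contributions of the $\pm q_j$-th Fourier modes $(\hat f_1)_{\pm q_j}$ flip sign between $x^*_+$ and $x^*_-$. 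Those Fourier modes are linked to $\sigma_j$ through the normal-form constraint \eqref{eq:rnf res compare}, which is what produces the prescribed $\mp$-correction. Your account, by taking the average to be simply $f_1^*$, discards exactly this orbit-dependent term and then tries to recover it (with the wrong order of magnitude and the wrong sign pattern) from $\sum y_k^2$. Separately, I would note that the uniqueness of the two $(p_j,q_j)$-orbits in $\T\times\R$ is not a "standard Aubry--Mather" fact (Poincar\'e--Birkhoff gives only a lower bound); the paper establishes it by showing all $(p_j,q_j)$-orbits must enter the annulus where the Rouch\'e count applies, via a direct sign analysis of $y_k-(p_j/q_j-\omega)$ along the orbit, which is a point you would need to argue explicitly.
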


The main theorem will be deduced from Lemma~\ref{lem:2 orb} together with the following elementary lemma.

To state Lemma~\ref{lem:linfty picard}, we denote by $\ell^\infty$ the Banach space of bounded sequences $\beta = (\beta_j)_{j \geq 1}$ of complex numbers with the supremum norm 
\begin{equation*}
    |\beta|_{\ell^\infty} := \sup_{j \geq 1} |\beta_j|.
\end{equation*}

\begin{lemma}\label{lem:linfty picard}
    Let $B$ be the unit open ball of $\ell^\infty$ centered at $0$. Let $T: B \to \ell^\infty$ be an analytic map. Suppose
    \begin{equation}\label{eq:linfty picard unif bnd}
        |T(\beta) - \beta|_{\ell^\infty} < \epsilon \qquad \text{for all} \quad \beta \in B.
    \end{equation}
    Then, for any $\alpha \in \ell^\infty$ such that $|\alpha|_{\ell^\infty} < (1 - 4\epsilon)/3$, there exists a unique $\beta_* \in \ell^\infty$ depending analytically on $\alpha$ such that $|\beta_*|_{\ell^\infty} < (1 - \epsilon)/3$ and $T(\beta_*) = \alpha$.
    Furthermore, if $\CU$ is an open subset of a complex Banach space and the map $T = T_u$ is analytically parametrized by $u \in \CU$, i.e., the value $T_u(\beta)_j$ is analytic in $u$ and $\beta \in B$ for each $j$, then the corresponding $\beta_*$ such that $T_u(\beta_*) = \alpha$ also depends analytically on $u$.
\end{lemma}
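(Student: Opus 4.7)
The plan is to convert the equation $T(\beta) = \alpha$ into a contractive fixed-point problem and apply the Banach fixed-point theorem in the closed ball $\overline{B_r}$ of $\ell^\infty$ of radius $r := (1-\epsilon)/3$. Setting $R(\beta) := T(\beta) - \beta$, the hypothesis says $R : B \to \ell^\infty$ is analytic with $|R|_{\ell^\infty} < \epsilon$, and the target equation becomes the fixed-point equation $\beta = \Phi(\beta)$ with $\Phi(\beta) := \alpha - R(\beta)$.

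The self-mapping property is immediate: for $\beta \in \overline{B_r}$, we have $|\Phi(\beta)|_{\ell^\infty} \leq |\alpha|_{\ell^\infty} + |R(\beta)|_{\ell^\infty} < (1-4\epsilon)/3 + \epsilon = r$, so $\Phi$ sends $\overline{B_r}$ strictly into $B_r$. For the contraction property, I would invoke the Cauchy estimate for analytic maps between complex Banach spaces: for $\beta \in \overline{B_r}$ and any unit direction $v \in \ell^\infty$, the one-variable function $z \mapsto R(\beta + zv)$ is analytic and bounded by $\epsilon$ on the disc $|z| < 1 - r = (2+\epsilon)/3$, so $|d_\beta R(v)|_{\ell^\infty} \leq 3\epsilon/(2+\epsilon)$. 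The condition $|\alpha|_{\ell^\infty} < (1-4\epsilon)/3$ forces $\epsilon < 1/4$, hence $3\epsilon/(2+\epsilon) < 1$; combining with the mean value inequality along line segments yields that $\Phi$ is a contraction on $\overline{B_r}$. The Banach fixed-point theorem then produces a unique $\beta_* \in \overline{B_r}$, and the strict inequality above forces $\beta_* \in B_r$, which is the uniqueness statement required.

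Analytic dependence on $\alpha$, and on $u$ in the parametric version, follows from the iterative description $\beta_* = \lim_{n \to \infty} \Phi^n(0)$. The map $\Phi$ is affine in $\alpha$ and analytic in $\beta$ (and analytic in $u$ when $T_u$ is), so each Picard iterate $\Phi^n(0)$ is analytic in $(\alpha, u)$ on the open set where the hypotheses remain valid. Since the contraction constant $3\epsilon/(2+\epsilon) < 1$ is uniform on this set, the Picard iterates converge uniformly in $\ell^\infty$, and uniform limits of analytic functions between complex Banach spaces are analytic.

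The argument is in essence a direct application of the complex-analytic Banach fixed-point theorem, so no serious obstacle appears. The only mildly delicate ingredient is the infinite-dimensional Cauchy estimate $|d_\beta R|_{\mathrm{op}} \leq \epsilon/(1 - |\beta|_{\ell^\infty})$ for analytic maps $B \to \ell^\infty$, which is proved by reducing to the scalar case along each unit direction as above and is standard.
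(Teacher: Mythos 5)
Your proof is correct and follows essentially the same route as the paper: recast $T(\beta)=\alpha$ as a fixed-point equation $\beta = \alpha - R(\beta)$ (the paper writes this as $\wt T_\alpha(\beta) = \alpha - T(\beta) + \beta$), verify invariance of a closed ball, extract a contraction estimate from a Cauchy bound on the derivative of $R$ in the $\ell^\infty$ norm, and invoke Banach's fixed point theorem, with analytic dependence coming from uniform convergence of the Picard iterates. The only cosmetic difference is in how the contraction constant is obtained: you fix $r=(1-\epsilon)/3$ and apply the directional Cauchy estimate pointwise along the segment to get the Lipschitz bound $3\epsilon/(2+\epsilon)$, whereas the paper applies the Cauchy estimate to the one-variable restriction $\tau\mapsto \wt T_\alpha((1-\tau)\beta+\tau\beta')_j$, yielding the slightly coarser bound $\epsilon/(1-3r)$ for a free radius $r$ chosen in $[|\alpha|+\epsilon,(1-\epsilon)/3)$; both bounds are strictly less than $1$ under the standing hypotheses, so either suffices.
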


\begin{remark}
    We say that a map $T: U \to \ell^\infty$ defined on an open subset $U \subset \ell^\infty$ is \textit{analytic} if the following property holds: for any family $(b^\tau)$ of elements in $U$ parametrized by a complex parameter $\tau$ such that, for each index $j \geq 1$, the function $\tau \mapsto b_j^\tau$ is analytic, the function $\tau \mapsto T(b^\tau)_j$ is analytic for every index $j$.
\end{remark}

\begin{proof}[Proof of Lemma \ref{lem:linfty picard}]
    We prove this lemma by reducing it to a fixed-point problem and solving it by the usual Picard iteration.
    Let $\alpha \in \ell^\infty$ be given. Define an auxiliary map 
    \begin{equation*}
        \wt T_\alpha: \beta \mapsto \alpha - T(\beta) + \beta.
    \end{equation*}
    Then finding $\beta$ solving the equation $T(\beta) = \alpha$ is equivalent to finding a fixed point of $\wt T_\alpha$.
    
    By the assumption \eqref{eq:linfty picard unif bnd}, we have $\linf{\wt T_\alpha(\beta)} < \linf{\alpha} + \epsilon$.
    Hence, for $\linf{\alpha} + \epsilon \leq r < 1$, the map $\wt T_\alpha$ preserves the closed ball of radius $r$ centered at $0$.
    Pick any $\beta, \beta' \in \ell^\infty$ such that $\linf{\beta} + \linf{\beta'} \leq 1$.
    Then by the mean value theorem applied to the function $\tau \mapsto \wt T_\alpha((1 - \tau) \beta + \tau \beta')_j$ for an arbitrary $j \geq 1$, 
    \begin{equation*}
        \wt T_\alpha(\beta')_j - \wt T_\alpha(\beta)_j = \int_0^1 \frac{\du}{\du \tau} \wt T_\alpha((1 - \tau) \beta + \tau \beta')_j d\tau.
    \end{equation*}
    By \eqref{eq:linfty picard unif bnd} and Cauchy's estimate over the domain $|\tau| \leq (1 - \linf{\beta})/\linf{\beta - \beta'}$,
    \begin{equation*}
        \verts{\frac{\du}{\du \tau} \wt T_\alpha((1 - \tau) \beta + \tau \beta')_j} \leq \frac{\epsilon \linf{\beta - \beta'}}{1 - \linf{\beta} - \linf{\beta' - \beta}} \qquad \text{for} \quad |\tau| \leq 1.
    \end{equation*}
    Since the above bound is independent of $j$, we deduce 
    \begin{equation*}
        \linf{\wt T_\alpha(\beta') - \wt T_\alpha(\beta)} \leq \frac{\epsilon \linf{\beta - \beta'}}{1 - \linf{\beta} - \linf{\beta' - \beta}}.
    \end{equation*}
    In particular, for $r < 1/3$, the Lipschitz constant of $\wt T_\alpha$ is at most $\epsilon / (1 - 3r)$.
    By the condition on $\alpha$, there exists $r$ such that $|\alpha| + \epsilon \leq r < (1 - \epsilon)/3$.
    It follows that the map $\wt T_\alpha$ preserves the closed ball of radius $r$ and is a contracting map there.
    By Banach fixed point theorem, we obtain a unique fixed point for $\wt T_\alpha$ therein.

    It remains to show the analytic dependence of $\beta$ on $\alpha$ and on $u \in \CU$, if $T = T_u$ is analytically parametrized by $u$ belonging to an open subset of a complex Banach space.
    Recall that, by the usual proof of Banach fixed point theorem, the fixed point $\beta_*$ can be given by the limit $\beta_* = \lim_{n \to \infty} \wt T_\alpha^n(0)$, where the contracting property of $\wt T_\alpha$ implies that $(\wt T_\alpha^n(0))_n$ is a Cauchy sequence in $\ell^\infty$.
    On the other hand, if  $\alpha^\tau$ depends analytically on a complex parameter $\tau$ in a domain of $\C$ and has small $\linf{\ \cdot \ }$-norm uniformly in $\tau$, and $T$ is analytically parametrized by $u$,
    then, by the assumed analyticity of $T$, for fixed $j$ the sequence $\{\wt T_{\alpha^\tau}^n(0)_j \ : \ n \geq 1\}$ is a Cauchy sequence (with respect to uniform norm in $\tau$) of analytic functions in $\tau$ and $u$.
    By standard results in complex analysis, the limit $\lim_{n \to \infty} \wt T_{\alpha^\tau}^n(0)_j$ is an analytic function in $\tau$ and $u$.
    It follows that $\beta_*$ is analytic in $\alpha$ and $u$.
    The proof of the Lemma is finished.
\end{proof}

\subsection{Proof of the main theorem.}
We now complete the proof of the main theorem using Lemma~\ref{lem:2 orb} and Lemma~\ref{lem:linfty picard}.
Let $\omega$ and $(p_j, q_j)$ be given as in the statement of Theorem~\ref{thm:main}.
In particular, we may assume $q_1 > n$ so that Lemma~\ref{lem:2 orb} applies to the sequence $(p_j, q_j)$.
We consider 
\begin{equation*}
    \sigma_j = \frac{2 + \wt \sigma_j}{q_j} \quad \text{with} \quad |\wt \sigma_j| \leq 1.
\end{equation*}
This choice of $\sigma_j$ automatically ensures that $q_j|\sigma_j| \geq 1$.
Moreover, for $q_1$ sufficiently large we have $\sum_{j} |\sigma_j| < 1/10$.
It follows that the condition $\sum_j|\sigma_j| + 2\|\phi_0\|' \leq 1/2$ is satisfied whenever $\|\phi_0\|' \leq 1/5$.
In this case, by Lemma~\ref{lem:2 orb} there exists $\phi_1 \in \CA_n'$ depending analytically on $(\wt \sigma_j)$ and $\phi_0$ such that conclusions of Lemma~\ref{lem:2 orb} hold.
In particular, for each $j \geq 1$ there is a pair of $(p_j, q_j)$-orbits depending analytically on $(\wt \sigma_j)$ and $\phi_0$.
Let $\xi_{j, \pm}$ be points picked from the two distinguished $(p_j,q_j)$-orbits furnished by Lemma~\ref{lem:2 orb}, respectively.
We shall verify that the conclusions of Theorem~\ref{thm:main} hold with respect to the sequence of orbits $\xi_j = \xi_{j,+}$ provided by Lemma~\ref{lem:2 orb}.
We first prove the simpler part~(2) before proceeding to part~(1).

\paragraph{Proof of part (2).} 
Let $\Lambda_j = \Lambda_{j,+}$ with $\Lambda_{j, +}$ defined as in \eqref{eq:Lamj def}.
We provisionally define 
\begin{equation*}
    \wt \Lambda_j = q_j\Lambda_{j} \eu^{2\pi q_j} - 2.
\end{equation*}
By analytic dependence of $\phi_1$ and $\xi_j$ on $(\sigma_j)$ and $\phi_0$, the map $T: (\wt \sigma_j)_j \mapsto (\wt \Lambda_j)_j$ is analytic and is analytically parametrized by $\phi_0$.
On the other hand, the estimate \eqref{eq:Lamj est} and the choice of $\sigma_j$ implies
\begin{equation*}
    \verts{\wt \Lambda_j -  \wt \sigma_j} < \frac{1}{16}.
\end{equation*}
We may in turn apply Lemma \ref{lem:linfty picard} with $\epsilon = 1/16$ and
$T: (\wt \sigma_j)_j \mapsto (\wt \Lambda_j)_j$ to conclude that, for all sequence $(\alpha_j)_j$ with $|\alpha_j| < 1/4$ and all $\phi_0$ such that $\|\phi_0\|' \leq 1/5$, there exists a unique sequence $(\wt \sigma_j)_j$ with $|\wt \sigma_j| < 5/16$ and depending analytically on $(\alpha_j)$ and $\phi_0$  such that $\wt \Lambda_j = \alpha_j$.
By Lemma~\ref{lem:2 orb}, the function $\phi_1$ realizing the prescribed values of $\wt \Lambda_j$ can be chosen to depend analytically on $(\alpha_j)$ and $\phi_0$.
It now follows from \eqref{eq:Lamj est} that the $q_j$-periodic orbits $(\xi_j)$ has a Lyapunov exponent given by
\begin{equation*}
    \det(\du_{\xi_j} F^{q_j} - \id) = - (2 + \alpha_j) q_j \Big( \frac{p_j}{q_j} - \omega \Big)^2 \eu^{-2\pi q_j}.
\end{equation*}
Since the Lyapunov exponent is obviously independent of coordinates, we deduce \eqref{eq:main thm lyap}, where the potential $V_\alpha$ is defined by \eqref{eq:RIC potential} with $\phi = \phi_0 + \phi_1$ and the choice of additive constant fixed by the normalization condition on $S$.

\paragraph{Proof of part (1).} 
We impose the following arithmetic condition as in \eqref{eq:main Liouv condition}
\begin{equation}\label{eq:liouv condition}
    \verts{\omega - \frac{p_j}{q_j}} \leq q_j^{-\nu} \eu^{-2\pi q_j} \quad \text{with} \quad \nu = 70.
\end{equation}
Let us write $\CA_j = \CA_{j, +}$ with $\CA_{j, +}$ as in \eqref{eq:CAj def} and provisionally define 
\begin{equation*}
    \wt \CA_j := 2 \pi^2 q_j^3\bigl[1 - 2(1 + f_1^*) \CA_j\bigr] \eu^{2\pi q_j}-2.
\end{equation*}
By analyticity we have similarly that $T: (\wt \sigma_j)_j \mapsto (\wt \CA_j)_j$ is an analytic map.
Combining the arithmetic condition \eqref{eq:liouv condition} with \eqref{eq:CAj est}, 
\begin{equation*}
    \verts{\wt \CA_j - \wt \sigma_j} \lesssim q_j^{\nu_0 + 3 - \nu} \lesssim \frac{1}{q_j^3}
\end{equation*}
In particular, if $q_1$ is large enough, then the right hand side above can be bounded uniformly in $j$ by $<1/16$.
We may in turn apply Lemma \ref{lem:linfty picard} as in the preceding proof of part (2) to conclude that, for all sequence $(\alpha_j)_j$ with $|\alpha_j| < 1/4$ and all $\phi_0$ such that $\|\phi_0\|' \leq 1/5$, there exists a unique sequence $(\wt \sigma_j)_j$ with $|\wt \sigma_j| < 5/16$ such that $\wt \CA_j = \alpha_j$.
By Lemma~\ref{lem:2 orb}, there exists $\phi_1$ depending analytically on $(\alpha_j)$ and $\phi_0$ such that the standard map $F$ associated with $(\omega, \phi = \phi_0 + \phi_1)$ in adapted coordinates has a sequence of $q_j$-periodic orbits $(\xi_j)$ whose symplectic action with respect to the normalized generating function $S$ in adapted coordinates is
\begin{equation*}
     \sum_{k = 0}^{q_j - 1} S \circ F^k (\xi_j) = \frac{q_j \big(\frac{p_j}{q_j} - \omega\big)^2}{2(1 + f_1^*)} \Big[ 1 - \frac{(2 + \alpha_j)\eu^{-2\pi q_j} }{2\pi^2 q_j^3} \Big].
\end{equation*}
By \eqref{eq:RIC action}, the symplectic action of $(\xi_j)$ with respect to the normalized generating function $\sS$ in the original coordinates is 
\begin{equation*}
     \sum_{k = 0}^{q_j-1} \sS \circ \sH  \circ F^k(\xi_j) =\sum_{k = 0}^{q_j-1} S \circ F^k(\xi_j) + \bigg(\omega - \int_\T\phi \dot\phi^+\bigg)p_j
\end{equation*}
where $\sH$ is the coordinate transform defined in \eqref{eq:RIC coord}.
Hence, we have deduced \eqref{eq:main thm action}.

If $(\sigma_j)$ and $\phi_0$ are real valued as assumed in Theorem~\ref{thm:main}, then by Lemma \ref{lem:2 orb} part (1), the standard map has only 2 geometrically distinct $(p_j, q_j)$-orbits, containing $\xi_{j, +}$ and $\xi_{j, -}$ respectively. 
Since $\sigma_j > 0$ by our choice, it follows easily from \eqref{eq:CAj est} and the estimate \eqref{eq:liouv condition} that the orbit of $\xi_{j, +}$ has smaller symplectic action between the two, and hence $\xi_j = \xi_{j, +}$ is a (globally) minimal orbit of rotation number $p_j/q_j$.

Finally, it remains to show that one can choose $V_\alpha$ in such a way that the quantities $f_1^*$ and $(\phi_\alpha \dot\phi_\alpha^+)^*$ are both constant in $\alpha$.
The main idea is that both of two quantities are dominated by the low-frequency components in the Fourier series of $\phi$.
Hence, if $q_1$ is sufficiently large compared to the order of nonzero Fourier coefficients of $\phi_0$, then the choice of $\phi_0$ will have a dominant effect on $f_1^*$ and $(\phi_\alpha \dot\phi_\alpha^+)^*$.
By adjusting $\phi_0$, we can keep the two quantities constant while varying $\alpha$.

Let us formalize this argument. 
We define a map 
\begin{equation*}
    \CG: (\phi_0, \phi_1) \mapsto \big( f_1^*,(\phi \dot\phi^+)^*\big)
\end{equation*}
where $\phi = \phi_0 + \phi_1$ and $f_1$ is defined as in \eqref{eq:f1}.
In the preceding steps, we have constructed an analytic map $(\alpha, \phi_0) \mapsto \phi_1$ where the standard map associated with $(\omega, \phi = \phi_0 + \phi_1)$ has a sequence of closed orbits with specified symplectic actions.
Since $\phi_1$ is determined by $(\alpha, \phi_0)$, we consider $\CG(\phi_0, \phi_1)$ as a function in terms $(\alpha, \phi_0)$ and we apply the implicit function theorem.
In fact, we pick a nonzero $\wt \phi_0 \in \CA_n'$ and consider the derivative
\begin{equation*}
    \frac{\du \CG}{\du \phi_0} = \frac{\partial \CG}{\partial \phi_0} + \frac{\partial \CG}{\partial \phi_1}\frac{\partial \phi_1}{\partial \phi_0}.
\end{equation*}
Pick $\wt \phi_0 \in \CA_n'$ such that the derivative $\frac{\partial \CG}{\partial \phi_0}$ is surjective onto $\C^2$
and take $q_1 \gg n$.
Since 
\begin{equation*}
    \sup_{\Im x = 0} |\phi_1(x)| \leq \eu^{-2\pi q_1},
\end{equation*}
it is easy to establish by Cauchy's estimate that 
$\frac{\partial \CG}{\partial \phi_1}\frac{\partial \phi_1}{\partial \phi_0}$ has arbitrarily small operator norm as $q_1 \to \infty$.
Hence, for $q_1$ sufficiently large, $\du \CG/\du \phi_0$ is surjective and we can apply the implicit function theorem to obtain $\epsilon_0 > 0$ and an analytic map  $\alpha \mapsto \phi_0$ such that $\CG(\phi_0, \phi_1)$ is constant in $\alpha$ for $|\alpha|_{\ell^\infty} < \epsilon_0$.

We have completed the proof of Theorem \ref{thm:main} using Lemma \ref{lem:2 orb}.
We proceed to prove Lemma \ref{lem:2 orb} now.

\subsection{Proof of Lemma~\ref{lem:2 orb}.}

From now on, we fix
\begin{equation*}
    \kappa = 2.
\end{equation*} 
Let $n > 0$ be given by Theorem \ref{thm:fx pt} with $\epsilon = 1/4$ and $\kappa = 2$.
Take $\omega \in \R \setminus \Q$ and a sequence $(p_j, q_j)$ such that \eqref{eq:equidistr arithm} holds for some $\nu \geq 70$.
Let $\phi_0 \in \CA_n'$ and $(\sigma_j)_j$ be given as in Lemma~\ref{lem:2 orb}. 
In particular, we assume 
\begin{equation*}
    |\sigma_j| q_j \geq 1
\end{equation*}
and 
\begin{equation}\label{eq:sigma smallness}
    \sum_j |\sigma_j| + 2 \|\phi_0\|' \leq \frac12.
\end{equation}

\paragraph{Application of Theorem~\ref{thm:fx pt}.}
We apply Theorem~\ref{thm:fx pt} to the function $h \in \CA$ defined by
\begin{equation}\label{eq:h choice}
    h(x):= \sum_{j \geq 1} \sigma_j \eu^{-2\pi q_j} \cos(2\pi q_j x).
\end{equation}
Note that, by the definition of the Fourier-weighted norm, 
\begin{equation*}
  \|h\| = \sum_{j=1}^\infty |\sigma_j|.
\end{equation*}
By \eqref{eq:sigma smallness}, the condition \eqref{eq:h+phi0 0} of Theorem~\ref{thm:fx pt} is satisfied with the choice $\epsilon = 1/4$.

Fix $N_j := \kappa q_j = 2 q_j$. 
Then, by Theorem~\ref{thm:fx pt}, there exists $\phi_1 \in \CA_n'$ depending analytically on $h$ and $\phi_0$ such that the standard map associated with $(\omega, \phi := \phi_0 + \phi_1)$ in the $q_j$–resonant coordinates can be written as
\begin{equation*}
  F_{q_j}(x,y) = \bigl(x + p_j/q_j + y,\; y + g_{q_j}(x,y)\bigr),
\end{equation*}
where the resonant normal form $g_{q_j}$ satisfies \eqref{eq:fx pt 0}.

For notational simplicity, put
\begin{equation*}
    \fg_{q_j} := \langle g_{q_j} \rangle_{q_j}^{N_j}.
\end{equation*}
By \eqref{eq:fx pt 0} and our choice of $h$,
\begin{equation}\label{eq:res}
    \frac{\partial \fg_{q_j}}{\partial x}(x, 0)
    =  \sigma_j \eu^{-2\pi q_j}
       \Big(\frac{p_j}{q_j} - \omega\Big)^2  \cos(2\pi q_j x).
\end{equation}
Integrating \eqref{eq:res} in $x$ gives
\begin{equation}\label{eq:fg(x,0)}
    \fg_{q_j}(x,0)
    =  \frac{\sigma_j\eu^{-2\pi q_j}}{2\pi q_j}
       \Big(\frac{p_j}{q_j} - \omega\Big)^2 \sin(2\pi q_j x)
       \;+\; g_{q_j}^*(0),
\end{equation}
where $g_{q_j}^*(0)$ denotes the average of $x \mapsto g_{q_j}(x,0)$ over $\T$.

By Theorem \ref{thm:rnf}, the functions $\fg_{q_j}$ and $g_{q_j}$ are defined on $\T_{a_j} \times \D_{b_j}$ with
\begin{equation*}
  a_j = 1 - \frac{3}{q_j},
  \qquad
  b_j = \frac{1}{5}\Big|\omega - \frac{p_j}{q_j}\Big|.
\end{equation*}
The conclusions \eqref{eq:rnf concl 2} and \eqref{eq:rnf res compare} respectively read as 
\begin{align}
    &|g_{q_j} - \fg_{q_j}|_{1/q_j, b_j}
      \;\lesssim\; \eu^{- 6\pi q_j}
      \Big|\omega - \frac{p_j}{q_j}\Big|^2, \label{eq:qj fg - g}\\
    &\Big\|
       \Big\langle
         g_{q_j}(\cdot , 0)
         + \frac{1}{2} \Big(\frac{p_j}{q_j} - \omega \Big)^2
           \frac{\langle \dot f_1 \rangle_{q_j}}{1 + \langle f_1 \rangle_{q_j}}
       \Big\rangle_{q_j}^{N_j}
     \Big\|
     \;\lesssim\;
     q_j^{\nu_0 - \nu}
     \Big|\omega - \frac{p_j}{q_j}\Big|^2,\label{eq:qj res compare}
\end{align}
where $f_1$ is as in \eqref{eq:f1} with $\phi = \phi_0 + \phi_1$, and we have absorbed the factor $\kappa \eu^{4\pi\kappa}$ into the implicit constant because $\kappa = 2$ is fixed.

Moreover, by \eqref{eq:gq star}, for $|y| < b_j$ we have
\begin{equation}\label{eq:gq star y}
    |g_{q_j}^*(y)|
    \;\lesssim\;
    q_j^{\nu_0 - \nu}
    \Big(
       \sup_{\Im x = 0} |(g_{q_j} - \fg_{q_j})(x,y)|
       + \sup_{\Im x = 0}|\fg_{q_j}^\bullet(x,y)|
    \Big),
\end{equation}
since
$g_{q_j}^\bullet
  = g_{q_j} - g_{q_j}^*
  = \bigl(g_{q_j} - \fg_{q_j}\bigr)
    + \bigl(\fg_{q_j} - \fg_{q_j}^*\bigr)
  = \bigl(g_{q_j} - \fg_{q_j}\bigr) + \fg_{q_j}^\bullet$
and $\fg_{q_j}^* = g_{q_j}^*$.

For $y = 0$, we have an explicit formula \eqref{eq:fg(x,0)} for $\fg_{q_j}(x,0)$.
Hence, using \eqref{eq:qj fg - g} and the fact that $|\sigma_j| \geq 1/q_j \geq \eu^{-4\pi q_j}$ for $q_j$ sufficiently large, we obtain
\begin{equation}\label{eq:qj g star}
    |g_{q_j}^*(0)|
    \;\lesssim\;
    |\sigma_j|\,q_j^{\nu_0 - \nu-1}
    \eu^{-2\pi q_j}
    \Big|\omega - \frac{p_j}{q_j}\Big|^2.
\end{equation}

For general $|y| < b_j$, 
we have, by $\kappa = 2$ and the estimate for Fourier coefficients,
\begin{equation*}
    \begin{aligned}
        |\fg_{q_j}^\bullet|_{1/q_j, b_j} 
    &\leq \sum_{k \in \pm \{q_j, 2q_j\}}
       \sup_{|y| < b_j}|\wh g_{q_j}(y)_k| \,\eu^{2\pi \frac{|k|}{q_j}} \\
    &\leq \sum_{k \in \pm \{q_j, 2q_j\}}
       \eu^{-2\pi (a_j - \frac{1}{q_j}) |k|}\, |g_{q_j}|_{a_j, b_j}\\
    &\lesssim \eu^{-2\pi q_j} |g_{q_j}|_{a_j,b_j}
      \;\lesssim\;
      \eu^{-2\pi q_j}
      \Big|\omega - \frac{p_j}{q_j}\Big|^2,
    \end{aligned}
\end{equation*}
where in the last inequality we used \eqref{eq:rnf concl 1}.
Combining this with \eqref{eq:gq star y} and \eqref{eq:qj fg - g}, and noting that
\begin{equation*}
  \sup_{\Im x = 0}|(g_{q_j} - \fg_{q_j})(x,y)|
  \;\le\;
  |g_{q_j} - \fg_{q_j}|_{1/q_j,b_j},
\end{equation*}
we obtain
\begin{equation*}
  \sup_{|y|<b_j}|g_{q_j}^*(y)|
  \;\lesssim\;
  \eu^{-2\pi q_j}
  \Big|\omega - \frac{p_j}{q_j}\Big|^2.
\end{equation*}
It follows from $\fg_{q_j} = g_{q_j}^* + \fg_{q_j}^\bullet$ that
\begin{equation}\label{eq:qj fg}
    \bigl|\fg_{q_j}\bigr|_{1/q_j, b_j}
    \;\lesssim\;
    \eu^{-2\pi q_j}
    \Big|\omega - \frac{p_j}{q_j}\Big|^2.
\end{equation}

\paragraph{Almost equidistributed orbits.}
We now show that $F_{q_j}$ has two geometrically distinct $(p_j,q_j)$-periodic orbits and they are \textit{almost equidistributed} in the sense that they closely shadow the sequence of points $(x^* + kp_j/q_j, 0)$, $k = 0,1,\ldots, q_j-1$ for some $x^* \in \T_{1/q_j}$.

To ease the notation, we will suppress the subscripts $j$ and $q_j$ in the following two paragraphs and write simply $(p, q) = (p_j, q_j)$, $N = N_j$, $\sigma = \sigma_j$, $g = g_{q_j}$, $\fg = \fg_{q_j}$, $a = a_j = 1-3/q$ and $b = b_j = |\omega - p/q|/5$.
We will apply the estimates \eqref{eq:qj fg - g}, \eqref{eq:qj res compare}, \eqref{eq:qj g star} and \eqref{eq:qj fg} obtained by the preceding paragraph.
Also recall that, by \eqref{eq:fg(x,0)},
\begin{equation*}
    \fg(x,0) =  \frac{\sigma \eu^{-2\pi q}}{2\pi q}  \Big(\frac{p}{q} - \omega\Big)^2 \sin(2\pi q x) + g^*(0).  
\end{equation*}
The first term on the right hand side has exactly $2q$ simple zeros at $x = k/(2q)$ for $k = 0, \ldots, 2q-1$.
By \eqref{eq:qj g star}, we may view the second term $ g^*(0)$ as a small perturbation of the first term and use Rouch\'e's theorem and periodicity of $\fg$ to conclude that $\fg(\cdot, 0)$ has exactly $2q$ simple zeros in $\T_{1/q}$ which are equidistributed and close to the points $k/(2q)$.
More precisely, for some $x^* \in \T_{a}$ with $|x^*| \lesssim q^{\nu_0 - \nu - 1}$, we have 
\begin{equation}\label{eq:equidistr zeros}
    \fg\Big(x^*_\pm + \frac{k}{q}, 0 \Big) = 0, \quad \text{where $k = 0, \ldots, q-1\ $, $x^*_+ = x^*$ and $x^*_- = \frac{1}{2q} -x^*$.}
\end{equation}
By $1/q$-periodicity of $\fg(\cdot, 0)$, it is readily verified that the points $(x^*_{\pm} + k/q, 0)$ are $(p,q)$-periodic under the `reduced' map 
\begin{equation}\label{eq:reduced F}
    \fF_q(x,y) = (x + p/q + y, y + \fg(x,y)).
\end{equation}
In view of \eqref{eq:qj fg - g}, we may consider the map 
\begin{equation*}
    F_q(x,y) = (x + p/q + y, y + g(x,y))
\end{equation*}
as a small perturbation of \eqref{eq:reduced F}.
We will show in the following lemma that the periodic orbits of $\fF_q$ starting from $(x^*_{\pm} + k/q, 0)$ persist under this perturbation.

\begin{lemma}\label{lem:equidistr orb}
    Let $x_\pm^* + k/q$ be the zeros of $\fg(\cdot,0)$ given by
    \eqref{eq:equidistr zeros}.
    Set
    \begin{equation*}
      a' := \frac{1}{2q},
      \qquad
      b' := \frac{1}{6}\biggl|\omega - \frac{p}{q}\biggr|,
      \qquad
      \alpha_\pm^k := \frac{\partial \fg}{\partial x}
        \Bigl(x_\pm^* + \frac{k}{q}, 0\Bigr).
    \end{equation*}
    Then, for $q$ sufficiently large (depending only on the universal implicit
    constants in the estimates below), the following holds.

    There exist exactly $2q$ distinct points
    \begin{equation*}
      (\wt x_+^k, \wt y_+^k),
      \qquad
      (\wt x_-^k, \wt y_-^k),
      \qquad k = 0,\dots,q-1,
    \end{equation*}
    lying in $\T_{a'} \times \D_{b'}$ which are $(p,q)$-periodic under the map $F_q$.
    More precisely, these $2q$ points form exactly two geometrically distinct $(p,q)$-periodic orbits of $F_q$:
    for each sign $\pm$,
    \begin{equation*}
      F_q^n(\wt x_\pm^k, \wt y_\pm^k)
      = \bigl(\wt x_\pm^{k+np}, \wt y_\pm^{k+np}\bigr),
      \qquad n \in \Z,
    \end{equation*}
    where the index $k+np$ is taken modulo $q$.

    Furthermore, the points $(\wt x_\pm^k, \wt y_\pm^k)$ depend analytically on
    the parameters $(\sigma_j)_j$ and on $\phi_0$, and the following estimates hold:
    \begin{gather}
        \bigl|\wt x_\pm^k - (x_\pm^* + k/q)\bigr|
        \;\lesssim\;
        q\, \eu^{-4\pi q},
        \qquad
        |\wt y_\pm^k|
        \;\lesssim\;
        q^3 \eu^{-6\pi q}
        \biggl|\omega - \frac{p}{q}\biggr|^2,
        \label{eq:equidistr orb est}
        \\
        \bigl|
          \det\bigl(\du_{(\wt x_\pm^k,\wt y_\pm^k)}F_q^q - \id\bigr)
          + q^2 \alpha_\pm^k
        \bigr|
        \;\ll\;
        q^2 |\alpha_\pm^k|.
        \label{eq:equidistr orb det est}
    \end{gather}
\end{lemma}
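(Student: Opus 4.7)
The plan is to locate the two $(p,q)$-periodic orbits via a quantitative implicit function theorem, using the equidistributed zeros $(x_\pm^* + k/q,0)$ of $\fg(\cdot,0)$ from \eqref{eq:equidistr zeros} as initial approximations. Two structural observations will drive the argument. First, area-preservation of $F_q$ forces $\partial_x g = \partial_y g$ identically, and since $\langle\cdot\rangle_q^N$ commutes with both $\partial_x$ and $\partial_y$ this propagates to $\partial_x \fg = \partial_y \fg$. Second, $\fg(\cdot,0)$ is exactly $1/q$-periodic (as an element in the image of $\langle\cdot\rangle_q^N$), so the derivatives $\alpha_\pm^k$ are in fact constant in $k$, and the explicit formula \eqref{eq:fg(x,0)} yields the lower bound $|\alpha_\pm| \gtrsim \eu^{-2\pi q}(\omega - p/q)^2/q$.

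Combining these observations, the Jacobian of $\fF_q$ at each equidistributed point is the constant matrix
\[
J_\pm \;=\; \begin{pmatrix} 1 & 1 \\ \alpha_\pm & 1 + \alpha_\pm \end{pmatrix},
\qquad \det J_\pm = 1, \qquad \tr J_\pm = 2 + \alpha_\pm,
\]
with eigenvalues $\mu_{1,2} = 1 \pm \sqrt{\alpha_\pm} + O(\alpha_\pm)$. A direct diagonalization gives, to leading order,
\[
(J_\pm^q - \id)^{-1} \;\approx\; \frac{1}{q}\begin{pmatrix} 0 & 1/\alpha_\pm \\ 1 & 0 \end{pmatrix}.
\]
The identity $\det J_\pm = 1$ keeps $\|J_\pm\| = 1 + O(\sqrt{|\alpha_\pm|})$, so $q$ iterations of $\du F_q$ amplify vectors by at most $\eu^{q\sqrt{|\alpha_\pm|}} = O(1)$; this is the mechanism that prevents a catastrophic exponential amplification and makes the Newton step quantitative.

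I then view the problem as finding zeros of $\Phi(x,y) := F_q^q(x,y) - (x+p,\,y)$ in the universal cover. Iterating the perturbation $F_q - \fF_q = (0,\,g - \fg)$ along the unperturbed orbit and using $|g - \fg|_{1/q, b} \lesssim \eu^{-6\pi q}(\omega - p/q)^2$ from \eqref{eq:qj fg - g}, I obtain componentwise bounds $|r_y| \lesssim q \eu^{-6\pi q}(\omega - p/q)^2$ and $|r_x| \lesssim q^2 \eu^{-6\pi q}(\omega - p/q)^2$ for the residual $r := \Phi(x_\pm^*+k/q, 0)$. The Newton correction $\Delta = -(J_\pm^q - \id)^{-1} r$ then satisfies
\[
|\Delta x| \;\lesssim\; \frac{|r_y|}{q|\alpha_\pm|} \;\lesssim\; q \eu^{-4\pi q},
\qquad
|\Delta y| \;\lesssim\; \frac{|r_x|}{q} \;\lesssim\; q \eu^{-6\pi q}(\omega - p/q)^2,
\]
well within \eqref{eq:equidistr orb est}, where the first bound also uses $|\sigma|\geq 1/q$. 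Cauchy estimates for $F_q^q$ across the analyticity strip bound the second derivatives of $\Phi$, so a Newton--Kantorovich argument upgrades this correction to a unique fixed point in the appropriate ball, with analytic dependence on $(\sigma_j)$ and $\phi_0$ inherited automatically from Theorem~\ref{thm:rnf} and Theorem~\ref{thm:fx pt}. The determinant bound \eqref{eq:equidistr orb det est} then follows by continuity, since $\det(J_\pm^q - \id) = 2 - 2\cosh(q\mu_\pm) = -q^2\alpha_\pm + O(q^4\alpha_\pm^2)$ and the Newton displacement is exponentially smaller than $|\alpha_\pm|/q$.

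Two orbital-structure claims remain. That the $2q$ fixed points form exactly two $(p,q)$-orbits follows from the near-$(p/q)$-shift structure of $F_q$ combined with local uniqueness: $F_q(\tilde x_\pm^k, \tilde y_\pm^k)$ lies in the Newton ball around $(x_\pm^* + (k+p)/q, 0)$ and must therefore coincide with $(\tilde x_\pm^{k+p\bmod q}, \tilde y_\pm^{k+p\bmod q})$. I expect global uniqueness in $\T_{a'}\times\D_{b'}$ to be the main obstacle; my plan is to combine an a priori bound on $y$ along any $(p,q)$-orbit (iterating $|g| \lesssim \eu^{-2\pi q}(\omega-p/q)^2$ over $q$ steps forces $y$ to be almost constant along the orbit) with the closure identity $\sum_{k=0}^{q-1} g(F_q^k(\xi)) = 0$, which after averaging over the almost-equidistributed $x$-coordinates constrains $x_0$ to lie within $O(\eu^{-4\pi q})$ of a zero of $\fg(\cdot,0)$; Rouch\'e's theorem gives at most $2q$ such candidates, which are exactly those captured by the Newton construction. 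In the real-valued case, Aubry--Mather theory together with this uniqueness identifies one orbit as minimal and the other as minimax.
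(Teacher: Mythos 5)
Your central structural claim---that area-preservation of $F_q$ forces $\partial_x g_q = \partial_y g_q$ and hence $\partial_x\fg = \partial_y\fg$---is false. The normal-form map $F_q = H_q^{-1}\circ F\circ H_q$ preserves the pulled-back form $H_q^*(\du x\,\du y) = \bigl(\tfrac{1}{1+f_1^*} + \eta\bigr)\du x\,\du y$, not the standard area form; the paper establishes this in Section~\ref{sec:rnf part 3} and exploits it only to control the average $g_q^*$, never to identify the two partials. In fact, at the equidistributed zeros, $\partial_y\fg$ is \emph{polynomially} small (Cauchy's estimate across the $y$-disc of radius $\sim|\omega-p/q|$ gives $|\partial_y\fg|\lesssim|\omega-p/q|$) while $\alpha_\pm = \partial_x\fg$ is \emph{exponentially} small ($\sim\eu^{-2\pi q}|\omega-p/q|^2/q$), so they differ by a factor $\sim\eu^{2\pi q}q/|\omega-p/q|$. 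Your $J_\pm$ is therefore wrong, and $(J^q-\id)^{-1}$ acquires a large $(1,1)$ entry $\approx -\partial_y\fg/(q\alpha_\pm)$ that you dropped. Multiplied against $r_x\lesssim q^2\eu^{-6\pi q}|\omega-p/q|^2$ this produces an extra contribution to $\Delta x$ of order $q^3\eu^{-4\pi q}|\omega-p/q|$---which happens to stay below $q\eu^{-4\pi q}$ because $q^2|\omega-p/q|\ll 1$, but you never performed this check, and your eigenvalue expansion (discriminant $c^2+4a$, not $a^2+4a$) and the leading-order determinant also need to be redone in the asymmetric regime $|c|^2\gg|a|$ before the conclusion $\det(J^q-\id)\approx -q^2\alpha_\pm$ can be asserted. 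Separately, your global-uniqueness step is only a sketch; you label it "the main obstacle" and do not close it.

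The paper sidesteps these difficulties by first reducing to one dimension. Lemma~\ref{lem:gam} in Appendix~\ref{sec:near int maps} builds $\gamma,\Gamma:\T_{a'}\to\C$ so that $(x,y)$ is a $(p,q)$-orbit iff $y=\gamma(x)$ and $\Gamma(x)=0$, with $\gamma$ absorbing the unknown $\partial_y\fg$ entirely, and with uniform bounds \eqref{eq:Gam approx} relating $\Gamma$ to $\sum_k g(\cdot+k/q,0)$ and \eqref{eq:lyap bnd} relating $\dot\Gamma$ to $\det(\du F^q-\id)$. The zeros of $\Gamma^1$ are then located by the quantitative Rouch\'e statement Lemma~\ref{lem:zero pert} applied to the interpolation $\tau\mapsto\Gamma^\tau$ between $\fF_q$ and $F_q$, and global uniqueness in $\T_{a'}\times\D_{b'}$ follows from an explicit pointwise lower bound on $|\fg(\cdot,0)|$ away from its zeros (display \eqref{eq:fg lower bound}) combined with the closeness estimate \eqref{eq:Gam1 - q fg}. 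This one-dimensional route is the piece your two-dimensional Newton scheme is missing: it both avoids any claim about $\partial_y\fg$ and supplies the complete orbit count.
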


\begin{remark}
    In the above lemma and its proof, we use the notation $A \ll B$ to mean that for all $\varepsilon > 0$ there exists $Q > 0$ depending only on $\varepsilon$ such that $|A/B| < \varepsilon$ for all $q \geq Q$.
\end{remark}

\begin{proof}[Proof of Lemma~\ref{lem:equidistr orb}.]
    Consider the interpolation map 
    \begin{equation*}
        F^\tau_q(x,y) = (x + p/q + y, y + \fg(x,y) + \tau (g(x,y) - \fg(x,y))).
    \end{equation*}
    Since $b \ll q^{-2}$ and $(3q - 1)(|\fg|_{1/q,b} + |g - \fg|_{1/q,b}) \ll b$ by \eqref{eq:qj fg} and \eqref{eq:qj fg - g}, we may apply Lemma \ref{lem:gam} (with $f = 0$ and $a = 1/q$) to $F^\tau_q$ for $|\tau|\leq R$ with some $R > 1$ to obtain $\gamma^\tau, \Gamma^\tau: \T_{a'} \to \C$ depending analytically on $\tau$.
    Using Cauchy's estimate in $\tau$ and choosing $R$ such that, for example, $(3q-1) (|\fg|_{1/q,b} + R|\fg - g|_{1/q,b}) = b$, we have
    \begin{equation}\label{eq:Gam01 approx}
        \verts{\Gamma^1 - \Gamma^0}_{a'} \leq \frac{\sup_{|\tau| \leq R} |\Gamma^\tau|_{a'}}{R - 1} \lesssim q |\fg - g|_{1/q,b}.
    \end{equation}
    We first consider $k = 0$ and look for the periodic orbit near the point $(x^*_+, 0)$.
    The cases for $x^*_-$ and other $k$ can be treated similarly.
    Observe in particular that by the explicit formula \eqref{eq:fg(x,0)} for $\fg(\cdot, 0)$ and the equality $x^*_- = (2q)\inv - x^*_+$, the quantity $\alpha^k_\pm$ is independent of $k$ and $\alpha^0_+ = -\alpha^0_-$.

    From now on until the end of the proof, we write $x^* = x^*_+$ for simplicity of notation.
    Let us put $\alpha = \alpha^0_+ = \partial \fg/\partial x(x^*, 0)$.
    By the explicit formula \eqref{eq:fg(x,0)},
    \begin{equation}\label{eq:alpha explicit}
        \alpha = \sigma\eu^{-2\pi q}  \Big(\frac{p}{q} - \omega\Big)^2 \cos(2\pi q x^*).
    \end{equation}
    Since $(x^*, 0)$ is $(p,q)$-periodic under $\fF_q$, it follows from Lemma \ref{lem:gam} that $\Gamma^0(x^*) = 0$ and $\gamma^0(x^*) = 0$.
    By \eqref{eq:Gam approx} and periodicity of $\fg(\cdot, 0)$ as well as the estimates \eqref{eq:qj fg} and $b \ll q^{-2}$
    \begin{equation}\label{eq:Gam0 - q fg}
        \verts{\Gamma^0 - q \fg(\cdot, 0)}_{a'} \lesssim \frac{q^2 |\fg|_{1/q,b}^2}{b}.
    \end{equation}
    By Cauchy's estimate and $|x^*| \lesssim q^{\nu_0 - \nu - 1} \ll a'$, we have
    \begin{equation}\label{eq:dotGam0 - q alpha}
        \verts{\dot\Gamma^0(x^*) - q \alpha} \lesssim \frac{q^3 |\fg|_{1/q,b}^2}{b} \ll q |\alpha|.
    \end{equation}
    The last $\ll$ can be obtained from the explicit \eqref{eq:alpha explicit}.
    We apply Lemma \ref{lem:zero pert} with $R = a' - |x^*|$,  $\eta_0 = \Gamma^0$ and $\eta_1 = \Gamma^1 - \Gamma^0$.
    To verify the condition \eqref{eq:zeroPt cond}, observe that, by \eqref{eq:Gam01 approx}, \eqref{eq:qj fg - g} and \eqref{eq:qj fg},
    \begin{equation*}
        r_0 r_1 \leq \frac{|\Gamma^0|_{a'} |\Gamma^0 - \Gamma^1|_{a'}}{(a' - |x^*|)^2} \lesssim q^4 |\fg|_{1/q,b} |g - \fg|_{1/q,b} \ll q^2 |\alpha|^2.
    \end{equation*}
    Combining with \eqref{eq:dotGam0 - q alpha}, we readily verify the condition \eqref{eq:zeroPt cond} for $q$ sufficiently large.
    Hence, the Lemma applies and we obtain a zero point $\wt x$ of $\Gamma^1$ satisfying
    \begin{equation}\label{eq:wt x0 - x*}
        |\wt x - x^*| \lesssim \frac{|\Gamma^1 - \Gamma^0|_{a'}}{|\dot \Gamma^0(x^*)|} \lesssim \frac{|\fg - g|_{1/q,b}}{|\alpha|} \lesssim q \eu^{-4\pi q}.
    \end{equation}
    Let us put $\wt y := \gamma^1(\wt x)$.
    Then, since $\gamma^0(x^*) = 0$, by \eqref{eq:Gam01 approx}, \eqref{eq:wt x0 - x*} and the triangle inequality,
    \begin{equation*}
        \begin{aligned}
            |\wt y| \leq& |\gamma^1(\wt x) - \gamma^0(\wt x)| + |\gamma^0(\wt x) - \gamma^0(x^*)|\\
            \lesssim& q |\fg - g|_{1/q,b} + \frac{q^2 |\fg|_{1/q,b} |\fg - g|_{1/q,b}}{|\alpha|}\\
            \lesssim& \frac{q^2 |\fg|_{1/q,b} |\fg - g|_{1/q,b}}{|\alpha|} \\
            \lesssim& q^3 \eu^{-6\pi q} |\omega - p/q|^2.
        \end{aligned}
    \end{equation*}
    In the second last line, we have used $|\alpha| \lesssim q|\fg|_{1/q,b}$, which follows from Cauchy's estimate.
    We have established the estimates \eqref{eq:equidistr orb est} for $k = 0$.

    For \eqref{eq:equidistr orb det est}, we decompose
    \begin{equation}\label{eq:det est decomp}
        \begin{aligned}
            &|\det(\du_{(\wt x, \wt y)} F_q^q - \id) + q^2 \alpha|\\ 
            \leq& |\det(\du_{(\wt x, \wt y)} F_q^q - \id) + q \dot\Gamma^1(\wt x)| + |q \dot\Gamma^1(\wt x) - q \dot\Gamma^0(x^*)| + |q \dot\Gamma^0(x^*) - q^2 \alpha|.
        \end{aligned}
    \end{equation}
    By \eqref{eq:dotGam0 - q alpha}, the last term on the right hand side is $\ll q^2 |\alpha|$.
    By the second inequality of \eqref{eq:zero pert}, the second term on the right hand side of \eqref{eq:det est decomp} can be estimated as
    \begin{equation*}
        |q \dot\Gamma^1(\wt x) - q \dot\Gamma^0(x^*)| \lesssim  \frac{q^4 |\fg|_{1/q,b} |g - \fg|_{1/q,b}}{|\alpha|} \ll q^2 |\alpha|.
    \end{equation*}
    Finally, by \eqref{eq:lyap bnd}, the first term on the right hand side of \eqref{eq:det est decomp} can be estimated as
    \begin{equation*}
        |\det(\du_{(\wt x, \wt y)} F_q^q - \id) + q \dot\Gamma^1(\wt x)| \lesssim \frac{q^4 |g|_{1/q,b}^2}{b} \ll q^2 |\alpha|.
    \end{equation*}
    This completes the proof of \eqref{eq:equidistr orb det est} for $(\wt x, \wt y) = (\wt x^0_+, \wt y^0_+)$.
    The cases for other $k$ can be treated similarly by replacing $x^*$ with $x^*_\pm + k/q$ in the above argument.

    Finally, we show the uniqueness of the $(p,q)$-periodic points in $\T_{a'} \times \D_{b'}$, where $b' = |\omega - p/q|/6$ as in the statement.
    Since $q^{-4} \ll |\alpha|/(q^2 |\fg|_{1/q,b}) \lesssim |\dot\Gamma^0(x^*)|(a' - |x^*|)^2/|\Gamma^0|_{a'}$, Lemma \ref{lem:zero pert} implies that $\wt x^k_{\pm}$ is the unique zero of $\Gamma^1$ in the $q^{-4}$-neighborhood of $x^*_\pm + k/q$.
    Hence, any $(p,q)$-periodic point of $F_q$ lying in a $q^{-4}$-neighborhood of some $x^*_\pm + k/q$ in the $x$-coordinate must coincide with the point $(\wt x^k_\pm, \wt y^k_\pm)$ constructed above.
    We need to show that there is no $(p,q)$-periodic point in $\T_{a'} \times \D_{b'}$ which lies outside these neighborhoods.
    By the explicit expression for $\fg(\cdot, 0)$,
    \begin{equation}\label{eq:fg lower bound}
        q^{-6} \eu^{-2\pi q}|\omega - p/q|^2 \ll |\fg(x, 0)| 
    \end{equation}
    for $x \in \T_{a'}$ lying outside the union of the $q^{-4}$-neighborhoods of the points $x^*_\pm + k/q$.
    On the other hand, by \eqref{eq:Gam0 - q fg} and \eqref{eq:Gam01 approx}, we have
    \begin{equation}\label{eq:Gam1 - q fg}
        \verts{\Gamma^1 - q \fg(\cdot, 0)}_{a'} \lesssim \frac{q^2 |\fg|_{1/q,b}^2}{b} + q|\fg - g|_{1/q,b} \lesssim q^2 \eu^{-4\pi q} |\omega - p/q|^3 + q \eu^{-6\pi q} |\omega - p/q|^2.
    \end{equation}
    Combining \eqref{eq:fg lower bound} and \eqref{eq:Gam1 - q fg}, we deduce that $\Gamma^1$ does not vanish outside the $q^{-4}$-neighborhoods of the points $x^*_\pm + k/q$, which implies via Lemma \ref{lem:gam} that the points $(\wt x^k_\pm, \wt y^k_\pm)$ constructed above are the only $(p,q)$-periodic points of $F_q$ in $\T_{a'} \times \D_{b'}$.
    It is also simple to verify, for example using the formula for $F_q$ and $|\wt y^k_\pm| \ll q^{-4}$, that we must have $F_q^n(\wt x^k_\pm, \wt y^k_\pm) = (\wt x^{k + np}_\pm, \wt y^{k + np}_\pm)$.
    Finally, the analyticity of the points $(\wt x^k_\pm, \wt y^k_\pm)$ in terms of $g$ follows from the construction using Lemma \ref{lem:zero pert} and Lemma \ref{lem:gam}.
    The proof is complete.
\end{proof}

The following lemma was used in the proof of Lemma~\ref{lem:equidistr orb}. This is an elementary result in complex analysis, but we nevertheless give a proof for clarity.

\begin{lemma}\label{lem:zero pert}
    Let $R > 0$ and $\eta_0, \eta_1: \D_R \to \C$ be bounded analytic functions such that $\eta_0(0) = 0$ and $\eta_0'(0) \neq 0$.
    Let us denote $r_j = R\inv\sup\{|\eta_j(z)| : |z| < R\}$ for $j = 0, 1$.
    Suppose 
    \begin{equation}\label{eq:zeroPt cond}
        r_0 r_1 < c |\eta_0'(0)|^2
    \end{equation}
    for some universal constant $c$.
    Then there exists a unique zero point $x$ of $\eta_0 + \eta_1$ within the open disc $\D_{\bar R}$ with $$\bar R = 2c\frac{|\eta_0'(0)|}{r_0} R.$$
    Moreover, we have
    \begin{equation}\label{eq:zero pert}
        \frac{|x|}{R} \lesssim \frac{r_1}{|\eta_0'(0)|} \quad \text{and} \quad 
        |\eta_0'(x) + \eta_1'(x) - \eta_0'(0)| \lesssim \frac{r_0 r_1}{|\eta_0'(0)|}.
    \end{equation}
\end{lemma}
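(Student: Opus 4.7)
The approach is a quantitative application of Rouché's theorem. The key observation is that since $\eta_0$ has a simple zero at the origin with derivative $\eta_0'(0)$, on small circles $|z| = \rho$ one has a lower bound $|\eta_0(z)| \gtrsim |\eta_0'(0)| \rho$, whereas $|\eta_1(z)| \leq r_1 R$ uniformly on $\D_R$. Comparing these two bounds on a suitably chosen circle is what drives the hypothesis \eqref{eq:zeroPt cond}.

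To make the lower bound on $|\eta_0|$ quantitative, I would factor
\begin{equation*}
    \eta_0(z) = \eta_0'(0)\, z + z^2 h(z), \qquad h(z) := \frac{\eta_0(z) - \eta_0'(0) z}{z^2},
\end{equation*}
where $h$ is analytic on $\D_R$. Since Cauchy's estimate at the origin gives $|\eta_0'(0)| \leq r_0$, the maximum modulus principle bounds $|h(z)| \leq 2 r_0 / R$ throughout $\D_R$, so that
\begin{equation*}
    |\eta_0(z)| \;\geq\; |\eta_0'(0)|\,\rho \;-\; \tfrac{2r_0}{R}\rho^2 \qquad \text{on } |z| = \rho.
\end{equation*}
Choosing $\rho = \bar R = 2c|\eta_0'(0)| R / r_0$ with $c$ a sufficiently small universal constant (say $c \leq 1/8$), the quadratic correction is absorbed into $|\eta_0'(0)|\rho/2$, and the hypothesis $r_0 r_1 < c|\eta_0'(0)|^2$ rewrites exactly as $r_1 R < |\eta_0'(0)|\rho/2$. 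Rouché's theorem applied on $|z| = \bar R$ then produces exactly one zero of $\eta_0 + \eta_1$ in $\D_{\bar R}$, matching the unique simple zero of $\eta_0$ there.

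For the location estimate in \eqref{eq:zero pert}, I would repeat the Rouché argument on the smaller circle $|z| = C r_1 R/|\eta_0'(0)|$ for an appropriate universal $C$; the same lower bound for $|\eta_0|$ still dominates $|\eta_1| \leq r_1 R$ there, so $\eta_0 + \eta_1$ has a zero inside, necessarily equal to $x$ by the uniqueness already established. For the derivative estimate, differentiating the factorization of $\eta_0$ yields $\eta_0'(z) - \eta_0'(0) = 2 z h(z) + z^2 h'(z)$; combined with a Cauchy estimate on $h$ (whose sup-norm is controlled by $2r_0/R$) and the bound $|x|/R \lesssim r_1/|\eta_0'(0)|$, this gives
\begin{equation*}
    |\eta_0'(x) - \eta_0'(0)| \;\lesssim\; \frac{r_0\,|x|}{R} \;\lesssim\; \frac{r_0 r_1}{|\eta_0'(0)|}.
\end{equation*}
A Cauchy estimate for $\eta_1'$ on $\D_R$ (evaluated at $x \in \D_{R/2}$) yields $|\eta_1'(x)| \lesssim r_1$; finally the elementary inequality $r_1 \leq r_0 r_1/|\eta_0'(0)|$, which is just $|\eta_0'(0)| \leq r_0$ in disguise, absorbs this into the desired bound.

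I do not anticipate a serious obstacle: the proof is essentially careful bookkeeping around standard Rouché and Cauchy estimates. The main point requiring care is choosing the universal constant $c$ consistently so that a single threshold in \eqref{eq:zeroPt cond} simultaneously delivers uniqueness in $\D_{\bar R}$, the bound $|x| \lesssim r_1 R/|\eta_0'(0)|$, and the derivative estimate; this amounts to fixing $c$ small enough that each of the Rouché comparisons above goes through with room to spare.
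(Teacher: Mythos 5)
Your proposal is correct and takes essentially the same route as the paper's proof: the factorization $\eta_0(z)=\eta_0'(0)z+z^2h(z)$ is the same quadratic-remainder bound the paper obtains via the mean value theorem and a Cauchy estimate on $\eta_0''$; the Rouch\'e argument on $|z|=\bar R$, the second Rouch\'e (or equivalent) for the location bound, and the Cauchy estimates for the derivative bound all mirror the paper, which simply normalizes $R=1$, $\eta_0'(0)=1$ at the outset whereas you carry the scales explicitly. One small remark worth bearing in mind: for the location estimate, keeping the second Rouch\'e circle $|z|=Cr_1R/|\eta_0'(0)|$ strictly inside $\D_{\bar R}$ (so that uniqueness identifies the zero) forces a coupled choice of $C$ and $c$ that needs explicit tracking; a slightly cleaner route, avoiding the second Rouch\'e altogether, is to note directly that the zero $x$ satisfies $|\eta_0(x)|=|\eta_1(x)|\le r_1R$ while your lower bound gives $|\eta_0(x)|\ge \tfrac12|\eta_0'(0)|\,|x|$, whence $|x|\le 2r_1R/|\eta_0'(0)|$.
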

\begin{proof}
    After replacing $\eta_j(x)$ by $\eta_j(Rx)/(R \eta_0'(0))$ we may assume $R = 1$ and $\eta_0'(0) = 1$ so that $r_j$ is the uniform norm of $\eta_j$ over the unit disc.
    By the mean value theorem we can write 
    \begin{equation*}
        \eta_0(x) = x + x^2 \int_0^1 \eta_0''(tx) (1-t) dt.
    \end{equation*}
    By Cauchy's estimate applied to $|x| < 1/2$, we have $|\eta_0(x) - x| < |x|/2$ if $|x| < (16r_0)\inv$.
    Hence, if $r_1 < (32 r_0)\inv$, then Rouch\'e's theorem implies that there exists a unique simple zero $x^*$ of $\eta_0 + \eta_1$ in the disc $|x| < (16 r_0)\inv$.
    In fact, Rouch\'e's theorem applies to the circle of radius $2r_1$ and we deduce $|x^*| \lesssim r_1$.
    By the mean value theorem and Cauchy's estimate, we have $|\eta_0'(x^*) + \eta_1'(x^*) - \eta_0'(0)| \lesssim r_0 |x^*| + r_1 \lesssim r_0 r_1$.
    In the last $\lesssim$ we used the fact $r_0 \geq 1$, which follows from Cauchy's estimate of $|\eta_0'(0)|$.
    After scaling $\eta_0$ and $\eta_1$ back to the original functions, we obtain the desired conclusions with $c = 1/32$.
\end{proof}

We complement Lemma~\ref{lem:equidistr orb} with the following corollary showing the uniqueness of the orbits that it constructs.

\begin{corollary}\label{cor:only 2}
    Suppose that $\phi$ is real-valued, so that the standard map $F$ acts on
    the real locus $\T \times \R$ in the adapted coordinates.
    Then the $(p,q)$-periodic points $(\wt x_\pm^k, \wt y_\pm^k)$ of $F_q$
    obtained in Lemma~\ref{lem:equidistr orb} lie in $\T\times\R$, and these are the only
    $(p,q)$-periodic orbits of $F$ contained in the real locus.
\end{corollary}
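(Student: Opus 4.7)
The argument has two parts: showing that the constructed orbits lie in $\T\times\R$, and showing that they exhaust all $(p_j,q_j)$-orbits of $F$ in the real locus.

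For the first part, when $\phi$ is real the map $F$ is real-analytic on $\T\times\R$ and, by the last assertion of Theorem~\ref{thm:rnf}, so is the coordinate change $H_q$; hence the resonant normal form $g_{q_j}$ and its truncation $\fg_{q_j}$ take real values on real inputs. I would revisit the Rouché argument in the proof of Lemma~\ref{lem:equidistr orb}: each zero $x^*_\pm$ of $\fg_{q_j}(\,\cdot\,,0)$ is located uniquely within a small complex disc centered at the real point $k/(2q_j)$, and since the zero set of a real-valued analytic function is invariant under complex conjugation, uniqueness within each such conjugation-symmetric disc forces $x^*_\pm\in\R$. The same reasoning, applied to the perturbative arguments of Lemma~\ref{lem:zero pert} and Lemma~\ref{lem:gam} (all of whose inputs are now real-analytic), yields real $\wt x^k_\pm$ and $\wt y^k_\pm=\gamma^1(\wt x^k_\pm)$; pushing forward by the real-analytic $H_q$ places the resulting $(p_j,q_j)$-periodic orbits in $\T\times\R$.

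For the second part, the plan is to confine any real $(p_j,q_j)$-orbit of $F$ to the strip $\T\times\bigl[p_j/q_j-\omega-b',\,p_j/q_j-\omega+b'\bigr]$ with $b'=|\omega-p_j/q_j|/6$, which is contained in the range of $H_q$. There the orbit corresponds to a real $(p_j,q_j)$-orbit of $F_{q_j}$ in $\T\times(-b',b')$, and the uniqueness statement of Lemma~\ref{lem:equidistr orb}---whose Rouché-and-contraction argument restricts verbatim to the real cross-section of $\T_{a'}\times\D_{b'}$---identifies it with one of the two constructed orbits. To establish the confinement I would work in the Lagrangian coordinates of the original standard map: a $(p_j,q_j)$-orbit is a critical point of the action $\sum_k[(x_k-x_{k+1})^2/2+V(x_k)]$ subject to $x_{q_j}=x_0+p_j$. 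Writing $x_k=a+kp_j/q_j+\xi_k$ with $\sum_k\xi_k=0$, the Euler--Lagrange equation $\xi_{k-1}-2\xi_k+\xi_{k+1}=\dot V(a+kp_j/q_j+\xi_k)$ splits, via Lyapunov--Schmidt along the kernel of the discrete Laplacian, into a contractive equation for $\xi$ as a function of $a$ and a scalar Melnikov function on $\T$ in the variable $a$. By our construction in Theorem~\ref{thm:fx pt} the $q_j$-resonant part of $V$ is dominated by a single cosine mode of amplitude $\sim|\omega-p_j/q_j|^2$, so the Melnikov function is a small perturbation of a single cosine and has exactly two critical points modulo the $\Z/q_j$-symmetry, yielding exactly two $(p_j,q_j)$-orbits, which must coincide with the constructed ones.

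The hardest step is making this reduction quantitative enough to rule out spurious critical points contributed by the non-resonant Fourier modes of $V$ and to verify that the resulting orbit displacements $\xi_k$ are small enough to place the orbit in the narrow strip of width $\sim|\omega-p_j/q_j|\leq q_j^{-\nu}$. This should parallel the contraction estimates of Section~\ref{sec:picard}: the combined smallness $\|\phi_0\|'+\|\phi_1\|'\leq 1/2$ and the strong arithmetic bound $|\omega-p_j/q_j|\leq q_j^{-\nu}$ should suffice for the bookkeeping, in the spirit of quantitative Aubry--Mather analysis of nearly integrable twist maps.
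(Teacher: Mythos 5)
The first part of your argument (realness of the constructed orbits) is sound and matches the paper's implicit reasoning: real-analyticity of $H_q$ together with uniqueness of the zero inside a conjugation-symmetric disc forces the zeros to be real.

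The gap is in the second part. Your Lyapunov--Schmidt/Melnikov route does not by itself supply the \emph{a priori} confinement estimate; it rather presupposes it. The reduction ``splits into a contractive equation for $\xi$ as a function of $a$'' only parametrizes critical points \emph{in the small ball where the contraction is valid}; it does not exclude critical points outside that ball. Moreover the naive confinement via the discrete Laplacian is ineffective here: the smallest nonzero eigenvalue of $\Delta$ on $\Z/q\Z$ is $\sim q^{-2}$, and $\|\dot V\|_\infty$ is bounded but not small relative to $|\omega - p/q|$, so the resulting bound $\|\xi\|_\infty \lesssim q^2 \|\dot V\|_\infty$ degenerates and does not place the orbit in a strip of width $\sim |\omega - p/q| \le q^{-\nu}$. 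The paper's actual confinement argument is quite different and far more elementary: working in the adapted coordinates \eqref{eq:F RIC}, one shows from $\|\dot\phi\|$ small (which follows from $\|\dddot\phi\| \le 1$) that $|f|,|g| < |y|/8$, then argues by contradiction. If a real $(p,q)$-orbit missed the annulus $A = \T\times\{\,|y-(p/q-\omega)|<\tfrac{1}{15}|\omega-p/q|\,\}$, all $y_k - (p/q-\omega)$ would have a constant sign (otherwise $g$ would have to produce too large a jump in $y$), and then the telescoping identity $\sum_k(x_{k+1}-x_k) = p$ forces some $x_{k+1}-x_k \le p/q$, which by the twist relation $x_{k+1}-x_k = \omega + y_k + f(x_k,y_k)$ pins $y_k$ back into $A$ --- a contradiction. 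This pigeonhole argument is the content you have labelled ``the hardest step'' and left unresolved; without it, the uniqueness claim does not close.

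One further remark: even granted the confinement, the Melnikov count of critical points is not needed, since Lemma~\ref{lem:equidistr orb} already establishes that the constructed $2q$ points are the \emph{only} $(p,q)$-periodic points of $F_q$ in $\T_{a'}\times\D_{b'}$. Your proposed Lyapunov--Schmidt machinery therefore duplicates that part of the work at considerably greater cost.
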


\begin{proof}[Proof of Corollary~\ref{cor:only 2}]
    Let $\phi$ be real-valued so that $F$ acts on $\T \times \R$.
    By Lemma \ref{lem:equidistr orb}, the points $(\wt x^k_\pm, \wt y^k_\pm)_{k = 0}^{q-1}$ in Lemma \ref{lem:equidistr orb} exhaust all $(p,q)$-periodic points in the domain $\T_{a'} \times \D_{b'}$ of $F_q$. 
    To prove the required statement it is enough to show that all $(p,q)$-orbits of $F$ must intersect the subset $H_q(\T_{a'} \times \D_{b'})$.
    For this, it suffices to show that every $(p,q)$-periodic orbit of $F$ (in the real locus $\T \times \R$) must intersect the real annulus $A$ defined as 
    \begin{equation*}
        A := \T \times \bigg\{ y \in \R \ : \ \bigg|y - \Big(\frac{p}{q} - \omega \Big) \bigg| < \frac{1}{15} \verts{\omega - \frac{p}{q}} \bigg\}.
    \end{equation*}
    Indeed, the estimate \eqref{eq:h12 est} clearly implies $A \subset H_q(\T_{a'} \times \D_{b'})$.

    We also recall that the conditions $\|\dddot\phi\| \leq 1$ and $\phi^* = 0$ imposed by Theorem \ref{thm:rnf} automatically guarantee that $\|\dot\phi\| \leq (2\pi)^{-2}$ by definition of the $\|\cdot\|$-norm.
    Then, by the mean value theorem applied to \eqref{eq:fg} and the bound on $\|\dot\phi\|$, it is straightforward to show (for example using the formula \eqref{eq:bar fg int}) that $\max(2|f(x,y)|,|g(x,y)|) < |y|/8$. 

    Now, let $(x_k, y_k) = F^{k}(x_0, y_0)$ be a $(p,q)$-periodic orbit under $F$ lifted to the universal cover. In particular, $x_q = x_0 + p$ and $y_q = y_0$.
    Assume for a contradiction that $(x_k, y_k)$ does not intersect $A$ for any $k$.
    Then we observe that $y_k - (p/q - \omega)$ must have the same sign for all $k$.
    For, if not, then we have $|y_k - y_{k+1}| \geq \frac{2}{15}|\omega - p/q|$ and $|y_k| \leq \frac{14}{15} |\omega - p/q|$ for some $k$. 
    By \eqref{eq:F RIC} and \eqref{eq:fg}, we deduce $\frac{2}{15}|\omega - p/q| \leq |g(x_k,y_k)|$.
    However, this implies $\frac{2}{15}|\omega - p/q| < |y_k|/8$, which contradicts $|y_k| < |\omega - p/q|$.
    
    We have shown that all points $(x_k, y_k)$ of the orbit lie on the same side of $A$.
    Suppose for example $p/q - \omega > 0$ and $y_k \geq \frac{16}{15}(p/q - \omega)$ for all $k$.
    Since $x_q - x_0 = p$, we have $x_{k+1} - x_k \leq p/q$ for some $k$, which implies $y_k + f(x_k, y_k) \leq p/q - \omega$.
    On the other hand, since $|f(x_k, y_k)| < |y_k|/16$ we have $y_k < (p/q - \omega)/(1- 1/16)$, which contradicts $y_k > \frac{16}{15}(p/q - \omega)$.
    Hence, we have shown that any $(p,q)$-orbit of $F$ must intersect $A$, which by $A \subset H_q(\T_{a'} \times \D_{b'})$ implies that it intersects $H_q(\T_{a'} \times \D_{b'})$.
    The cases $y_k \leq \frac{14}{15}(p/q - \omega)$ for all $k$ and the cases with $p/q - \omega < 0$ can be treated similarly.
    It follows from this argument that this orbit must intersect $A$ and it must be one of the $(p,q)$-orbits obtained by Lemma~\ref{lem:equidistr orb}.
\end{proof}

\paragraph{End of proof of Lemma~\ref{lem:2 orb}.}

Finally, we show that for each $j$ the two periodic orbits constructed in Lemma~\ref{lem:equidistr orb} satisfy the conclusions of Lemma~\ref{lem:2 orb}, thereby completing its proof.

The uniqueness statement in part~(1) of Lemma~\ref{lem:2 orb} follows immediately from Corollary~\ref{cor:only 2}.
We now verify the estimates \eqref{eq:Lamj est} and \eqref{eq:CAj est} for these periodic orbits.

We begin with \eqref{eq:Lamj est}.
In the case of the zero $x_-^* + k/q$, the corresponding quantity $\alpha_-^k$ in \eqref{eq:equidistr orb det est} is given by
\begin{equation*}
    \alpha_-^k = -\sigma \eu^{-2\pi q} \Bigl(\frac{p}{q} - \omega\Bigr)^2
    \cos(2\pi q x^*).
\end{equation*}
Thus,
\begin{equation*}
    \verts{
      \frac{\det\bigl(\du_{(\wt x_\pm^k, \wt y_\pm^k)}F_q^q - \id\bigr)}
           {q^2 (p/q - \omega)^2}
      \pm \sigma \eu^{-2\pi q} \cos(2\pi q x^*)
    }
    \;\ll\; |\sigma| \eu^{-2\pi q}.
\end{equation*}
From the estimate $|x^*| \lesssim q^{\nu_0 - \nu - 1} \ll 1/q$ we obtain
$|\cos(2\pi q x^*) - 1| \ll 1$.
The desired estimate \eqref{eq:Lamj est} now follows from the definition of
$\Lambda_{j,\pm}$ by taking $q$ sufficiently large.

We next turn to \eqref{eq:CAj est}.
Recall that, by \eqref{eq:rnf S compare}, the function $S_q$ satisfies
\begin{equation}\label{eq:Sq approx recall}
    \biggl|
      S_q(x,y)
      - \Bigl(\frac{p}{q} - \omega + y\Bigr)^2
        \frac{1 + f_1(x)}{2\bigl(1 + \langle f_1 \rangle_q(x)\bigr)^2}
    \biggr|
    \;\lesssim\;
    q^{\nu_0} \biggl|\omega - \frac{p}{q}\biggr|^3,
\end{equation}
with $\nu_0 = 64$.
Substituting $x = \wt x_\pm^k$, $y = \wt y_\pm^k$ and using the mean value theorem
together with \eqref{eq:equidistr orb est}, we obtain
\begin{equation}\label{eq:equidistr bound S}
    \begin{aligned}
        &
        \biggl|
          \Bigl(\frac{p}{q} - \omega + \wt y_\pm^k\Bigr)^2
          \frac{1 + f_1(\wt x_\pm^k)}{2\bigl(1 + \langle f_1 \rangle_q(\wt x_\pm^k)\bigr)^2}
          -
          \Bigl(\frac{p}{q} - \omega\Bigr)^2
          \frac{1 + f_1(x_\pm^* + k/q)}{2\bigl(1 + \langle f_1 \rangle_q(x_\pm^* + k/q)\bigr)^2}
        \biggr| \\
        &\qquad\lesssim\;
        \biggl\|
           \frac{1 + f_1}{2(1 + \langle f_1 \rangle_q)^2}
        \biggr\|
        \biggl|\omega - \frac{p}{q}\biggr|\, |\wt y_\pm^k|
        +
        \biggl\|
           \frac{\du}{\du x}
           \Bigl(
             \frac{1 + f_1}{2(1 + \langle f_1 \rangle_q)^2}
           \Bigr)
        \biggr\|
        \biggl|\omega - \frac{p}{q}\biggr|^2
        \biggl|\wt x_\pm^k - \Bigl(x_\pm^* + \frac{k}{q}\Bigr)\biggr| \\
        &\qquad\lesssim\;
        \biggl|\omega - \frac{p}{q}\biggr|\, |\wt y_\pm^k|
        +
        \biggl|\omega - \frac{p}{q}\biggr|^2
        \biggl|\wt x_\pm^k - \Bigl(x_\pm^* + \frac{k}{q}\Bigr)\biggr|
        \;\lesssim\;
        q \eu^{-4\pi q} \biggl|\omega - \frac{p}{q}\biggr|^2.
    \end{aligned}
\end{equation}
In the last line we have used the basic estimates
$\|f_1\| < 1/2$ and $\|\dot f_1\| \lesssim \|\ddot\phi\|$, which follow from the definition~\eqref{eq:f1}.

Summing over $k = 0,\dots,q-1$ yields
\begin{equation}\label{eq:action q-proj}
    \sum_{k=0}^{q-1}
      \frac{1 + f_1(x_\pm^* + k/q)}{2(1 + \langle f_1 \rangle_q(x_\pm^* + k/q))^2}
    =
    q\biggl\langle
        \frac{1 + f_1}{2(1 + \langle f_1 \rangle_q)^2}
      \biggr\rangle_q(x_\pm^*)
    =
    \frac{q}{2\bigl(1 + f_1^* + \langle f_1^\bullet \rangle_q(x_\pm^*)\bigr)}.
\end{equation}
Recall that $x_+^* = x^*$ and $x_-^* = (2q)^{-1} - x^*$ with
$|x^*| \lesssim q^{\nu_0 - \nu - 1} \ll 1/q$.
In particular, we have $|\Im x^*_\pm| \ll 1$.
By Cauchy's estimate and the mean value theorem we obtain
\begin{equation}\label{eq:xq star vs 0}
    \bigl|
      \langle f_1^\bullet \rangle_q(x_+^*)
      - \langle f_1^\bullet \rangle_q(0)
    \bigr|
    \;\lesssim\;
    q^{\nu_0 - \nu} |\langle f_1^\bullet \rangle_q|_{1/q}
    \;\lesssim\;
    q^{\nu_0 - \nu}
    \biggl(\sum_{\substack{|k|\ge q \\ q\mid k}}
       |f_1| \eu^{-2\pi(1-1/q)|k|}
    \biggr)
    \;\lesssim\;
    q^{\nu_0 - \nu} \eu^{-2\pi q}.
\end{equation}
The second $\lesssim$ follows by estimating the Fourier coefficients of $f_1$ individually using analyticity over $\T_1$.
Similarly
\begin{equation}\label{eq:xq star- vs 0}
    \biggl|
      \langle f_1^\bullet \rangle_q(x_-^*)
      - \langle f_1^\bullet \rangle_q\Bigl(\frac{1}{2q}\Bigr)
    \biggr|
    \;\lesssim\;
    q^{\nu_0 - \nu} \eu^{-2\pi q}.
\end{equation}
Moreover, for $\Im x = 0$ we have
$|\langle f_1 \rangle_q^{>q}(x)|
 \leq \sum_{|k|>q,\;q\mid k} |f_1| \eu^{-2\pi|k|}
 \lesssim \eu^{-4\pi q}$.
Combining this with \eqref{eq:xq star vs 0} and \eqref{eq:xq star- vs 0} gives
\begin{equation*}
    \biggl|
      \pm\bigl((\hat f_1)_q + (\hat f_1)_{-q}\bigr)
      - \langle f_1^\bullet \rangle_q(x_\pm^*)
    \biggr|
    \;\lesssim\;
    q^{\nu_0 - \nu} \eu^{-2\pi q},
\end{equation*}
where $(\hat f_1)_{\pm q}$ are the Fourier coefficients of $f_1$ of order $\pm q$.
It follows that
\begin{equation}\label{eq:reduction to vee}
    \biggl|
      \frac{1}{1 + f_1^* \pm ((\hat f_1)_q + (\hat f_1)_{-q})}
      - \frac{1}{1 + f_1^* + \langle f_1^\bullet \rangle_q(x_\pm^*)}
    \biggr|
    \;\lesssim\;
    q^{\nu_0 - \nu} \eu^{-2\pi q}.
\end{equation}

Let $\CA_\pm$ denote the rescaled action defined by the right-hand side of
\eqref{eq:CAj def} with $\xi_j$ taken to be the $(p,q)$-orbit of
$(\wt x_\pm^k, \wt y_\pm^k)$ under $F_q$, i.e.
\begin{equation*}
    \CA_\pm
    :=
    \frac{1}{q(\omega - p/q)^2}
    \sum_{k=0}^{q-1} S_q(\wt x_\pm^k, \wt y_\pm^k).
\end{equation*}
Combining \eqref{eq:Sq approx recall}, \eqref{eq:equidistr bound S},
\eqref{eq:action q-proj}, and \eqref{eq:reduction to vee} yields
\begin{equation*}
    \biggl|
      \CA_\pm
      - \frac{1}{2\bigl(1 + f_1^* \pm ((\hat f_1)_q + (\hat f_1)_{-q})\bigr)}
    \biggr|
    \;\lesssim\;
    q^{\nu_0} \biggl|\omega - \frac{p}{q}\biggr|
    + q^{\nu_0 - \nu} \eu^{-2\pi q}.
\end{equation*}
Since $|(\hat f_1)_{\pm q}| \lesssim \eu^{-2\pi q}$, up to an additional error
$\lesssim q^{\nu_0 - \nu} \eu^{-2\pi q}$ we may write
\begin{equation}\label{eq:normalized action est}
    \biggl|
      \CA_\pm
      - \frac{1}{2(1 + f_1^*)}
        \biggl(
          1 \mp \frac{(\hat f_1)_q + (\hat f_1)_{-q}}{1 + f_1^*}
        \biggr)
    \biggr|
    \;\lesssim\;
    q^{\nu_0} \biggl|\omega - \frac{p}{q}\biggr|
    + q^{\nu_0 - \nu} \eu^{-2\pi q}.
\end{equation}

We now control $(\hat f_1)_q + (\hat f_1)_{-q}$ via the choice of the coefficients
$\sigma_j$ in \eqref{eq:h choice}.
Recall from \eqref{eq:qj res compare} and the definition of $\fg$ that
\begin{equation*}
    \Bigl\|
      \fg^\bullet(\cdot,0)
      + \frac{1}{2}\Bigl(\frac{p}{q} - \omega\Bigr)^2
        \biggl\langle
          \frac{\langle \dot f_1 \rangle_q}{1 + \langle f_1 \rangle_q}
        \biggr\rangle_q^N
    \Bigr\|
    \;\lesssim\;
    q^{\nu_0 - \nu} \biggl|\omega - \frac{p}{q}\biggr|^2.
\end{equation*}
Since $N = 2q$, the $k$th Fourier coefficient of the expression inside the
$\|\cdot\|$-norm is nonzero only for $|k| = q$ or $2q$, and analyticity in $\T_1$
gives a bound $q^{\nu_0 - \nu} \eu^{-2\pi q} |\omega - p/q|^2$ for such coefficients.
Hence the supremum norm over the real torus $\T$ is bounded by
$\lesssim q^{\nu_0 - \nu} \eu^{-2\pi q} |\omega - p/q|^2$.
A similar argument shows that the supremum norm over $\T$ of
$\langle\langle \dot f_1 \rangle_q (1 + \langle f_1 \rangle_q)^{-1}\rangle^{>N}$
is bounded by $\lesssim \eu^{-6\pi q}$.
For $\Im x = 0$, the explicit formula \eqref{eq:fg(x,0)} then yields
\begin{equation}\label{eq:sig-f1 est}
    \biggl|
      \frac{\sigma \eu^{-2\pi q}}{2\pi q} \sin(2\pi q x)
      + \frac{1}{2}\,
        \frac{\langle \dot f_1 \rangle_q(x)}
             {1 + \langle f_1 \rangle_q(x)}
    \biggr|
    \;\lesssim\;
    q^{\nu_0 - \nu} \eu^{-2\pi q}.
\end{equation}

We decompose
$\langle f_1 \rangle_q
 = f_1^* + \langle f_1 \rangle_q^q + \langle f_1 \rangle_q^{>q}$.
Analyticity and $\Im x = 0$ imply
$|\langle \dot f_1 \rangle_q^{>q}(x)| \lesssim q \eu^{-4\pi q}$,
$|\langle f_1 \rangle_q^q| \lesssim \eu^{-2\pi q}$,
and $|\langle \dot f_1 \rangle_q^q| \lesssim q \eu^{-2\pi q}$, hence
\begin{equation*}
    \biggl|
      \frac{\langle \dot f_1 \rangle_q(x)}
           {1 + \langle f_1 \rangle_q(x)}
      - \frac{\langle \dot f_1 \rangle_q^q(x)}{1 + f_1^*}
    \biggr|
    \;\lesssim\;
    q \eu^{-4\pi q}
    \;\lesssim\;
    q^{\nu_0 - \nu} \eu^{-2\pi q}.
\end{equation*}
Substituting this into \eqref{eq:sig-f1 est} and comparing the Fourier coefficients
of order $\pm q$, we obtain
\begin{equation*}
    \biggl|
      \frac{\sigma \eu^{-2\pi q}}{(2\pi q)^2}
      - \frac{1}{2}\,
        \frac{(\hat f_1)_q + (\hat f_1)_{-q}}{1 + f_1^*}
    \biggr|
    \;\lesssim\;
    q^{\nu_0 - \nu} \eu^{-2\pi q}.
\end{equation*}
Combining this with \eqref{eq:normalized action est}, we arrive at
\begin{equation*}
    \biggl|
      \CA_\pm
      - \frac{1}{2(1 + f_1^*)}
        \Bigl(
          1 \mp \frac{2\sigma \eu^{-2\pi q}}{(2\pi q)^2}
        \Bigr)
    \biggr|
    \;\lesssim\;
    q^{\nu_0} \biggl|\omega - \frac{p}{q}\biggr|
    + q^{\nu_0 - \nu} \eu^{-2\pi q}.
\end{equation*}
This is precisely \eqref{eq:CAj est}.
The proof of Lemma~\ref{lem:2 orb} is complete.

We have now completed all steps in the proof of Theorem~\ref{thm:main}, with the sole exception of the technical intermediate result Theorem~\ref{thm:rnf}.
The remainder of the article will be devoted to the proof of Theorem~\ref{thm:rnf}.

\section{Proof of Theorem \ref{thm:rnf}}\label{sec:rnf proof}

Before entering the proof of Theorem \ref{thm:rnf}, we broadly outline the main steps.

We start with the standard map \eqref{eq:F RIC} in adapted coordinates, where the RIC is given by $\{y = 0\}$. 
For a fixed pair $(p, q)$, we focus on a complex neighbourhood of the line $\{y = p/q - \omega\}$.
On this neighbourhood, we construct two successive changes of coordinates, denoted by $\Theta$ and $\id + \Sigma$, in Section~\ref{sec:rnf part 1} and Section~\ref{sec:rnf part 2}, respectively.

The common idea behind the construction of these two coordinate changes is to consider a near-identity change of variables 
\begin{equation}\label{eq:rnf Phi}
\Phi = \id + \Psi \quad \text{where} \quad \Psi(x,y) = (\psi(x,y),\varphi(x,y))
\end{equation}
for some functions $\psi$ and $\varphi$.
Put $A(x,y) = (x + \omega + y, y)$ and $G(x,y) = (f(x,y), g(x,y))$. 
Let 
\begin{equation*}
    \Phi \circ F \circ \Phi\inv = \wt F = A + \wt G
\end{equation*}
where $\wt G(x,y) = (\wt f(x,y), \wt g(x,y))$.
Expanding $\wt F \circ \Phi = \Phi \circ F$, we have 
\begin{equation*}
    \wt G \circ (\id + \Psi) = G + \Psi \circ F + A - A \circ (\id + \Psi).
\end{equation*}
Componentwise, this is
\begin{equation}\label{eq:tf tg}
    \begin{cases}
        \wt f \circ (\id + \Psi) = f + \psi \circ F - \psi - \varphi, \\
        \wt g \circ (\id + \Psi) = g + \varphi \circ F - \varphi.
    \end{cases}
\end{equation}
To simplify $\wt F$ as much as possible, we shall make $\wt f = 0$ by fixing
\begin{equation}\label{eq:varphi}
    \varphi = f + \psi \circ F - \psi.
\end{equation}
Then $\wt F(x,y) = (x + p/q + y, y + \wt g(x,y))$ with 
\begin{equation}\label{eq:tg}
    \wt g \circ (\id + \Psi) = g + (f \circ F - f) + (\psi \circ F^2 - 2 \psi \circ F + \psi).
\end{equation}
Our goal is to choose $\psi$ so that $\wt g(\cdot, 0)$ is closely approximated by its resonant part $\langle g_q \rangle_q^N$, as formalized by the estimate \eqref{eq:rnf concl 2}.

In the first step, we base the construction of $\psi$ on a formal Taylor-series computation, and the coordinate transform \eqref{eq:rnf Phi} is accordingly defined over a neighbourhood of $\{y = 0\}$ which contains the circle $\{y = p/q - \omega\}$.
More precisely, we start in Section~\ref{sec:rnf1 proof taylor} by analyzing the Taylor expansion
\begin{equation}\label{eq:tg taylor}
    \wt g(x,y) = \wt g_1(x) y + \wt g_2(x) y^2 + \CO(y^3)
\end{equation}
where $\wt g$ is as given in \eqref{eq:tg}, and we take 
\begin{equation*}
    \psi(x,y) = \psi_1(x) y + \psi_2(x) y^2
\end{equation*}
with suitable choices of $\psi_1$ and $\psi_2$ so that $\wt g_1$ and $\wt g_2$ are almost $1/q$-periodic.
The resulting coordinate transform $\Phi$ will not be symplectic in general.

In order to evaluate the symplectic action of any orbit found in these new coordinates, we also compute the leading term $\wt s_2$ in the Taylor expansion
\begin{equation}\label{eq:tS Taylor outline}
    \wt S(x,y) = \wt s_2(x) y^2 + \CO(y^3)
\end{equation}
of the pullback $\wt S = S \circ \Phi\inv$ of the generating function $S$.

Then, we restrict everything to a neighbourhood of $\{y = p/q - \omega\}$.
This requires evaluating the Taylor expansion around $y = 0$ at a finite $y \approx p/q - \omega$, and hence we also need a quantitative estimate of the $\CO(y^3)$ remainders in \eqref{eq:tg taylor} and \eqref{eq:tS Taylor outline}.
This estimate will be the main content of Section \ref{ssec:rnf1 q remainder}.
The coordinate transform resulting from Step~1 will be constructed as $\Theta = \Phi\inv \circ \tau$ where $\tau$ is a translation that re-centers the domain at $y = p/q - \omega$.

Roughly speaking, at $y \approx p/q - \omega$ the Taylor remainders in \eqref{eq:tg taylor} and \eqref{eq:tS Taylor outline} have size $\sim |\omega - p/q|^3$, whereas the $q$-resonant part of $\wt g$ restricted to the real torus $\T$ has size $\sim |\omega - p/q|^2 \eu^{-2\pi q}$.
Since $\wt g$ has a rather explicit dependence on $\phi$, this informally indicates that the eigendata of the $(p, q)$-orbit can be controlled by the $q\Z$-Fourier modes of $\phi$ over a range of size at most $\sim |\omega - p/q|^2\eu^{-2\pi q}$.
It can be verified, although we do not do so here, that if we impose the stronger arithmetic condition $|\omega - p/q| \leq \eu^{-2\pi C q}$ for some $C > 1$, then the size $|\omega - p/q|^3$ of the error term will be small enough and the first coordinate change already suffices to deduce the main results. 

If, however, the size of $|\omega - p/q|$ is only polynomially small in $q$, as assumed in part~(1) of the main theorem, then the error contributed by the Taylor remainder outweighs the size of the $q$-resonant part of $\wt g$.
Therefore, we need an iterative scheme in Step~2 to reduce the non-resonant errors further. 

In Section~\ref{sec:rnf part 2 pf}, we use a similar type of coordinate transform given by \eqref{eq:rnf Phi}–\eqref{eq:tg} but with $\omega = p/q$, since we have re-centered the coordinates at $y = p/q - \omega$ in Step~1.
The coordinate transform obtained in Step~2 is near-identity and is denoted $\id + \Sigma$.
The iteration scheme is similar in spirit to the works \cite{BounemouraNiederman} and \cite{Martín_2016}. 
Unlike \cite{BounemouraNiederman}, the coordinates we obtain are not symplectic.

We remark that, in contrast to Step~1, the construction in Step~2 (Section \ref{sec:rnf part 2 pf}) is nonexplicit because it involves $\sim q$ iterations of coordinate transforms.
Hence, while we can make the non-resonant part \eqref{eq:rnf2 iter concl 2} of $\wt g$ exponentially small in $q$, we do not have a better approximation of the resonant part in \eqref{eq:rnf2 iter res compare} — we do not have sufficient information on the contribution to the resonant part from this iteration scheme.

For technical reasons, we cannot skip Step~1 and directly begin with the iteration scheme in Step~2.
This is because, without Step~1, the first iteration of the scheme in Step~2 will introduce an `error' of comparable size to the resonant part of the original map, and it seems difficult to analyze this `error' term explicitly.
On the other hand, Step~1 ensures that the non-resonant part of the map is small enough so that errors introduced by each iteration in Step~2 are always `much smaller' than the resonant part.
This situation appears similar to Lazutkin's proof of existence of invariant curves near the boundary of a convex billiard table: in his proof, one has to transform the original coordinates into higher-order `Lazutkin coordinates' so that the non-integrable part of the billiard map vanishes to sufficiently high order near the boundary before applying the KAM iteration.

We also remark that, while the method in Step~2 yields the weaker arithmetic condition~\eqref{eq:main dioph condition} for the results on Lyapunov exponents, it does not improve the condition~\eqref{eq:main Liouv condition} for the results on symplectic actions.
The key issue is that, whereas Lyapunov exponents can be computed in \emph{any} coordinates, the symplectic action is invariant only under exact symplectic coordinate transforms.
Although the coordinate transform in Step~1 is non-symplectic, it is fairly explicit, and we can approximate the symplectic action using just the leading term \eqref{eq:tS Taylor outline}, \emph{provided} the Taylor remainder is much smaller than the leading term, as ensured by the strong condition~\eqref{eq:main Liouv condition}.
If only weaker conditions of the type~\eqref{eq:main dioph condition} are assumed, then the relevant information on the symplectic actions of $q$-periodic orbits is contained in the first $\sim q$ terms in the Taylor expansion \eqref{eq:tS Taylor outline}.
In other words, it is unclear how to analyze the dependence of the $q$-resonant part of the restricted generating function $S(\cdot, y)$ on $\phi$ when $|y - \omega|$ is only \emph{polynomially} small in $q$.

We now begin the proof of Theorem~\ref{thm:rnf} proper.

\subsection{Step 1: coordinate changes via Taylor expansion}\label{sec:rnf part 1}

\begin{proposition}\label{prop:rnf1}
    Let $\omega \in \R \setminus \Q$ and $\phi: \T_1 \to \C$ be analytic with zero average and $\|\dddot\phi\| \leq 1$.
    Let $F$
    be the standard map associated with $(\omega, \phi)$ in the adapted coordinates.
    Then the following statements hold.
    For $q \in \Z^+$ sufficiently large, if
    \begin{equation}\label{eq:rnf1 arithm}
         \verts{\omega - \frac{p}{q}} \leq \frac{q^{-6}}{6} \qquad \text{for some} \quad p \in \Z,
    \end{equation}
    then there exists an analytic change of coordinates $\Theta$ on the domain $\T_{1 - q\inv} \times \D_{|\omega - p/q|/2}$ which is close to a translation by $p/q - \omega$ in $y$ in the sense that 
    \begin{equation}\label{eq:th12}
        \Theta(x,y) = (x + \theta_1(x,y),\; y + p/q - \omega + \theta_2(x,y))
    \end{equation}
    with 
    \begin{equation}\label{eq:th12 est}
        |\theta_1|_{1-1/q, |\omega - p/q|/2} \lesssim q \verts{\omega - \frac{p}{q}}; \quad |\theta_2|_{1-1/q, |\omega - p/q|/2} < \frac{1}{11} \verts{\omega - \frac{p}{q}},
    \end{equation}
    and the map $\wt F_q := \Theta\inv \circ F \circ \Theta$ in these new coordinates writes
    \begin{equation*}
        \wt F_q: \begin{pmatrix}
            x \\ y 
        \end{pmatrix} \mapsto \begin{pmatrix}
            x + \frac{p}{q} + y \\ y + \wt g_q(x,y)
        \end{pmatrix} .
    \end{equation*}
    Define $f_1: \T_1 \to \C$ as in \eqref{eq:f1} and $\wt S_q := S \circ \Theta$, where  $S$ is the normalized generating function associated with $(\omega, \phi)$ in adapted coordinates.
    Then for $\nu_0 = 64$ and for all $(x,y) \in \T_{1- 1/q} \times \D_{|\omega - p/q|/2}$,
    \begin{equation}\label{eq:rnf1 residue}
        \begin{aligned}
            \Bigg|\wt g_q(x,y) + \frac{1}{2}\Big(\frac{p}{q} - \omega + y\Big)\Big(\frac{p}{q} - \omega - y\Big)\frac{\langle \dot f_1 \rangle_q(x)}{1 + \langle f_1 \rangle_q(x)}\Bigg| \lesssim& q^{\nu_0} \verts{\omega - \frac{p}{q}}^3\\
            \verts{\wt S_q(x,y) - \Big(\frac{p}{q} - \omega + y\Big)^2\frac{1+f_1}{2(1 + \crochet{f_1}_q)^2}(x) } \lesssim& q^{\nu_0} \verts{\omega - \frac{p}{q}}^3.
        \end{aligned}
    \end{equation}
    Moreover, the coordinate transform $\Theta$ depends analytically on $\phi$ and is real-analytic when $\phi$ is real-valued.
    In this case, the map $\wt F_q$ preserves the area form
    \begin{equation}\label{eq:rnf1 area form}
        \Theta^* \du x \du y = \Big(\frac{1}{1 + \langle f_1 \rangle_q} + \wt \eta\Big) \du x \du y
    \end{equation}
    for some real-analytic $\wt \eta$ such that
    \begin{equation}\label{eq:rnf1 area form estimate}
        |\wt \eta|_{1-\frac{1}{q}, \frac12|\omega - \frac{p}{q}|} \lesssim q^{\nu_0} \verts{\omega - \frac{p}{q}}.
    \end{equation}
\end{proposition}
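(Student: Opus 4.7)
The plan is to build $\Theta = \Phi^{-1} \circ \tau$, where $\tau(x,y) = (x, y + p/q - \omega)$ re-centres the $y$-coordinate on the circle $\{y = p/q - \omega\}$ and $\Phi = \id + \Psi$ is a near-identity coordinate change on a neighbourhood of the RIC $\{y = 0\}$ that removes the first-coordinate perturbation in \eqref{eq:F RIC} up to second order in $y$.  Writing $\Psi = (\psi, \varphi)$ and enforcing \eqref{eq:varphi}, i.e.\ $\varphi = f + \psi \circ F - \psi$, guarantees that the first coordinate of $\Phi \circ F \circ \Phi^{-1}$ is $x + \omega + y$ identically, leaving $\psi$ free.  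I will take $\psi$ polynomial, $\psi(x,y) = \psi_1(x) y + \psi_2(x) y^2$, and match the first two Taylor coefficients of $\wt g$ from \eqref{eq:tg} in its second argument at $y = 0$.  Successive differentiation of \eqref{eq:fg} at $y = 0$ should show that the resulting cohomological equations for $\psi_1, \psi_2$ involve the operator $L u := u \circ R_\omega - u$ (and its iterate $L^2$) with right-hand sides depending on $\phi$ only through $f_1$ and its derivatives.  On nonresonant Fourier modes $L$ is invertible, and under \eqref{eq:rnf1 arithm} the small-divisor losses are polynomial in $q$, yielding $\|\psi_j\| \lesssim q^{O(1)}$ after a small loss of analyticity.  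I will prescribe
\begin{equation*}
    \wt g_2(x) := \tfrac{1}{2}\,\frac{\langle \dot f_1\rangle_q(x)}{1 + \langle f_1\rangle_q(x)}, \qquad \wt g_1(x) := -\Big(\tfrac{p}{q} - \omega\Big)\,\wt g_2(x),
\end{equation*}
so that $\wt g_1(x) y + \wt g_2(x) y^2$ matches, at $y = 0$, the first two Taylor coefficients of the target quadratic appearing after the shift $y \mapsto y - (p/q - \omega)$ in \eqref{eq:rnf1 residue}.

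Once $\Psi$ is constructed on $\T_{1 - 1/(2q)} \times \D_{b_0}$ with $b_0$ slightly larger than $|\omega - p/q|/2$, a standard Banach-space inverse function argument gives that $\Phi$ restricts to a biholomorphism on $\T_{1 - 1/q} \times \D_{|\omega - p/q|/2}$, and $\Theta := \Phi^{-1} \circ \tau$ takes the form \eqref{eq:th12}.  The explicit form of $\psi$ together with \eqref{eq:varphi} gives $\theta_1 \approx -\psi_1(x)(p/q - \omega)$ and $\theta_2 = O(|\omega - p/q|^2)$, so that \eqref{eq:th12 est} follows from $\|\psi_1\| \lesssim q^{O(1)}$.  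The second coordinate of $\wt F_q = \Theta^{-1} \circ F \circ \Theta$ satisfies $\wt g_q(x,y) = \wt g(x,\, y + p/q - \omega)$, so that using the prescribed values of $\wt g_1, \wt g_2$ above and expanding $\wt g$ around $y = 0$ through second order reproduces precisely the explicit term displayed in the first line of \eqref{eq:rnf1 residue}; Cauchy's estimate applied to the $O(y^3)$ Taylor remainder of $\wt g$ on a slightly larger $y$-domain then yields the claimed error $q^{\nu_0}|\omega - p/q|^3$.

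For the generating function, I will expand $\wt S_q = S \circ \Theta$ via the Taylor series of $S$ around $y = 0$.  From \eqref{eq:RIC gen func-fg} together with the normalization $S|_{\{y = 0\}} \equiv 0$, one sees that $S(x, y) = \tfrac{1}{2}(1 + f_1(x)) y^2 + O(y^3)$.  Substituting $\Theta(x,y)$ and using the resonant truncation built into $\psi_1$ (which produces a factor of $\langle f_1\rangle_q$ in the $\theta_1$ contribution) yields, after algebraic simplification, the announced leading expression $(p/q - \omega + y)^2 (1 + f_1)/[2(1 + \langle f_1\rangle_q)^2]$, with a Cauchy-estimate remainder $\lesssim q^{\nu_0}|\omega - p/q|^3$.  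For the area form, since $F$ and $\tau$ preserve $\du x \du y$, we have $\Theta^{*} \du x \du y = \Phi^{-*} \du x \du y$, reducing matters to a direct Jacobian computation of $\Phi = \id + \Psi$: using \eqref{eq:varphi} to eliminate $\partial_y \varphi$ and substituting the Taylor coefficients of $\psi$ evaluated along $\{y = p/q - \omega\}$ gives the leading coefficient $(1 + \langle f_1\rangle_q)^{-1}$, with remainder $\wt \eta$ controlled as in \eqref{eq:rnf1 area form estimate}.  The main obstacle in this plan is the careful bookkeeping needed to verify that the right-hand sides of the cohomological equations, the quadratic coefficient of $S$, and the Jacobian of $\Phi$ all depend on $\phi$ only through $f_1$: without this algebraic coincidence the clean resonant expressions in \eqref{eq:rnf1 residue} and \eqref{eq:rnf1 area form} cannot be obtained.
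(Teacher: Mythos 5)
Your global architecture matches the paper's: $\Theta = \Phi^{-1}\circ\tau$ with $\Phi = \id + (\psi,\varphi)$, $\varphi = f + \psi\circ F - \psi$, $\psi$ a degree-two polynomial in $y$, and $\wt S_q$ handled via the Taylor expansion $S(x,y) = \tfrac12(1+f_1)y^2 + O(y^3)$. But there are two substantive gaps and a few smaller slips.

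\textbf{The small-divisor inversion is wrong as stated.} You write that under \eqref{eq:rnf1 arithm} the nonresonant part of $L u = u\circ R_\omega - u$ is invertible with only polynomial losses in $q$. That fails for general irrational $\omega$: for $|k| \lesssim q^5$ one does get $|1 - e^{2\pi i k\omega}| \gtrsim 1/q$ from \eqref{eq:rnf1 arithm}, but for $|k| \gg q^5$ the divisors $|1 - e^{2\pi i k\omega}|$ can be arbitrarily (even exponentially) small, and nothing in \eqref{eq:rnf1 arithm} controls them. The paper targets precisely the Liouville regime, so it never inverts $\CD_\omega$. Instead it inverts $\CD_{p/q}$, whose divisors satisfy the \emph{uniform} bound $|1 - e^{2\pi i k p/q}| \geq 4/q$ on every nonresonant mode $q\nmid k$, and treats $(\CD_\omega - \CD_{p/q})$ (operator norm $\lesssim |\omega - p/q|\,\|\dot{(\cdot)}\|$, by Lemma~\ref{lem:fourier MVT}) as a perturbation; see the explicit formulas \eqref{eq:psi12} and the computation \eqref{eq:varphi1 explicit}. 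Without this substitution your cohomological solution $\psi_1$ need not even lie in the analytic class, and the whole proposition fails.

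\textbf{The prescribed linear coefficient has a factor-of-$2$ error.} Shifting $y \mapsto y - (p/q - \omega)$ in the target quadratic $-\tfrac12(p/q-\omega+y)(p/q-\omega-y)\tfrac{\langle\dot f_1\rangle_q}{1+\langle f_1\rangle_q}$ gives $-\tfrac12\,y\bigl(2(p/q-\omega) - y\bigr)\tfrac{\langle\dot f_1\rangle_q}{1+\langle f_1\rangle_q}$, whose Taylor coefficients are $\wt g_1 = (\omega - p/q)\tfrac{\langle\dot f_1\rangle_q}{1+\langle f_1\rangle_q} = -2(p/q-\omega)\wt g_2$ and $\wt g_2 = \tfrac12\tfrac{\langle\dot f_1\rangle_q}{1+\langle f_1\rangle_q}$. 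Your prescription $\wt g_1 := -(p/q-\omega)\wt g_2$ is half the correct value; with it, the first line of \eqref{eq:rnf1 residue} would carry an uncancelled term of size $|\omega - p/q|\,|y|$, far exceeding the claimed $q^{\nu_0}|\omega-p/q|^3$ on the shifted circle. Also, these values cannot be \emph{prescribed} exactly — the cohomological step leaves a residue — so one must track the error, which is the content of the paper's Lemma~\ref{lem:tg12 ts2 est}.

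\textbf{Remainder estimate.} You propose a single Cauchy estimate on a slightly larger rectangular $y$-domain. The paper deliberately avoids this: because $f,g$ are defined implicitly by \eqref{eq:fg} through $\phi(x+\omega+y+f)$, each composition with $F$ loses analyticity width in $x$ \emph{proportionally to} $|y|$, and the author replaces the Cauchy estimate by $\CC^3$ bounds on the triangular domains $\CD_\lambda$ (Appendix~\ref{sec:tri dom}) precisely to track this $y$-dependent loss. A rectangular-domain argument may in fact work with careful bookkeeping (the final domain $\T_{1-1/q}\times\D_{|\omega-p/q|/2}$ does fit inside the relevant $\CD_\lambda$), but the chain of inclusions and the resulting $\sup|\wt g|$ bound that drives the third-order Cauchy estimate must be set up explicitly; the paper's choice of $\nu_0 = 64$ is an artifact of the $\CC^3$ bookkeeping, and a genuinely cleaner Cauchy-based argument would be a legitimate improvement, but your sketch does not carry it out.

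Minor: your claim $\theta_2 = O(|\omega-p/q|^2)$ is too strong; since $\varphi_1\approx\langle f_1\rangle_q\ne 0$, one has $\theta_2\approx -\varphi_1\cdot(y+p/q-\omega)$, which is genuinely of order $|\omega-p/q|$, consistent with the bound $<\tfrac1{11}|\omega-p/q|$ in \eqref{eq:th12 est}.
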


The rest of Section \ref{sec:rnf part 1} aims to prove Proposition~\ref{prop:rnf1}.

\subsubsection{Preliminary Taylor series calculations}\label{sec:rnf1 proof taylor}
Recall that the standard map associated with $(\omega, \phi)$ in adapted coordinates $F$ is given by \eqref{eq:F RIC} - \eqref{eq:fg}.
Consider a change of coordinates $\Phi = \id + \Psi$ as described in \eqref{eq:rnf Phi} - \eqref{eq:tg}.
We shall choose $\psi$ to be of the form $\psi(x,y) = \psi_1(x)y + \psi_2(x) y^2$ for some $\psi_1$ and $\psi_2$ such that in a neighbourhood of $\{y = p/q - \omega\}$ the function $\wt g$ in \eqref{eq:tg} is almost $1/q$-periodic in $x$.

\paragraph{Taylor series of $F$ and the generating function.}
In this paragraph we obtain the Taylor expansion at $y = 0$ of $f$ and $g$ as well as the generating function $S$ up to order $\CO(y^2)$. 
Let us begin by fixing some notations.
Denote 
\begin{equation*}
    \begin{aligned}
        f(x,y) = f_1(x) y + f_2(x) y^2 + \CO(y^3) \quad \text{and} \quad 
        g(x,y) = g_1(x) y + g_2(x) y^2 + \CO(y^3).
    \end{aligned}
\end{equation*}
We adopt similar notations for the Taylor expansions of $\psi$, $\varphi$ and $\wt g$.
Then, a straightforward computation using the Taylor series of \eqref{eq:fg} yields 
\begin{equation}\label{eq:fg12}
    1 + f_1 = \frac{1}{(1 + \dot \phi)(1 + \dot \phi^+)}; \quad f_2 = - \frac{\ddot\phi^+}{2(1 + \dot \phi)^2(1 + \dot\phi^+)^3}; \quad g_1 = 0; \quad g_2 = - \frac{1}{2} \dot f_1.
\end{equation}
We remark that the last equality $g_2 = -\dot f_1 /2$ can be computed from the area-preservation property $F^*(\du x \du y) = \du x \du y$ and $g_1 = 0$.
Next, we consider the normalized generating function $S$ associated with $(\omega, \phi)$ in adapted coordinates (see Section~\ref{sec:RIC coord}).
Integrating the Taylor expansion of the second equation in \eqref{eq:RIC gen func-fg} along $y$ and using the Taylor remainder formula and the normalization condition $S(x, 0) = 0$, we get
\begin{equation}\label{eq:S Taylor}
    S(x, y) = \frac{1 + f_1(x)}{2} y^2  + \CO(y^3).
\end{equation}

\paragraph{Taylor series after coordinate changes.}
To express the Taylor expansion of \eqref{eq:varphi} and \eqref{eq:tg} in $y$, let us define a finte-difference operator $\CD_{\omega}$ by 
\begin{equation}\label{eq:Domega}
    \CD_{\omega} u := u \circ R_{\omega} - u \quad \text{for any} \quad u: \T_a \to \C.
\end{equation}
Also recall the notation $u^\pm: = u \circ R_{\pm \omega}$.
Then, Taylor expanding the right hand sides of \eqref{eq:varphi} and of the second line of \eqref{eq:tf tg} 
and substituting $g_1 = 0$, $g_2 = -\dot f_1 /2$,
\begin{equation}\label{eq:RHS Taylor}
    \begin{aligned}
        \varphi =&  (f_1 + \CD_\omega \psi_1) y + \Big(f_2 + \CD_\omega \psi_2 - \frac{1}{2} \dot f_1 \psi_1^+ + (1 + f_1) \dot \psi_1^+\Big)y^2 + \CO(y^3).\\
        g + \varphi \circ F - \varphi =& (\CD_\omega \varphi_1) y + \Big(\CD_\omega \varphi_2 -\frac{1}{2} (1 + \varphi_1^+)\dot f_1 + (1 + f_1) \dot \varphi_1^+\Big)y^2 + \CO(y^3),
    \end{aligned}
\end{equation}
On the other hand, Taylor expanding $\wt g \circ (\id + \Psi)$ at $y = 0$ yields 
\begin{equation}\label{eq:LHS Taylor}
    \wt g \circ (\id + \Psi) = (1 + \varphi_1) \wt g_1 y + \big((1 + \varphi_1)^2 \wt g_2 + \varphi_2 \wt g_1 + (1 + \varphi_1) \psi_1 \dot{\wt g}_1\big) y^2 + \CO(y^3).
\end{equation}
From the first line of \eqref{eq:RHS Taylor},
\begin{equation}\label{eq:varphi 12}
    \varphi_1 = f_1 + \CD_\omega \psi_1, \quad \varphi_2 = f_2 + \CD_\omega \psi_2 -\frac{1}{2} \dot f_1\psi_1^+ + (1 + f_1) \dot \psi_1^+.
\end{equation}
Equating the second line of \eqref{eq:LHS Taylor} with \eqref{eq:RHS Taylor},
\begin{equation*}
    \begin{aligned}
        (1 + \varphi_1) \wt g_1 =& \CD_\omega \varphi_1, \\
        (1 + \varphi_1)^2 \wt g_2 + \varphi_2 \wt g_1 + (1 + \varphi_1) \psi_1 \dot{\wt g}_1 =&  \CD_\omega \varphi_2 -\frac{1}{2} (1+\varphi_1^+) \dot f_1  + (1 + f_1) \dot \varphi_1^+.
    \end{aligned}
\end{equation*}
We may differentiate the first line in $x$ and multiply throughout by $\psi_1$ to obtain $(1 + \varphi_1) \psi_1 \dot {\wt g}_1 + \psi_1 \dot\varphi_1 \wt g_1 = \psi_1 \CD_\omega \dot\varphi_1$, which can be used to eliminate the $(1 + \varphi_1) \psi_1 \dot{\wt g}_1$ term on the second line above and obtain 
\begin{equation}\label{eq:tg 12}
    \begin{aligned}
    (1 + \varphi_1) \wt g_1 =&  \CD_\omega \varphi_1, \\
    (1 + \varphi_1)^2 \wt g_2 + (\varphi_2 - \psi_1 \dot\varphi_1) \wt g_1 + \psi_1 \CD_\omega \dot\varphi_1 =&  \CD_\omega \varphi_2 - \frac{1}{2} (1+\varphi_1^+) \dot f_1  + (1 + f_1) \dot \varphi_1^+.
    \end{aligned}
\end{equation}
The formulae \eqref{eq:varphi 12} and \eqref{eq:tg 12} together give the dependence of the leading Taylor coefficients $\wt g_1$ and $\wt g_2$ of $\wt g$ in terms of $\psi_1$ and $\psi_2$.
Our next task is to choose $\psi_1$ and $\psi_2$ appropriately so that $\wt g_1$ and $\wt g_2$ are almost $1/q$-periodic on $\T$.

\paragraph{The choice of $\psi_1$, $\psi_2$.}
From now on, let $q > 1$ and $p \in \Z$ be fixed.
Analogous to \eqref{eq:Domega}, we define a finte-difference operator $\CD_{p/q}$ by 
\begin{equation*}
    \CD_{p/q} u := u \circ R_{p/q} - u \quad \text{for any} \quad u: \T_a \to \C.
\end{equation*}
Recall that we denote by $\langle u \rangle_q$ and $\curly{u}_q$ to be the $1/q$-periodization of $u$ and the complement part respectively (see Section \ref{sec:notation}).
Given $v: \T_1 \to \C$, there is a \textit{unique} $u: \T_1 \to \C$, denoted $\CD_{p/q}\inv v$, such that $\CD_{p/q} u = v$ and $\langle u \rangle_q = 0$. The function $u$ can be defined via its Fourier series as 
\begin{equation*}
    \hat u_k = \begin{cases}
        \frac{\hat v_k}{\eu^{2\pi \iu k \frac{p}{q}} - 1} & \text{if $q \nmid k$}\\
        0 & \text{otherwise.}
    \end{cases} 
\end{equation*}
Recall that $\wt g_1$, $\wt g_2$ and $\wt s_2$ denote the lead terms of the Taylor expansions of $\wt g$ and $\wt S$ as in \eqref{eq:tg taylor} and \eqref{eq:tS Taylor outline}.
In the sequel, we shall choose $\psi_1$ and $\psi_2$ so as to make $\{\wt g_1\}$ and $\{\wt g_2\}_q$ as small as possible assuming $p/q$ is sufficiently close to $\omega$.
Informally, we will obtain 
\begin{equation}\label{eq:rnf1 tg12 informal}
    \wt g_1 \approx \Big(\omega - \frac{p} {q}\Big) \frac{\langle \dot f_1 \rangle_q}{1 + \crochet{f_1}_q}, \quad \wt g_2 \approx \frac{1}{2} \frac{\langle \dot f_1 \rangle_q}{1 + \langle f_1 \rangle_q}
\end{equation}
and 
\begin{equation}\label{eq:rnf1 tS informal}
    \wt s_2 \approx \frac{1+f_1}{2(1 + \langle f_1 \rangle_q)^2}.
\end{equation}
In this section, we obtain the terms on the right hand sides of `$\approx$' above.  
The rigorous estimates of the differences between the two sides will be left to the next section.
The guiding heuristics of the choice of $\psi$ that follows is that $|\omega - p/q|$ shall be regarded as a `small' quantity so that $\CD_\omega \crochet{u}_q$ and $(\CD_{\omega} - \CD_{p/q})u$ generally have `smaller' orders of magnitude than $\crochet{u}_q$ and $u$ respectively.
In fact, to achieve \eqref{eq:rnf1 tg12 informal} we take $\psi(x,y) = \psi_1(x) y + \psi_2(x) y^2$ with
\begin{equation}\label{eq:psi12}
\begin{aligned}
    &\psi_1 = (\CD_{\omega} - \CD_{p/q})\CD_{p/q}^{-2} \curly{f_1}_q - \CD_{p/q}\inv \{f_1\}_q; \\
    &\psi_2 = - \CD_{p/q}^{-2} \curly{\CD_\omega \Big( f_2  -\frac{1}{2} \dot f_1 \psi_1^+ + (1 + f_1) \dot \psi_1^+ \Big) -\frac{1}{2} (1+\varphi_1^+)\dot f_1  + (1 + f_1) \dot \varphi_1^+}_q.
\end{aligned}
\end{equation}
By the choice of $\psi_1$ and the first line of \eqref{eq:RHS Taylor},
\begin{equation}\label{eq:varphi1 explicit}
\begin{aligned}
        \varphi_1 =& f_1 + (\CD_\omega - \CD_{p/q})\psi_1 + \CD_{p/q}\psi_1  \\
    =& f_1 + (\CD_\omega - \CD_{p/q})\psi_1 + (\CD_{\omega} - \CD_{p/q})\CD_{p/q}^{-1} \curly{f_1}_q - \curly{f_1}_q\\
    =& \crochet{f_1}_q + (\CD_\omega - \CD_{p/q})(\psi_1 + \CD_{p/q}^{-1} \curly{f_1}_q)\\
    =& \crochet{f_1}_q + (\CD_\omega - \CD_{p/q})^2\CD_{p/q}^{-2} \curly{f_1}_q.
\end{aligned}
\end{equation}
Thus $\varphi_1 = \crochet{ f_1}_q + \{\varphi_1\}_q$ and we may consider $\{\varphi_1\}_q$ and $\CD_\omega \varphi_1$ as small errors. 
It follows from the first line of \eqref{eq:tg 12} that 
\begin{equation}\label{eq:tg1 explicit}
\begin{aligned}
        \wt g_1 =& \frac{\CD_\omega \crochet{f_1}_q}{1 + \crochet{f_1}_q} + \frac{\CD_\omega \curly{\varphi_1}_q - \curly{\varphi_1}_q \wt g_1}{1 + \crochet{f_1}_q}\\
        =& \Big(\omega - \frac{p} {q}\Big) \frac{\langle \dot f_1 \rangle_q}{1 + \crochet{f_1}_q} 
        + \frac{\CD_\omega \crochet{f_1}_q - (\omega - \frac{p} {q}) \langle \dot f_1 \rangle_q}{1 + \crochet{f_1}_q} 
        + \frac{\{\varphi_1^+\}_q - (1 + \wt g_1) \curly{\varphi_1}_q }{1 + \crochet{f_1}_q}.
\end{aligned}
\end{equation}
On the other hand, substituting \eqref{eq:varphi 12} into the right hand side of the second line of \eqref{eq:tg 12} and using the choice of $\psi_2$,
\begin{equation}\label{eq:tg2 rhs}
    \begin{aligned}
        &\CD_\omega^2 \psi_2 + \CD_\omega \Big(f_2 -\frac{1}{2} \dot f_1 \psi_1^+ + (1 + f_1) \dot \psi_1^+\Big) -\frac{1}{2} (1+\varphi_1^+)\dot f_1  + (1 + f_1) \dot \varphi_1^+ \\ 
        =&(\CD_\omega^2 - \CD_{p/q}^2) \psi_2 + \CD_\omega \crochet{f_2 -\frac{1}{2} \dot f_1 \psi_1^+ + (1 + f_1) \dot \psi_1^+}_q + \crochet{(1 + f_1) \dot \varphi_1^+ -\frac{1}{2} (1+\varphi_1^+)\dot f_1}_q \\ 
        =&(\CD_\omega^2 - \CD_{p/q}^2) \psi_2 + \CD_\omega \crochet{f_2 -\frac{1}{2} \dot f_1 \psi_1^+ + (1 + f_1) \dot \psi_1^+}_q + \crochet{(1 + f_1) \dot \varphi_1 -\frac{1}{2} (1+\varphi_1)\dot f_1}_q \\
        &  + \crochet{ (1 + f_1) \CD_\omega \dot \varphi_1 - \frac{1}{2} \dot f_1 \CD_\omega \varphi_1}_q\\ 
        =&(\CD_\omega^2 - \CD_{p/q}^2) \psi_2 + \CD_\omega \crochet{f_2 -\frac{1}{2} \dot f_1 \psi_1^+ + (1 + f_1) \dot \psi_1^+}_q + \crochet{(1 + f_1) \langle \dot f_1 \rangle_q -\frac{1}{2} (1+\crochet{f_1}_q)\dot f_1}_q \\
        & + \crochet{(1 + f_1) \{\dot \varphi_1\}_q -\frac{1}{2} \{\varphi_1\}_q \dot f_1 }_q
        + \crochet{ (1 + f_1) \CD_\omega \dot \varphi_1 - \frac{1}{2} \dot f_1 \CD_\omega \varphi_1}_q\\
        =& \frac{1}{2}(1 + \crochet{f_1}_q) \langle \dot f_1 \rangle_q + (\CD_\omega^2 - \CD_{p/q}^2) \psi_2 + \CD_\omega \crochet{f_2 -\frac{1}{2} \dot f_1 \psi_1^+ + (1 + f_1) \dot \psi_1^+}_q  \\
        & + \crochet{(1 + f_1) \{\dot \varphi_1\}_q -\frac{1}{2} \{\varphi_1\}_q \dot f_1 }_q
        + \crochet{ (1 + f_1) \CD_\omega \dot \varphi_1 - \frac{1}{2} \dot f_1 \CD_\omega \varphi_1}_q.
    \end{aligned}
\end{equation}
In the last line, we used the simple observation that $\langle (1 + f_1) \langle \dot f_1\rangle_q \rangle_q = (1 + \langle f_1 \rangle_q) \langle \dot f_1 \rangle_q$.
Using $(1 + \varphi_1)^2 = (1 + \crochet{f_1}_q)^2 + (2 + \crochet{f_1}_q + \varphi_1)\{\varphi_1\} $,  the left hand side of the second line of \eqref{eq:tg 12} writes 
\begin{equation}\label{eq:tg2 lhs}
\begin{aligned}
     (1 + \crochet{f_1}_q)^2  \wt g_2 + (2 + \crochet{f_1}_q + \varphi_1)\{\varphi_1\}  \wt g_2  + (\varphi_2 - \psi_1 \dot\varphi_1) \wt g_1 + \psi_1 \CD_\omega \dot\varphi_1.
\end{aligned}
\end{equation}
Equating \eqref{eq:tg2 lhs} with \eqref{eq:tg2 rhs}, we obtain 
\begin{equation}\label{eq:tg2 explicit}
    \begin{aligned}
        \wt g_2 = \frac{1}{2} \frac{\langle \dot f_1 \rangle_q}{1 + \langle f_1 \rangle_q} 
        &+ \frac{1}{(1 + \crochet{f_1}_q)^2}\bigg[(\CD_\omega^2 - \CD_{p/q}^2) \psi_2 + \CD_\omega \crochet{f_2 -\frac{1}{2} \dot f_1 \psi_1^+ + (1 + f_1) \dot \psi_1^+}_q  \\
        & \qquad  + \crochet{(1 + f_1) \{\dot \varphi_1\}_q -\frac{1}{2} \{\varphi_1\}_q \dot f_1 }_q
        + \crochet{ (1 + f_1) \CD_\omega \dot \varphi_1 - \frac{1}{2} \dot f_1 \CD_\omega \varphi_1}_q \\
        & \qquad -(2 + \crochet{f_1}_q + \varphi_1)\{\varphi_1\}  \wt g_2  - (\varphi_2 - \psi_1 \dot\varphi_1) \wt g_1 - \psi_1 \CD_\omega \dot\varphi_1\bigg].
    \end{aligned}
\end{equation}

We now compute an approximation of the pullback $\wt S = S \circ \Phi\inv$ of the generating function to the new coordinates using \eqref{eq:S Taylor}:
\begin{equation}\label{eq:tS Taylor}
        \wt S(x,y) 
        = \frac{1+f_1}{2(1 + \varphi_1)^2} y^2 + \CO(y^3).
\end{equation}
Using $(1 + \varphi_1)^2 = (1 + \crochet{f_1}_q)^2 + (2 + \crochet{f_1}_q + \varphi_1)\{\varphi_1\} $, we may write 
\begin{equation}\label{eq:tS2 explicit}
    \wt s_2 = \frac{1+f_1}{2(1 + \langle f_1 \rangle_q)^2} - \frac{(1+f_1)(2  +\langle f_1 \rangle_q + \varphi_1) \{\varphi_1\}_q}{2(1 + \langle f_1 \rangle_q)^2 (1 + \varphi_1)^2}.
\end{equation}

\subsubsection{Quantitative estimates: the leading terms of $\wt g$.}\label{ssec:rnf1 q leading}

The goal of this section is to give a precise justification of the approximations  \eqref{eq:rnf1 tg12 informal} and \eqref{eq:rnf1 tS informal} by showing that all but the first terms on the right hand sides of \eqref{eq:tg1 explicit}, \eqref{eq:tg2 explicit} and \eqref{eq:tS2 explicit} are relatively small.
More precisely, we prove 
\begin{lemma}\label{lem:tg12 ts2 est}
    Suppose 
    \begin{equation}\label{eq:rnf1 cons standing assump}
        \|\dddot\phi\|  \leq 1 \quad \text{and} \quad 
        \verts{\omega - \frac{p}{q}} \leq \frac{1}{q^3}.
    \end{equation}
    Then
    \begin{equation}\label{eq:tg12 est}
        \norm{\wt g_1 - \Big(\omega - \frac{p} {q}\Big) \frac{\langle \dot f_1 \rangle_q}{1 + \crochet{f_1}_q}} \lesssim q^2 \verts{\omega - \frac{p}{q}}^2, \qquad \norm{\wt g_2 - \frac{1}{2} \frac{\langle \dot f_1 \rangle_q}{1 + \langle f_1 \rangle_q} } \lesssim q^4 \verts{\omega - \frac{p}{q}}
    \end{equation}
    and 
    \begin{equation}\label{eq:ts2 est}
        \norm{\wt s_2 - \frac{1+f_1}{2(1 + \langle f_1 \rangle_q)^2}} \lesssim \verts{\omega - \frac{p}{q}}.
    \end{equation}
\end{lemma}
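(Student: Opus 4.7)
The plan is to start from the fully explicit expressions \eqref{eq:tg1 explicit}, \eqref{eq:tg2 explicit} and \eqref{eq:tS2 explicit}, whose right-hand sides each consist of the claimed main term plus a handful of remainder terms, and to bound each remainder directly. The essential ingredients will be: (i) the Banach-algebra property of $\|\cdot\|$; (ii) the smoothing bound $\|\CD_{p/q}^{-1} u\|\leq (q/4)\|u\|$ valid whenever $\crochet{u}_q=0$, which follows from $|\eu^{2\pi \iu kp/q}-1|\geq 4/q$ for $q\nmid k$; and (iii) the Taylor bound $\|(\CD_\omega-\CD_{p/q})u\|\leq |\omega-p/q|\,\|\dot u\|$, obtained by integrating $u$ along the shift path. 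The standing assumption $\|\dddot\phi\|\leq 1$ combined with the explicit formulas \eqref{eq:fg12} ensures $\|f_1\|, \|\dot f_1\|, \|\ddot f_1\|, \|f_2\|, \|\dot f_2\|\lesssim 1$.

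First I would derive the auxiliary estimates. Applying (ii)--(iii) to the definition \eqref{eq:psi12}, the $\CD_{p/q}^{-1}\curly{f_1}_q$ piece contributes $O(q)$ while the $(\CD_\omega-\CD_{p/q})\CD_{p/q}^{-2}\curly{f_1}_q$ piece is $O(q^2|\omega-p/q|)=O(q^{-1})$ under \eqref{eq:rnf1 cons standing assump}; the same reasoning applied termwise gives $\|\psi_1\|, \|\dot\psi_1\|\lesssim q$ and $\|\ddot\psi_1\|\lesssim q^2$. Next, rewriting \eqref{eq:varphi1 explicit} as $\curly{\varphi_1}_q=(\CD_\omega-\CD_{p/q})^2\CD_{p/q}^{-2}\curly{f_1}_q$ and applying (iii) twice together with (ii) twice yields the key estimate $\|\curly{\varphi_1}_q\|\lesssim q^2|\omega-p/q|^2$, and analogously $\|\curly{\dot\varphi_1}_q\|\lesssim q^2|\omega-p/q|$. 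A longer but entirely parallel calculation, tracking each factor in the bracket defining $\psi_2$ in \eqref{eq:psi12}, gives $\|\psi_2\|, \|\dot\psi_2\|\lesssim q^3$.

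With these bounds in hand I would estimate each remainder. For the first inequality in \eqref{eq:tg12 est}, the decomposition \eqref{eq:tg1 explicit} leaves two error terms. The key observation for the first is that $\crochet{f_1}_q$ is $1/q$-periodic while $p/q\in(1/q)\Z$, so $\CD_{p/q}\crochet{f_1}_q=0$ and the Taylor remainder gives $\|\CD_\omega\crochet{f_1}_q-(\omega-p/q)\crochet{\dot f_1}_q\|\lesssim|\omega-p/q|^2\|\ddot f_1\|$. The second error is bounded by the $\curly{\varphi_1}_q$ estimate above, combined with the a priori bound $|\wt g_1|\lesssim|\omega-p/q|$ coming from rearranging the first line of \eqref{eq:tg 12}; this gives the target $\lesssim q^2|\omega-p/q|^2$. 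For the second inequality, I would bound the five correction terms in \eqref{eq:tg2 explicit} separately: the dominant one is $(\CD_\omega^2-\CD_{p/q}^2)\psi_2$, with $\|(\CD_\omega^2-\CD_{p/q}^2)\psi_2\|\lesssim|\omega-p/q|\,\|\dot\psi_2\|\lesssim q^3|\omega-p/q|$, while the $\CD_\omega\crochet{\cdots}_q$ term gains an extra $|\omega-p/q|$ factor via the same $1/q$-periodicity trick, and the remaining three terms are of order $q^2|\omega-p/q|^2$ or better, summing to the required $q^4|\omega-p/q|$ bound. Finally, \eqref{eq:ts2 est} follows at once from \eqref{eq:tS2 explicit}, since its sole error is proportional to $\curly{\varphi_1}_q$ and so is bounded by $q^2|\omega-p/q|^2\leq|\omega-p/q|/q$ under \eqref{eq:rnf1 cons standing assump}.

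The main obstacle will be the careful tracking of derivative losses. Because higher derivatives of $\phi$ beyond $\dddot\phi$ are not controlled in $\|\cdot\|$, neither are $\dddot f_1$ nor higher derivatives of $\psi_1,\psi_2$, so the refined bound (iii) cannot always be invoked. Wherever applying (iii) would demand $\dddot f_1$ or beyond, one must fall back on the trivial estimate $\|\CD_\omega u\|\leq 2\|u\|$, sacrificing one factor of $|\omega-p/q|$ but recovering it from the standing assumption $|\omega-p/q|\leq q^{-3}$. Verifying that this accounting keeps all five error terms in the $\wt g_2$ analysis within the $q^4|\omega-p/q|$ budget is the principal bookkeeping task, and is the reason the hypothesis \eqref{eq:rnf1 cons standing assump} is calibrated the way it is.
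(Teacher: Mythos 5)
Your overall strategy — estimating each remainder in \eqref{eq:tg1 explicit}, \eqref{eq:tg2 explicit}, \eqref{eq:tS2 explicit} using the Banach-algebra property of $\|\cdot\|$, the smoothing bound $\|\CD_{p/q}^{-1}\{u\}_q\|\leq (q/4)\|u\|$, and the Taylor bound $\|(\CD_\omega-\CD_{p/q})u\|\leq|\omega-p/q|\,\|\dot u\|$ — is exactly the paper's approach. Your treatment of $\wt g_1$ and $\wt s_2$ is correct, and the ``main obstacle'' paragraph rightly anticipates that the loss of control over $\|\dddot f_1\|$ sometimes forces one to fall back on $\|\CD_\omega u\|\leq 2\|u\|$; the paper does precisely this when estimating $\|\ddot\varphi_1\|$ and $\|\ddot\psi_1\|$.

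However, there is a concrete error in the $\wt g_2$ calculation: you claim $\|\dot\psi_2\|\lesssim q^3$, but the correct bound is $\|\dot\psi_2\|\lesssim q^4$. Differentiating $\psi_2$ pushes the derivative onto the bracket in \eqref{eq:psi12}, producing $\ddot\psi_1^+$ and $\ddot\varphi_1^+$; each has $\|\cdot\|$-norm $\lesssim q^2$ (precisely because the refined $(\CD_\omega-\CD_{p/q})$ bound cannot be used without $\dddot f_1$, so one must use the trivial bound, as you anticipated). Combined with the $\CD_{p/q}^{-2}$ factor of $q^2$, this gives $q^4$, not $q^3$. This slip propagates into an internal inconsistency in your final accounting: your claimed dominant term $\|(\CD_\omega^2-\CD_{p/q}^2)\psi_2\|\lesssim|\omega-p/q|\,\|\dot\psi_2\|\lesssim q^3|\omega-p/q|$, while the other terms are smaller, so your own estimates would sum to $q^3|\omega-p/q|$ rather than the $q^4|\omega-p/q|$ you assert. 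The $q^4$ in the lemma is not slack; it is forced by $\|\dot\psi_2\|\lesssim q^4$. Once you correct that single bound, your calculation lines up with the paper's and gives exactly $q^4|\omega-p/q|$ with the $(\CD_\omega^2-\CD_{p/q}^2)\psi_2$ term as the genuine dominant contribution.
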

Before proceeding to the proof of Lemma \ref{lem:tg12 ts2 est}, 
let us note some elementary facts.
Since 
\begin{equation*}
    k \nmid q \quad \implies \quad |\eu^{2\pi \iu k p/q} - 1| \geq |\eu^{2\pi \iu /q} - 1| = 2\sin\frac{\pi}{q} \geq \frac{4}{q},
\end{equation*} 
we have, for $u: \T_1 \to \C$ analytic, that
\begin{equation}\label{eq:CD inv}
    \|\CD_{p/q}\inv \{u\}_q\| = \sum_{q \nmid k} \verts{\frac{\hat u_k}{\eu^{2\pi \iu k p/q} - 1}} \eu^{2\pi |k|} 
    \leq \frac{q}{4} \|u\|
\end{equation}
where $\CD_{p/q}\inv \{u\}_q$ is uniquely defined as the function $v$ such that $\langle v \rangle_q = 0$ and $\CD_{p/q} v = u$.
On the other hand, since $(\CD_{\omega} - \CD_{p/q}) u = u(\cdot + \omega) - u(\cdot + p/q) = \CD_{\omega - p/q} (u \circ R_{p/q})$, where $R_{p/q}: x \mapsto x + p/q$ is a rotation on $\T_1$, by Lemma \ref{lem:fourier MVT} with $\delta = \omega - p/q$,
\begin{equation}\label{eq:Fourier MVT}
    \|(\CD_\omega - \CD_{p/q}) u\| =  \|\CD_{\omega - p/q} (u \circ R_{p/q})\| \leq \verts{\omega - \frac{p}{q}} \|\dot u\|.
\end{equation}
Finally, we also remark that, after integrating the Fourier series of $\dddot\phi$ and using the definition of $\|\; \cdot\; \|$-norm, the condition $\|\dddot\phi\| \leq 1$  implies the following convenient estimates.
\begin{equation}\label{eq:dphi bnd}
    \|\ddot\phi\| \leq \frac{1}{2\pi} \quad \text{and} \quad \|\dot\phi\| \leq \frac{1}{(2 \pi)^2}.
\end{equation}
This observation has already been used previously in the proof of Corollary~\ref{cor:only 2}.

Let us begin to prove Lemma \ref{lem:tg12 ts2 est}.

By \eqref{eq:dphi bnd} and the explicit expression \eqref{eq:fg12}, we first have 
\begin{equation}\label{eq:f1 norm}
    \|f_1\| \leq \frac{(2 + \|\dot\phi\|) \|\dot\phi\|}{(1 - \|\dot\phi\|)^2}  < \frac{1}{18}.
\end{equation}
In the above, we used the inequality $\|(1 + f_1)\inv\| \leq (1 - \|f_1\|)\inv$, which follows from the fact that the space of analytic functions $\T_1 \to \C$ with finite $\|\; \cdot \; \|$-norm is a Banach algebra (Lemma \ref{lem:banach-alg}).

Furthermore, it is straightforward to deduce from $\|\dot\phi\| \leq (2\pi)^{-2}$ and the explicit expressions \eqref{eq:fg12} that
\begin{equation}\label{eq:fg 12 dot norm}
    \|\dot f_1\|, \,  \|\ddot f_1\|,\,  \|f_2\|,\,  \|\dot f_2\|,\,  \|g_2\|,\,  \|\dot g_2\| \,  \lesssim \,  1.
\end{equation}

Recall that by \eqref{eq:varphi1 explicit} we have $\varphi_1 = \crochet{f_1}_q + (\CD_\omega - \CD_{p/q})^2\CD_{p/q}^{-2} \curly{f_1}_q$.
Hence, by \eqref{eq:CD inv} - \eqref{eq:fg 12 dot norm} and assumption \ref{eq:rnf1 cons standing assump},
\begin{equation}\label{eq:varphi1 norm}
    \begin{aligned}
        \|\varphi_1\| 
        \leq& \|\crochet{f_1}_q\| + \|(\CD_\omega - \CD_{p/q})^2\CD_{p/q}^{-2} \curly{f_1}_q\|\\
        \leq& \|f_1\| + \Big(\frac{q}{4}\Big)^2 \verts{\omega - \frac{p}{q}}\|(\CD_\omega - \CD_{p/q})\dot f_1\|
        <\frac{1}{18}
    \end{aligned}
\end{equation}
for $q$ sufficiently large.
Similarly, using \eqref{eq:fg 12 dot norm}
\begin{equation}\label{eq:dvarphi1 norms}
    \begin{aligned}
        \|\dot \varphi_1\| =& \|\langle \dot f_1\rangle_q + (\CD_\omega - \CD_{p/q})^2\CD_{p/q}^{-2} \{ \dot f_1\}_q\|  \lesssim \|\dot f_1\| + q^2 \verts{\omega - \frac{p}{q}} \|\ddot f_1\| \lesssim 1;\\
        \|\ddot \varphi_1\| =& \|\langle \ddot f_1\rangle_q + (\CD_\omega - \CD_{p/q})^2\CD_{p/q}^{-2} \{ \ddot f_1\}_q\|  \lesssim \|\ddot f_1\| + q^2 \|\ddot f_1\| \lesssim q^2.
    \end{aligned}
    \end{equation}

\begin{remark}
    We give more precise bounds in \eqref{eq:f1 norm} and \eqref{eq:varphi1 norm} in stead of just an implicit universal constant $\lesssim 1$ as in \eqref{eq:fg 12 dot norm} and \eqref{eq:dvarphi1 norms}.
    The reason is that we will need to provide lower bounds on the denominators is \eqref{eq:tg1 explicit}, \eqref{eq:tg2 explicit} and \eqref{eq:tS2 explicit}.
\end{remark}

Since $\{\varphi_1\}_q = (\CD_\omega - \CD_{p/q})^2 \CD_{p/q}^{-2} \{f_1\}_q$ and $\crochet{\varphi_1}_q = \crochet{f_1}_q$, by periodicity of $\crochet{f_1}$ and \eqref{eq:CD inv}, \eqref{eq:Fourier MVT}, \eqref{eq:fg 12 dot norm},
\begin{equation}\label{eq:varphi1 norm aux}
    \begin{aligned}
        \|\{\varphi_1\}_q\| \lesssim& q^2 \verts{\omega - \frac{p}{q}}^2 \|\ddot f_1\| \lesssim q^2 \verts{\omega - \frac{p}{q}}^2 \qquad \text{(applying \eqref{eq:Fourier MVT} twice)};\\
        \|\{\dot \varphi_1\}_q\| \lesssim& q^2 \verts{\omega - \frac{p}{q}} \|\ddot f_1\| \lesssim q^2 \verts{\omega - \frac{p}{q}} \qquad \text{(applying \eqref{eq:Fourier MVT} once)};\\
        \|\CD_{\omega} \varphi_1\| =& \|(\CD_{\omega} - \CD_{p/q}) \crochet{f_1}_q\| + \|\CD_{\omega} \{\varphi_1\}_q\|
        \lesssim \verts{\omega - \frac{p}{q}}\|\dot f_1\| + \|\{\varphi_1\}_q\| \lesssim \verts{\omega - \frac{p}{q}};\\
        \|\CD_{\omega} \dot\varphi_1\| =& \|(\CD_{\omega} - \CD_{p/q}) \langle \dot f_1 \rangle_q\| + \|\CD_{\omega} \{\dot \varphi_1\}_q\|
        \lesssim \verts{\omega - \frac{p}{q}}\|\ddot f_1\| + \|\{\dot\varphi_1\}_q\| \lesssim q^2 \verts{\omega - \frac{p}{q}};\\
    \end{aligned}
\end{equation}
Using the first line of \eqref{eq:tg 12}, we have $\wt g_1 = (1 + \varphi_1)\inv \CD_\omega \varphi_1$.
By \eqref{eq:varphi1 norm} and \eqref{eq:varphi1 norm aux},
\begin{equation*}
    \|\wt g_1\| \lesssim \|\CD_\omega \varphi_1\| \lesssim \verts{\omega - \frac{p}{q}}.
\end{equation*}
We may now estimate the last two terms of \eqref{eq:tg1 explicit}.
Using Lemma~\ref{lem:fourier MVT} with $\delta = \omega - p/q$ and the preceding bounds \eqref{eq:f1 norm}, \eqref{eq:fg 12 dot norm} on $f_1$ as well as the first line of \eqref{eq:varphi1 norm aux}, we have
\begin{equation*}
\begin{aligned}
    \norm{\wt g_1 - \Big(\omega - \frac{p} {q}\Big) \frac{\langle \dot f_1 \rangle_q}{1 + \crochet{f_1}_q}} 
    \leq& \norm{ \frac{\CD_\omega \crochet{f_1}_q - (\omega - \frac{p} {q}) \langle \dot f_1 \rangle_q}{1 + \crochet{f_1}_q} } 
    + \norm{\frac{\{\varphi_1^+\}_q - (1 + \wt g_1) \curly{\varphi_1}_q }{1 + \crochet{f_1}_q}}\\
    \lesssim& \verts{\omega - \frac{p}{q}}^2 \|\ddot f_1\| + q^2 \verts{\omega - \frac{p}{q}}^2 
    \lesssim q^2 \verts{\omega - \frac{p}{q}}^2.
\end{aligned}
\end{equation*}
We have established the first inequality of \eqref{eq:tg12 est}.
To approximate $\wt g_2$, it is necessary to bound the second term of \eqref{eq:tg2 explicit}, which in turn requires bounds on $\psi_1$ and $\psi_2$ as well as $\varphi_2$.
To this end, let us first use definition \eqref{eq:psi12} of $\psi_1$, inequalities \eqref{eq:CD inv}, \eqref{eq:Fourier MVT}, \eqref{eq:fg 12 dot norm} and assumption \eqref{eq:rnf1 cons standing assump} to obtain
\begin{equation}\label{eq:psi1 norms}
        \|\psi_1\| \lesssim q^2 \verts{\omega - \frac{p}{q}} \|\dot f_1\| + q \|f_1\| \lesssim q;\qquad 
        \|\dot \psi_1\| \lesssim q^2 \verts{\omega - \frac{p}{q}} \|\ddot f_1\| + q \|\dot f_1\| \lesssim q
\end{equation}
and 
\begin{equation}\label{eq:ddpsi1 norm}
    \|\ddot\psi_1\| \lesssim q^2 \|\ddot f_1\| + q \|\ddot f_1\| \lesssim q^2.
\end{equation}
Similarly, by definition \eqref{eq:psi12} and \eqref{eq:dvarphi1 norms}, \eqref{eq:psi1 norms},
\begin{equation*}
    \|\psi_2\| 
    \lesssim q^2 \Big( \|f_2\|  + \|\dot f_1 \psi_1^+\| + \|(1 + f_1) \dot \psi_1^+\| + \|(1+\varphi_1^+)\dot f_1\|  + \|(1 + f_1) \dot \varphi_1^+\|\Big)
    \lesssim q^3.
\end{equation*}
By a direct computation, we find that $\dot \psi_2$ is given by
\begin{equation*}
    - \CD_{p/q}^{-2} \curly{\CD_\omega \Big( \dot f_2  -\frac{1}{2} \ddot f_1 \psi_1^+ + (1 + f_1) \ddot \psi_1^+ + \frac{1}{2} \dot f_1 \dot \psi_1^+\Big) -\frac{1}{2} (1+\varphi_1^+)\ddot f_1  + (1 + f_1) \ddot \varphi_1^+ + \frac{1}{2} \dot\varphi_1^+\dot f_1}_q.
\end{equation*}
Thus, by \eqref{eq:dvarphi1 norms} and \eqref{eq:ddpsi1 norm},
\begin{equation*}
    \begin{aligned}
        \|\dot \psi_2\| 
        \lesssim& q^2 \Big( \| \dot f_2\| + \|\ddot f_1 \psi_1^+\| + \|(1 + f_1) \ddot \psi_1^+\|+ \|\dot f_1 \dot \psi_1^+\|+ \|(1 + \varphi_1^+)\ddot f_1\|+\|(1 + f_1)\ddot \varphi_1^+\|+ \|\dot\varphi_1^+\dot f_1\|\Big)\\
        \lesssim& q^4.
    \end{aligned}
\end{equation*}
Then, from the definition \eqref{eq:varphi 12} of $\varphi_2$, a similar estimate gives
\begin{equation*}
    \|\varphi_2\| = \bigg\| f_2 + \CD_\omega \psi_2 -\frac{1}{2} \dot f_1\psi_1^+ + (1 + f_1) \dot \psi_1^+\bigg\| \lesssim q^3.
\end{equation*}
It now follows from \eqref{eq:varphi1 norm} the second line of \eqref{eq:tg 12} that
\begin{equation*}
    \|\wt g_2\| \lesssim \bigg\|\CD_\omega \varphi_2 - \frac{1}{2} (1+\varphi_1^+) \dot f_1  + (1 + f_1) \dot \varphi_1^+ - \big((\varphi_2 - \psi_1 \dot\varphi_1) \wt g_1 + \psi_1 \CD_\omega \dot\varphi_1\big)\bigg\|  \lesssim q^3.
\end{equation*}
To recapitulate, we have obtained 
\begin{equation}\label{eq:leading norms recap}
    \begin{gathered}
        \|\psi_1\|, \|\dot \psi_1\| \lesssim q; 
        \quad \|\ddot\psi_1\| \lesssim q^2;
        \quad \|\psi_2\|\lesssim q^3;  
        \quad \|\dot\psi_2\|\lesssim q^4; 
        \quad \|\varphi_1\| < \frac{1}{18}; \quad  \|\dot\varphi_1\| \lesssim 1; 
        \\
        \|\ddot\varphi_1\| \lesssim q^2; \quad \|\varphi_2\| \lesssim q^3; \quad \|\{\varphi_1\}_q\| \lesssim q^2 \verts{\omega - \frac{p}{q}}^2; 
        \quad \|\{\dot \varphi_1\}_q\| \lesssim q^2 \verts{\omega - \frac{p}{q}};\\
         \|\CD_\omega \varphi_1\| \lesssim \verts{\omega - \frac{p}{q}}; 
        \quad \|\CD_\omega \dot \varphi_1\| \lesssim q^2 \verts{\omega - \frac{p}{q}}; 
        \quad \|\wt g_1\| \lesssim \verts{\omega - \frac{p}{q}}; 
        \quad \|\wt g_2\| \lesssim q^3.
    \end{gathered}
\end{equation}
Now we are ready to approximate $\wt g_2$ by estimating the last term of \eqref{eq:tg2 explicit}.
Since $\|(1 + \langle f_1 \rangle_q)^{-2}\| \lesssim 1$, we may focus on the terms within the square bracket of \eqref{eq:tg2 explicit}.
Since $(\CD_\omega^2 - \CD_{p/q}^2) \psi_2 = (\CD_\omega - \CD_{p/q})(\CD_\omega + \CD_{p/q}) \psi_2$, we may use \eqref{eq:Fourier MVT} together with the inequalities summarized in \eqref{eq:leading norms recap} to bound the first two terms as follows.
\begin{equation*}
\begin{aligned}
        &\norm{(\CD_\omega^2 - \CD_{p/q}^2) \psi_2 + \CD_\omega \crochet{f_2 -\frac{1}{2} \dot f_1 \psi_1^+ + (1 + f_1) \dot \psi_1^+}_q} \\
    \lesssim& \verts{\omega - \frac{p}{q}}\bigg( \|\dot \psi_2\| + \norm{\crochet{\dot f_2  -\frac{1}{2} \ddot f_1 \psi_1^+ + (1 + f_1) \ddot \psi_1^+ + \frac{1}{2} \dot f_1 \dot \psi_1^+}_q}\bigg)
    \lesssim q^4 \verts{\omega - \frac{p}{q}}.
\end{aligned}
\end{equation*}
The rest of the terms can be controlled by a routine application of \eqref{eq:leading norms recap}:
\begin{equation*}
    \begin{aligned}
        &\bigg\|\crochet{(1 + f_1) \{\dot \varphi_1\}_q -\frac{1}{2} \{\varphi_1\}_q \dot f_1 }_q\bigg\| \lesssim  q^2 \verts{\omega - \frac{p}{q}}\\
        &\bigg\|\crochet{ (1 + f_1) \CD_\omega \dot \varphi_1 - \frac{1}{2} \dot f_1 \CD_\omega \varphi_1}_q \bigg\| \lesssim q^2 \verts{\omega - \frac{p}{q}} \\ 
        &\|(2 + \crochet{f_1}_q + \varphi_1)\{\varphi_1\}  \wt g_2  + (\varphi_2 - \psi_1 \dot\varphi_1) \wt g_1 + \psi_1 \CD_\omega \dot\varphi_1\| \lesssim q^3 \verts{\omega - \frac{p}{q}}.
    \end{aligned}
\end{equation*}
We have shown
\begin{equation*}
    \begin{aligned}
        \norm{\wt g_2 - \frac{1}{2} \frac{\langle \dot f_1 \rangle_q}{1 + \langle f_1 \rangle_q} } \lesssim q^4 \verts{\omega - \frac{p}{q}}
    \end{aligned}
\end{equation*}
and completed the proof of \eqref{eq:tg12 est}.
Finally, we turn to the estimate \eqref{eq:ts2 est} of  $\wt s_2$. 
By the second line of \eqref{eq:tS Taylor} and the bounds on $\|f_1\|$, $\|\varphi_1\|$ and $\|\{\varphi_1\}_q\|$ collected in \eqref{eq:leading norms recap} as well as the assumption \eqref{eq:rnf1 cons standing assump},
\begin{equation*}
    \begin{aligned}
        \norm{\wt s_2 - \frac{1+f_1}{2(1 + \langle f_1 \rangle_q)^2}} =& \norm{\frac{(1+f_1)(2  +\langle f_1 \rangle_q + \varphi_1) \{\varphi_1\}_q}{2(1 + \langle f_1 \rangle_q)^2 (1 + \varphi_1)^2}}\\
        \lesssim& \|\{\varphi_1\}_q\| \lesssim q^2 \verts{\omega - \frac{p}{q}}^2 \lesssim \verts{\omega - \frac{p}{q}}.
    \end{aligned}
\end{equation*}
The proof of Lemma \ref{lem:tg12 ts2 est} is finished.

\subsubsection{Quantitative estimates: the Taylor remainder}\label{ssec:rnf1 q remainder}

Having obtained approximations of the leading Taylor coefficients $\wt g_1$ and $\wt g_2$ of $\wt g$ and $\wt s_2$ of the pullback generating function $\wt S$, we now need to estimate the corresponding Taylor remainders of $\wt g$.
We need to introduce some additional definitions in the following.

For $\lambda > 0$, we define the complex domain $\CD_{\lambda} \subset \C/\Z \times \C$ to be
\begin{equation*}
    \CD_\lambda = \{(x,y) \in \T_1 \times \C \ : \ |\Im x| + \lambda\inv |y| < 1\}.
\end{equation*}
If $u$ is a function defined on $\CD_{\lambda}$, we denote by $|u|^{(\lambda)}$ the supremum norm of $u$ over $\CD_{\lambda}$.
If $u$ is of class $\CC^r$ on $\CD_{\lambda}$, we set
\begin{equation}\label{eq:tri norm}
    |u|^{(\lambda, r)} := \sup\bigg\{ \biggl\vert\frac{\partial^{j+k} u}{\partial x^j \partial y^k}(x,y)\biggr\vert \ : \ (x,y) \in \CD_\lambda,\ 0 \le j+k \le r\bigg\}.
\end{equation}
The elementary properties of the $|\cdot|^{(\lambda, r)}$-norms that we will need are collected in Appendix \ref{sec:tri dom}.

\begin{figure}[h]
\centering
\begin{tikzpicture}[scale=2]
  \draw[->] (-1.4,0) -- (1.4,0) node[right] {$\Im x$};
  \draw[->] (0,0) -- (0,1.2) node[above] {$|y|$};

  \def\lam{0.7}
  \path[fill=gray, opacity=0.5]
    (-1,0) -- (0,\lam) -- (1,0) -- (1,0) -- cycle;

  \draw (1,0) node[below] {$1$} -- (1, -0.03);
  \draw (-1,0) node[below] {$-1$} -- (-1, -0.03);
  \draw (0,\lam) node[right] {$\lambda$} -- (0.03, \lam);
\end{tikzpicture}
\caption{An illustration of $\CD_\lambda$.}
\end{figure}

The following Lemma is the main result of the current section.
\begin{lemma}\label{lem:taylor rem est}
    Suppose 
    \begin{equation*}
        \|\dddot\phi\|  \leq 1 \quad \text{and} \quad 
        \verts{\omega - \frac{p}{q}} \leq \frac{q^{-6}}{6}.
    \end{equation*}
    Then $\wt g$ and $\wt S$ are analytic on $\CD_\lambda$  with $\lambda = q^{-5}/4$.
    Furthermore, for $(x,y) \in \CD_{\lambda}$, we have 
    \begin{equation}\label{eq:tg rem est}
        \verts{\wt g(x,y) - \big(\wt g_1(x) y + \wt g_2(x) y^2\big)} \lesssim q^{\nu_0}|y|^3
    \end{equation}
    and 
    \begin{equation}\label{eq:ts rem est}
        \verts{\wt S(x,y) - \wt s_2(x) y^2} \lesssim q^{\nu_0}|y|^3
    \end{equation}
    where $\nu_0 = 64$.
\end{lemma}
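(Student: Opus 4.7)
The strategy is to extend $\wt g$ and $\wt S$ analytically to a complex tubular neighbourhood of the invariant circle $\{y = 0\}$ properly containing $\CD_\lambda$, obtain polynomial-in-$q$ sup-norm bounds there, and extract the Taylor remainders via a Cauchy integral in~$y$.

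\emph{Step~1 (analyticity and sup bounds).} From the implicit equation \eqref{eq:fg}, the function $w := y + f(x,y)$ satisfies $w + \phi(x + \omega + w) = \phi(x + \omega) + y/(1 + \dot\phi(x))$; applying the inverse function theorem to $w \mapsto w + \phi(x + \omega + w)$ (whose derivative stays bounded away from zero thanks to $\|\dot\phi\| \leq (2\pi)^{-2}$) extends $f$, and hence $g$ and $S$ via \eqref{eq:fg} and \eqref{eq:RIC gen func-fg}, analytically to $\T_{1 - 1/q} \times \D_{r_0}$ for some $r_0 \gtrsim 1/q$, with polynomial-in-$q$ sup-norm bounds. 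Since $\psi = \psi_1(x)\,y + \psi_2(x)\,y^2$ is polynomial in~$y$ and \eqref{eq:leading norms recap} gives $\|\psi_1\| \lesssim q$, $\|\psi_2\| \lesssim q^3$, we have $|\Psi|^{(\lambda)} \lesssim q\lambda + q^3\lambda^2 \lesssim q^{-4} \ll \lambda$; a standard contraction-mapping argument then shows $\Phi = \id + \Psi$ is invertible on a neighbourhood slightly larger than $\CD_\lambda$, with $\Phi\inv$ mapping back into the domain of~$F$. The formulas $\wt g = [g + (f \circ F - f) + (\psi \circ F^2 - 2\psi \circ F + \psi)] \circ \Phi\inv$ from \eqref{eq:tg} and $\wt S = S \circ \Phi\inv$ then yield analytic extensions of $\wt g$ and $\wt S$ to a neighbourhood of $\CD_\lambda$, with sup norms bounded by $q^{C}$ for some moderate~$C$.

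\emph{Step~2 (Taylor remainder via Cauchy).} Since $\Phi$ fixes $\{y = 0\}$ pointwise, the invariant curve remains invariant under $\wt F = \Phi \circ F \circ \Phi\inv$, so $\wt g(x, 0) = 0$. Similarly $\wt S(x, 0) = 0$ (from $S(x, 0) = 0$) and $\partial_y \wt S(x, 0) = 0$ (from \eqref{eq:S Taylor} together with $\nabla S(x, 0) = 0$ and the chain rule). Hence the Taylor series of $\wt g(x, \cdot)$ and $\wt S(x, \cdot)$ start at orders $y^1$ and $y^2$ respectively, and after subtracting $\wt g_1(x)\,y + \wt g_2(x)\,y^2$ (resp.\ $\wt s_2(x)\,y^2$) the remainders have a triple zero at $y = 0$. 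Representing the remainder as a Cauchy integral over a circle in~$y$ of radius $\sim \lambda$ contained in the analyticity domain from Step~1 yields a bound of the form $|R(x,y)| \lesssim q^{C} \cdot |y|^3/\lambda^3 \lesssim q^{C + 15}|y|^3$, which is majorised by $q^{\nu_0}|y|^3$ with considerable margin since $\nu_0 = 64$; the same argument gives the bound for $\wt S$.

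\emph{Main obstacle.} The principal technical difficulty is the careful tracking of nested analyticity domains through the composition $\wt g = (\cdots) \circ \Phi\inv$: $\Phi\inv$ must land in the domain of~$F$, $F$ must land in the domain of~$\Phi$, and enough room must remain to run Cauchy's estimate. These nestings require the smallness relations $|\Psi|, |f|, |g| \ll \lambda$ on slightly shrunken domains, which hold with considerable room to spare thanks to $\lambda = q^{-5}/4$ and the assumption $|\omega - p/q| \leq q^{-6}/6$, but the bookkeeping is the main work in the proof.
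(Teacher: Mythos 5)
Your Step~2 contains a gap that the paper itself flags: the Cauchy-integral extraction of the Taylor remainder does not work on the triangular domains $\CD_\lambda$. You propose to bound the remainder $R(x,y) := \wt g(x,y) - \wt g_1(x)\,y - \wt g_2(x)\,y^2$ by a Cauchy integral over a $y$-circle of radius ``$\sim\lambda$'', but for $(x,y)\in\CD_\lambda$ with $|\Im x|$ close to $1$, no such circle is contained in the analyticity domain. What you get from Step~1 is (at best) analyticity of $\wt g$ on another triangular domain $\CD_{\lambda'}$ with $\lambda' > \lambda$, and for a fixed $x$ the $y$-disc available in $\CD_{\lambda'}$ has radius only $\lambda'(1-|\Im x|)$, which is $\ll \lambda$ when $1-|\Im x|\ll 1$. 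Using the maximal available radius $\rho(x) = \lambda'(1-|\Im x|)$ in the Cauchy estimate gives
\begin{equation*}
    |R(x,y)| \;\lesssim\; \frac{|y|^3}{\rho(x)^3}\,\sup|\wt g|
    \;\lesssim\; \frac{|y|^3}{\lambda^3\,(1-|\Im x|)^3}\,\sup|\wt g|,
\end{equation*}
and the factor $(1-|\Im x|)^{-3}$ blows up as $|\Im x|\to 1$; since $|y|<\lambda(1-|\Im x|)$ there, this only yields a bound that is $O(1)$ in $y$, not the required $O(|y|^3)$. Thus your proposed estimate $|R| \lesssim q^{C}|y|^3/\lambda^3$ does not hold uniformly on $\CD_\lambda$.

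The paper circumvents this by bounding the $\CC^3$-norm $|\wt g|^{(\lambda_4,3)} \lesssim q^{\nu_0}$ directly (through the composition and inversion machinery for triangular domains, Propositions~\ref{prop:tri comp}--\ref{prop:tri inv} and Lemma~\ref{lem:tri cauchy}) and then writing the remainder in the integral form
\begin{equation*}
    R(x,y) = \frac{y^3}{2}\int_0^1 \frac{\partial^3\wt g}{\partial y^3}(x, ty)\,(1-t)^2\,\du t,
\end{equation*}
which only requires the ray $\{(x, ty) : 0\le t\le 1\}$ to lie in $\CD_\lambda$ --- a condition that, unlike a full $y$-circle, is automatically preserved by the triangular shape. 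You correctly identify domain-nesting as the main bookkeeping burden, but the essential obstruction is the triangular geometry, which invalidates the $y$-circle Cauchy argument; the paper explicitly remarks on exactly this issue in the discussion preceding the proof. As a secondary point, your smallness claim $|\Psi|^{(\lambda)} \lesssim q^{-4} \ll \lambda = q^{-5}/4$ has the inequality reversed; invertibility of $\id + \Psi$ must instead exploit that $\varphi$ vanishes linearly in $y$ (so $|\varphi|\lesssim |y|$ rather than merely $\lesssim q^{-4}$), as in Proposition~\ref{prop:tri inv}.
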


Before proceeding to the proof, we explain the technical issue that necessitates the introduction of the domains of the form $\CD_{\lambda}$.
The key issue is that the function $\wt g(\cdot, y)$ for a fixed $y \neq 0$ is in general less regular than $\phi: \T_1 \to C$, in the sense that it extends analytically only to a complex domain $\T_a$ for some $a < 1$. 
\footnote{This is analogous to the fact in billiard dynamics that the billiard map on the phase space is generally less smooth than the boundary of the convex billiard domain.}
The loss of regularity implies that the Taylor remainder in \eqref{eq:tg taylor} has smaller analyticity strip in $x$ whereas the leading terms $\wt g_j, j = 1,2$ are analytic over the larger domain $\T_1$.
By analyticity, the $k$-th Fourier coefficients of $\wt g_j$ decay exponentially faster in $k$ than that of the Taylor remainder. 
This is potentially a problem because, with $y \approx p/q - \omega$, the $\pm q$-th Fourier coefficient of $\wt g(\cdot, y)$, which are presumably the largest Fourier coefficients of the resonant part of $\wt g$, has the size $\sim |\omega - p/q|^2 \eu^{2\pi (a - 1) q}$ whereas that of the Taylor remainder has the size $\sim |\omega - p/q|^3$.
If $\eu^{2\pi (a - 1) q} \ll |\omega - p/q|$ then the uncontrolled error from the Taylor remainder will be much larger than the approximating functions in the estimates \eqref{eq:tg12 est} and \eqref{eq:ts2 est}.
Therefore, it is key to keep track of how fast $a$ decreases as $|y|$ increases from $0$.
More precisely, we aim to establish the analyticity of $\wt g$ over a domain $\CD_{\lambda}$, which by definition has a `triangular shape' in the sense that its $y$-slice is a complex neighborhood $\T_a$ such that $a$ decreases linearly from $1$ with a speed $\lambda\inv$ as $|y|$ increases from $0$.
Then, we restrict $\wt g$ and $\wt S$ on the slab $\CD_\lambda \cap \{|y - (p/q - \omega)| < |\omega - p/q|/2\}$, where the loss of analyticity width in the $x$-variable is uniformly bounded in $q$ by $\lesssim q^5 |\omega - p/q| \ll 1$.
This allows us to show that the first $N$ Fourier coefficients (with $N$ not too big relative to $q$) of the Taylor remainders of \eqref{eq:tg taylor} and \eqref{eq:tS Taylor} are much smaller than the counterparts of the leading terms, and hence they can be regarded truly as `error terms'.

Another technical issue arises when we consider the Taylor remainders over $\CD_\lambda$: it seems not straightforward to show that the remainders vanish to the order $y^3$.
If a function is analytic and bounded over a `rectangular' domain like $\T_1 \times \C$, then it is standard to use Cauchy's estimate to obtain that the $N$-th order Taylor remainder of the expansion at $y = 0$ vanishes to the order $y^N$.
For functions defined on $\CD_\lambda$, however, the Cauchy's estimate is not applicable.
Therefore, we resort to a more rudimentary approach by directly bounding the $\CC^r$ norms \eqref{eq:tri norm} of $\wt g$ and $\wt S$ viewed as a $\CC^\infty$ function on $\CD_\lambda$.
The lengthy but elementary estimates about functions on domains of the form $\CD_\lambda$ are collected in Appendix \ref{sec:tri dom}.
This approach is probably far from optimal and it is partially responsible for the large size of the exponent $\nu$ in the condition \eqref{eq:om arith cond} of Theorem \ref{thm:rnf}.

We now begin the proof of Lemma \ref{lem:taylor rem est}.

\paragraph{Chain of domains.}
Recall that, by construction,
\begin{equation*}
    \wt g = \big(g + \varphi \circ F - \varphi \big) \circ (\id + \wt \Sigma), \qquad \varphi = f + \psi \circ F - \psi
\end{equation*}
where $\id + \wt \Sigma$ is the right inverse of $\id + \Psi$.
We shall consider the chain of inclusion of domains schematised as follows
\begin{equation}\label{eq:dom chain}
    \begin{tikzcd}
    & \wt g  & g + \varphi\circ F - \varphi  & \varphi, f, g & \psi \\
    {\CD_{ \lambda_4}} \arrow[r, hook] & {\CD_{ \lambda_3}} \arrow[r, "\id + \wt\Sigma", hook] \arrow[u, dotted] & {\CD_{ \lambda_2}} \arrow[r, "F"] \arrow[u, dotted] & {\CD_{ \lambda_1}} \arrow[r, "F"] \arrow[u, dotted] & {\CD_{ \lambda_0}} \arrow[u, dotted]
    \end{tikzcd}
\end{equation}
where we shall fix
\begin{equation*}
    \lambda_0 = 1, \quad \lambda_1 = \frac{1}{q^5}, \quad \lambda_2 = \frac{1}{2q^5}, \quad \lambda_3 = \frac{1}{3 q^5} \quad \text{and} \quad \lambda_4 = \frac{1}{4 q^5}.
\end{equation*}
In the diagram \eqref{eq:dom chain}, the vertical arrows with dotted lines denote domains of definitions.
We will show $\wt g$ and $\wt S$ are well-defined analytic functions on $\CD_{\lambda_3}$ and establish a uniform $\CC^3$ bound
\begin{equation}\label{eq:tg est on Dlambda}
    |\wt g|^{(\lambda_4, 3)} \lesssim q^{\nu_0} \quad \text{and} \quad |\wt S|^{(\lambda_4, 3)} \lesssim q^{\nu_0}.
\end{equation}
with a universal constant $\nu_0 = 64$.

In fact, the estimate \eqref{eq:tg est on Dlambda} is sufficient for us to deduce \eqref{eq:tg rem est} and \eqref{eq:ts rem est} of Lemma \ref{lem:taylor rem est}.
Indeed, by the usual Taylor remainder formula,
\begin{equation*}
\begin{aligned}
    &\wt g(x,y) = \wt g_1(x) y + \wt g_2(x) y^2 + \CQ_1(x,y); \qquad& \CQ_1(x,y) = \frac{y^3}{2} \int_0^1 \frac{\partial^3 \wt g}{\partial y^3}(x, ty) (1 - t)^2 dt;\\
    &\tilde S(x,y) =  \tilde s_2(x) y^2 + \CQ_2(x,y), \qquad &\CQ_2(x,y) = \frac{y^3}{2} \int_0^1 \frac{\partial^3 \tilde S}{\partial y^3}(x, ty) (1 - t)^2 dt.
\end{aligned}
\end{equation*}
For $(x,y) \in \CD_{\lambda_4}$, by the explicit expressions above and \eqref{eq:tg est on Dlambda},
\begin{equation*}
    |\CQ_j(x,y)| \lesssim  q^{\nu_0} |y|^3, \qquad (j = 1,2).
\end{equation*}
We readily obtain \eqref{eq:tg rem est} and \eqref{eq:ts rem est}.

In the rest of this section, we focus on proving that all the compositions in \eqref{eq:dom chain} are well-defined and the estimate \eqref{eq:tg est on Dlambda} holds.

\paragraph{Estimates of $\ol f$ and $\ol g$.}

Recall that the functions $f$ and $g$ in the formula $F(x,y) = (x + \omega +y +f(x,y), y + g(x,y))$ are determined by $(\omega, \phi)$ via \eqref{eq:fg} and that $f(x,0) =g(x,0)  = 0$ for all $x$ so that $F$ preserves the real torus $\T \times \{0\}$. 
By our choice of $\psi$ in the preceding section and by \eqref{eq:varphi}, we clearly have $\varphi(x,0) = 0$ as well.
We may therefore put 
\begin{equation*}
    f(x,y) = y \ol f(x,y), \quad g(x,y) = y \ol g(x,y), \quad
    \psi(x,y) = y \ol \psi(x,y), \quad \text{and} \quad \varphi(x,y) = y \ol \varphi(x,y)
\end{equation*}
for some $\ol f$, $\ol g$, $\ol \psi$ and $\ol \varphi$ analytic in a complex neighbourhood of $\T \times \{0\}$.

Let us first focus on $\ol f$ and $\ol g$.
Substituting the definitions of $\ol f$ and $\ol g$ into \eqref{eq:fg} and rearranging,
\begin{equation}\label{eq:bar fg implicit}
    \begin{gathered}
        1 + \ol f(x,y) + \frac{\phi(x + \omega + y(1 + \ol f(x,y)))  -\phi(x + \omega)}{y} = \frac{1}{1+\dot\phi(x)}\\
        1 + \ol g(x,y) =  \big[1 + \dot\phi(x + \omega + y (1+ \ol f(x,y)))\big] \bigg[ 1 + \ol f(x,y) + \frac{\phi(x + y (1 + \ol f(x,y))) - \phi(x)}{y}\bigg].
    \end{gathered}
\end{equation}
By the Mean Value Theorem,
\begin{equation*}
\begin{aligned}
    \frac{\phi(x + \omega + y(1 + \ol f(x,y)))  -\phi(x + \omega)}{y} =& (1 + \ol f(x,y)) \int_0^1 \dot \phi(x + \omega + t (y +  f(x,y))) dt\\
    \frac{\phi(x  + y(1 + \ol f(x,y)))  -\phi(x )}{y} =& (1 + \ol f(x,y))\int_0^1 \dot \phi(x  + t (y +  f(x,y))) dt.
\end{aligned}
\end{equation*}
Hence, Equation \eqref{eq:bar fg implicit} can be written as 
\begin{equation}\label{eq:bar fg int}
    \begin{aligned}
        \ol f(x,y) = & \bigg[(1+\dot\phi(x)) \Big(1 + \int_0^1  \dot \phi(x + \omega + t (y +  f(x,y))) dt \Big)\bigg]\inv - 1\\
        \ol g(x,y) =& \frac{(1 + \dot\phi(x + \omega +  y+ f(x,y))) \big(1 + \int_0^1 \dot \phi(x  + t (y +  f(x,y))) dt \big)}{(1+\dot\phi(x))\big(1 + \int_0^1 \dot \phi(x + \omega + t (y +  f(x,y))) dt \big)} - 1.
    \end{aligned}
\end{equation}

Using \eqref{eq:bar fg implicit} and \eqref{eq:bar fg int}, we control the norms of $\bar f$ and $\bar g$ over domains $\CD_\lambda$ for a sufficiently small $\lambda$.

\begin{lemma}\label{lem:bar fg sup}
    Under the condition $\|\dddot\phi\|\leq 1$, the functions $\bar f$ and $\bar g$ are well-defined on the domain $\CD_{\lambda}$ with $\lambda = (1 - |\dot\phi|)^2$, and
    \begin{equation}\label{eq:bar fg sup}
        |\bar f|^{(\lambda)} \leq \frac{ |\dot\phi|(2 - |\dot\phi|)}{(1 - |\dot\phi|)^2}, \quad |1+\bar f|^{(\lambda)} \leq \frac{1}{(1 - |\dot\phi|)^2} \quad \text{and} \quad  
        |\bar g|^{(\lambda)} \leq  \frac{2|\dot\phi|(2 + |\dot\phi|)}{(1 - |\dot\phi|)^2}.
    \end{equation}
\end{lemma}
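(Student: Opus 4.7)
The plan is a Banach contraction-mapping argument to construct $\ol f$ as the unique fixed point of a natural operator obtained from the first line of \eqref{eq:bar fg int}, and then to extract all three bounds from the explicit formulas. Set $r := |\dot\phi|$ so $\lambda = (1-r)^2$, and consider the closed subset
\begin{equation*}
X := \{h \in \CO(\CD_\lambda) : |1+h|^{(\lambda)} \leq 1/(1-r)^2\}
\end{equation*}
of the Banach space of bounded analytic functions on $\CD_\lambda$ equipped with the sup norm. Define the nonlinear operator
\begin{equation*}
\CT(h)(x,y) := \bigl[(1+\dot\phi(x))(1+I(h)(x,y))\bigr]^{-1} - 1,
\qquad I(h)(x,y) := \int_0^1 \dot\phi\bigl(x+\omega+ty(1+h(x,y))\bigr)\,dt,
\end{equation*}
so that $\ol f$ is sought as a fixed point of $\CT$.

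The crucial geometric observation is that the specific choice $\lambda = (1-r)^2$ is exactly what is required for $\CT$ to be well-defined on $X$: for $h \in X$ and $(x,y) \in \CD_\lambda$, the shifted argument satisfies
\begin{equation*}
|\Im(x+\omega+ty(1+h(x,y)))| \leq |\Im x| + |y|\cdot|1+h|^{(\lambda)} \leq |\Im x| + |y|/\lambda < 1,
\end{equation*}
so it lies in $\T_1$, where $\dot\phi$ is analytic with $|\dot\phi|_1 \leq r$. Consequently $|I(h)|^{(\lambda)} \leq r$, which gives $|(1+\dot\phi)(1+I(h))| \geq (1-r)^2 = \lambda$ and hence $|1+\CT(h)|^{(\lambda)} \leq 1/\lambda$, proving $\CT(X) \subset X$. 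For the contraction property, the identity $\CT(h_1) - \CT(h_2) = (I(h_2)-I(h_1))/[(1+\dot\phi)(1+I(h_1))(1+I(h_2))]$ together with a line-segment bound $|I(h_1)-I(h_2)| \leq |y|\,|\ddot\phi|_1\,|h_1-h_2|^{(\lambda)}$ (valid because all interpolating arguments stay in $\T_1$ by the same imaginary-part estimate), combined with $|y| < \lambda$ and $|\ddot\phi|_1 \leq 1/(2\pi)$ from \eqref{eq:dphi bnd}, yields a contraction factor of order $1/(2\pi(1-r)) \ll 1$. The Banach fixed point theorem then produces a unique $\ol f \in X$ solving \eqref{eq:bar fg int}.

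Membership $\ol f \in X$ immediately gives the middle bound $|1+\ol f|^{(\lambda)} \leq 1/(1-r)^2$. For $|\ol f|^{(\lambda)}$, rewrite $\ol f = [1-(1+\dot\phi)(1+I(\ol f))]/[(1+\dot\phi)(1+I(\ol f))]$: bounding the denominator below by $(1-r)^2$ and estimating the numerator via the observation that $1+\ol f$ is a \emph{reciprocal} of a product of two factors lying in $\overline{\D(1,r)}$ (hence $1+\ol f$ is much more constrained than a generic element of the disc of radius $1/(1-r)^2$), together with the algebraic identity $1/(1-r)^2 - 1 = r(2-r)/(1-r)^2$, yields the claimed bound. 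Finally, the second line of \eqref{eq:bar fg int} expresses $1+\ol g$ as a ratio of two factors bounded above by $1+r$ over two factors bounded below by $1-r$, giving $|1+\ol g|^{(\lambda)} \leq (1+r)^2/(1-r)^2$ and hence $|\ol g|^{(\lambda)} \leq (1+r)^2/(1-r)^2 - 1 = 4r/(1-r)^2 \leq 2r(2+r)/(1-r)^2$. The main obstacle is the tight coupling of $X$ with $\lambda$: the radius $1/(1-r)^2$ of $X$ is forced by requiring that shifted arguments remain in $\T_1$, and the value $\lambda = (1-r)^2$ is forced by requiring $|(1+\dot\phi)(1+I(h))| \geq \lambda$ so that $\CT$ maps $X$ into itself; these two constraints only close up with the specific pairing chosen. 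Once this balance is identified, the remaining estimates are mechanical.
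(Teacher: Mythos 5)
Both you and the paper use a contraction-mapping argument, and indeed your operator $\CT$ and the paper's $\CP_{x,y}$ are rearrangements of the same fixed-point equation (the first line of \eqref{eq:bar fg implicit}). The paper works \emph{pointwise}: fixing $(x,y)\in\CD_\lambda$, it considers
\begin{equation*}
  \CP_{x,y}(z) := -\frac{\dot\phi(x)}{1+\dot\phi(x)} - \frac{\phi(x+\omega+y(1+z))-\phi(x+\omega)}{y}
\end{equation*}
on the disc $\overline\D_r\subset\C$ with $r:=\lambda^{-1}-1 = |\dot\phi|(2-|\dot\phi|)/(1-|\dot\phi|)^2$. Because $\CP'_{x,y}(z)=-\dot\phi(\cdot)$, the Lipschitz constant is simply $|\dot\phi|<1$, and because the invariant ball is \emph{centered at the origin} with radius $r$, the first bound in \eqref{eq:bar fg sup} is automatic. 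Your formulation is a genuine variant — a Banach-space fixed point for the rearranged operator $\CT(h)=[(1+\dot\phi)(1+I(h))]^{-1}-1$ — and it handles existence, analyticity, and the middle bound $|1+\bar f|^{(\lambda)}\le 1/\lambda$ cleanly. However, because your invariant set $X$ is a ball centered at $-1$ rather than at $0$, the first and third bounds do not come for free, and this is where your argument has a gap.

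Concretely, the reasoning ``$|1+\bar g|^{(\lambda)}\le (1+r)^2/(1-r)^2$ and hence $|\bar g|^{(\lambda)}\le (1+r)^2/(1-r)^2-1$'' is invalid: for complex-valued $\bar g$, a bound $|1+\bar g|\le M$ does not imply $|\bar g|\le M-1$ (consider $1+\bar g=-M$). The analogous step for $\bar f$ — invoking the ``algebraic identity $1/(1-r)^2-1=r(2-r)/(1-r)^2$'' after establishing $|1+\bar f|\le 1/(1-r)^2$ — appears to commit the same fallacy. Your surrounding intuition (that $1+\bar f$, being a reciprocal of a product of two factors in $\overline{\D(1,r)}$, is much more constrained than a generic point in the disc of radius $1/\lambda$) is in fact correct, but it needs to be carried out, e.g. by writing $1/(1+\dot\phi)$ and $1/(1+I)$ as points in $\overline\D\bigl(\tfrac{1}{1-r^2},\tfrac{r}{1-r^2}\bigr)$ and applying the triangle inequality there, or by the elementary telescoping bound $|1+\bar f-1|\le |1-(1+\dot\phi)(1+I)|/|(1+\dot\phi)(1+I)|$. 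For $\bar g$, the correct route is $\bar g=(cd-ab)/ab$ with $a,b,c,d\in\overline{\D(1,r)}$, then $|cd-ab|\le|cd-1|+|1-ab|\le 2r(2+r)$ and $|ab|\ge(1-r)^2$ — or telescope the four near-unit factors of $cd/(ab)$ — either of which gives a valid bound at or below the one claimed in \eqref{eq:bar fg sup}. So the final numbers you state are achievable, but the one-line derivations as written are not proofs; the paper's choice to run the iteration on $\overline\D_r$ (centered at the origin) is precisely what makes these bounds fall out without any reciprocal analysis.
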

\begin{proof}
    We use \eqref{eq:bar fg implicit} and the contraction mapping principle to solve for $\bar f$.
    Let $\lambda = (1 - |\dot\phi|)^2$ and define $r = \lambda\inv - 1$. 
    Fix $(x,y) \in \CD_\lambda$ and consider the map
    \begin{equation*}
    \begin{aligned}
        \CP_{x,y}: \overline{\D}_r &\to \C \\
        z &\mapsto -\frac{\dot\phi(x)}{1 + \dot\phi(x)} - \frac{\phi(x + \omega + y(1 + z))  -\phi(x + \omega)}{y}.
    \end{aligned}
    \end{equation*}
    By definitions of $\CD_\lambda$ and $r$, when $|z|\leq r$ we have $|\Im(x + \omega + y(1 + z))| \leq |\Im x| + \lambda\inv |y| < 1$.
    Hence, the map  $\CP_{x,y}$ is well-defined on $\overline{\D}_r$, and  $\CP_{x,y}'(z) = -\dot\phi(x + \omega + y(1+z))$. 
    Since the assumption $\|\dddot\phi\| \leq 1$ implies \textit{a fortiori} that $|\dot\phi|<1$, the map $\CP_{x,y}$ is contracting over $\overline{\D}_r $.
    On the other hand, by the mean value theorem and the definitions of $\lambda$ and $r$, for $|z| \leq r$,
    \begin{equation*}
        |\CP_{x,y}(z)| \leq \frac{|\dot\phi|}{1 - |\dot\phi|} + (1 + r) |\dot\phi| = \Big(  \frac{1}{1 - |\dot\phi|} + \frac{1}{\lambda}\Big) |\dot\phi| = \frac{2 - |\dot\phi|}{(1 - |\dot\phi|)^2} |\dot\phi| = \frac{1}{(1 - |\dot\phi|)^2} - 1 = r.
    \end{equation*}
    It follows that $\CP_{x,y}(\overline{\D}_r) \subset \overline{\D}_r$.
    Hence, by the contracting mapping principle there exists  a unique fixed point $\CP_{x,y}(z) = z$ in $\overline{\D}_r$ depending analytically on $(x,y)$. 
    We shall define $\bar f(x,y)$ to be this fixed point.
    In particular, we have 
    \begin{equation*}
         |\bar f(x,y)| \leq r = \frac{|\dot\phi|(2 - |\dot\phi|)}{(1 - |\dot\phi|)^2} \quad \text{and} \quad |1+\bar f(x,y)| \leq 1 + r = \frac{1}{\lambda} = \frac{1}{(1 - |\dot\phi|)^2}.
    \end{equation*}
    We have obtained the first two inequalities of \eqref{eq:bar fg sup}.
    Finally, from the second line of \eqref{eq:bar fg int}, we deduce that $g$ is also well-defined on $\CD_\lambda$ and satisfies
    \begin{equation*}
        |\bar g(x,y)| \leq \frac{2 |\dot\phi|(2 + |\dot\phi|)}{(1 - |\dot\phi|)^2}.
    \end{equation*}
    The proof of the lemma is finished.
\end{proof}

Next, we apply Proposition~\ref{prop:tri comp} to estimate the norms $|\bar f|^{(\lambda', 2)}$ and $|\bar g|^{(\lambda', 2)}$ by shrinking $\CD_\lambda$ to a smaller domain $\CD_{\lambda'}$.

\begin{corollary}\label{cor:bar fg Cr}
    Suppose $\|\dddot\phi\| \leq 1$. 
    Then 
    \begin{equation*}
        |\bar f|^{(\lambda', 2)}+|\bar g|^{(\lambda', 2)} \lesssim 1 \quad \text{for} \quad \lambda' = (1 - |\dot\phi|)^2/2.
    \end{equation*}
\end{corollary}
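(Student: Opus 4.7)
The plan is to reduce the $\CC^2$ bound to the sup-norm bound already supplied by Lemma~\ref{lem:bar fg sup}, via a Cauchy-type derivative estimate on the triangular domain $\CD_\lambda$. This is precisely the role played by Proposition~\ref{prop:tri comp} in the appendix on triangular domains, so the argument will essentially be an invocation rather than a computation.

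First I would observe that the standing hypothesis $\|\dddot\phi\| \le 1$ together with \eqref{eq:dphi bnd} forces $|\dot\phi| \le (2\pi)^{-2}$, so $1 - |\dot\phi|$ is bounded below by a universal positive constant. Consequently both $\lambda = (1 - |\dot\phi|)^2$ and $\lambda' = \lambda/2$ are bounded below \emph{and} above by universal constants, and by Lemma~\ref{lem:bar fg sup} we have the uniform sup-norm bounds
\begin{equation*}
    |\bar f|^{(\lambda)} + |\bar g|^{(\lambda)} \;\lesssim\; 1.
\end{equation*}

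Next I would apply Proposition~\ref{prop:tri comp} to the nested inclusion $\CD_{\lambda'} \subset \CD_\lambda$, reading it as a Cauchy-type derivative estimate: for each $(x,y) \in \CD_{\lambda'}$ one can inscribe a bidisc $\{(x + \zeta, y + \eta) : |\zeta|,|\eta| < r\}$ inside $\CD_\lambda$ with $r$ comparable to $\min(1, \lambda - \lambda') \gtrsim 1$, and the Cauchy integral formula on that bidisc bounds each mixed partial $\partial^{j+k}\bar f / (\partial x^j \partial y^k)$ of order $j+k \le 2$ by $|\bar f|^{(\lambda)}/r^{j+k}$, with an analogous bound for $\bar g$. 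Summing over $j+k \le 2$ and using $r \gtrsim 1$ yields the desired estimate
\begin{equation*}
    |\bar f|^{(\lambda', 2)} + |\bar g|^{(\lambda', 2)} \;\lesssim\; 1.
\end{equation*}

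The only delicate point is the bidisc-fitting step: one must verify that the triangular geometry of $\CD_\lambda$ permits inscribing a bidisc of radius $\gtrsim 1$ around every point of $\CD_{\lambda'}$, including points close to the slanted boundary. This is exactly why Proposition~\ref{prop:tri comp} is formulated in the appendix for the triangular domains $\CD_\lambda$ in the first place, so I expect the verification to reduce to a direct application without any further calculation. Apart from this, the proof is a one-line consequence of Lemma~\ref{lem:bar fg sup} and the universal lower bound on $1 - |\dot\phi|$.
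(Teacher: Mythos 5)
Your proposal does not work, and the gap is precisely the "only delicate point" you flag at the end. Concretely: the domains $\CD_\lambda$ and $\CD_{\lambda'}$ share the same vertex at $(|\Im x|,|y|)=(1,0)$, since both contain every $(x,0)$ with $|\Im x|<1$. Take a point $(x,0)\in\CD_{\lambda'}$ with $|\Im x|=1-\epsilon$; a bidisc of radius $r$ about it lies in $\CD_\lambda$ only if $(1-\epsilon)+r+\lambda^{-1}r<1$, i.e.\ $r<\epsilon/(1+\lambda^{-1})$. So no bidisc of radius bounded below by a universal constant fits around points of $\CD_{\lambda'}$ near the real slice of its slanted boundary, and the Cauchy-integral step fails there. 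The uniform lower bound $\lambda'\gtrsim 1$ (which you correctly derive from $|\dot\phi|\le(2\pi)^{-2}$) controls the slope of the triangle, not the distance from $\CD_{\lambda'}$ to $\partial\CD_\lambda$, which tends to zero near $y=0$.

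You also misidentify the role of Proposition~\ref{prop:tri comp}: it is not a Cauchy-type derivative estimate for restriction between nested triangles, but a chain-rule (Faà di Bruno) estimate for compositions $v\circ(\id+\Sigma)$ and a bootstrap for implicit equations $\bar\sigma_j=v_j\circ(\id+\Sigma)$. The Cauchy-type estimate on triangular domains is Lemma~\ref{lem:tri cauchy}, and it applies only to functions of the form $u=y\bar u$ — the factor $y$ is exactly what compensates for the degeneration of the $x$-strip width near $y=0$ — so it gives a bound on $|u|^{(\lambda',r)}$, not on $|\bar u|^{(\lambda',r)}$. To bound $|\bar f|^{(\lambda',2)}$ one needs the additional structure exploited in the paper: by~\eqref{eq:bar fg int}, $\bar f=w_1\circ(\id+\Sigma)$ and $\bar g=w_2\circ(\id+\Sigma)$ with $\Sigma=(0,f)$, where $w_1,w_2$ are explicit expressions in $\dot\phi$ that satisfy $|w_j|^{(1,1)}\lesssim 1+\|\ddot\phi\|$ and $|w_j|^{(1,2)}\lesssim 1+\|\dddot\phi\|$ on the \emph{full} domain $\CD_1$. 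Applying Proposition~\ref{prop:tri comp} part~(3) to the implicit equation for $\bar f$ and then part~(2) to $\bar g$ turns these uniform $\CC^2$ bounds on $w_j$ into the claimed bounds, without ever inscribing a bidisc near the critical boundary.
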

\begin{proof}
To estimate $|\bar f|^{(\lambda', 2)}$ and $|\bar g|^{(\lambda', 2)}$, let us define $\Sigma(x,y) = (0, f(x,y))$ and 
\begin{equation*}
    \begin{aligned}
        w_1(x,y) =& \bigg[(1+\dot\phi(x)) \Big(1 + \int_0^1  \dot \phi(x + \omega + t y) dt \Big)\bigg]\inv - 1, \\
        w_2(x,y) =& \frac{(1 + \dot\phi(x + \omega +  y)) \big(1 + \int_0^1 \dot \phi(x  + t y) dt \big)}{(1+\dot\phi(x))\big(1 + \int_0^1 \dot \phi(x + \omega + t y) dt \big)} - 1.
    \end{aligned}
\end{equation*}
By \eqref{eq:bar fg int},
\begin{equation*}
    \bar f = w_1 \circ (\id + \Sigma) \quad \text{and} \quad \bar g = w_2 \circ (\id + \Sigma). 
\end{equation*}
Observe that for $0 \leq t \leq 1$, the expression $\dot \phi(x + \omega + ty)$ is well-defined for $|\Im x| + |y| <1$. 
Hence $w_1$ and $w_2$ are well-defined on $\CD_1$.
On the other hand, for $\lambda = (1 - |\dot\phi|)^2$ and $(x,y) \in \CD_{\lambda}$,
the results of Lemma~\ref{lem:bar fg sup} imply that $|\Im x| +|y+f(x,y)| \leq |\Im x| + (|1 + \bar f|^{(\lambda)}) |y| \leq |\Im x| + \lambda\inv |y| < 1$ with  $0 \leq t \leq 1$, and thus $(\id + \Sigma) (\CD_\lambda) \subset \CD_1$ and the map $w_j \circ (\id + \Sigma)$ is well-defined on $\CD_{\lambda}$.
Moreover, since the assumption $\|\dddot\phi\|\leq 1$ implies $|\dot\phi| \leq (2\pi)^{-2}$, computing the derivatives of $w_j,\ j = 1,2$ shows that the following crude bounds hold
\begin{equation}\label{eq:wij}
    |w_j|^{(1,1)} \lesssim 1 + \|\ddot\phi\| \lesssim 1 \quad \text{and} \quad  |w_j|^{(1,2)} \lesssim 1 + \|\dddot\phi\| \lesssim 1.
\end{equation}
Let us now apply Proposition \ref{prop:tri comp} part 3 with $v_1 = 0$, $v_2 = w_1$ , $\bar \sigma_1 = 0$, $\bar \sigma_2 = \bar f$ and $r = 2$.
We have accordingly that for all $\lambda' < \lambda = (1 - |\dot\phi|)^2$,
\footnote{The proposition is applied with $(\lambda, \lambda', \lambda'') \leftarrow (1, \lambda, \lambda')$.}
\begin{equation*}
    |\bar f|^{(\lambda', 2)} \leq C_2 \Big( \frac{2+ \lambda}{1 - (\lambda'(1 - |\dot\phi|)^{-2})^{\frac{1}{2}}}\Big)^{A_2} (1 + |w_1|^{(1, 1)})^{B_2} |w_1|^{(1, 2)}.
\end{equation*}
Fixing $\lambda' = \frac{1}{2}(1 - |\dot\phi|)^2$ and using \eqref{eq:wij}, we have
\begin{equation}\label{eq:bar f vs dddphi}
    |\bar f|^{(\lambda', 2)} \lesssim 1.
\end{equation}
At last, applying Proposition \ref{prop:tri comp} part 2 to $\bar g = w_2 \circ (\id + \Sigma)$ and using \eqref{eq:wij}, \eqref{eq:bar f vs dddphi} and the assumption $\|\dddot\phi\| \leq 1$, we have
\begin{equation*}
    |\bar g|^{(\lambda', 2)} \lesssim |w_2|^{(1, 2)} (1 + |\bar f|^{(\lambda', 2)})^2 \lesssim 1.
\end{equation*}
The proof is finished.
\end{proof}

We now consider the last two arrows of \eqref{eq:dom chain}.

Since $\lambda_1 < (1 - |\dot\phi|)^2$, it follows from Lemma \ref{lem:bar fg sup} that $F$ is well-defined on $\CD_{\lambda_1}$.

We first derive an extension to Proposition~\ref{prop:tri comp} part (1) and part (2) in order to verify that the last two arrows of \eqref{eq:dom chain} make sense and thus the compositions $\psi \circ F$ and $\varphi \circ F$ are well-defined.
Let decompose $F = \tau_\omega \circ F_0$ where $F_0(x,y) = (x + y +f(x,y), y + g(x,y))$ and $\tau_\omega(x,y) = (x + \omega, y)$.
Since $\tau_\omega(\CD_{\lambda}) = \CD_{\lambda}$ for any $\lambda > 0$, we just need to verify $F_0(\CD_{\lambda_2}) \subset \CD_{\lambda_1}$ and $F_0(\CD_{\lambda_1}) \subset \CD_{\lambda_0}$.

Fix $q \geq 2$. 
By the estimates in Lemma \ref{lem:bar fg sup} for $|\ol f|^{\lambda}$ and $|\ol g|^{\lambda}$ and the smallness of $|\dot\phi|$, one easily verifies that the condition \eqref{eq:tri o cond} of Proposition~\ref{prop:tri comp} is satisfied with $\bar \sigma_1 =1+ \bar f$, $\bar \sigma_2 = \bar g$ and $(\lambda', \lambda) \in \{(q^{-5}, 1),\;  (q^{-5}/2,q^{-5})\}$.
Thus, we may apply Proposition~\ref{prop:tri comp} part (1) with $\id + \Sigma = F_0$, which implies the desired inclusion.

Having verified that compositions $\psi \circ F$ and $\varphi \circ F$ are well-defined on $\CD_{\lambda_1}$ and on $\CD_{\lambda_2}$ respectively, we apply Proposition~\ref{prop:tri comp} part (2) with  $\sigma_1(x,y) = y + f(x,y)$,  $\sigma_2(x,y) = g(x,y)$, $v = \psi \circ \tau_\omega$ and $v = \varphi \circ \tau_\omega$ to get 
\begin{equation*}
\begin{cases}
    |\psi \circ F|^{(\lambda_1, 3)} = |\psi \circ \tau_\omega \circ F_0|^{(\lambda_1, 3)} \lesssim |\psi \circ \tau_\omega|^{(\lambda_0, 3)} (1 + |y+f|^{(\lambda_1, 3)} + |g|^{(\lambda_1, 3)})^3\\
    |\varphi \circ F|^{(\lambda_2, 3)} = |\varphi \circ \tau_\omega \circ F_0|^{(\lambda_2, 3)} \lesssim |\varphi \circ \tau_\omega|^{(\lambda_1, 3)} (1 + |y+f|^{(\lambda_2, 3)} + |g|^{(\lambda_2, 3)})^3.
\end{cases}
\end{equation*} 
Using Lemma~\ref{lem:tri cauchy} with $(\lambda', \lambda) = (\lambda_1, 2\lambda_1)$, we have 
\begin{equation*}
    1 + |y + f|^{(\lambda_1, 3)} + |g|^{(\lambda_1, 3)} \leq \frac{3 + \lambda_1}{1 - 1/2}   (1 + |1 + \bar f|^{(2\lambda_1, 2)} + |\bar g|^{(2\lambda_1, 2)}),
\end{equation*}
where, since $2\lambda_1 < (1 - |\dot\phi|)^2/2$, Corollary~\ref{cor:bar fg Cr} implies that the terms inside the parentheses in the preceding expression is $\lesssim 1$.
We also have the obvious equalities $|\psi \circ \tau_\omega|^{(\lambda_0, 3)} = ||\psi|^{(\lambda_0, 3)}$ and $|\varphi \circ \tau_\omega|^{(\lambda_0, 3)} = ||\varphi|^{(\lambda_0, 3)}$.
It follows that
\begin{equation}\label{eq:psiphi o F}
     |\psi \circ F|^{(\lambda_1, 3)} \lesssim |\psi|^{(\lambda_0, 3)} \quad \text{and} \quad |\varphi \circ F|^{(\lambda_2, 3)} \lesssim |\varphi|^{(\lambda_1, 3)}.
\end{equation}
By the same argument using Lemma \ref{lem:tri cauchy},
\begin{equation}\label{eq:fg lam}
    |f|^{(\lambda_1,3)} + |g|^{(\lambda_1, 3)} \lesssim 1.
\end{equation}

\paragraph{Estimates of $\psi$ and $\varphi$.}
Recall that $\bar \psi(x,y) = \psi_1(x) + \psi_2(x)y$ by construction. 
Applying Lemma~\ref{lem:tri cauchy} twice and using $\|\ddot\psi_1\|\lesssim q^2$ and $\|\dot\psi_2\|\lesssim q^4$ from \eqref{eq:leading norms recap}, we have
\begin{equation}\label{eq:psi lam}
    |\psi|^{(\lambda_0, 3)} \lesssim |\bar\psi|^{(2\lambda_0, 2)} \lesssim \|\ddot\psi_1\| + \|\dot\psi_2\| \lesssim q^4.
\end{equation}
Using the definition $\varphi = f + \psi \circ F - \psi$ and applying \eqref{eq:psiphi o F} - \eqref{eq:psi lam}, we obtain 
\begin{equation}\label{eq:phi lam}
    |\varphi|^{(\lambda_1, 3)} \lesssim |f|^{(\lambda_1,3)} + |\psi|^{(\lambda_0, 3)} \lesssim q^4 .
\end{equation}
Since $\varphi(x,y) = y \bar \varphi(x,y)$,
by  the mean value theorem, 
\begin{equation}\label{eq:barphi MVT}
    \bar \varphi(x,y) = \varphi_1(x) + y \int_0^1 \frac{\partial^2 \varphi}{\partial y^2}(x, ty)(1-t) dt.
\end{equation}
When $(x,y) \in \CD_{\lambda_1}$, we have $|y| \leq q^{-5}$. 
Therefore, the estimate \eqref{eq:phi lam} together with $\|\varphi_1\| < 1/18$ and $\|\ddot\varphi_1\| \lesssim q^2$ from \eqref{eq:leading norms recap} implies that, for some universal constant $c > 0$,
\begin{equation}\label{eq:barphi lam}
\begin{aligned}
    |\bar\varphi|^{(\lambda_1)} \leq&  \|\varphi_1\| + \frac{1}{2q^5} |\varphi|^{(\lambda_1, 2)} < \frac{1}{18} + \frac{c}{q};\\
    |\bar\varphi|^{(\lambda_2, 2)} \lesssim& \|\ddot\varphi_1\| + \verts{\frac{\partial^2 \varphi}{\partial y^2}}^{(\lambda_1, 1)} \lesssim q^2 + |\varphi|^{(\lambda_1, 3)} \lesssim q^4 . 
\end{aligned}
\end{equation}
In the second line, we have applied Lemma~\ref{lem:tri cauchy} to the integral term of \eqref{eq:barphi MVT}.

\paragraph{Estimates of $\id + \wt \Sigma$.}
Next, we consider the map $\id + \wt\Sigma$ in \eqref{eq:dom chain}.
To apply Proposition~\ref{prop:tri inv} with $\bar \xi_1 \leftarrow \bar \psi$, $\bar \xi_2 \leftarrow \bar \varphi$, $\Xi \leftarrow \Psi$ and $\Sigma \leftarrow \wt \Sigma$, we need to verify the condition \eqref{eq:tri inv} holds with $\lambda' = \lambda_3$, $\lambda = \lambda_2$ and $q$ sufficiently large, but this follows easily from \eqref{eq:psi lam} and \eqref{eq:barphi lam} as well as the definitions of $\lambda_3$ and $\lambda_2$. 

Accordingly, Proposition~\ref{prop:tri inv} applies and the right inverse $\id + \wt\Sigma$ of $\id + \Psi$ exists and is defined on $\CD_{\lambda_3}$.
By Proposition \ref{prop:tri comp} part (1) one also verifies that $(\id + \Psi)(\CD_{\lambda_4}) \subset \CD_{\lambda_3}$ for $q$ sufficiently large.
Hence, Proposition \ref{prop:tri inv} conclusion \ref{eq:tri left inv} implies that $(\id + \wt \Sigma) \circ (\id + \Psi) = \id$ on $\CD_{\lambda_4}$.
If we put $\wt F = (\id + \Psi) \circ F \circ (\id + \wt \Sigma)$, then in view of \eqref{eq:dom chain} the map $\wt F$ is well-defined on $\CD_{\lambda_3}$ and we have $ (\id + \Psi) \circ F  = \wt F \circ  (\id + \Psi)$ on $\CD_{\lambda_4}$.
Write $\wt F(x,y) = (x + \omega + y + \wt f(x,y), y + \wt g(x,y))$ for analytic $\wt f, \wt g: \CD_{\lambda_3} \to \C$. 
Then by the discussion in the beginning of Section \ref{sec:rnf proof} we have $\wt f \circ (\id + \Psi) = 0$ and $\wt g = (g + \varphi \circ F - \varphi) \circ (\id + \wt\Sigma)$.
By analyticity $\wt f$ vanishes identically.

Write $\wt \Sigma(x,y) = (y \bar \sigma_1(x,y), y \bar \sigma_2(x,y))$.
By the conclusion \eqref{eq:bar xi vs sig} of Proposition \ref{prop:tri inv} with $r = 2$ and $\lambda'' = 7\lambda_4/6$,
\begin{equation*}
    1 + |\bar \sigma_1|^{(\frac{7}{6}\lambda_4, 2)} + |\bar \sigma_2|^{(\frac{7}{6}\lambda_4, 2)}
    \leq C_2 \Big( \frac{2 + \lambda_3}{1 - \beta}\Big)^{A_2} \Big(\frac{1 + |\bar \psi|^{(\lambda_2, 1)} + |\bar \varphi|^{(\lambda_2, 1)}}{1 - |\bar \varphi|^{(\lambda_2)}}\Big)^{B_2} \big(1 + |\bar \psi|^{(\lambda_2, 2)} + |\bar \varphi|^{(\lambda_2, 2)}\big)
\end{equation*}
where $\beta = (\frac{7}{6}\frac{\lambda_4}{\lambda_3})^{1/2} = (7/8)^{1/2} < 1$.
By \eqref{eq:psi lam} and \eqref{eq:barphi lam}, we arrive at a crude bound:
\begin{equation}\label{eq:barsig}
    1 + |\bar \sigma_1|^{(\frac{7}{6}\lambda_4, 2)} + |\bar \sigma_2|^{(\frac{7}{6}\lambda_4, 2)}
    \lesssim  q^{4(B_2 + 1)}.
\end{equation}

\paragraph{Estimate of $\wt g$.}
Finally, we may bound $|\wt g|^{(\lambda_4, 3)}$ as follows. 
\begin{equation*}
    \begin{aligned}
        |\wt g|^{( \lambda_4, 3)} 
        &=|(g + \varphi \circ F - \varphi) \circ (\id + \wt\Sigma)|^{( \lambda_4, 3)} & \\
        &\lesssim |g + \varphi \circ F - \varphi|^{( \lambda_2, 3)} (1 + |\sigma_1|^{( \lambda_4, 3)} + |\sigma_2|^{( \lambda_4, 3)})^3 &\text{by Proposition~\ref{prop:tri comp} part (2)}\\
        & \lesssim |g + \varphi \circ F - \varphi|^{( \lambda_2, 3)} (1 + |\bar \sigma_1|^{( \frac{7}{6}\lambda_4, 2)} + |\bar \sigma_2|^{( \frac{7}{6}\lambda_4, 2)})^3 & \text{by Lemma~\ref{lem:tri cauchy}}\\
        & \lesssim q^{12(B_2 + 1)} |g + \varphi \circ F - \varphi|^{( \lambda_2, 3)}  & \text{by \eqref{eq:barsig}}\\
        & \lesssim q^{12(B_2 + 1)} ( |g|^{( \lambda_2, 3)} + |\varphi|^{( \lambda_1, 3)} ) & \text{by \eqref{eq:psiphi o F}}\\
        & \lesssim q^{12(B_2 + 1)} q^4  & \text{by \eqref{eq:fg lam} and \eqref{eq:phi lam}}.
    \end{aligned}    
\end{equation*}
By the explicit recursive formula for $B_r$ at the end of the proof of \ref{prop:tri comp}, one can readily compute that $B_2 = 4$.
Therefore, we have obtained $|\wt g|^{(\lambda_4, 3)} \lesssim q^{64}$, which is the first inequality of \eqref{eq:tg est on Dlambda}.

\paragraph{Estimate of $\wt S$.}
We start by estimating the $|\cdot|^{(\lambda_2, 3)}$-norm of the normalized generating function $S$ associated with $(\omega, \phi)$  in adapted coordinates.
By \eqref{eq:RIC gen func-fg},
\begin{equation*}
    \frac{\partial S}{\partial x} = y \Big[ (1 + \ol g)\Big(1 + \frac{\partial f}{\partial x}\Big) -1 \Big] \quad \text{and} \quad \frac{\partial S}{\partial y} = y (1 + \ol g)\Big(1 + \frac{\partial f}{\partial y}\Big).
\end{equation*}
It follows from Lemma~\ref{lem:tri cauchy}, Corollary~\ref{cor:bar fg Cr} and \eqref{eq:fg lam} that 
\begin{equation*}
    \begin{aligned}
        \verts{\frac{\partial S}{\partial x}}^{(\lambda_2, 2)} \lesssim&  \verts{(1 + \ol g)\Big(1 + \frac{\partial f}{\partial x}\Big) - 1}^{(\lambda_1, 1)} \lesssim (1 + |\ol g|^{(\lambda_1, 1)}) \Big(1 + \verts{\frac{\partial f}{\partial x}}^{(\lambda_1, 1)}\Big) \lesssim 1\\
        \verts{\frac{\partial S}{\partial y}}^{(\lambda_2, 2)} \lesssim&  \verts{(1 + \ol g)\Big(1 + \frac{\partial f}{\partial y}\Big) }^{(\lambda_1, 1)} \lesssim (1 + |\ol g|^{(\lambda_1, 1)}) \Big(1 + \verts{\frac{\partial f}{\partial y}}^{(\lambda_1, 1)}\Big) \lesssim 1.
    \end{aligned}
\end{equation*}
Applying the second estimate above to the Taylor remainder of
\begin{equation*}
    S(x,y) = \frac{1 + f_1(x)}{2} y^2 + y^3 \int_0^1 \frac{\partial^3 S}{\partial y^3}(x, ty) (1 - t)^2 \du t
\end{equation*}
and using $\|f_1\| \lesssim 1$, we get $|S|^{(\lambda_2)} \lesssim 1$ and thus,

\begin{equation}\label{eq:|S| lam}
    |S|^{(\lambda_2, 3)} \leq \max \bigg\{ |S|^{(\lambda_2)}, \verts{\frac{\partial S}{\partial x}}^{(\lambda_2, 2)}, \verts{\frac{\partial S}{\partial y}}^{(\lambda_2, 2)}\bigg\} \lesssim 1.
\end{equation}
It follows that
\begin{equation*}
    \begin{aligned}
        |\wt S|^{(\lambda_4, 3)} 
        \lesssim& |S|^{(\lambda_2, 3)} (1 + |\sigma_1|^{(\lambda_4, 3)} + |\sigma_2|^{(\lambda_4, 3)})^3 & \text{by Proposition \ref{prop:tri comp} part (2)}\\
        \lesssim& |S|^{(\lambda_2, 3)} (1 + |\bar \sigma_1|^{(7\lambda_4/6,\ 2)} + |\bar \sigma_2|^{(7\lambda_4/6,\ 2)})^3 & \text{by Lemma \ref{lem:tri cauchy}}\\
        \lesssim& q^{12(B_2 + 1)} & \text{by \eqref{eq:barsig} and \eqref{eq:|S| lam}.}
    \end{aligned}
\end{equation*}
Using $B_2 = 4$ we obtain $|\wt S|^{(\lambda_4, 3)} \lesssim q^{60}$, which obviously implies the second inequality of \eqref{eq:tg est on Dlambda}.
In view of the discussion immediately following \eqref{eq:tg est on Dlambda}, we have finished the proof of Lemma \ref{lem:tg12 ts2 est}.

\subsubsection{End of the proof of Proposition~\ref{prop:rnf1}}\label{sec:end prf rnf1}

We now complete the proof of  Proposition~\ref{prop:rnf1}.
Let us fix from now onwards that $\lambda = q^{-5}/4$ and let $\CD_\lambda$ and the $|\cdot|^{(\lambda, r)}$-norm be defined as in the beginning of Section \ref{sec:rnf1 proof taylor}.
Take $(x,y) \in \CD_{\lambda}$.
Combining Lemma~\ref{lem:tg12 ts2 est} and Lemma \ref{lem:taylor rem est},
\begin{equation}\label{eq:tri residue}
    \begin{gathered}
        \bigg|\wt g(x,y) -  y\Big(\omega - \frac{p} {q} + \frac{y}{2} \Big) \frac{\langle \dot f_1 \rangle_q(x)}{1 + \langle f_1 \rangle_q(x)}\bigg| \lesssim q^2 \verts{\omega - \frac{p}{q}} |y| + q^5 \verts{\omega - \frac{p}{q}} |y|^2 +  q^{\nu_0}|y|^3\\
        \bigg|\wt S(x,y) -  \frac{1+f_1(x)}{2(1 + \langle f_1 \rangle_q(x))^2 }y^2 \bigg| \lesssim \verts{\omega - \frac{p}{q}} |y|^2 + q^{\nu_0} |y|^3.
    \end{gathered}
\end{equation}
Recall that in the preceding paragraph we have shown $(\id + \Psi) \circ (\id + \wt\Sigma) = \id$ over $\CD_{\lambda}$ and 
\begin{equation*}
    (\id + \Psi) \circ F \circ (\id + \wt \Sigma): \begin{pmatrix}
        x \\ y 
    \end{pmatrix} \mapsto \begin{pmatrix}
        x + \omega + y \\ y + \wt g(x,y)
    \end{pmatrix}
\end{equation*}
is analytic on $\CD_{\lambda}$.
Define a translation $\tau(x,y) = (x, p/q - \omega + y)$.
Then we have 
\begin{equation}\label{eq:tau-translated id}
    \tau\inv \circ (\id + \Psi) \circ (\id + \wt\Sigma) \circ \tau = \id \quad \text{over} \quad \tau\inv \CD_{\lambda}.
\end{equation}
Using assumption \eqref{eq:rnf1 arithm}, one readily checks that $\T_{1 - q\inv} \times \D_{|\omega - p/q|/2} \subset \tau\inv \CD_{\lambda}$. 
We prove the conclusion of Proposition \ref{prop:rnf1} with the following choices.
\begin{equation}\label{eq:Thetaq}
    \Theta := (\id + \wt\Sigma) \circ \tau; \quad \wt F_q := \tau\inv \circ (\id + \Psi) \circ F \circ \Theta; \quad \wt S_q := S \circ \Theta; \quad  \wt g_q := \wt g \circ \tau.
\end{equation}
Then $\wt F_q(x,y) = (x + p/q + y, y + \wt g_q(x, y)) $ for $\wt g_q: \T_{1 - q\inv} \times \D_{|\omega - p/q|/2} \to \C$ analytic.

Let us fix $(x,y) \in \T_{1 - q\inv} \times \D_{|\omega - p/q|/2}$.

Define $\theta_1, \theta_2:\T_{1 - q\inv} \times \D_{|\omega - p/q|/2} \to \C$ as in \eqref{eq:th12}.
By \eqref{eq:tau-translated id} and the definition of $\Theta$, we have $(\id + \Psi) \circ \Theta = \tau$. 
Componentwise, this is means that
\begin{equation}\label{eq:th12-psiphi}
    \begin{cases}
        \theta_1(x,y) + \psi(x + \theta_1(x,y),\; y + p/q - \omega + \theta_2(x,y)) = 0\\
        \theta_2(x,y) + \varphi(x + \theta_1(x,y),\; y + p/q - \omega + \theta_2(x,y)) = 0.
    \end{cases}
\end{equation}
Since $\Theta(x,y) \subset \CD_{\lambda_2}$, by the first line of \eqref{eq:barphi lam} and $\|\phi_1\| < 1/18$, we have 
\begin{equation*}
    |\varphi(x + \theta_1(x,y),\; y + p/q - \omega + \theta_2(x,y))| < \frac{1}{18} |y + p/q - \omega + \theta_2(x,y)|
\end{equation*}
for $q$ sufficiently large.
Combining with the second line of \eqref{eq:th12-psiphi} and using $|y| < |\omega - p/q|/2$,
\begin{equation*}
    |\theta_2(x,y)| < \frac{1}{18}\Big(\frac{3}{2} \verts{\omega - \frac{p}{q}} + |\theta_2(x,y)| \Big).
\end{equation*}
The second estimate of \eqref{eq:th12 est} follows from above.
Recall that $\psi(x,y) = y \psi_1(x) + y^2 \psi_2(x)$ and from \eqref{eq:leading norms recap} we have $\|\psi_1\| \lesssim q$ and $\|\psi_2\| \lesssim q^3$. 
Therefore the first line of \eqref{eq:th12-psiphi} and $|\theta_2(x,y)| \lesssim |\omega - p/q|$ imply
\begin{equation*}
    |\theta_1(x,y)| \lesssim q \verts{\omega - \frac{p}{q}}.
\end{equation*}
We have used $|\omega - p/q| \leq q^{-6}$. The proof of \eqref{eq:th12 est} is complete.

Using $|p/q - \omega + y| \lesssim |\omega - p/q|$ and \eqref{eq:tri residue},
\begin{equation*}
    \begin{gathered}
        \bigg|\wt g_q(x,y) + \frac{1}{2} \Big(\frac{p} {q} - \omega + y \Big) \Big(\frac{p} {q} - \omega -y \Big) \frac{\langle \dot f_1 \rangle_q(x)}{1 + \langle f_1 \rangle_q(x)}\bigg| \lesssim  q^{\nu_0}\verts{\omega - \frac{p}{q}}^3\\
        \bigg|\wt S_q(x,y) - \Big(\frac{p}{q} - \omega + y\Big)^2 \frac{1+f_1(x)}{2(1 + \langle f_1 \rangle_q(x))^2 } \bigg| \lesssim q^{\nu_0} \verts{\omega - \frac{p}{q}}^3,
    \end{gathered}
\end{equation*}
which give \eqref{eq:rnf1 residue}.
It is clear from the construction that the change of coordinates $\id + \wt \Sigma$ depends analytically on $\phi$ and it is real-analytic when $\phi$ is so.

Recall that $\Sigma(x,y) = (\sigma_1(x,y), \sigma_2(x,y)) = (y\bar \sigma_1(x,y), y \bar \sigma_2(x,y)) $.
By \eqref{eq:barsig}, Lemma \ref{lem:tri cauchy} and $B_2 = 4$, $\lambda = \lambda_4$,
\begin{equation}\label{eq:sig Cr}
    |\bar \sigma_j|^{(\lambda, 2)} \lesssim q^{20} \qquad (j = 1,2).
\end{equation}
Since the original map $F$ preserves the area form $\du x \du y$, the map $\wt F_q$ defined by \eqref{eq:Thetaq} preserves the area form $\Theta^* \du x \du y = \tau^* (\id + \wt\Sigma)^* \du x \du y$, where 
\begin{equation*}
    (\id + \wt\Sigma)^* \du x \du y = \Big(1 + \frac{\partial \sigma_1}{\partial x} + \frac{\partial \sigma_2}{\partial y} + \frac{\partial \sigma_1}{\partial x}  \frac{\partial \sigma_2}{\partial y} - \frac{\partial \sigma_1}{\partial y} \frac{\partial \sigma_2}{\partial x}   \Big) \du x \du y.
\end{equation*}
Hence we may write 
\begin{equation*}
    \Theta^* \du x \du y = \Big(  \frac{1}{1 + \langle f_1 \rangle_q} + \wt \eta\Big) \du x \du y  
\end{equation*}
where 
\begin{equation*}
    \wt \eta = \frac{1}{1 + \varphi_1} - \frac{1}{1 + \langle f_1 \rangle_q} + \tau^*\bigg[\Big(\frac{\partial \sigma_2}{\partial y}(x,y) + 1 - \frac{1}{1 + \varphi_1(x)}\Big) + \Big(\frac{\partial \sigma_1}{\partial x}\Big(1 + \frac{\partial \sigma_2}{\partial y} \Big) - \frac{\partial \sigma_1}{\partial y} \frac{\partial \sigma_2}{\partial x}    \Big)\bigg].
\end{equation*}
Recall that the construction in Section \ref{sec:rnf1 proof taylor} gives $\varphi_1 = \langle f_1 \rangle_q + \{\varphi_1 \}_q$ and we have an upper bound $\|\{\varphi_1\}_q\| \lesssim q^2 |\omega - p/q|^2$ from \eqref{eq:leading norms recap}.
Therefore, it can be verified with mean value theorem that
\begin{equation*}
    \norm{\frac{1}{1 + \varphi_1} - \frac{1}{1 + \langle f_1 \rangle_q}} \lesssim q^2 \verts{\omega - \frac{p}{q}}^2.
\end{equation*}
On the other hand, Taylor expanding the $y$-component of the equality $\wt\Sigma + (\id + \Psi) \circ (\id + \wt\Sigma) = 0$ we have $\partial \sigma_2/\partial y (x, 0) = (1 + \varphi_1(x))\inv - 1$.
By \eqref{eq:sig Cr} and $\sigma_j(x, 0) = 0$,  we have that, for $(x,y) \in \CD_{\lambda}$,
\begin{equation*}
    \begin{aligned}
        &\verts{\frac{\partial \sigma_2}{\partial y}(x,y) + 1 - \frac{1}{1 + \varphi_1(x)}} \lesssim |y| |\sigma_2|^{(\lambda, 2)} \lesssim q^{20} |y|, \\
        &\verts{\frac{\partial \sigma_j}{\partial x}(x,y)} \lesssim |y||\sigma_j|^{(\lambda, 2)} \lesssim q^{20} |y| \quad  \text{and} \quad
        \verts{\frac{\partial \sigma_j}{\partial y}(x,y)} \lesssim |\sigma_j|^{(\lambda, 1)} \lesssim q^{20}.
    \end{aligned}
\end{equation*}
Since the translation $\tau$ maps  $(\T_{1 - q\inv} \times \D_{|\omega - p/q|/2})$ into $\CD_{\lambda}$, applying the above estimates to $\wt \eta$ over the domain $\T_{1 - q\inv} \times \D_{|\omega - p/q|/2}$, we get 
\begin{equation*}
    |\wt \eta|_{1 - \frac{1}{q}, \frac{1}{2}|\omega - \frac{p}{q}} \lesssim q^{40} \verts{\omega - \frac{p}{q}}.
\end{equation*}
Since $40 < \nu_0$, we have proved \eqref{eq:rnf1 area form}.
The proof of Proposition~\ref{prop:rnf1} is finished.

\subsection{Step 2: coordinate changes by an iterative scheme}\label{sec:rnf part 2}

\begin{proposition}\label{prop:rnf2}
    We consider the same setup as Proposition~\ref{prop:rnf1}. 
    Fix $\nu > \nu_0$, $M \geq 1$ and $\kappa \in \Z^+$.
    Then for all $q \in \Z^+$ sufficiently large and satisfying 
    \begin{equation}\label{eq:rnf2 arithm cond}
        \verts{\omega  - \frac{p}{q}} \leq q^{-\nu},
    \end{equation}
    and with $\wt F_q = \Theta\inv \circ F \circ \Theta: (x,y) \mapsto (x + p/q + y, y + \wt g_q(x,y))$ denoting the standard map in the coordinates constructed in Proposition~\ref{prop:rnf1}, the following conclusion holds true. 

    There exists a further change of coordinates
    \begin{equation*}
        \id + \Sigma:\T_{1 - \frac{2}{q}} \times \D_{\frac{1}{3}|\omega - \frac{p}{q}|} \to \T_{1 - \frac{1}{q}} \times \D_{\frac{1}{2} |\omega - \frac{p}{q}|}
    \end{equation*}
    such that the map $F_q := (\id + \Sigma)\inv \circ \wt F_q \circ (\id + \Sigma)$ takes the form 
    $(x, y) \mapsto (x + p/q + y, y +  g_q(x,y))$
    for some analytic $ g_q$
    satisfying 
    \begin{align}
        \label{eq:rnf2 iter concl 1}
        | g_q|_{1 - \frac{2}{q}, \frac{1}{3}|\omega - \frac{p}{q}|} \lesssim& \verts{\omega - \frac{p}{q}}^2 \\
        \label{eq:rnf2 iter concl 2}
        |\{ g_q\}_q|_{1 - \frac{2}{q}, \frac{1}{3}|\omega - \frac{p}{q}|} \lesssim& \eu^{-2\pi M q} \verts{\omega - \frac{p}{q}}^2 \\
        \label{eq:rnf2 iter res compare}
        \| \langle g_q (\cdot ,0) \rangle_q^N - \langle \wt  g_q (\cdot ,0) \rangle_q^N\| \lesssim& \kappa \eu^{4\pi \kappa} q^{\nu_0 - \nu} \verts{\omega - \frac{p}{q}}^2 \quad \text{where} \quad N = \kappa q.
    \end{align}
    Moreover, writing $\Sigma(x,y) = (\sigma_1(x,y), \sigma_2(x,y))$ for analytic $\sigma_1, \sigma_2: \T_{1 - 2/q} \times \D_{|\omega - p/q|/3} \to \C $, we have 
    \begin{equation}\label{eq:rnf2 iter psiphi}
        |\sigma_j|_{1 - \frac{2}{q}, \frac{1}{3}|\omega - \frac{p}{q}|}\lesssim q^{5 + \nu_0 - \nu} \verts{\omega - \frac{p}{q}}^2 , \qquad (j = 1,2).
    \end{equation}
\end{proposition}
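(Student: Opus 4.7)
The plan is to construct $\Sigma$ as the cumulative composition of near-identity changes of coordinates $(\id + \Psi_n)_{n \ge 0}$, each chosen to remove the nonresonant part of the current $g$-component at the linear order. Starting from $F_0 := \wt F_q$ with $g_0 := \wt g_q$ and $f_0 := 0$, we define $F_{n+1} := (\id + \Psi_n)^{-1} \circ F_n \circ (\id + \Psi_n)$ with $\Psi_n = (\psi_n, \varphi_n)$, following the scheme at the start of Section~\ref{sec:rnf proof} with $\omega$ replaced by $p/q$. The choice $\varphi_n := \psi_n \circ F_n - \psi_n$ preserves the form $F_n(x,y) = (x + p/q + y, y + g_n(x,y))$, and the relation
\begin{equation*}
    g_{n+1} \circ (\id + \Psi_n) = g_n + \psi_n \circ F_n^2 - 2\psi_n \circ F_n + \psi_n
\end{equation*}
linearizes, using $F_n \approx R_{p/q}$ near $\{y=0\}$, to $g_n + \CD_{p/q}^2\psi_n$. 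We take $\psi_n(\cdot, y)$ to be the Fourier-solution of the homological equation $\CD_{p/q}^2 \psi_n(\cdot, y) = -\{g_n(\cdot, y)\}_q^{K_n}$ truncated to modes $|k| \leq K_n$; the small-divisor bound $|\eu^{2\pi \iu k p/q} - 1| \geq 4/q$ for $q \nmid k$ (cf.~\eqref{eq:CD inv}) yields $\|\psi_n\|_{a_n} \lesssim q^2 \|\{g_n\}_q^{K_n}\|_{a_n}$.

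The residual nonresonant part $\{g_{n+1}\}_q$ then splits into (a) the high-frequency tail $\{g_n\}_q^{>K_n}$ of size $\lesssim \eu^{-2\pi \delta_n K_n}|g_n|_{a_n}$ (from analyticity and a per-step strip loss $\delta_n$), (b) a commutator from $F_n - R_{p/q}$ acting on $\psi_n$, bounded by $(|\omega - p/q| + |g_n|)\,q\,\|\psi_n\|$ via Lemma~\ref{lem:fourier MVT}, and (c) quadratic terms in $\Psi_n$ of size $\lesssim q^{C_1}\|\psi_n\|^2$ after Cauchy losses. Allocating $\sum_n \delta_n \le 1/q$ and choosing $K_n$ with $\delta_n K_n \asymp 1$ produces the contracting recursion $\eta_{n+1} \le \tfrac12 \eta_n + q^{C_2}\eta_n^2$ for $\eta_n := |\{g_n\}_q|_{a_n}$; starting from $\eta_0 \lesssim |\omega - p/q|^2 \le q^{-2\nu}$, induction gives $\eta_n \le 2^{-n}\eta_0$, and after $n \sim Mq$ iterations one reaches~\eqref{eq:rnf2 iter concl 2}. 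The uniform bound~\eqref{eq:rnf2 iter concl 1} and the estimate~\eqref{eq:rnf2 iter psiphi} on $\sigma_j$ then follow from the geometric convergence of $\sum_n \|\psi_n\|$ together with Cauchy's estimate to translate $\|\cdot\|$-bounds into sup-norm bounds across the cumulative strip loss, the latter producing the power $q^{5 + \nu_0 - \nu}$ in~\eqref{eq:rnf2 iter psiphi}.

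The main obstacle is the resonant-part comparison~\eqref{eq:rnf2 iter res compare}. The key observation is that the \emph{linear} contribution $\CD_{p/q}^2 \psi_n$ to $g_{n+1} - g_n$ is by construction purely nonresonant, so $\langle g_{n+1}(\cdot,0) - g_n(\cdot,0) \rangle_q^N$ only receives contributions from the error sources (a), (b), (c), each substantially smaller than $\eta_n$. Since $a_n \geq 1 - 2/q$, estimating each of the $\leq 2\kappa+1$ resonant Fourier coefficients individually yields $\|\langle u \rangle_q^N\| \leq (2\kappa+1)\,\eu^{4\pi \kappa}\, |u|_{1-2/q}$; applying this at every step and summing over the $\sim Mq$ iterations, the cumulative change is dominated by $\sum_n q^{C_2}\eta_n^2$, which telescopes to $\lesssim \kappa\, \eu^{4\pi \kappa}\, q^{C_2 - 2\nu}\,|\omega - p/q|^2$. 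Matching this to the target $\kappa\, \eu^{4\pi \kappa}\, q^{\nu_0 - \nu}\,|\omega - p/q|^2$ amounts to verifying $C_2 \leq \nu_0 + \nu$, which mirrors the crude exponent tracking that produced the constant $\nu_0 = 64$ in Proposition~\ref{prop:rnf1}. Controlling these accumulated Cauchy-loss constants through the $\sim Mq$ compositions, while keeping all bounds uniform in $q$, is expected to be the most delicate technical step of the proof.
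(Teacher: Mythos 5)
Your scheme follows the paper's general strategy (iterative removal of the nonresonant part by near-identity coordinate changes, then a resonant-contribution sum), and the high-level conclusions \eqref{eq:rnf2 iter concl 1}, \eqref{eq:rnf2 iter concl 2}, \eqref{eq:rnf2 iter psiphi} would come out of your setup. However, your treatment of the resonant comparison \eqref{eq:rnf2 iter res compare} has a genuine gap that invalidates the stated bound.

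The problem is your claim that ``the cumulative change is dominated by $\sum_n q^{C_2}\eta_n^2$,'' i.e.\ that the per-step contribution to the resonant part is quadratic in $\eta_n$. It is not. After solving the homological equation, the error feeding into $\wt g$ consists of the high-frequency tail, the composition corrections from $\Phi^{-1}$, and the commutator terms; their resonant projections are bounded by their sup-norms, and in the paper's one-step lemma (Lemma~\ref{lem:rnf2 one step}) the sup-norm bound for the remainder $\CR$ is $\lambda\,\normm{g - \langle g\rangle_q^N}_{a,b}$, which is \emph{linear} in $\eta_n$ with a contraction factor $\lambda$ that does not decay with $n$ (it involves $\epsilon = |g|_{a,b}$, the full sup-norm of $g$, not the nonresonant part). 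In particular, the high-frequency truncation error contributes $\sim\eu^{-2\pi\delta K}\eta_n$ with a coefficient that is a universal constant, not a power of $q^{-1}$. Summing such a linear contribution over the iterations gives $\lesssim\eta_0$, not $\lesssim\eta_0^2$. With your coarse bound $\eta_0\lesssim|\omega-p/q|^2$, this produces only $\lesssim|\omega-p/q|^2$, which is a factor of $q^{\nu-\nu_0}$ too large compared with the target $\kappa\eu^{4\pi\kappa}q^{\nu_0-\nu}|\omega-p/q|^2$.

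What you are missing is the crucial input from Proposition~\ref{prop:rnf1}: the first coordinate change has already arranged that the initial nonresonant part of $\wt g_q$ is polynomially small, namely
\begin{equation*}
  \normm{\wt g_q - \langle\wt g_q\rangle_q^\fN}_{a,b}\;\lesssim\; q^{\nu_0-\nu}\Bigl|\omega-\frac{p}{q}\Bigr|^2.
\end{equation*}
This follows from the first line of \eqref{eq:rnf1 residue}, which shows that $\wt g_q(\cdot,0)$ coincides with a purely resonant expression up to an error of order $q^{\nu_0}|\omega-p/q|^3\lesssim q^{\nu_0-\nu}|\omega-p/q|^2$. It is precisely this estimate that the paper feeds into the geometric sum $\sum_j\eu^{-2\pi Mj}$ to obtain \eqref{eq:rnf2 iter res compare}. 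Your iteration cannot escape needing this: the summed \emph{linear} resonant contributions are of order $\eta_0$, so $\eta_0$ itself must already be $\lesssim q^{\nu_0-\nu}|\omega-p/q|^2$ for the target to hold. Once you incorporate this tighter initial bound, your scheme does close, but the resonant control becomes linear in $\eta_0$ exactly as in the paper, and the quadratic term $q^{C_2}\eta_n^2$ is then of strictly lower order and never the dominant one. (A secondary, less serious difference: the paper solves the homological equation with the translation $A(x,y)=(x+p/q+y,\,y)$ rather than $R_{p/q}$, so the $y$-dependence of the cohomological denominator is built in and no separate ``commutator'' error (b) needs to be tracked; and the paper uses exactly $q$ iterations with per-step factor $\eu^{-2\pi M}$ rather than $\sim Mq$ iterations with factor $1/2$, which keeps the strip-loss bookkeeping cleaner.)
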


\begin{remark}
    The `sufficiently large' condition on $q$ depends only on $\nu$, $M$ and $\kappa$. 
    In particular, it is independent of the map $F$ as long as $\|\dddot\phi\| \leq 1$.
    If the irrationality exponent of $\omega$ is $\geq \nu$, then there exist infinitely many $q$ satisfying \eqref{eq:rnf2 arithm cond}.
\end{remark}

\subsubsection{An iterative lemma for Proposition~\ref{prop:rnf2}}

Let us consider generally a map
\begin{equation}\label{eq:rnf2 orig map}
    F: (x,y) \mapsto (x + p/q + y, y + g(x,y))
\end{equation}
for analytic $g: \T_{a} \times \D_{b} \to \C$.
We consider a change of coordinates $\Phi = \id + \Psi$ where $\Psi(x,y) = (\psi(x,y), \varphi(x,y))$ and $\varphi = \psi \circ F - \psi$.
A straightforward calculation analogous to that in Section~\ref{sec:rnf1 proof taylor} shows that, in the new coordinates, the transformed map $\Phi \circ F \circ \Phi\inv$ writes 
\begin{equation*}
    \Phi \circ F \circ \Phi\inv(x,y) = (x + p/q + y, y + \wt g(x,y))
\end{equation*}
where $\wt g$ is an analytic function satisfying
\begin{equation}\label{eq:rnf2 hom eq}
    \wt g \circ \Phi(x,y) = g + \psi \circ F^2 - 2 \psi \circ F + \psi.
\end{equation}
In the following lemma, we show that a judicious choice of $\psi$ can reduce the nonresonant part of the map in the new coordinates in the sense that $\wt g$ has a `smaller' resonant part compared with $g$.

To state this result, we need to introduce a new norm on the space of analytic functions: for $u: \T_a \times \D_b \to \C$ analytic, let us define 
\begin{equation*}
    \normm{u}_{a,b} = \sup_{|y| < b,\ k \in \Z} |\hat u(y)_k| \eu^{2\pi a |k|}.
\end{equation*}
Recall that $\hat u(y)_k$ is the $k$-th Fourier coefficient of $u(\cdot,y)$.
Since $|\hat u(y)_k| \leq |u|_{a,b}\eu^{-2\pi a|k|}$,
\begin{equation*}
    \normm{u}_{a,b} \leq |u|_{a,b}.
\end{equation*}
Also observe that for $N \in \Z_{\geq 1}$ and $a' < a$,
\begin{equation}\label{eq:triple norm truncation}
    \begin{aligned}
        |\crochet{u}^{>N}|_{a',b} 
    \leq& \sum_{|k| \geq N+1} \sup_{|y| < b} |\hat u(y)_k| \eu^{2\pi a' |k|} 
    \leq \sum_{|k| \geq N+1} \normm{u}_{a,b} \eu^{-2\pi (a-a') |k|}\\
    \leq&  \frac{\eu^{-2\pi (a-a') (N+1)}}{1- \eu^{-2\pi (a-a')}}\normm{u}_{a,b}
    \leq \frac{\eu^{-2\pi (a-a')N}}{2\pi (a - a')}\normm{u}_{a,b}.
\end{aligned}
\end{equation}
In the last line, we used the elementary inequality $\eu^{-x}(1 - \eu^{-x})\inv < x\inv$ for $x \in \R$.

\begin{lemma}\label{lem:rnf2 one step}
    Let $\epsilon > 0$, $q \in \Z_{> 1}$, $N \in \Z_{\geq q}$, $5b < \alpha < a$ and $5\epsilon < \beta < b$ such that
    \begin{equation}\label{eq:rnf2 one step assump}
        qNb \leq \frac{1}{2} \quad \text{and} \quad \frac{q^2 N \epsilon}{\beta} \leq \frac{1}{2}.
    \end{equation}
    Let $F$ be a map of the form \eqref{eq:rnf2 orig map} with $|g|_{a,b} \leq \epsilon$.
    Then there exists a near-identity analytic change of coordinates
    \begin{equation*}
        \id + \Sigma: \T_{a - \alpha} \times \D_{b - \beta} \to \T_{a - \alpha/2} \times \D_{b - \beta/2},
    \end{equation*} 
    where $\Sigma(x,y) = (\sigma_1(x,y), \sigma_2(x,y))$ for some $\sigma_1, \sigma_2: \T_{a - \alpha} \times \D_{b - \beta} \to \C$ analytic and
    \begin{equation}\label{eq:rnf2 one step psiphi norm}
        |\sigma_1|_{a - \alpha, b - \beta} + |\sigma_2|_{a - \alpha, b - \beta}< q^2 N \normm{g - \langle g\rangle_q^N}_{a,b},
    \end{equation}
    and the map $(\id + \Sigma) \circ F \circ (\id + \Sigma)\inv$ in the new coordinates writes 
    \begin{equation*}
        (x,y) \mapsto (x + p/q + y, y + \wt g(x,y))
    \end{equation*}
    where $\wt g: \T_{a - \alpha} \times \D_{b - \beta} \to \C$ is real-analytic and
    \begin{equation}\label{eq:rnf2 one step conclusion}
        |\wt g - \langle g\rangle_q^N|_{a - \alpha, b - \beta} < \lambda \normm{g - \langle g\rangle_q^N}_{a,b} \quad \text{and} \quad 
        |\wt g|_{a - \alpha, b - \beta} < |g|_{a,b} + \lambda \normm{g - \langle g\rangle_q^N}_{a,b}, 
    \end{equation}
    where 
    \begin{equation*}
        \lambda = 12 \Big( \frac{q^2 N \epsilon}{\beta} + \frac{\eu^{- \pi \alpha N}}{\alpha} \Big).
    \end{equation*}
\end{lemma}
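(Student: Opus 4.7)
The plan is to execute the homological-equation step sketched before the lemma: I will take $\Phi = \id + \Psi$ with $\Psi = (\psi, \varphi)$ and $\varphi := \psi \circ F - \psi$, so that by \eqref{eq:rnf2 hom eq} the transformed map has second component $\wt g$ satisfying $\wt g \circ \Phi = g + \psi \circ F^2 - 2\psi \circ F + \psi$. The function $\psi$ will be designed to cancel, against the linearized model $F_0 : (x,y) \mapsto (x + p/q + y, y)$, the nonresonant low-frequency part $\langle g \rangle^N - \langle g \rangle_q^N$. Since the $k$th Fourier coefficient in $x$ of $\psi \circ F_0^2 - 2\psi \circ F_0 + \psi$ is $\hat\psi(y)_k (\eu^{2\pi\iu k(p/q+y)} - 1)^2$, I set
\[
\hat\psi(y)_k \;=\; -\frac{\hat g(y)_k}{(\eu^{2\pi\iu k(p/q+y)} - 1)^2}
\quad \text{for } q \nmid k,\ 0 < |k| \leq N,
\]
and $\hat\psi(y)_k = 0$ otherwise. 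This forces $\psi \circ F_0^2 - 2\psi \circ F_0 + \psi = -(\langle g \rangle^N - \langle g \rangle_q^N)$ by construction.

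The small-divisor estimate is elementary: for $q \nmid k$ one has $|\eu^{2\pi\iu k p/q} - 1| \geq 2\sin(\pi/q) \geq 4/q$, and the assumption $qNb \leq 1/2$ keeps $|2\pi k y| \leq \pi/q$ on $\D_b$, so that $|\eu^{2\pi \iu k(p/q+y)} - 1| \gtrsim 1/q$ for $q$ large enough. This yields $\normm{\psi}_{a,b} \lesssim q^2 \normm{g - \langle g\rangle_q^N}_{a,b}$, and since $\psi$ is supported on $|k| \leq N$, the bound $|\psi|_{a,b} \leq (2N+1)\normm{\psi}_{a,b}$ delivers \eqref{eq:rnf2 one step psiphi norm}; an analogous estimate for $\varphi$ follows after a small shrinkage in $y$. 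Under the hypothesis $q^2 N \epsilon/\beta \leq 1/2$ combined with $5b < \alpha$, the map $\Psi$ is small enough on $\T_a \times \D_b$ that a standard Banach contraction-mapping argument inverts $\Phi$, producing the right inverse $\id + \Sigma$ defined on $\T_{a-\alpha} \times \D_{b-\beta}$ with image in $\T_{a-\alpha/2} \times \D_{b-\beta/2}$, with $|\sigma_j|$ comparable to $|\psi|$ and $|\varphi|$.

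For \eqref{eq:rnf2 one step conclusion}, I will write
\[
\wt g \circ \Phi \;=\; \langle g\rangle_q^N \;+\; \langle g\rangle^{>N} \;+\; R,
\qquad R := (\psi \circ F^2 - 2\psi\circ F + \psi) - (\psi \circ F_0^2 - 2\psi \circ F_0 + \psi).
\]
The truncation term is controlled by \eqref{eq:triple norm truncation}, giving $|\langle g\rangle^{>N}|_{a-\alpha, b} \lesssim \frac{\eu^{-\pi \alpha N}}{\alpha}\normm{g - \langle g\rangle_q^N}_{a,b}$ (using $N \geq q$, so that the high modes of $g$ and of $g - \langle g\rangle_q^N$ coincide). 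For $R$, the mean-value theorem together with Cauchy's estimate on $\partial_y \psi$ (with loss $\beta/2$) and $|g|_{a,b} \leq \epsilon$ yields $|\psi \circ F - \psi \circ F_0|_{a,b-\beta} \lesssim \epsilon |\psi|_{a,b}/\beta$, and likewise for the $F^2$ term, producing $|R|_{a-\alpha/2, b-\beta} \lesssim (q^2 N \epsilon/\beta)\normm{g - \langle g\rangle_q^N}_{a,b}$. Finally, composing with $\Phi^{-1}$, which is uniformly close to the identity on $\T_{a-\alpha}\times\D_{b-\beta}$, gives the stated $\lambda = 12(q^2 N \epsilon/\beta + \eu^{-\pi \alpha N}/\alpha)$ after bookkeeping of universal constants.

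The main obstacle I anticipate is the constant chase required to land on $\lambda = 12(\cdots)$: one must distribute the domain shrinkages $\alpha,\beta$ among the Cauchy estimates for the $F$-versus-$F_0$ comparison, the Fourier truncation \eqref{eq:triple norm truncation}, and the inversion of $\Phi$, without letting any one operation exhaust its budget. Equally important is the asymmetry between the \emph{weak} norm $\normm{\cdot}_{a,b}$ on the right-hand sides of \eqref{eq:rnf2 one step psiphi norm} and \eqref{eq:rnf2 one step conclusion} and the stronger $|\cdot|_{a-\alpha,b-\beta}$ on the left: this is what permits a geometric improvement upon iteration in Proposition~\ref{prop:rnf2}, since the factor $N$ lost when bounding $|\psi|$ by $N\normm{\psi}$ is paid only once per step, whereas the smallness factor $\eu^{-\pi\alpha N}$ compounds.
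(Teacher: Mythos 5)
Your proposal is correct and follows essentially the same route as the paper. Your choice $\hat\psi(y)_k = -\hat g(y)_k/(\eu^{2\pi\iu k(p/q+y)}-1)^2$ is algebraically identical to the paper's $\hat\psi(y)_k = \hat g(y)_k/\bigl(2\eu^{2\pi\iu k(p/q+y)}[1-\cos(2\pi k(p/q+y))]\bigr)$, since $(\eu^{\iu\theta}-1)^2 = -2\eu^{\iu\theta}(1-\cos\theta)$; your decomposition $\wt g\circ\Phi = \langle g\rangle_q^N + \langle g\rangle^{>N} + R$ agrees with the paper's $\langle g\rangle_q + \{g\}_q^{>N} + \CR'$ because $\langle g\rangle_q^N + \langle g\rangle^{>N} = \langle g\rangle_q + \{g\}_q^{>N}$ as Fourier projections; and the small-divisor bound, the mean-value/Cauchy estimate of the residual, and the inversion by contraction all match the paper's steps (the paper uses Lemma~\ref{lem:rect inv} for the inversion). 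The only cosmetic imprecision is the phrase ``for $q$ large enough'' in the small-divisor estimate: the inequality $|\eu^{2\pi\iu k(p/q+y)}-1|\ge 2/q$ already holds for all $q\ge 2$ given $qNb\le\tfrac12$, which is needed since the lemma is stated for all $q\in\Z_{>1}$.
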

\begin{proof}
    We shall construct $\id + \Sigma$ as the inverse of the map $\Phi = \id + \Psi$ where $\Psi(x,y) = (\psi(x,y), \varphi(x,y))$ and $\varphi := \psi \circ F - \psi$.
    Let $p, q, a,b, \epsilon, N, \alpha, \beta$ be given as in the statement.
    Set the $k$-th Fourier coefficient of  $\psi(\cdot , y)$ to be 
    \begin{equation*}
        \hat \psi(y)_k = \frac{\hat g(y)_k}{2 \eu^{2\pi \iu k (p/q + y)}[1 - \cos(2\pi k (p/q + y))]} \qquad \text{($q \nmid k$ and $|k| \leq N$),}
    \end{equation*}
    and $\hat \psi(y)_k = 0$ for all other $k$.
    By Fourier expansion, this choice of $\psi$ is equivalent to 
    \begin{equation*}
        \psi \circ A^2 - 2 \psi \circ A + \psi = - \{g\}_q^N.
    \end{equation*}
    Hence, by \eqref{eq:rnf2 hom eq},
    \begin{equation*}
        \wt g \circ \Phi = \langle g \rangle_q^N + \{g\}_q^{> N} + (\psi \circ F^2 - \psi \circ A^2) + 2 (\psi \circ A - \psi \circ F).
    \end{equation*}
    Let us denote 
    \begin{equation*}
        \CR' = (\psi \circ F^2 - \psi \circ A^2) + 2 (\psi \circ A - \psi \circ F)
    \end{equation*}
    and 
    \begin{equation*}
        \CR = \langle g \rangle_q^{>N} + \Big( \langle g \rangle_q \circ \Phi\inv - \langle g \rangle_q \Big) + \{g\}_q^{>N} \circ \Phi\inv + \CR' \circ \Phi\inv
    \end{equation*}
    so that 
    \begin{equation*}
        \wt g = \langle g \rangle_q^N + \CR.
    \end{equation*}
    \noindent \textit{Step 1: estimate of $\psi$ and $\varphi$.} 
    By assumptions $Nb \leq 1/(2q)$ and $q \geq 2$, for $|y| < b$, $|k| \leq N$ and $q \nmid k$,
    \begin{equation*}
        \bigg|1 - \cos 2\pi k \Big(\frac{p}{q} + y\Big)\bigg| \geq 1 - \cos \frac{\pi}{q} \geq \frac{4}{q^2}.
    \end{equation*} 
    We have used the elementary inequality $1 - \cos(2\pi x) \geq 16 x^2$ for $|x| \leq 1/4$.
    Thus, by definition of $\psi$, we have 
    \begin{equation}\label{eq:rnf2 psi norm}
        \begin{aligned}
            |\psi|_{a, b} 
            \leq& \sum_{\substack{q \nmid k,\\ |k| \leq N}} \frac{q^2}{8} \sup_{|y| < b}|\hat g(y)_k| 
            \leq \sum_{\substack{q \nmid k,\\ |k| \leq N}}  \frac{q^2}{8} \normm{\{g\}_q^N}_{a,b} \eu^{- 2\pi |k| a} \\
            <& 2N \frac{q^2}{8} \normm{\{g\}_q^N}_{a,b} \eu^{- 2\pi a} < \frac{q^2 N}{4} \normm{\{g\}_q^N}_{a,b} 
            \leq \frac{q^2 N}{4} \normm{g - \langle g\rangle_q^N}_{a,b} .
        \end{aligned}
    \end{equation}
    Since $\normm{g - \langle g\rangle_q^N}_{a,b} \leq |g|_{a,b}$, we have in particular that 
    \begin{equation}\label{eq:rnf2 psi-eps}
        |\psi|_{a, b} < \frac{q^2 N \epsilon}{4}.
    \end{equation}
    Since $|g|_{a,b} \leq \epsilon$, we also have $F(\T_{a - b + \epsilon} \times \D_{b - \epsilon}) \subset \T_a \times \D_b$.
    It follows from $\varphi = \psi \circ F - \psi$ that 
    \begin{equation}\label{eq:rnf2 phi norm}
        |\varphi|_{a - b + \epsilon, b - \epsilon} \leq 2 |\psi|_{a, b}.
    \end{equation}
    
    \noindent \textit{Step 2: estimate of $\CR'$ over $\T_{a'} \times \D_{b'}$.}
    Let $a' = a - \alpha/2$ and $b' = b - \beta /2$.
Using the explicit expression of $F$ and the bound $|g|_{a,b} \leq \epsilon$ as well as the assumptions $5 \epsilon < \beta$ and $5 b < \alpha$, we deduce that the images of $\T_{a'} \times \D_{b'}$ under the maps $F^2$, $A^2$, $F$ and $A$ are all contained in $\T_{a - \alpha/10} \times \D_{b - \beta/10}$. \footnote{These images are  actually contained in an even smaller domain, but we will not need this stronger fact.}
    By mean value theorem and Cauchy's estimate, 
    \begin{equation*}
        \begin{aligned}
            &|\psi \circ F^2 - \psi \circ A^2|_{a',b'} \\
            = & \sup_{|\Im x| < a',\ |y| < b'} \verts{\psi\Big(x + \frac{2p}{q} + 2y + g(x,y), y + g(x,y) + g\circ F(x,y)\Big) - \psi\Big(x + \frac{2p}{q} + 2y, y\Big)} \\
            = & |g|_{a,b}\verts{\frac{\partial \psi}{\partial x}}_{a - \alpha/10, b - \beta/10}  + 2 |g|_{a,b}\verts{\frac{\partial \psi}{\partial y}}_{a - \alpha/10, b - \beta/10} \\
            \leq&  \Big(\frac{10}{\alpha} + \frac{20}{\beta}\Big) |\psi|_{a,b} |g|_{a,b}
             < \frac{22}{\beta}|\psi|_{a,b} |g|_{a,b}.
        \end{aligned}
    \end{equation*}
    In the last inequality, we used $\alpha > 5 b > 5\beta$.
    Similarly,
    \begin{equation*}
        |\psi \circ F - \psi \circ A|_{a',b'} 
        = |g|_{a,b} \verts{\frac{\partial \psi}{\partial y}}_{a - \alpha/10, b - \beta/10} 
        = \frac{10}{\beta} |\psi|_{a,b} |g|_{a,b}.
    \end{equation*}
    We have proved 
    \begin{equation*}
        |\CR'|_{a',b'} \leq \frac{42}{\beta} |\psi|_{a,b} |g|_{a,b}.
    \end{equation*}

    \noindent \textit{Step 3: inverting $\Phi$.}
    Observe that since $|g|_{a,b} \leq \epsilon$, the function $\varphi = \psi \circ F - \psi$ is well-defined on $\T_{a - b + \epsilon} \times \D_{b - \epsilon}$ and $|\varphi|_{a - b + \epsilon, b - \epsilon} \leq 2 |\psi|_{a,b}$.
    We shall apply Lemma \ref{lem:rect inv} with $a_0 = a - b + \epsilon$, $b_0 = b - \epsilon$, $\delta_1 = a_0 - a'$ and $\delta_2 = b_0 - b'$.
    Since $a' = a - \alpha/ 2$ and $b' = b - \beta/2$, we have $\delta_1 > 3\alpha/10$ and $\delta_2 > 3 \beta /10$ using the assumptions $5 \epsilon < \beta$ and $5 b < \alpha$.
    By \eqref{eq:rnf2 psi-eps} and $\alpha > 5 \beta$, one verifies that the second assumption in \eqref{eq:rnf2 one step assump} ensures the condition \eqref{eq:rect inv cond}. 
    It now follows from Lemma~\ref{lem:rect inv} that $\Phi$ is a diffeomorphism from $\T_{a'} \times \D_{b'}$ onto the image, which contains $\T_{a' - \delta_1} \times \D_{b' - \delta_2}$. 
    In particular, we may define 
    $\Phi\inv = \id + \Sigma: \T_{a - \alpha} \times \D_{b - \beta} \to \T_{a'} \times \D_{b'}$
    where $\Sigma(x,y) = (\sigma_1(x,y), \sigma_2(x,y))$ with 
    \begin{equation}\label{eq:rnf2 sig12 norm}
        |\sigma_1|_{a - \alpha, b - \beta} \leq |\psi|_{a',b'} ; \qquad |\sigma_2|_{a - \alpha, b - \beta} \leq |\varphi|_{a', b'}.
    \end{equation}
    Combining with \eqref{eq:rnf2 psi norm} and \eqref{eq:rnf2 phi norm}, we obtain \eqref{eq:rnf2 one step psiphi norm}.
    Since we have shown that $\wt g \circ \Phi$ is well-defined on $\T_{a'} \times \D_{b'}$, by precomposing with $(\id + \Sigma)$, we deduce that $\wt g$ is well-defined on $\T_{a - \alpha} \times \D_{b - \beta}$ as desired.

    \noindent \textit{Step 4: estimate of $\CR'$ over $\T_{a - \alpha} \times \D_{b - \beta}$.}
    Similar to step 2, using mean value theorem and Cauchy's estimate as well as \eqref{eq:rnf2 sig12 norm}, we obtain 
    \begin{equation*}
        \begin{aligned}
            \verts{\crochet{g}_q \circ \Phi\inv - \crochet{g}_q}_{a - \alpha, b - \beta} 
            \leq & \verts{\frac{\partial \crochet{g}_q}{\partial x}}_{a', b'} |\sigma_1|_{a - \alpha, b - \beta} + \verts{\frac{\partial \crochet{g}_q}{\partial y}}_{a', b'} |\sigma_2|_{a - \alpha, b - \beta}\\
            \leq & \Big(\frac{2|\psi|_{a, b}}{\alpha} + \frac{4|\psi|_{a, b}}{\beta}\Big)|\crochet{g}_q|_{a,b} \\
            \leq & \frac{22}{5 \beta}|\psi|_{a, b}|g|_{a,b}.
        \end{aligned}
    \end{equation*}
    On the other hand, applying \eqref{eq:triple norm truncation} to $ u=\crochet{g}_q$ and $u=\{g\}_q$, 
    \begin{equation*}
        \begin{aligned}
            |\crochet{g}_q^{>N}|_{a - \alpha, b - \beta} \leq& \frac{\eu^{-2\pi \alpha N}}{2\pi \alpha} \normm{\crochet{g}_q^{>N}}_{a,b} \\
            |\{g\}_q^{>N} \circ \Phi\inv|_{a - \alpha, b - \beta} \leq& |\{g\}_q^{>N}|_{a', b'} \leq  \frac{\eu^{-\pi \alpha N}}{\pi \alpha} \normm{\{g\}_q^{>N}}_{a,b}.
        \end{aligned}
    \end{equation*}
    Lastly, we have the obvious bound $|\CR' \circ \Phi\inv|_{a - \alpha, b - \beta} \leq |\CR'|_{a', b'}$
    Thus,
    \begin{equation*}
        \begin{aligned}
            |\CR |_{a - \alpha, b - \beta} 
            \leq& \frac{\eu^{-2\pi \alpha N}}{2\pi \alpha}\normm{\crochet{g}_q^{>N}}_{a,b}   
            + \frac{\eu^{-\pi \alpha N}}{\pi \alpha} \normm{\{g\}_q^{>N}}_{a,b}
            +\frac{22}{5 \beta}|\psi|_{a, b}|g|_{a,b}
            +\frac{42}{\beta} |\psi|_{a,b} |g|_{a,b}\\
            \leq& \bigg[\Big(\frac{1}{\pi} + \frac{\eu^{-\pi \alpha N}}{2\pi}\Big)  \frac{\eu^{-\pi \alpha N}}{\alpha}+ \Big(\frac{22}{5} + 42\Big) \frac{q^2 N \epsilon}{4\beta} \bigg]  \normm{g - \crochet{g}_q^N}_{a,b}\\
            <& 12 \Big( \frac{q^2 N \epsilon}{\beta} + \frac{\eu^{-\pi \alpha N}}{\alpha}\Big)\normm{g - \crochet{g}_q^N}_{a,b}.
        \end{aligned}
    \end{equation*}
    We have proven the first line of \eqref{eq:rnf2 one step conclusion}.
    To prove the second line of \eqref{eq:rnf2 one step conclusion}, it is enough to apply $|\crochet{g}_q|_{a,b} \leq |g|_{a,b}$ and the preceding estimates to  
    \begin{equation*}
        \wt g = \crochet{g}_q + \Big( \langle g \rangle_q \circ \Phi\inv - \langle g \rangle_q \Big) + \{g\}_q^{>N} \circ \Phi\inv + \CR' \circ \Phi\inv.
    \end{equation*}
\end{proof}

\begin{lemma}\label{lem:rect inv}
    Suppose $\psi, \varphi: \T_{a_0} \times \D_{b_0} \to \C$ are analytic and, for some $\delta_1, \delta_2 > 0$,
    \begin{equation}\label{eq:rect inv cond}
        \frac{|\psi|_{a_0,b_0}}{\delta_1} + \frac{|\varphi|_{a_0,b_0}}{\delta_2} < 1.
    \end{equation}
    Then the map $\Phi = \id + \Psi$ with $\Psi(x,y) = (\psi(x,y), \varphi(x,y))$ is an analytic diffeomorphism from $\T_{a_0 - \delta_1} \times \D_{b_0 - \delta_2}$ onto its image and surjects onto $\T_{a_0 - 2\delta_1} \times \D_{b_0 - 2\delta_2}$.
    The inverse $\id + \Sigma: \T_{a_0 - 2\delta_1} \times \D_{b_0 - 2\delta_2} \to \T_{a_0 - \delta_1} \times \D_{b_0 - \delta_2}$ of $\Phi$ satisfies $\Sigma(x,y) = (\sigma_1(x,y), \sigma_2(x,y))$ and 
    \begin{equation*}
        |\sigma_1|_{a_0 - 2\delta_1} \leq |\psi|_{a_0 - \delta_1}; \quad 
        |\sigma_2|_{a_0 - 2\delta_1} \leq |\varphi|_{a_0 - \delta_1}
    \end{equation*}
\end{lemma}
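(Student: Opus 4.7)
The plan is to invert $\Phi = \id + \Psi$ pointwise via a Banach contraction argument. First I would fix a target $(X,Y) \in \T_{a_0 - 2\delta_1} \times \D_{b_0 - 2\delta_2}$ and consider the map
\[
    T_{X,Y}(x,y) := \bigl( X - \psi(x,y),\; Y - \varphi(x,y) \bigr)
\]
on the closed polydisc $\bar{\T}_{a_0 - \delta_1} \times \bar{\D}_{b_0 - \delta_2}$. Since \eqref{eq:rect inv cond} entails both $|\psi|_{a_0,b_0} < \delta_1$ and $|\varphi|_{a_0,b_0} < \delta_2$, the triangle inequality shows that $T_{X,Y}$ sends this polydisc strictly into its interior, so all iterates stay safely inside the open domain of definition of $\psi$ and $\varphi$.

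The main analytic step will be a contraction estimate in the weighted norm $\|(u,v)\| := |u|/\delta_1 + |v|/\delta_2$ on $\C \times \C$. Cauchy's inequalities on the shrunken polydisc give $|\partial_x \psi| \le |\psi|_{a_0,b_0}/\delta_1$ and $|\partial_y \psi| \le |\psi|_{a_0,b_0}/\delta_2$, with analogous estimates for $\varphi$. A routine mean-value computation, carried out on the lift to the convex strip $|\Im x| < a_0$, should then yield
\[
    \| T_{X,Y}(x,y) - T_{X,Y}(x',y') \| \;\le\; \Bigl( \frac{|\psi|_{a_0,b_0}}{\delta_1} + \frac{|\varphi|_{a_0,b_0}}{\delta_2} \Bigr) \|(x-x',\; y-y')\|,
\]
a strict contraction by hypothesis. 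The Banach fixed-point theorem then furnishes a unique fixed point $(x,y)$ in the polydisc, and setting $\sigma_1(X,Y) := -\psi(x,y)$, $\sigma_2(X,Y) := -\varphi(x,y)$ defines the required inverse $\id + \Sigma$; the sup-norm bounds $|\sigma_1| \le |\psi|_{a_0-\delta_1,b_0-\delta_2}$ and $|\sigma_2| \le |\varphi|_{a_0-\delta_1,b_0-\delta_2}$ are then immediate from these defining identities, since $(x,y)$ lies in the smaller polydisc.

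For analyticity of $\Sigma$ in $(X,Y)$ I would invoke Weierstrass' theorem: the Picard iterates $T_{X,Y}^n(X,Y)$ are analytic in $(X,Y)$ (polynomial expressions in $\psi,\varphi$), and the uniform contraction rate gives uniform-on-compacta convergence to the fixed point, whence the limit is analytic. For injectivity of $\Phi$ on the larger polydisc $\T_{a_0-\delta_1} \times \D_{b_0-\delta_2}$, the same Lipschitz estimate applied in reverse shows that $\Phi(x,y) = \Phi(x',y')$ would force $\|(x,y)-(x',y')\| = \|\Psi(x',y') - \Psi(x,y)\| \le c\,\|(x,y)-(x',y')\|$ with $c < 1$, so $(x,y) = (x',y')$. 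Combined with the surjectivity onto $\T_{a_0-2\delta_1} \times \D_{b_0-2\delta_2}$ from the fixed-point argument, this produces the advertised analytic diffeomorphism.

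I do not expect any serious obstacle. The one minor subtlety is making the mean-value step rigorous on the cylinder $\T_a$, which one sidesteps by passing to the universal cover where the relevant domain becomes a convex strip; this is entirely routine.
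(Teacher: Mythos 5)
Your argument is correct and is the standard Banach fixed-point proof of this classical near-identity inversion lemma; the paper in fact states Lemma~\ref{lem:rect inv} without proof, evidently regarding it as standard, so there is nothing in the source to compare against. The only wrinkle worth recording is a micro-pedantry in the injectivity step on the cylinder: when $\Phi(x,y)=\Phi(x',y')$, the relation $x-x'=\psi(x',y')-\psi(x,y)$ holds in $\C/\Z$ while the right-hand side is a genuine complex number, so the correct statement is the inequality $d_{\C/\Z}(x,x')\le|\psi(x',y')-\psi(x,y)|$ rather than the literal equality you wrote; since this inequality goes the useful direction, the contraction argument still forces $(x,y)=(x',y')$. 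Apart from that, everything goes through as stated: \eqref{eq:rect inv cond} forces $|\psi|_{a_0,b_0}<\delta_1$, $|\varphi|_{a_0,b_0}<\delta_2$ so that $T_{X,Y}$ maps $\bar{\T}_{a_0-\delta_1}\times\bar{\D}_{b_0-\delta_2}$ strictly into its interior, the Cauchy estimates feed the mean-value computation to give the contraction constant $|\psi|_{a_0,b_0}/\delta_1+|\varphi|_{a_0,b_0}/\delta_2<1$ in the weighted norm, the sup bounds on $\sigma_1,\sigma_2$ follow from $\sigma_j(X,Y)$ being evaluations of $-\psi,-\varphi$ at a point of the smaller polydisc, and analyticity of $\Sigma$ in $(X,Y)$ follows from uniform convergence of the Picard iterates.
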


\subsubsection{Proof of Proposition~\ref{prop:rnf2}}\label{sec:rnf part 2 pf}

\begin{proof}
    By Proposition~\ref{prop:rnf1}, there exists a universal constant $c$ such that, for sufficiently large $q$, we have 
    \begin{equation*}
        |\wt g_q|_{1-1/q, |\omega - p/q|/2} \leq c (1 - \eu^{-2\pi M}) \verts{\omega - \frac{p}{q}}^2.
    \end{equation*}
    Let us pick an integer $n \in (2, \nu - 3)$
    and define
    \begin{equation*}
        \epsilon = c \verts{\omega - \frac{p}{q}}^2; \quad \fN = q^n; \quad \alpha = \frac{1}{q^2}; \quad \beta = \frac{1}{6q} \verts{\omega - \frac{p}{q}}; \quad b = \frac{1}{2}\verts{\omega - \frac{p}{q}}.
    \end{equation*}
    By the choice of $n$, there exists $q_0 \gg 1$ such that for all $q \geq q_0$ satisfying \eqref{eq:rnf2 arithm cond}, 
    \begin{equation}\label{eq:rnf2 assump unpacked}
        \begin{aligned}
            &\frac{b}{\alpha} = \frac{q^2}{2}|\omega - p/q| \leq \frac{q^{2 - \nu}}{2} < \frac{1}{5};\\
            &\frac{\epsilon}{\beta} = 6cq \verts{\omega - \frac{p}{q}} \leq 6 c q^{1 - \nu} < \frac15;\\
            &q N b = \frac{1}{2}q^{n + 1} \verts{\omega - \frac{p}{q}} \leq \frac{1}{2} q^{n + 1 - \nu} \leq \frac{1}{2};\\
            &\frac{q^2 N \epsilon}{\beta} = \frac{6c q^{3 + n}}{|\omega - p/q|} \leq 6 c q^{3 + n - \nu} < \frac{\eu^{-2\pi M}}{24};\\
            &\frac{1}{\alpha}\eu^{-\pi \alpha \fN} = q^2 \eu^{-\pi q^{n - 2}} < \frac{\eu^{-2\pi M}}{24};\\
            &\eu^{-2\pi q^{n-1}} \leq q^{\nu_0 - \nu} \quad \text{and} \quad q \geq \kappa.
        \end{aligned}
    \end{equation}
    Fix such a $q$ and define 
    \begin{equation*}
        a= 1 - \frac{1}{q}; \qquad a' = 1 - \frac{2}{q}; \qquad b' = \frac{1}{3} \verts{\omega - \frac{p}{q}}.
    \end{equation*}
    The first five lines of \eqref{eq:rnf2 assump unpacked} ensure that for any $\tilde a \in (a' + \alpha, a)$ and $\tilde b \in (b' + \beta, b)$, Lemma \ref{lem:rnf2 one step} applies to any map $F$ of the form \eqref{eq:rnf2 orig map} with analytic $g: \T_{\tilde a} \times \D_{\tilde b} \to \C$ such that $|g|_{\tilde a, \tilde b} \leq \epsilon$.
    The lemma produces a coordinate transform $\id + \Sigma$ such that the function $\wt g$ in the new coordinates satisfies 
    \begin{equation*}
        \begin{aligned}
            |\wt g - \langle g\rangle_q^\fN|_{\tilde a - \alpha, \tilde b - \beta} <& \eu^{-2\pi M} \normm{g - \langle g\rangle_q^\fN}_{\tilde a,\tilde b} \\ 
            |\wt g|_{\tilde a - \alpha, \tilde b - \beta} <& |g|_{\tilde a,\tilde b} + \eu^{-2\pi M} \normm{g - \langle g\rangle_q^\fN}_{\tilde a,\tilde b}.
        \end{aligned}
    \end{equation*}
    For notational convenience, we define 
    \begin{equation*}
        \CT g:= \wt g; \qquad \CR g:= \wt g - \langle g\rangle_q^\fN
    \end{equation*}
    so that the preceding inequalities write
    \begin{equation}\label{eq:rnf2 one step application}
        \begin{aligned}
            |\CR g|_{\tilde a - \alpha, \tilde b - \beta} <& \eu^{-2\pi M} \normm{g - \langle g\rangle_q^\fN}_{\tilde a,\tilde b}\\
            |\CT g|_{\tilde a - \alpha, \tilde b - \beta} <& |g|_{\tilde a,\tilde b} + \eu^{-2\pi M} \normm{g - \langle g\rangle_q^\fN}_{\tilde a,\tilde b}.
        \end{aligned}
    \end{equation}
    Let us also put $a_j = a - j \alpha$ and $b_j = b - j \beta$. 
    Then $a_q = a'$ and $b_q = b'$.
    In order to iterate Lemma \ref{lem:rnf2 one step} multiple times starting from an analytic $g: \T_a \times \D_b \to \C$, we suppose
    \begin{equation}\label{eq:rnf2 iter |g|}
        |g|_{a,b} \leq (1 - \eu^{-2\pi M}) \epsilon.
    \end{equation}
    Then we show by induction on $1 \leq j \leq q$ that, after repeatedly applying Lemma \ref{lem:rnf2 one step},
    \begin{gather}
        \label{eq:rnf2 iter ind hyp 1}
        \CT^j g = \crochet{g}_q^\fN + \sum_{k = 1}^{j-1} \crochet{\CR \CT^{k-1} g}_q^\fN + \CR \CT^{j-1} g,\\
        \label{eq:rnf2 iter ind hyp 2}
        |\CR \CT^{j-1}g|_{a_j, b_j} < \eu^{-2\pi M j} \normm{g - \crochet{g}_q^\fN}_{a,b} \quad \text{and} \quad |\CT^j g|_{a_j, b_j} < \sum_{k=0}^{j} \eu^{-2\pi M k} |g|_{a,b}.
    \end{gather}
    The case of $j = 1$ follows from definitions of $\CT g$ and $\CR g$ as well as \eqref{eq:rnf2 one step application}.
    Let us assume \eqref{eq:rnf2 iter ind hyp 1} and \eqref{eq:rnf2 iter ind hyp 2} hold for some $1 \leq j < q$.
    Then, the assumption \eqref{eq:rnf2 iter |g|} ensures that $|\CT^j g|_{a_j, b_j} < (1 - \eu^{-2\pi M})\inv |g|_{a,b} \leq \epsilon$ and hence Lemma \ref{lem:rnf2 one step} applies to $\CT^j g$. 
    Notice that $\CT^j g - \langle \CT^j g \rangle_q^\fN = \CR \CT^{j-1} g - \langle \CR \CT^{j-1} g \rangle_q^\fN$.
    Therefore, by \eqref{eq:rnf2 one step application} with $g$ replaced by $\CT^j g$ and the induction hypothesis,
    \begin{equation}\label{eq:rnf2 iter ind step 1}
        \begin{aligned}
            |\CR\CT^j g|_{a_{j+1}, b_{j+1}} <& \eu^{-2\pi M}\normm{\CT^j g - \langle \CT^j g \rangle_q^\fN}_{a_j,  b_j} \\
            =& \eu^{-2\pi M} \normm{\CR \CT^{j-1} g - \langle \CR \CT^{j-1} g \rangle_q^\fN}_{a_j,  b_j} \\
            \leq& \eu^{-2\pi M}|\CR \CT^{j-1} g|_{a_j, b_j} \\
            <& \eu^{-2\pi M (j+1)} \normm{g - \crochet{g}_q^\fN}_{a,b}
        \end{aligned}
    \end{equation}
    Thus, we obtain the first inequality of \eqref{eq:rnf2 iter ind hyp 2} for the case $j + 1$.
    To obtain the second inequality of \eqref{eq:rnf2 iter ind hyp 2}, we apply the second inequality of  \eqref{eq:rnf2 one step application} with $g$ replaced by $\CT^j g$ and use the induction hypothesis and \eqref{eq:rnf2 iter ind step 1} to obtain
    \begin{equation*}
        \begin{aligned}
            |\CT^{j+1} g|_{a_{j+1}, b_{j+1}} <& |\CT^j g|_{a_j, b_j} + \eu^{-2\pi M} \normm{\CT^j g - \langle \CT^j g \rangle_q^N}_{a_j, b_j} \\
            <& \sum_{k=0}^{j} \eu^{-2\pi M k} |g|_{a,b} + \eu^{-2\pi M (j+1)} \normm{g - \langle g \rangle_q^N}_{a,b} 
            \leq \sum_{k=0}^{j+1} \eu^{-2\pi M k} |g|_{a,b}.
        \end{aligned}
    \end{equation*}
    The induction is finished.
    Let us now take $g = \wt g_q$ for the analytic function $\wt g_q$ obtained from Proposition~\ref{prop:rnf1} and take $j = q$ and $ g_q := \CT^q \wt g_q$.
    We readily get 
    \begin{equation*}
        \begin{aligned}
            |\{ g_q\}_q|_{a', b'} =& |\{\CR \CT^{q-1} \wt g_q\}_q|_{a',b'} \leq 2|\CR \CT^{q-1} \wt g_q|_{a',b'}  
            < 2\eu^{-2\pi M q} \normm{\wt g_q - \langle \wt g_q \rangle_q^\fN}_{a,b}\\
            \leq& 2\eu^{-2\pi M q} |\wt g_q|_{a,b}  \lesssim \eu^{-2\pi M q} \verts{\omega - \frac{p}{q}}^2
        \end{aligned}
    \end{equation*}
    and 
    \begin{equation*}
        \begin{aligned}
            |g_q|_{a',b'} < \frac{|\wt g_q|_{a,b}}{(1 - \eu^{-2\pi M})} \lesssim \verts{\omega - \frac{p}{q}}^2.
        \end{aligned}
    \end{equation*}
    We have proved \eqref{eq:rnf2 iter concl 1} and \eqref{eq:rnf2 iter concl 2}.
    We now show \eqref{eq:rnf2 iter res compare}.
    Since $N = \kappa q \leq \fN$ by the last inequality in \eqref{eq:rnf2 assump unpacked}, Equation \eqref{eq:rnf2 iter ind hyp 1} implies
    \begin{equation*}
        \langle g_q \rangle_q^{N} - \langle \wt g_q \rangle_q^{N} = \sum_{j = 1}^{q} \langle \CR \CT^{j - 1} \wt g_q\rangle_q^{N}.
    \end{equation*}
    By the first inequality in \eqref{eq:rnf2 iter ind hyp 2}, we have $|\CR\CT^{j-1} \wt g_q|_{a_j, b_j} < \eu^{-2\pi Mj} \normm{\wt g_q - \langle \wt g_q \rangle_q^\fN}_{a,b}$. 
    Thus, fixing $y = 0$ and substituting the definition of $a_j$, we find
    \begin{equation*}
        \begin{aligned}
            \|\langle \CR \CT^{j - 1} \wt g_q(\cdot , 0)\rangle_q^{N}\| 
            \leq & \sum_{|k| \leq N,\ q \mid k} |\CR\CT^{j-1} \wt g_q|_{a_j, b_j} \eu^{-2\pi a_j |k|} \eu^{2\pi |k|}\\
            = & \sum_{|k| \leq N,\ q \mid k} \eu^{-2\pi Mj} \normm{\wt g_q - \langle \wt g_q \rangle_q^\fN}_{a,b} \eu^{2\pi (1/q + j/q^2) |k|}\\
            \leq & \Big(\frac{2N}{q} + 1\Big) \eu^{-2\pi Mj} \normm{\wt g_q - \langle \wt g_q \rangle_q^\fN}_{a,b} \eu^{2\pi \frac{2N}{q}}\\
            \leq & (2\kappa + 1) \eu^{-2\pi Mj} \normm{\wt g_q - \langle \wt g_q \rangle_q^\fN}_{a,b} \eu^{4\pi \kappa}\\
            \lesssim& \kappa \eu^{4\pi \kappa} \eu^{-2\pi Mj} \normm{\wt g_q - \langle \wt g_q \rangle_q^\fN}_{a,b}.
        \end{aligned}
    \end{equation*}
    Summing over $1 \leq j \leq q$ and using $M \geq 1$, we obtain
    \begin{equation}\label{eq:rnf2 res compare prelim}
        \|\langle g_q(\cdot, 0) \rangle_q^{N} - \langle \wt g_q(\cdot, 0) \rangle_q^{N}\| \lesssim \kappa \eu^{4\pi \kappa} \normm{\wt g_q - \langle \wt g_q \rangle_q^\fN}_{a,b}.
    \end{equation}
    On the other hand, we may write 
    $$
        \wt g_q - \langle \wt  g_q\rangle_q^\fN  = \{\wt g_q\}_q + \langle \wt g_q + h\rangle_q^{> \fN} - \langle h \rangle^{>\fN},
    $$ 
    where 
    \begin{equation*}
        h(x,y) =  \frac{1}{2}\Big(\frac{p}{q} - \omega + y\Big)\Big(\frac{p}{q} - \omega - y\Big)\frac{\langle \dot f_1 \rangle_q(x)}{1 + \langle f_1 \rangle_q(x)}.
    \end{equation*}
    Then, by the first line in \eqref{eq:rnf1 residue},
    we have 
    \begin{equation*}
        \normm{\{\wt g_q\}_q + \langle \wt g_q + h\rangle_q^{> \fN}}_{a, b} \leq |\wt g_q + h|_{a,b} \lesssim q^{\nu_0} \verts{\omega - \frac{p}{q}}^3 \lesssim q^{\nu_0 - \nu}  \verts{\omega - \frac{p}{q}}^2
    \end{equation*}
    and, since the $\|\cdot \|$-norm of $\langle \dot f_1 \rangle_q(1 + \langle f_1 \rangle_q)\inv$ is bounded, by the choice of $\fN$ and the last line of \eqref{eq:rnf2 assump unpacked},
    \begin{equation*}
        \normm{\langle h \rangle^{>\fN}}_{a, b} \lesssim \eu^{-2\pi (1 - a)\fN}\normm{\langle h \rangle^{>\fN}}_{1, b} \lesssim \eu^{-2\pi q^{n-1}} \verts{\omega - \frac{p}{q}}^2 \lesssim q^{\nu_0 - \nu}\verts{\omega - \frac{p}{q}}^2.
    \end{equation*}
    It follows that
    \begin{equation}\label{eq:rnf2 orig res compare}
        \begin{aligned}
            \normm{\langle \wt  g_q\rangle_q^\fN - \wt g_q}_{a, b} 
            \lesssim q^{\nu_0 - \nu}\verts{\omega - \frac{p}{q}}^2.
        \end{aligned}
    \end{equation}
    Combining with \eqref{eq:rnf2 res compare prelim} we obtain \eqref{eq:rnf2 iter res compare}.

    It now remains to show \eqref{eq:rnf2 iter psiphi}.
    Let us denote by $\id + \Sigma_j: \T_{a_j} \times \D_{b_j} \to \T_{a_{j-1}} \times \D_{b_{j-1}}$ the coordinate transform obtained from the $j$-th iteration of the Lemma.
    Let us write $\Sigma_j(x,y) = (\sigma_{1,j}(x,y), \sigma_{2,j}(x,y))$ for some analytic $\sigma_{1,j } , \sigma_{2,j}: \T_{a_j} \times \D_{b_j} \to \C$.
    By \eqref{eq:rnf2 one step psiphi norm}, \eqref{eq:rnf2 iter ind hyp 1}, \eqref{eq:rnf2 iter ind hyp 2} and \eqref{eq:rnf2 orig res compare},
    \begin{equation*}
        \begin{aligned}            
            |\sigma_{1,j }|_{a_j, b_j} + |\sigma_{2,j }|_{a_j, b_j} <& q^2 \fN \normm{\CT^{j-1} g_q - \langle \CT^{j-1} g_q\rangle_q^\fN}_{a_{j-1}, b_{j-1}} \\
            \leq& q^2 \fN |\CR \CT^{j-2} g_q|_{a_{j-1}, b_{j-1}}\\
            \leq& q^2 \fN \eu^{-2\pi M(j-1)}\normm{g_q - \langle g_q\rangle_q^\fN}_{a,b}\\
            \leq&  \eu^{-2\pi M(j-1)}q^{2 + n + \nu_0 - \nu} \verts{\omega - \frac{p}{q}}^2.
    \end{aligned}
    \end{equation*}
    On the other hand, by composing $(\id + \Sigma):= (\id + \Sigma_1) \circ (\id + \Sigma_2) \circ \cdots \circ (\id + \Sigma_q)$, we find that 
    \begin{equation*}
        \Sigma = \sum_{j = 1}^{q} \Sigma_j \circ (\id + \Sigma_{j+1}) \circ (\id + \Sigma_{j+2}) \circ \cdots \circ (\id + \Sigma_q).
    \end{equation*}
    Therefore, writing $\Sigma(x,y) = (\sigma_1(x,y), \sigma_2(x,y))$, we have 
    \begin{equation*}
        |\sigma_1|_{a', b'} + |\sigma_2|_{a',b'} \leq \sum_{j = 1}^{q} |\sigma_{1,j}|_{a_j, b_j} +  |\sigma_{2,j}|_{a_j, b_j} 
        < \sum_{j = 1}^{q} \eu^{-2\pi M(j-1)} q^{2 + n + \nu_0 - \nu} \verts{\omega - \frac{p}{q}}^2.
    \end{equation*}
    For concreteness, we may fix $n = 3$. 
    Then the proof of \eqref{eq:rnf2 iter psiphi} is finished by summing the geometric series in $j$.
\end{proof}

\subsection{Step 3: control $g^*_q$ using exactness and the end of the proof}\label{sec:rnf part 3}

We are now in position to prove Theorem~\ref{thm:rnf}.
So far, we  have essentially obtained all the required results in the statement of Theorem \ref{thm:rnf} with the exception of part (3). 
The proof of part (3) will be the main focus of this section.
The main idea for the proof of part (3) is to use the exactness property of the original standard map and the fact that the resonant coordinates do not `distort' the standard area form by much. 
Then, using an elementary geometric property of exact symplectic map, we can make use of the estimate of the zero-average part $g^\bullet_q$ to estimate the average part $g^*_q$.
We remark that a similar idea was also used in \cite{Martín_2016}.

\begin{proof}[Proof of Theorem \ref{thm:rnf}]
    Fix $\omega \in \R \setminus \Q$ and $\phi: \T_1 \to \C$ analytic with $\|\dddot\phi\| \leq 1$.
    Let $\kappa \geq 2$ and $\nu > \nu_0 := 64$ be given as in the statement of Theorem~\ref{thm:rnf}.
    Consider $q \gg 1$ such that the arithmetic condition \eqref{eq:om arith cond} holds and let $p$ be the corresponding minimizer of $|q\omega - p|$ over $p \in \Z$.
    Since the condition \eqref{eq:om arith cond} is stronger than the arithmetic condition \eqref{eq:rnf1 arithm} of Proposition~\ref{prop:rnf1}, taking $q$ sufficiently large ensures that Proposition~\ref{prop:rnf1} applies and we accordingly obtain a coordinate transform $\Theta$ and a map 
    \begin{equation*}
        \begin{aligned}
            \wt F_q = \Theta\inv \circ F \circ \Theta: \T_{1 - 1/q} \times \D_{|\omega - p/q|/2} \to& \T_{1} \times \C\\
            (x,y) \mapsto& \Big(x + \frac{p}{q} + y,\ y + \wt g_q(x,y)\Big)
        \end{aligned}
    \end{equation*}
    satisfying the conclusions of Proposition~\ref{prop:rnf1}.

    Increasing $q$ if necessary, we may then apply Proposition~\ref{prop:rnf2} with $M = \kappa +1$ (and $\nu$, $\kappa$ as given in the theorem statement) to obtain a second coordinate transform $\id + \Sigma$ and a map
    \begin{equation*}
        \begin{aligned}
            F_q = (\id + \Sigma)\inv \circ \wt F_q \circ (\id + \Sigma): \T_{1 - 2/q} \times \D_{|\omega - p/q|/3} \to& \T_{1} \times \C\\
            (x,y) \mapsto& \Big(x + \frac{p}{q} + y,\ y + g_q(x,y)\Big)
        \end{aligned}
    \end{equation*}
    satisfying the conclusions of Proposition~\ref{prop:rnf2}.
    We now prove that the change of coordinates $H_q := \Theta \circ (\id + \Sigma)$, the map $F_q$ and the function $g_q$ satisfy the conclusions of Theorem~\ref{thm:rnf}.

    Let $a = 1 - 3/q$ and $b = |\omega - p/q|/5$ as in the theorem statement.
    
    Let $h_{q,1}$ and $h_{q,2}$ be defined as in \eqref{eq:h12}. By \eqref{eq:th12} and $\Sigma(x,y) = (\sigma_1(x,y), \sigma_2(x,y))$, we have 
    \begin{equation*}
        h_{q,1} = \sigma_1 + \theta_1 \circ (\id + \Sigma), 
        \qquad h_{q,2} =  \sigma_2 + \theta_2 \circ (\id + \Sigma).
    \end{equation*}
    Since $(\id + \Sigma)(\T_{a} \times \D_{b}) \subset \T_{1 - 1/q} \times \D_{|\omega - p/q|/2}$, it follows from \eqref{eq:th12 est} and \eqref{eq:rnf2 iter psiphi} as well as the choice $\nu \geq \nu_0 + 6$ that 
    \begin{equation*}
        |h_{q,1}|_{a,b} \lesssim q\verts{\omega - \frac{p}{q}}; \qquad 
        |h_{q,2}|_{a,b} < \frac{1}{10} \verts{\omega - \frac{p}{q}}
    \end{equation*}
    for $q$ sufficiently large.
    We have obtained \eqref{eq:h12 est}.

    Next, from the conclusion \eqref{eq:rnf2 iter concl 1} of Proposition~\ref{prop:rnf2} we have immediately  \eqref{eq:rnf concl 1}.
    Next, to obtain \eqref{eq:rnf concl 2}, we use \eqref{eq:rnf2 iter concl 1}, \eqref{eq:rnf2 iter concl 2} and the choices $N = \kappa q$, $M = \kappa / 2$ to get 
    \begin{equation*}
        \begin{aligned}
            |g_q - \langle g_q \rangle_q^N|_{1/q,b} \leq&
            |\{g_q\}_q|_{1/q,b} + |\langle g_q \rangle_q^{>N}|_{1/q,b}\\
            \lesssim& |\{g_q\}_q|_{1-2/q, |\omega - p/q|/3} + \sum_{|j| > N, \ q \mid j} |g_q|_{1 - 2/q, |\omega - p/q|/3} \eu^{-2\pi (1 - 2/q - 1/q)|j|}\\
            \lesssim&  \eu^{-2\pi M q} \verts{\omega - \frac{p}{q}}^2 + \sum_{|j| > N, \ q \mid j}  \eu^{-2\pi(1-3/q) |j|} \verts{\omega - \frac{p}{q}}^2\\
            \lesssim& \eu^{-2\pi M q} \verts{\omega - \frac{p}{q}}^2 + \eu^{6\pi \kappa}\eu^{-2\pi (\kappa+1) q} \verts{\omega - \frac{p}{q}}^2
            \lesssim \eu^{6\pi \kappa} \eu^{-2\pi (\kappa+1) q} \verts{\omega - \frac{p}{q}}^2.
        \end{aligned}
    \end{equation*}
    We have proven \eqref{eq:rnf concl 2}.

    We shall postpone the proof of \eqref{eq:gq star} to the end of this section.
    To prove \eqref{eq:rnf res compare}, we denote 
    \begin{equation*}
        \CQ(x,y) =  \wt g_q(x , y) + \frac{1}{2} \Big(\frac{p}{q} - \omega + y\Big) \Big(\frac{p}{q} - \omega - y\Big) \frac{\langle \dot f_1 \rangle_q(x)}{1 + \langle f_1 \rangle_q(x)}.
    \end{equation*}
    Then, by the first inequality of \eqref{eq:rnf1 residue}, we have $|\CQ|_{1-1/q, |\omega - p/q|/2} \lesssim q^{\nu_0} |\omega - p/q|^3$.
    It follows from the definition of the $\|\cdot \|$-norm and the choice $N = \kappa q$ that
    \begin{equation*}
        \begin{aligned}
            \norm{\crochet{\CQ(\cdot, 0)}_q^N} =& \sum_{\substack{q \mid k \\ |k| \leq N}} |\hat \CQ(0)_k| \eu^{2\pi |k|}  
        \leq \sum_{\substack{q \mid k \\ |k| \leq N}} | \CQ|_{1 - 1/q, |\omega - p/q|/2}  \eu^{-2\pi (1 - 1/q) |k|} e^{2\pi |k|}  \\
        \leq& \Big(\frac{2N}{q} + 1 \Big) | \CQ|_{1 - 1/q, |\omega - p/q|/2}  \eu^{2\pi \frac{N}{q}} \\
            \lesssim& \kappa \eu^{2\pi \kappa} q^{\nu_0} \verts{\omega - \frac{p}{q}}^3 .
        \end{aligned}
    \end{equation*}
    Combining with \eqref{eq:rnf2 iter res compare}, we obtain
    \begin{equation*}
        \begin{aligned}
            \norm{\crochet{  g_q(\cdot , 0) + \frac{1}{2} \Big(\frac{p}{q} - \omega \Big)^2 \frac{\langle \dot f_1 \rangle_q}{1 + \langle f_1 \rangle_q}}_q^{N}} \leq & \norm{\langle g_q(\cdot , 0)\rangle_q^{N} - \langle \wt g_q(\cdot , 0)\rangle_q^{N}} + \norm{\crochet{\CQ(\cdot, 0)}_q^{N}} \\
            \lesssim& \kappa \eu^{4\pi \kappa} q^{\nu_0 - \nu} \verts{\omega - \frac{p}{q}}^2 + \kappa \eu^{2\pi \kappa} q^{\nu_0} \verts{\omega - \frac{p}{q}}^3\\
            \lesssim& \kappa \eu^{4\pi \kappa} q^{\nu_0 - \nu} \verts{\omega - \frac{p}{q}}^2.
        \end{aligned}
    \end{equation*}
    We have proven \eqref{eq:rnf res compare}.
    To prove \eqref{eq:rnf S compare}, 
    we first note that, by combining the second inequality of \eqref{eq:rnf1 residue} with the estimate $\|f_1\| < 1/18$ from \eqref{eq:f1 norm}, we have $|\wt S_q|_{1 - 1/q, |\omega - p/q|/2} \lesssim \verts{\omega - p/q}^2$.
    Since $S_q = S \circ H_q = \wt S_q \circ (\id + \Sigma)$, by \eqref{eq:rnf2 iter psiphi} and Cauchy's estimate we have
    \begin{equation*}
        \begin{aligned}
            |S_q - \wt S_q|_{a,b} 
            \leq& |\sigma_2|_{a,b} \verts{\frac{\partial \wt S_q}{\partial y}}_{1-2/q, |\omega - p/q|/3} + |\sigma_1|_{a,b} \verts{\frac{\partial \wt S_q}{\partial x}}_{1-2/q, |\omega - p/q|/3} \\
            \lesssim& q |\sigma_2|_{a,b} \verts{\wt S_q}_{1-1/q, |\omega - p/q|/2} + \verts{\omega - \frac{p}{q}}\inv |\sigma_1|_{a,b}  \verts{\wt S_q}_{1-1/q, |\omega - p/q|/2} \\
            \lesssim& q^{5 + \nu_0 - \nu} \verts{\omega - \frac{p}{q}}^3.
        \end{aligned}
    \end{equation*}
    We have assumed $q$ is sufficiently large so that the map $\id + \Sigma$ sends $\T_a \times \D_b$ into $\T_{1 - 2/q} \times \D_{|\omega - p/q|/3}$.
    Since $\nu > 5$, the conclusion \eqref{eq:rnf S compare} follows from the above estimate and the second line of \eqref{eq:rnf1 residue} by a triangle inequality.

    Finally, it remains to prove \eqref{eq:gq star}.
    Informally, the claim states that $g_q(\cdot, 0)$ is dominated by its zero average (oscillating) part $g_q^\bullet$.
    This is expected from the fact that the map $F_q$ preserves an area form close to the standard form $\du x \du y$:
    if the preserved area form were exactly $\du x \du y$, then $g_q^*$ would be identically zero, whereas for an area form close to $\du x \du y$, the average part $g_q^*$ is expected to be small compared to the oscillatory part $g_q^\bullet$.

    We make this argument precise as follows.
    Observe that, if $\phi$ is real-analytic, then the map $F_q$ is real-analytic and exact symplectic with the (nonstandard) symplectic 1-form $H^*_q(y \du x)$, whose exterior derivative is an area form $H^*_q(\du x \du y)$ preserved by $F_q$.
    By \eqref{eq:rnf1 area form}, the area form $H^*_q(\du x \du y)$ writes
    \begin{equation*}
        \begin{aligned}
            & (\Theta \circ (\id + \Sigma))^* (\du x \du y) \\
            =& (\id + \Sigma)^* \Big(\Big( \frac{1}{1 + \langle f_1\rangle_q } + \wt \eta \Big) \du x \du y \Big) \\
            =& \Big(\frac{1}{1+ \langle f_1 \rangle_q \circ (\id + \Sigma)}  + \wt \eta \circ (\id + \Sigma) \Big) \Big(1 + \frac{\partial \sigma_1}{ \partial x} +  \frac{\partial \sigma_2}{ \partial y} +  \frac{\partial \sigma_1}{ \partial x} \frac{\partial \sigma_2}{ \partial y} -  \frac{\partial \sigma_1}{ \partial y} \frac{\partial \sigma_2}{ \partial x} \Big) \du x \du y
        \end{aligned}
    \end{equation*}
    where $\wt \eta$, $\sigma_1$ and $\sigma_2$ are analytic functions satisfying \eqref{eq:rnf1 area form estimate} and \eqref{eq:rnf2 iter psiphi} respectively.
    This expression holds on the domain $\T_{1 - 2/q} \times \D_{|\omega - p/q|/3}$.
    In particular, 
    \begin{equation*}
        |\wt \eta \circ (\id + \Sigma)|_{1 - 2/q, |\omega - p/q|/3} \lesssim q^{\nu_0} \verts{\omega - \frac{p}{q}}.
    \end{equation*}
    By Cauchy's estimate and \eqref{eq:rnf2 iter psiphi},
    \begin{equation*}
        \verts{\frac{\partial \sigma_1}{ \partial x} +  \frac{\partial \sigma_2}{ \partial y} +  \frac{\partial \sigma_1}{ \partial x} \frac{\partial \sigma_2}{ \partial y} -  \frac{\partial \sigma_1}{ \partial y} \frac{\partial \sigma_2}{ \partial x}}_{1 - 3/q, |\omega - p/q|/4} \lesssim q^{5 + \nu_0 - \nu} \verts{\omega - \frac{p}{q}}.
    \end{equation*}
    Since $\langle f_1 \rangle_q = f_1^* + \langle f_1^\bullet \rangle_q$, we may write 
    \begin{equation*}
        \frac{1}{1 + \langle f_1 \rangle_q \circ (\id + \Sigma)} = \frac{1}{1 + f_1^*} - \frac{\langle f_1^\bullet \rangle_q \circ (\id + \Sigma)}{(1 + f_1^*)(1 + \langle f_1 \rangle_q \circ (\id + \Sigma))}.
    \end{equation*}
    Since the smallest order of nonzero Fourier coefficients of $\langle f_1^\bullet \rangle_q$ is at least $q$, using $\|f_1\| < 1/18$ from \eqref{eq:f1 norm} and the smallness of $\sigma_1$, the quantity $\langle f_1^\bullet \rangle_q \circ (\id + \Sigma)(x,y)$ for $\Im x = 0$ can be bounded  by $\lesssim \eu^{-2\pi q}$.
    The second term on the right-hand side in the above equation can then be estimated as
    \begin{equation*}
        \verts{\frac{\langle f_1^\bullet \rangle_q \circ (\id + \Sigma)}{(1 + f_1^*)(1 + \langle f_1 \rangle_q \circ (\id + \Sigma))}}_{0, |\omega - p/q|/4} \lesssim \eu^{-2\pi q}.
    \end{equation*}
    Here and henceforth, we denote by $\verts{\; \cdot\; }_{0,b}$ the supremum norm over the domain $\T \times \D_b$.
    Combining the above estimates and using $\eu^{-2\pi q} \ll q^{-\nu}$ and $|\omega - p/q| \leq q^{-\nu}$, we find that 
    \begin{equation}\label{eq:eta estimate}
        H_q^* (\du x \du y) = \Big(\frac{1}{1 + f_1^*} + \eta\Big) \du x \du y, \qquad \text{with} \qquad |\eta|_{0, |\omega - p/q|/4} \lesssim q^{\nu_0 - \nu}.
    \end{equation}
    By construction, the function $\eta: \T_{1-3/q} \times \D_{|\omega - p/q|/4} \to \C$ depends analytically on $\phi$.

    If $\phi$ is real-analytic, then the changes of coordinates are all real-analytic and therefore $\eta$ is real-analytic.
    By the standard property of exact symplectic maps,  the \textit{signed} area of the region bounded by the curve $\T \times \{y\}$ and its image under $F_q$ is zero for a fixed \textit{real} $y$, i.e.,
    \begin{equation}\label{eq:signed area = 0}
        \int_\T \int_y^{y + g_q(x,y)} \frac{1}{1 + f_1^*} + \eta(x,y)  \du y\du x = 0.
    \end{equation}
    Since $\eta$ is analytic on $\T_{1 - 3/q} \times \D_{|\omega - p/q|/4}$ and $|g_q|_{1 - 3/q, |\omega - p/q|/4} \ll |\omega - p/q|$ by \eqref{eq:rnf concl 1}, the left-hand side of the equation above is well-defined and analytic for $y$ in the complex disc $\D_b$ with $b = |\omega - p/q|/5$.

    Fix $y \in \D_b \cap \R$. Consider now the Fourier expansion of $\phi$ of the following form
    \begin{equation*}
        \phi(x) = \sum_{k \geq 1} a_k \sin(2\pi k x) + b_k \cos(2\pi k x)
    \end{equation*}
    where $a_k, b_k \in \C$.
    If $a_k, b_k \in \R$ for all $k$, then $\phi$ is real-analytic and the equation \eqref{eq:signed area = 0} holds.
    Suppose $a_k = b_k = 0$ for all but finitely many $k$'s.
    Then we may regard the left hand side of \eqref{eq:signed area = 0} as an analytic function in the finitely many variables $a_k, b_k$.
    Since the equation \eqref{eq:signed area = 0} holds for all real values of $a_k, b_k$, it must hold for all complex values of $a_k, b_k$ by analytic continuation.
    By approximating a general $\phi$ by its Fourier truncations and using the continuous dependence of $\eta$ and $g_q$ on $\phi$, we conclude that \eqref{eq:signed area = 0} holds for all analytic $\phi: \T_1 \to \C$ such that $\|\dddot\phi\| \leq 1$.

    Applying the above argument for each fixed real $y$ and using analytic continuation again, we deduce that \eqref{eq:signed area = 0} holds for all $y \in \D_b$.

    We now use \eqref{eq:signed area = 0} to control $g_q^*(y)$.
    Fixing $|y| < b$ and rewriting the left hand side of \eqref{eq:signed area = 0}, we have
    \begin{equation*}
        \frac{g^*_q(y)}{1 + f_1^*} + \int_\T \int_0^1 g_q(x,y) \eta(x,y + t g_q(x,y))  \du t \du x = 0
    \end{equation*}
    The second term on the left hand side can be bounded as
    \begin{equation*}
        \verts{\int_\T \int_0^1 g_q(x,y) \eta(x,y + t g_q(x,y))  \du t \du x}
        \leq  |\eta|_{0, |\omega - p/q|/4} \sup_{x \in \T} |g_q(x,y)|
    \end{equation*}
    Therefore, we obtain from \eqref{eq:eta estimate} that 
    \begin{equation*}
        |g_q^*(y)| \lesssim  q^{\nu_0 - \nu}  \sup_{x \in \T} |g_q(x,y)|.
    \end{equation*}
    Note that $g_q(x,y) = g_q^*(y) + g_q^\bullet(x,y)$. 
    Since $\nu_0 < \nu$, for $q$ sufficiently the quantity $q^{\nu_0 - \nu}$ is arbitrarily small. Hence we may write 
    \begin{equation*}
        |g_q^*(y)| \lesssim  q^{\nu_0 - \nu}  \sup_{x \in \T} |g_q^\bullet(x,y)|.
    \end{equation*}
    We have obtained \eqref{eq:gq star} and hence completed the proof of the theorem.
\end{proof}

\appendix

\section{Functions defined on $\CD_\lambda$}\label{sec:tri dom}
Recall that for $\lambda > 0$ we have defined the domain $\CD_{\lambda}$ to be
\begin{equation}\label{eq:tri dom recall}
    \CD_\lambda = \{(x,y) \in \T_1 \times \C \mid |\Im x| + \lambda\inv |y| < 1\}.
\end{equation}
If $u$ is a function defined on $\CD_{\lambda}$, we denote by $|u|^{(\lambda)}$ the supremum norm of $u$ over $\CD_{\lambda}$.
If $u$ is of class $\CC^r$ on $\CD_{\lambda}$, we set
\begin{equation*}
    |u|^{(\lambda, r)} := \sup\bigg\{ \biggl\vert\frac{\partial^{j+k} u}{\partial x^j \partial y^k}(x,y)\biggr\vert \ : \ (x,y) \in \CD_\lambda,\ 0 \le j+k \le r\bigg\}.
\end{equation*}

\begin{lemma}[Cauchy’s estimate]\label{lem:tri cauchy}
Let $\lambda>0$ and $\bar u:\CD_\lambda\to\C$ be analytic, and put $u(x,y)=y\,\bar u(x,y)$.
Then, for any $r\ge1$ and any $\lambda'\in(0,\lambda)$,
\begin{equation}\label{eq:tri cauchy}
  |u|^{(\lambda', r)} \;\le\; \frac{r+\lambda'}{1-\lambda'/\lambda}\;|\bar u|^{(\lambda,\,r-1)}.
\end{equation}
In particular,
\begin{equation}\label{eq:tri cauchy ord1}
  \biggl\vert\frac{\partial u}{\partial x}\biggr\vert^{(\lambda')} \le \frac{\lambda\lambda'}{\lambda-\lambda'}\,|\bar u|^{(\lambda)},
  \qquad
  \biggl\vert\frac{\partial u}{\partial y}\biggr\vert^{(\lambda')} \le \frac{\lambda}{\lambda-\lambda'}\,|\bar u|^{(\lambda)}.
\end{equation}
\end{lemma}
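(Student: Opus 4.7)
The plan is to derive the estimate from one-variable Cauchy estimates applied to $\bar u$, carefully exploiting the triangular geometry of $\CD_\lambda$. The key observation is that for every $(x_0,y_0) \in \CD_{\lambda'}$ the inequality $|y_0| < \lambda'(1-|\Im x_0|)$ holds, and two natural one-dimensional discs fit inside $\CD_\lambda$: the disc in the $x$-direction at fixed $y = y_0$ of radius $1 - |\Im x_0| - \lambda^{-1}|y_0|$, and the disc in the $y$-direction at fixed $x = x_0$ of radius $\lambda(1-|\Im x_0|) - |y_0|$. Both radii degenerate near the corner $(|\Im x_0|, |y_0|) = (1, 0)$, but the prefactor $|y_0|$ from $u = y\bar u$ will compensate for this, as shown below.

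To estimate $|u|^{(\lambda',r)}$, I would apply the Leibniz rule to $u = y\bar u$: for any multi-index $(a,b)$ with $a+b \le r$,
\begin{equation*}
  \partial_x^a \partial_y^b u \;=\; y\, \partial_x^a \partial_y^b \bar u \;+\; b\, \partial_x^a \partial_y^{b-1} \bar u.
\end{equation*}
The second term is bounded immediately by $b\,|\bar u|^{(\lambda, r-1)}$, since it involves at most $r-1$ derivatives of $\bar u$ and $\CD_{\lambda'} \subset \CD_\lambda$. For the first term, I recover the missing derivative of $\bar u$ by a one-variable Cauchy estimate: when $b \ge 1$, I differentiate $\partial_x^a \partial_y^{b-1}\bar u$ in the $y$-direction along the $y$-disc above; when $b=0$ and $a \ge 1$, I differentiate $\partial_x^{a-1}\bar u$ in the $x$-direction along the $x$-disc. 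In each case the factor $|y_0|$ multiplies the Cauchy bound and compensates the shrinking of the Cauchy radius near the boundary, producing a uniformly bounded coefficient after taking the supremum over $\CD_{\lambda'}$.

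The main concrete computation is to maximize, over $(x_0,y_0) \in \CD_{\lambda'}$, the two ratios $|y_0|/(\lambda(1-|\Im x_0|) - |y_0|)$ and $|y_0|/(1-|\Im x_0|-\lambda^{-1}|y_0|)$. A short calculus exercise, parametrizing the boundary $|\Im x_0| + (\lambda')^{-1}|y_0| = 1$ by $t := 1 - |\Im x_0|$, shows that the maxima are attained on that boundary and equal $\lambda'/(\lambda-\lambda')$ and $\lambda\lambda'/(\lambda-\lambda')$ respectively (independent of $t$, so the supremum is genuinely uniform). Combined with the trivial pointwise bound $|u(x_0,y_0)| \le \lambda'(1-|\Im x_0|)\,|\bar u|^{(\lambda)} \le \lambda'\,|\bar u|^{(\lambda)}$, these already yield the specialized estimates \eqref{eq:tri cauchy ord1} for $r = 1$.

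For general $r$, summing the two contributions in the Leibniz rule and using $b \le r$ produces a bound of the form $(\lambda'/(\lambda-\lambda') + r)\,|\bar u|^{(\lambda, r-1)}$, and I would close the argument by checking the elementary inequality $\lambda'/(\lambda-\lambda') + r \le (r+\lambda')\lambda/(\lambda-\lambda') = (r+\lambda')/(1 - \lambda'/\lambda)$, which reduces to $r\lambda \ge -\lambda'^2 + \lambda'(1-\lambda) + \lambda'(\lambda-1)$ and is trivially valid for $r \ge 1$ and $\lambda, \lambda' > 0$. I do not anticipate any real obstacle: the argument is routine but care-requiring bookkeeping of how the one-variable Cauchy formula interacts with the triangular shape of $\CD_\lambda$, and the presence of the factor $y$ in $u = y\bar u$ is precisely what allows the cancellation needed where $\CD_\lambda$ degenerates.
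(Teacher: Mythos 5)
Your proof is correct and follows essentially the same approach as the paper: isolate the factor $y$ via the Leibniz rule, apply one-variable Cauchy estimates along the natural $x$- and $y$-discs inside $\CD_\lambda$, and let the prefactor $|y_0|$ compensate for the degeneration of the Cauchy radius near the corners, which yields exactly the constants $\lambda'/(\lambda-\lambda')$ and $\lambda\lambda'/(\lambda-\lambda')$ of the paper's Step~1. The only (harmless) discrepancy is your final algebraic reduction: after clearing denominators, $\lambda'/(\lambda-\lambda')+r\le (r+\lambda')/(1-\lambda'/\lambda)$ reduces to $1-r\le\lambda$ rather than your stated $r\lambda\ge -\lambda'^2$, but both are trivially valid for $r\ge1$, $\lambda,\lambda'>0$, so the argument stands.
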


\begin{proof}
Throughout the proof, let us fix $(x,y)\in\CD_{\lambda'}$.

\medskip\noindent
\emph{Step 1: first derivatives.}
By \eqref{eq:tri dom recall}, for fixed $y$ the available $x$–strip in $\CD_\lambda$ has half-width $1-|y|/\lambda$ and in $\CD_{\lambda'}$ has half-width $1-|y|/\lambda'$. Cauchy's estimate in $x$ gives
\begin{equation}\label{eq:tri cauchy ord 1}
\begin{aligned}
  \biggl\vert y\,\frac{\partial\bar u}{\partial x}(x,y)\biggr\vert
  &\le\; |y|\Bigl(\Bigl(1-\frac{|y|}{\lambda}\Bigr)-\Bigl(1-\frac{|y|}{\lambda'}\Bigr)\Bigr)^{-1}
           |\bar u|^{(\lambda)} \;\le \frac{\lambda\lambda'}{\lambda-\lambda'}\,|\bar u|^{(\lambda)}.
\end{aligned}
\end{equation}
For fixed $x$, the available $y$–radii are $\lambda(1-|\Im x|)$ in $\CD_\lambda$ and $\lambda'(1-|\Im x|)$ in $\CD_{\lambda'}$. Cauchy's estimate in $y$ yields
\begin{align*}
  \biggl\vert y\,\frac{\partial\bar u}{\partial y}(x,y)\biggr\vert
  &\le\; \frac{|y|}{\lambda(1-|\Im x|)-\lambda'(1-|\Im x|)}\,
           |\bar u|^{(\lambda)} = \frac{\lambda'}{\lambda-\lambda'}\,|\bar u|^{(\lambda)}.
\end{align*}
Since $\partial_y u=\bar u+y\,\partial_y\bar u$, we obtain
\begin{equation*}
 \biggl\vert\frac{\partial u}{\partial y}\biggr\vert^{(\lambda')}
 \le |\bar u|^{(\lambda')} + \frac{\lambda'}{\lambda-\lambda'}\,|\bar u|^{(\lambda)}
 \le \frac{\lambda}{\lambda-\lambda'}\,|\bar u|^{(\lambda)},
\end{equation*}
and together with \eqref{eq:tri cauchy ord 1} this proves \eqref{eq:tri cauchy ord1}.

\medskip\noindent
\emph{Step 2: higher derivatives.}
Let $1\le j+k\le r$. Applying the previous estimates to
$\partial_x^{\,j-1}\partial_y^{\,k}\bar u$ (if $k=0$) or to
$\partial_x^{\,j}\partial_y^{\,k-1}\bar u$ (if $k\ge1$) gives
\begin{equation*}
  \biggl\vert y\,\frac{\partial^{j+k}\bar u}{\partial x^j \partial y^k}(x,y)\biggr\vert
  \le
  \begin{cases}
    \dfrac{\lambda\lambda'}{\lambda-\lambda'}\;
    \Bigl\vert \dfrac{\partial^{j-1}\bar u}{\partial x^{j-1}}\Bigr\vert^{(\lambda)}
      & \text{if } k=0,\\
    \dfrac{\lambda'}{\lambda-\lambda'}\;
    \Bigl\vert \dfrac{\partial^{j+k-1}\bar u}{\partial x^{j}\partial y^{k-1}}\Bigr\vert^{(\lambda)}
      & \text{if } k\ge1.
  \end{cases}
\end{equation*}
By the Leibniz rule,
\begin{equation*}
  \biggl\vert \frac{\partial^{j+k}u}{\partial x^{j}\partial y^{k}}(x,y) \biggr\vert
  = \biggl\vert y\,\frac{\partial^{j+k}\bar u}{\partial x^{j}\partial y^{k}}(x,y)
      + k\,\frac{\partial^{j+k-1}\bar u}{\partial x^{j}\partial y^{k-1}}(x,y) \biggr\vert
  \le \max\!\Bigl\{\frac{\lambda\lambda'}{\lambda-\lambda'},\; \frac{\lambda'}{\lambda-\lambda'}+r\Bigr\}
     \;|\bar u|^{(\lambda,\,r-1)}.
\end{equation*}
Since $\frac{\lambda\lambda'}{\lambda-\lambda'}=\frac{\lambda'}{1-\lambda'/\lambda}$ and
$\frac{\lambda'}{\lambda-\lambda'}+r < \frac{1}{1 - \lambda'/\lambda} + r < \frac{r}{1-\lambda'/\lambda}$, we get
\begin{equation*}
  \sup_{1\le j+k\le r}\biggl\vert \frac{\partial^{j+k}u}{\partial x^{j}\partial y^{k}}\biggr\vert^{(\lambda')}
  < \frac{r+\lambda'}{1-\frac{\lambda'}{\lambda}}\;|\bar u|^{(\lambda,\,r-1)}.
\end{equation*}

\medskip\noindent
\emph{Step 3: zeroth order.}
For $j=k=0$, using $|y|<\lambda'$ on $\CD_{\lambda'}$,
\begin{equation*}
  |u(x,y)| \le |y|\,|\bar u(x,y)|
  \le \lambda'\,|\bar u|^{(\lambda')}
  \le \frac{r+\lambda'}{1-\frac{\lambda'}{\lambda}}\;|\bar u|^{(\lambda,\,r-1)}.
\end{equation*}
Taking the supremum over $(x,y)\in\CD_{\lambda'}$ and $0\le j+k\le r$ yields \eqref{eq:tri cauchy}.
\end{proof}

\begin{proposition}[Composition]\label{prop:tri comp}
    Let $\lambda > \lambda' > 0$ and suppose $\bar \sigma_j : \CD_{\lambda'} \to \C$ is analytic for $j=1,2$.
    Define $\sigma_j(x,y) := y\,\bar \sigma_j(x,y)$ for $j=1,2$, set $\Sigma(x,y) := (\sigma_1(x,y), \sigma_2(x,y))$, and consider the map $\id + \Sigma: \CD_{\lambda'} \to \C^2$.
    Then:
    \begin{enumerate}
        \item If 
        \begin{equation}\label{eq:tri o cond}
            |\bar \sigma_1|^{(\lambda')} + \frac{1+|\bar \sigma_2|^{(\lambda')}}{\lambda} \;\le\; \frac{1}{\lambda'},    
        \end{equation}
        then $\id + \Sigma$ maps $\CD_{\lambda'}$ into $\CD_{\lambda}$.
        
        \item For all $r \ge 0$, there exists a combinatorial constant $\tilde C_r>0$ depending only on $r$ such that,
        whenever the composition $v \circ (\id + \Sigma)$ is well-defined on $\CD_{\lambda'}$ for an analytic $v: \CD_\lambda \to \C$,
        \begin{equation}\label{eq:tri comp norm}
            |v \circ (\id + \Sigma)|^{(\lambda', r)} \;\le\; \tilde C_r\, |v|^{(\lambda, r)}\,
            \bigl(1 + |\sigma_1|^{(\lambda', r)} + |\sigma_2|^{(\lambda', r)}\bigr)^{r}.
        \end{equation}
        
        \item Let $v_j: \CD_\lambda \to \C$ be analytic for $j=1,2$ such that $v_j \circ (\id + \Sigma)$ is well-defined on $\CD_{\lambda'}$,
        and suppose $\bar \sigma_j$ satisfies
        \begin{equation}\label{eq:sig-v implicit}
            \bar \sigma_j \;=\; v_j \circ (\id + \Sigma) \qquad (j=1,2).
        \end{equation}
        Then, for any $r \in \Z_{\ge 0}$, there exist constants $A_r,B_r\in\Z_{\ge0}$ and $C_r>0$ depending only on $r$ such that, for all $\lambda'' \in (0,\lambda')$,
        \begin{equation}\label{eq:tri bootstrap}
            |\bar \sigma_j|^{(\lambda'', r)} \;\le\;
            C_r \Bigl( \frac{r + \lambda'}{1 - \beta}\Bigr)^{A_r}
            \Bigl(1 + |v_1|^{(\lambda, r-1)} + |v_2|^{(\lambda, r-1)}\Bigr)^{B_r}
            |v_j|^{(\lambda, r)},
            \qquad \beta := \Bigl( \frac{\lambda''}{\lambda'}\Bigr)^{\!1/r},
        \end{equation}
        with the convention that $|v_j|^{(\lambda,-1)}:=0$.
    \end{enumerate}
\end{proposition}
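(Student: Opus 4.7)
My plan is to dispatch Parts~(1) and~(2) by direct computation and reserve the bulk of the work for Part~(3), which I would prove by induction on $r$ using Lemma~\ref{lem:tri cauchy} together with Part~(2) itself. For Part~(1), fix $(x,y) \in \CD_{\lambda'}$; then the first coordinate of $(\id+\Sigma)(x,y)$ has imaginary part at most $|\Im x|+|y|\,|\bar\sigma_1|^{(\lambda')}$ while the second coordinate has modulus at most $|y|\bigl(1+|\bar\sigma_2|^{(\lambda')}\bigr)$, so the hypothesis~\eqref{eq:tri o cond} gives
\begin{equation*}
  |\Im(x+\sigma_1)|+\lambda^{-1}|y+\sigma_2|
  \le |\Im x|+|y|\Bigl(|\bar\sigma_1|^{(\lambda')}+\tfrac{1+|\bar\sigma_2|^{(\lambda')}}{\lambda}\Bigr)
  \le |\Im x|+\tfrac{|y|}{\lambda'}<1,
\end{equation*}
placing the image in $\CD_\lambda$.

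For Part~(2), I would apply Faà di Bruno's formula. For any multi-index of order $j+k\le r$, the derivative $\partial_x^j\partial_y^k(v\circ(\id+\Sigma))$ expands as a finite sum, with combinatorial coefficients depending only on $r$, of monomials of the form $(\partial^\alpha v)\circ(\id+\Sigma)\cdot \prod_i \partial^{\beta_i}(\id+\Sigma)_{k_i}$, where $|\alpha|\le r$ and the product has at most $r$ factors, each bounded by $1+|\sigma_1|^{(\lambda',r)}+|\sigma_2|^{(\lambda',r)}$. Since the well-definedness hypothesis guarantees $(\id+\Sigma)(\CD_{\lambda'})\subset\CD_\lambda$, every factor $(\partial^\alpha v)\circ(\id+\Sigma)$ is bounded by $|v|^{(\lambda,r)}$, and \eqref{eq:tri comp norm} follows by taking $\tilde C_r$ to be the total Faà di Bruno combinatorial count at order $r$.

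The main difficulty is Part~(3), since the identity $\bar\sigma_j=v_j\circ(\id+\Sigma)$ is implicit in $\bar\sigma_j$ itself, and a naive application of Part~(2) at scale $\lambda'$ would require controlling $|\sigma_k|^{(\lambda',r)}$ in terms of $|\bar\sigma_k|$ on a \emph{larger} domain. I would argue by induction on $r$. The base case $r=0$ is immediate from the argument of Part~(1) applied at scale $\lambda''$: $(\id+\Sigma)(\CD_{\lambda''})\subset\CD_\lambda$ and hence $|\bar\sigma_j|^{(\lambda'')}\le|v_j|^{(\lambda)}$, giving $C_0=1$ and $A_0=B_0=0$. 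For the inductive step, the key idea is the choice of an intermediate radius
\begin{equation*}
  \tilde\lambda := \lambda'\beta^{r-1}, \qquad \beta=(\lambda''/\lambda')^{1/r},
\end{equation*}
designed so that $\lambda''/\tilde\lambda = \beta$ and $(\tilde\lambda/\lambda')^{1/(r-1)}=\beta$ hold \emph{simultaneously}; this is what allows a clean $(1-\beta)^{-1}$ scaling to be extracted from the composition of the Cauchy loss with the induction hypothesis. Concretely, Part~(2) applied at scale $\lambda''$ combined with Lemma~\ref{lem:tri cauchy} from $\tilde\lambda$ down to $\lambda''$ yields
\begin{equation*}
  |\bar\sigma_j|^{(\lambda'',r)}
  \le \tilde C_r\,|v_j|^{(\lambda,r)}
    \Bigl(1+\tfrac{r+\lambda''}{1-\beta}\bigl(|\bar\sigma_1|^{(\tilde\lambda,r-1)}+|\bar\sigma_2|^{(\tilde\lambda,r-1)}\bigr)\Bigr)^{r}.
\end{equation*}
The induction hypothesis, applied with $\lambda''\leftarrow\tilde\lambda$ and $r\leftarrow r-1$ (its $\beta$-parameter is exactly our $\beta$ by construction), bounds each $|\bar\sigma_k|^{(\tilde\lambda,r-1)}$ in terms of $|v_k|^{(\lambda,r-1)}$ and the lower jet of $v_1,v_2$. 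Expanding the $r$-th power, collecting powers of $(1-\beta)^{-1}$ and of $1+|v_1|^{(\lambda,r-1)}+|v_2|^{(\lambda,r-1)}$, and using the monotonicities $r+\lambda''\le r+\lambda'$ and $r-1+\lambda'\le r+\lambda'$, one obtains~\eqref{eq:tri bootstrap} with the recursions $A_r=r(1+A_{r-1})$, $B_r=r(1+B_{r-1})$, and $C_r$ determined from $C_{r-1}$ and $\tilde C_r$. In particular, starting from $A_0=B_0=0$ this gives $B_2=4$, consistent with the value cited elsewhere in the paper.
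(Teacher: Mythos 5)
Your proposal is correct and follows essentially the same route as the paper's proof. Parts~(1) and~(2) are identical; for Part~(3), your intermediate radius $\tilde\lambda=\lambda'\beta^{r-1}$ is exactly the paper's $\lambda''/\beta$ (since $\lambda''/\beta=\lambda'\beta^{r}/\beta=\lambda'\beta^{r-1}$), and the same two observations---$\lambda''/\tilde\lambda=\beta$ and $(\tilde\lambda/\lambda')^{1/(r-1)}=\beta$---drive the Cauchy-loss bookkeeping; your slightly tighter form $\bigl(1+\tfrac{r+\lambda''}{1-\beta}(\cdots)\bigr)^{r}$ reduces to the paper's $\bigl(\tfrac{r+\lambda''}{1-\beta}\bigr)^{r}(1+\cdots)^{r}$ after using $\tfrac{r+\lambda''}{1-\beta}\ge 1$, and the recursions $A_r=r(1+A_{r-1})$, $B_r=r(1+B_{r-1})$ coincide (in particular $B_2=4$ as you note).
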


\begin{proof}
\emph{(1)} Let $(x,y)\in\CD_{\lambda'}$, so $|\Im x| < 1 - |y|/\lambda'$.
Using $|\Im(x+\sigma_1)| \le |\Im x| + |\sigma_1|$ and $|\sigma_j|\le |y|\,|\bar\sigma_j|^{(\lambda')}$,
\begin{equation*}
    \begin{aligned}
          |\Im(x + \sigma_1(x,y))| + \frac{|y + \sigma_2(x,y)|}{\lambda}
            \;&\le\; |\Im x| + |y| \Bigl(|\bar\sigma_1|^{(\lambda')} + \frac{1 + |\bar\sigma_2|^{(\lambda')}}{\lambda}\Bigr)\\
  \;&<\; 1 - \frac{|y|}{\lambda'} + |y|\Bigl(|\bar\sigma_1|^{(\lambda')} + \frac{1 + |\bar\sigma_2|^{(\lambda')}}{\lambda}\Bigr).
    \end{aligned}
\end{equation*}
If \eqref{eq:tri o cond} holds, then the term between the parentheses is $\le \frac1{\lambda'}$, and the right-hand side is $<1$.
Hence $(x+\sigma_1(x,y),y+\sigma_2(x,y))\in\CD_\lambda$.

\smallskip
\emph{(2)} Differentiate $v(x+\sigma_1(x,y),\,y+\sigma_2(x,y))$ up to total order $r$ in $(x,y)$.
By the multivariate chain rule (Faà di Bruno), each derivative is a finite linear combination (with coefficients depending only on $r$) of terms
\begin{equation*}
  \frac{\partial^{\alpha+\beta} v}{\partial x^\alpha \partial y^\beta}\bigl(\varsigma_1(x,y), \varsigma_2(x,y)\bigr)
  \prod_{\ell=1}^{\alpha}\frac{\partial^{j_\ell+k_\ell}\varsigma_1}{\partial x^{j_\ell}\partial y^{k_\ell}}(x,y)\;
  \prod_{\ell=1}^{\beta}\frac{\partial^{j'_\ell+k'_\ell}\varsigma_2}{\partial x^{j'_\ell}\partial y^{k'_\ell}}(x,y),
\end{equation*}
where $\varsigma_j(x,y) =x+\sigma_j(x,y)$, and
$\sum_{\ell = 1}^{\alpha}(j_\ell+k_\ell)+\sum_{\ell = 1}^{\beta}(j'_\ell+k'_\ell)\le r$.
Since the composition is well-defined, we have $(\varsigma_1(x,y),\varsigma_2(x,y))\in \CD_\lambda$ and hence
$\bigl|\partial^{\alpha+\beta} v\bigr|^{(\lambda)}\le |v|^{(\lambda,r)}$.
On the other hand, each derivative of $\varsigma_j$ is bounded by $1+|\sigma_j|^{(\lambda',r)}$.
Taking suprema and absorbing the number of terms into $\tilde C_r$ yields \eqref{eq:tri comp norm}.

\smallskip
\emph{(3)} We argue by induction on $r\ge0$.

\underline{Base case $r=0$.} From \eqref{eq:sig-v implicit} we have $|\bar\sigma_j|^{(\lambda'')} \le |v_j|^{(\lambda)}$, so \eqref{eq:tri bootstrap} holds with $C_0=1$ and $A_0=B_0=0$.

\underline{Induction step.} Fix $r\ge1$ and $\lambda''\in(0,\lambda')$, and set $\beta=(\lambda''/\lambda')^{1/r}\in(0,1)$.
Applying \eqref{eq:tri comp norm} with $v=v_j$ and using Lemma~\ref{lem:tri cauchy} to estimate $\sigma_j=y\bar\sigma_j$ on $\CD_{\lambda''}$ from $\bar\sigma_j$ on the larger domain $\CD_{\lambda''/\beta}\subset\CD_{\lambda'}$, we get
\begin{equation*}
  |\bar\sigma_j|^{(\lambda'',r)}
  \;\le\; \tilde C_r\,|v_j|^{(\lambda,r)}\,
          \Bigl(1 + |\sigma_1|^{(\lambda'',r)} + |\sigma_2|^{(\lambda'',r)}\Bigr)^r
  \;\le\; \tilde C_r\,|v_j|^{(\lambda,r)}\,
          \Bigl(\frac{r+\lambda''}{1-\beta}\Bigr)^{\!r}\,
          \Bigl(1 + |\bar\sigma_1|^{(\lambda''/\beta,\,r-1)} + |\bar\sigma_2|^{(\lambda''/\beta,\,r-1)}\Bigr)^{\!r}.
\end{equation*}
Note that $\lambda''<\lambda''/\beta\le\lambda'$ and
$((\lambda''/\beta)/\lambda')^{1/(r-1)}=\beta$.
Thus the induction hypothesis at level $r-1$ (applied with $\lambda''$ replaced by $\lambda''/\beta$) yields
\begin{equation*}
  1 + |\bar\sigma_1|^{(\lambda''/\beta,\,r-1)} + |\bar\sigma_2|^{(\lambda''/\beta,\,r-1)}
  \;\le\; C_{r-1}\,\Bigl(\frac{r+\lambda''/\beta}{1-\beta}\Bigr)^{\!A_{r-1}}
          \Bigl(1 + |v_1|^{(\lambda, r-1)} + |v_2|^{(\lambda, r-1)}\Bigr)^{B_{r-1}+1}.
\end{equation*}
Combining and using $\lambda''/\beta < \lambda'$ gives
\begin{equation*}
  |\bar\sigma_j|^{(\lambda'',r)}
  \;\le\; \tilde C_r\,C_{r-1}^{\,r}\,
          \Bigl(\frac{r+\lambda'}{1-\beta}\Bigr)^{\,r+rA_{r-1}}\,
          \Bigl(1 + |v_1|^{(\lambda, r-1)} + |v_2|^{(\lambda, r-1)}\Bigr)^{\,r(B_{r-1}+1)}
          |v_j|^{(\lambda,r)}.
\end{equation*}
Thus \eqref{eq:tri bootstrap} holds with the recursive choices
\begin{equation*}
  A_r := r + rA_{r-1}, \qquad B_r := r(B_{r-1}+1), \qquad C_r := \tilde C_r\, C_{r-1}^{\,r},
\end{equation*}
and the stated base values.
\end{proof}

\begin{remark}[Explicit choice of $\tilde C_r$ in \eqref{eq:tri comp norm}]
One may take
\begin{equation*}
  \tilde C_0 := 1, \qquad \tilde C_r := r!\,2^{\,r}\quad (r\ge 1).
\end{equation*}
This follows from the multivariate Faà di Bruno expansion for $v\circ(\id+\Sigma)$:
an $r$-th $(x,y)$–derivative is a sum of at most $r!\,2^{r}$ terms, each bounded by
$|v|^{(\lambda,r)}(1+|\sigma_1|^{(\lambda',r)}+|\sigma_2|^{(\lambda',r)})^{r}$.
\end{remark}

\begin{proposition}[Inversion]\label{prop:tri inv}
    Let $\bar \xi_1, \bar \xi_2: \CD_\lambda \to \C$ be analytic and define $\xi_j(x,y) := y\,\bar \xi_j(x,y)$ and $\Xi(x,y) := (\xi_1(x,y), \xi_2(x,y))$.
    Suppose 
    \begin{equation}\label{eq:tri inv}
        0 < \lambda' < \frac{\bigl(1 - |\bar \xi_2|^{(\lambda)}\bigr)^2}{\;\frac{1}{\lambda} + |\bar \xi_1|^{(\lambda)}\bigl(2 - |\bar \xi_2|^{(\lambda)}\bigr)}.
    \end{equation}
    Then there exist analytic functions $\bar \sigma_1, \bar \sigma_2: \CD_{\lambda'} \to \C$ such that, with $\sigma_j(x,y) := y\,\bar \sigma_j(x,y)$ and $\Sigma(x,y) := (\sigma_1(x,y), \sigma_2(x,y))$, we have 
    \begin{equation}\label{eq:tri right inv}
        (\id + \Xi) \circ (\id + \Sigma) = \id \quad \text{on } \CD_{\lambda'}
    \end{equation}
    and, for any $\lambda'' \in (0, \lambda')$ such that $(\id + \Xi) (\CD_{\lambda''}) \subset \CD_{\lambda'}$,
    \begin{equation}\label{eq:tri left inv}
        (\id + \Sigma) \circ (\id + \Xi) = \id \quad \text{on } \CD_{\lambda'}.
    \end{equation}
    Moreover, for any $r \in \Z_{\ge 0}$ there exist constants $A_r,B_r\in\Z_{\ge0}$ and $C_r>0$ (depending only on $r$) such that, for any $\lambda'' \in (0,\lambda')$,
    \begin{equation}\label{eq:bar xi vs sig}
        1 + |\bar \sigma_1|^{(\lambda'', r)} + |\bar \sigma_2|^{(\lambda'', r)}
        \;\le\;  C_r \Bigl( \frac{r + \lambda'}{1 - \beta}\Bigr)^{A_r}
        \Biggl(\frac{1 + |\bar \xi_1|^{(\lambda, r-1)} + |\bar \xi_2|^{(\lambda, r-1)}}{1 - |\bar \xi_2|^{(\lambda)}}\Biggr)^{B_r}
        \Bigl(1 + |\bar \xi_1|^{(\lambda, r)} + |\bar \xi_2|^{(\lambda, r)}\Bigr),
    \end{equation}
    where $\beta := (\lambda''/\lambda')^{1/r}$.
\end{proposition}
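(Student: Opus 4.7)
The plan is to rewrite the inversion equation $(\id+\Xi)\circ(\id+\Sigma)=\id$ as a fixed-point equation for the pair $(\bar\sigma_1,\bar\sigma_2)$, solve it by the contraction mapping principle, and then plug the result into Proposition~\ref{prop:tri comp}~(3) to obtain the quantitative bound \eqref{eq:bar xi vs sig}.

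Writing $\Sigma(x,y)=(y\bar\sigma_1,y\bar\sigma_2)$, $\Xi(x,y)=(y\bar\xi_1,y\bar\xi_2)$ and dividing the inversion equation through by $y$ gives the componentwise system $\bar\sigma_1+(1+\bar\sigma_2)\,\bar\xi_1\circ(\id+\Sigma)=0$ and $\bar\sigma_2+(1+\bar\sigma_2)\,\bar\xi_2\circ(\id+\Sigma)=0$. Condition \eqref{eq:tri inv} forces $|\bar\xi_2|^{(\lambda)}<1$, so the second equation can be solved algebraically for $1+\bar\sigma_2$, reducing the problem to the symmetric system
\begin{equation*}
    \bar\sigma_j \;=\; v_j \circ (\id + \Sigma), \qquad
    v_1 := -\frac{\bar\xi_1}{1+\bar\xi_2}, \qquad
    v_2 := -\frac{\bar\xi_2}{1+\bar\xi_2},
\end{equation*}
which is precisely in the form of Proposition~\ref{prop:tri comp}~(3). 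Both $v_j$ are analytic on $\CD_\lambda$ since $|1+\bar\xi_2|\ge 1-|\bar\xi_2|^{(\lambda)}>0$ there.

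Next I would set up a contraction on the closed ball $B$ of pairs of analytic functions on $\CD_{\lambda'}$ with $|\bar\sigma_j|^{(\lambda')}\le |\bar\xi_j|^{(\lambda)}/(1-|\bar\xi_2|^{(\lambda)})$. For $(\bar\sigma_1,\bar\sigma_2)\in B$ one has $1+|\bar\sigma_2|^{(\lambda')}\le 1/(1-|\bar\xi_2|^{(\lambda)})$, so that the left-hand side of the composition condition \eqref{eq:tri o cond} is bounded by
\begin{equation*}
    |\bar\sigma_1|^{(\lambda')}+\frac{1+|\bar\sigma_2|^{(\lambda')}}{\lambda}
    \;\le\; \frac{1}{1-|\bar\xi_2|^{(\lambda)}}\Bigl(|\bar\xi_1|^{(\lambda)}+\frac{1}{\lambda}\Bigr),
\end{equation*}
and \eqref{eq:tri inv} is precisely tailored so that this is $<1/\lambda'$ with enough slack to spare. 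Thus Proposition~\ref{prop:tri comp}~(1) implies $(\id+\Sigma)(\CD_{\lambda'})\subset\CD_\lambda$, so $T(\bar\sigma_1,\bar\sigma_2):=(v_1\circ(\id+\Sigma),v_2\circ(\id+\Sigma))$ is well defined and self-mapping on $B$. The contraction estimate follows from the mean value theorem and a Cauchy estimate for $\nabla v_j$ applied on an intermediate domain strictly inside $\CD_\lambda$; the extra factor $(1-|\bar\xi_2|^{(\lambda)})$ in the numerator of \eqref{eq:tri inv} (compared to what self-mapping alone would require) is exactly what supplies the room needed for the Lipschitz constant of $T$ to be strictly less than one. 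The fixed point of $T$ yields analytic functions $\bar\sigma_j$ solving \eqref{eq:tri right inv}. For \eqref{eq:tri left inv}, Cauchy's estimate shows that the Jacobian of $\id+\Xi$ on $\CD_{\lambda''}$ is close to the identity (hence invertible), so $\id+\Xi$ is a local biholomorphism; its right inverse $\id+\Sigma$ must coincide with the two-sided local inverse on any subdomain where $(\id+\Xi)$ maps into $\CD_{\lambda'}$.

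Finally, the estimate \eqref{eq:bar xi vs sig} is obtained by applying Proposition~\ref{prop:tri comp}~(3) to the system $\bar\sigma_j=v_j\circ(\id+\Sigma)$. The bound \eqref{eq:tri bootstrap} gives $|\bar\sigma_j|^{(\lambda'',r)}$ in terms of $|v_j|^{(\lambda,r)}$ and $|v_1|^{(\lambda,r-1)}+|v_2|^{(\lambda,r-1)}$, so it remains to control the $C^r$ norms of the rational functions $v_j$. Using that the denominator $1+\bar\xi_2$ is bounded below by $1-|\bar\xi_2|^{(\lambda)}$ together with the Banach-algebra property of the analytic sup-norm, one gets
\begin{equation*}
    |v_j|^{(\lambda,r)}
    \;\lesssim_r\; \Bigl(1-|\bar\xi_2|^{(\lambda)}\Bigr)^{-(r+1)}
    \Bigl(1+|\bar\xi_1|^{(\lambda,r)}+|\bar\xi_2|^{(\lambda,r)}\Bigr),
\end{equation*}
and substitution yields \eqref{eq:bar xi vs sig} with adjusted combinatorial constants $A_r,B_r,C_r$. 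The main obstacle in the whole argument is the bookkeeping of domain inclusions: one must simultaneously keep $B$ large enough to catch the fixed point, small enough that $(\id+\Sigma)$ lands inside $\CD_\lambda$, and keep enough margin so that Cauchy estimates on an intermediate domain give a contractive Lipschitz constant — all with constants that match the precise form of \eqref{eq:tri inv}. Once the calibration is done, the quantitative estimate is a direct application of the previously established composition estimate.
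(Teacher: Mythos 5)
Your overall plan is sound, and notably the key algebraic step — eliminating the factor $(1+\bar\sigma_2)$ to rewrite the inversion as $\bar\sigma_j = v_j\circ(\id+\Sigma)$ with $v_j = -\bar\xi_j/(1+\bar\xi_2)$ — is exactly what the paper uses when applying Proposition~\ref{prop:tri comp}(3) to prove the quantitative estimate \eqref{eq:bar xi vs sig}. So the final step of your argument coincides with the paper's. Where you diverge is in the \emph{existence} of the inverse: you propose to find the fixed point of $T(\bar\sigma) = (v_1\circ(\id+\Sigma),\, v_2\circ(\id+\Sigma))$ by Picard iteration on a ball of radius $|\bar\xi_j|^{(\lambda)}/(1-|\bar\xi_2|^{(\lambda)})$, whereas the paper runs a geometric argument: it first chooses an intermediate $\lambda_*$ so that $\lambda' \le (1-b)/(\frac{1}{\lambda_*}+a)$ and $\lambda_* < (1-b)/(\frac{1}{\lambda}+a)$ (this is where \eqref{eq:tri inv} enters — it is exactly the solvability of this system), shows $\Xi$ is contracting for the \emph{anisotropic norm} $\lambda_*|x|+|y|$ on $\CD_{\lambda_*}$ (hence $\id+\Xi$ is injective there), checks the Jacobian is nondegenerate, and then uses a homotopy/degree argument $\id+t\Xi$ on the boundary $\partial\CD_{\lambda_*}$ to show $\CD_{\lambda'}\subset(\id+\Xi)(\CD_{\lambda_*})$. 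Your self-mapping check — that $|\bar\sigma_1|^{(\lambda')} + (1+|\bar\sigma_2|^{(\lambda')})/\lambda \le \frac{1}{1-b}(a+\frac{1}{\lambda}) < 1/\lambda'$ under \eqref{eq:tri inv} — is correct, and the fixed-point approach is a legitimate alternative that trades the degree argument for a contraction estimate.

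There are, however, two spots where your write-up has genuine gaps. First, you assert that the extra factor in \eqref{eq:tri inv} is "exactly what supplies the room needed for the Lipschitz constant of $T$ to be strictly less than one," but you never compute that constant. The delicate part is that the Lipschitz estimate involves $\sup|\nabla v_j|$ on the intermediate image domain, which via Cauchy estimates on the triangular domain $\CD_\lambda$ produces a factor of order $\lambda\lambda'/(\lambda-\lambda')$ (cf.\ Lemma~\ref{lem:tri cauchy}); you must show this is $<1$ under \eqref{eq:tri inv}, and this calibration is not obviously automatic — it is the analogue of what the paper's anisotropic contraction argument establishes explicitly. Second, for \eqref{eq:tri left inv} you invoke only \emph{local} injectivity of $\id+\Xi$ from the nondegenerate Jacobian, but this is not sufficient: to conclude that the right inverse $\id+\Sigma$ is also a left inverse, you need $\id+\Xi$ to be injective on a domain containing both $p$ and $(\id+\Sigma)\circ(\id+\Xi)(p)$. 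The paper gets global injectivity for free from its anisotropic contraction on $\CD_{\lambda_*}$. In your framework you would need an extra step — for instance, showing the fixed-point set of $H := (\id+\Sigma)\circ(\id+\Xi)$ is nonempty (it contains $\{y=0\}$), closed, and open (using the local inverse from the IFT at each fixed point), and then appealing to connectivity of $\CD_{\lambda''}$.
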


\begin{proof}
Set $a:=|\bar \xi_1|^{(\lambda)}$ and $b:=|\bar \xi_2|^{(\lambda)}$ (so $b<1$ is implicit in \eqref{eq:tri inv}). From \eqref{eq:tri inv} one checks there exists $\lambda_*>0$ such that
\begin{equation}\label{eq:tri o lam'}
    \lambda' \;\le\; \frac{1-b}{\frac{1}{\lambda_*}+a}
    \qquad\text{and}\qquad
    \lambda_* \;<\; \frac{1-b}{\frac{1}{\lambda}+a}.
\end{equation}
(Indeed, the first inequality is equivalent to $\lambda_* \ge ((1-b)/\lambda' - a)^{-1}$, and the second one gives an upper bound; \eqref{eq:tri inv} is precisely the condition that the interval is nonempty.)

Equip $\C^2$ with the anisotropic norm $\|(x,y)\|^{(\lambda_*)}:=\lambda_*|x|+|y|$. For $(x,y),(x',y')\in\CD_{\lambda_*}$, Lemma~\ref{lem:tri cauchy} (with $(\lambda',\lambda)\leftarrow (\lambda_*,\lambda)$) and the mean value theorem give, for $j=1,2$,
\begin{equation*}
\begin{aligned}
    |\xi_j(x,y) - \xi_j(x',y')| 
    \leq& |\xi_j(x,y) - \xi_j(x,y')| + |\xi_j(x,y') - \xi_j(x',y')| \\
    \leq& |y - y'|\verts{\frac{\partial \xi_j}{\partial y}}^{(\lambda_*)}  + |x - x'|\verts{\frac{\partial \xi_j}{\partial x}}^{(\lambda_*)} \\
    \leq& |y - y'|\frac{\lambda}{\lambda - \lambda_*} |\bar \xi_j|^{(\lambda)}  + |x - x'|\frac{\lambda \lambda_*}{\lambda - \lambda_*} |\bar \xi_j|^{(\lambda)} \\
    =& \frac{\lambda}{\lambda - \lambda_*} |\bar \xi_j|^{(\lambda)}  \|(x - x', y - y')\|^{(\lambda_*)}.
\end{aligned}
\end{equation*}
Hence
\begin{equation*}
    \begin{aligned}
        \|\Xi(x,y) - \Xi(x',y')\|^{(\lambda_*)} 
        =& \lambda_* |\xi_1(x,y) - \xi_1(x',y')|  +  |\xi_2(x,y) - \xi_2(x',y')| \\
        \leq & \frac{\lambda}{\lambda-\lambda_*}\,\bigl(\lambda_* a + b\bigr)\,\|(x-x',y-y')\|^{(\lambda_*)}
    \end{aligned}
    \end{equation*}
By the second inequality in \eqref{eq:tri o lam'}, $\lambda_* a + b < 1 - \lambda_*/\lambda$, so the last line is $<\|(x-x',y-y')\|^{(\lambda_*)}$. Thus $\Xi$ is a contraction on $\CD_{\lambda_*}$ for the distance induced by $\|\cdot\|^{(\lambda_*)}$ and, in particular, $\id+\Xi$ is injective there. 
 Moreover, since the determinant of the Jacobian matrix of $\id + \Xi$ at $(x,y)$ writes 
    \begin{equation*}
        \det (d_{(x,y)} (\id + \Xi)) = \Big(1 + \frac{\partial \xi_1}{\partial x}(x,y) \Big)\Big(1 + \frac{\partial \xi_2}{\partial y} (x,y)\Big) - \frac{\partial \xi_1}{\partial y}(x,y) \frac{\partial \xi_2}{\partial x}(x,y),
    \end{equation*}
    by \eqref{eq:tri cauchy ord1} again,
    we have for $(x,y) \in \CD_{ \lambda_*}$ that 
    \begin{equation*}
        \begin{aligned}
            |\det (d_{(x,y)} (\id + \Xi))| 
            \geq& \Big(1 - \verts{\frac{\partial \xi_1}{\partial x} }_{ \lambda_*}\Big)\Big(1 - \verts{\frac{\partial \xi_2}{\partial y}}_{ \lambda_*} \Big) - \verts{\frac{\partial \xi_1}{\partial y} \frac{\partial \xi_2}{\partial x}}_{ \lambda_*}\\
            \geq& \Big(1 -  \frac{\lambda \lambda_*}{\lambda - \lambda_*} |\bar \xi_1|_{ \lambda}\Big)\Big(1 - \frac{\lambda}{\lambda - \lambda_*} |\bar \xi_2|_{ \lambda}\Big) - \frac{\lambda}{\lambda - \lambda_*} |\bar \xi_1|_{ \lambda} \frac{\lambda \lambda_*}{\lambda - \lambda_*} |\bar \xi_2|_{ \lambda}\\
            =& 1 - \frac{\lambda}{\lambda - \lambda_*} (\lambda_* |\bar \xi_1|_{ \lambda} + |\bar \xi_2|_{ \lambda}) > 0.
        \end{aligned}
    \end{equation*}
    Hence, $\id + \Xi$ is a diffeomorphism from $\CD_{ \lambda_*}$ onto its image.

Next, we show $\CD_{\lambda'} \subset (\id+\Xi)(\CD_{\lambda_*})$. It suffices to check that $(\id+\Xi)$ maps the boundary $\partial\CD_{\lambda_*}$ outside the interior of $\CD_{\lambda'}$ and then appeal to degree/invariance-of-domain for the injective homotopy $\id+t\Xi$, $t\in[0,1]$. Let $(x,y)\in\partial\CD_{\lambda_*}$, so $|\Im x|+|y|/\lambda_*=1$. Using $|\xi_1|\le |y|\,a$ and $|\xi_2|\le |y|\,b$, we estimate
\begin{equation*}
\begin{aligned}
  |\Im(x+\xi_1)| + \frac{|y+\xi_2|}{\lambda'}
  &\ge |\Im x| + \frac{|y|}{\lambda'} - |y|\,a - \frac{|y|}{\lambda'}\,b \\
  &\ge |\Im x| + \frac{|y|}{\lambda_*}\,\Bigl(\frac{\lambda_*}{\lambda'} - \lambda_* a - \frac{\lambda_*}{\lambda'}\,b\Bigr).
\end{aligned}
\end{equation*}
Writing $s:=|\Im x|$ and $t:=|y|/\lambda_*$ (so $s+t=1$) gives
\begin{equation*}
  s + (1-s)\,\lambda_*\Bigl(\frac{1}{\lambda'} - a - \frac{b}{\lambda'}\Bigr)
  \;\ge\; \min\!\Bigl\{1,\, \lambda_*\Bigl(\frac{1}{\lambda'} - a - \frac{b}{\lambda'}\Bigr)\Bigr\}.
\end{equation*}
By the first inequality in \eqref{eq:tri o lam'}, $\lambda_*\bigl(\frac{1}{\lambda'} - a - \frac{b}{\lambda'}\bigr)\ge 1$, hence the minimum is $1$. Thus $(\id+\Xi)(\partial\CD_{\lambda_*})$ lies outside the interior of $\CD_{\lambda'}$, and consequently $\CD_{\lambda'}\subset (\id+\Xi)(\CD_{\lambda_*})$. 
Since $\id+\Xi$ is injective on $\CD_{\lambda_*}$, we can define $\id+\Sigma: \CD_{\lambda'} \to \CD_{\lambda_*}$ satisfying \eqref{eq:tri right inv}.
By construction, $\Sigma=(\sigma_1,\sigma_2)$ satisfies $\sigma_j(x,0)=0$, so there are analytic $\bar \sigma_j$ with $\sigma_j(x,y)=y\,\bar \sigma_j(x,y)$.

Suppose $0 < \lambda'' < \lambda'$ such that $(\id + \Xi)(\CD_{\lambda''}) \subset \CD_{\lambda'}$.
Then restricting \eqref{eq:tri right inv} to $(\id + \Xi)(\CD_{\lambda''})$ yields $(\id + \Xi) \circ (\id + \Sigma) \circ (\id + \Xi) = (\id + \Xi)$ on $\CD_{\lambda''}$.
Since $(\id + \Sigma) \circ (\id + \Xi)(\CD_{\lambda''}) \subset \CD_{\lambda_*}$ by construction of $\Sigma$ and $(\id + \Xi)$ injective on $\CD_{\lambda_*}$, we obtain \eqref{eq:tri left inv}.

It remains to prove \eqref{eq:bar xi vs sig}. From $(\id+\Xi)\circ(\id+\Sigma)=\id$ we get $\Sigma + \Xi\circ(\id+\Sigma)=0$, which yields
\begin{equation*}
  \bar \sigma_j \;=\; -\,\frac{\bar \xi_j\circ(\id+\Sigma)}{1+\bar \xi_2\circ(\id+\Sigma)}\qquad(j=1,2).
\end{equation*}
Define $v_j(x,y):= -\bar \xi_j(x,y)/(1+\bar \xi_2(x,y))$. By repeated differentiation (quotient rule) one obtains, for each $r\ge0$,
\begin{equation*}
  |v_1|^{(\lambda,r)} + |v_2|^{(\lambda,r)}
  \;\le\; D_r\,
  \Biggl(\frac{1 + |\bar \xi_1|^{(\lambda,r-1)} + |\bar \xi_2|^{(\lambda,r-1)}}{1-|\bar \xi_2|^{(\lambda)}}\Biggr)^{r+1}
  \Bigl(|\bar \xi_1|^{(\lambda,r)} + |\bar \xi_2|^{(\lambda,r)}\Bigr)
\end{equation*}
for some constant $D_r>0$ depending only on $r$. Since $\bar \sigma_j = v_j\circ(\id+\Sigma)$, Proposition~\ref{prop:tri comp}(3) gives, for $\lambda''\in(0,\lambda')$ and $\beta=(\lambda''/\lambda')^{1/r}$,
\begin{equation*}
  1 + |\bar \sigma_1|^{(\lambda'',r)} + |\bar \sigma_2|^{(\lambda'',r)}
  \;\le\; C_r \Bigl(\frac{r+\lambda'}{1-\beta}\Bigr)^{A_r}
  \Bigl(1 + |v_1|^{(\lambda,r-1)} + |v_2|^{(\lambda,r-1)}\Bigr)^{B_r}
  \Bigl(1 + |v_1|^{(\lambda,r)} + |v_2|^{(\lambda,r)}\Bigr).
\end{equation*}
Substituting the bounds for $v_j$ and enlarging constants completes the proof of \eqref{eq:bar xi vs sig}.
\end{proof}

\section{A characterization of periodic orbits.}\label{sec:near int maps}

Let $\omega \in \R$ and $a,b>0$. 
Consider an analytic map
\begin{equation}\label{eq:near integrable maps def}
    F:\binom{x}{y}\mapsto 
    \begin{pmatrix}
        x + \omega + y + f(x,y)\\
        y + g(x,y)
    \end{pmatrix},
    \qquad f,g:\T_a\times \D_b\to\C.
\end{equation}

For iterates we write
\begin{equation}\label{eq:Fj-def}
  F^j(x,y) \;=\; \begin{pmatrix}
        x + j\omega + j y + f_j(x,y)\\
        y + g_j(x,y)
  \end{pmatrix},
\end{equation}
whenever the right-hand side is defined. From \eqref{eq:near integrable maps def},
\begin{equation}\label{eq:j comp ind}
    f_0=g_0=0,\quad f_1=f,\quad g_1=g,\qquad
    \begin{cases}
        f_{j+1} = f_j + g_j + f\circ F^j,\\
        g_{j+1} = g_j + g\circ F^j.
    \end{cases}
\end{equation}
The next lemma gives a domain of definition and sup-norm bounds for $f_j,g_j$.

\begin{lemma}\label{lem:composability}
Let $f,g$ be analytic on $\T_a\times \D_b$ and let $f_j,g_j$ be as in \eqref{eq:Fj-def}. 
Fix $a'<a$, $b'<b$ and $q \in \Z_{>0}$. If
\begin{equation}\label{eq:dom def composed residue conditions}
  \begin{aligned}
    a' \;+\; q\,b' \;+\; q\,|f|_{a,b} \;+\; \frac{q(q-1)}{2}\,|g|_{a,b} \;&\le a,\\
    b' \;+\; q\,|g|_{a,b} \;&\le b,
  \end{aligned}
\end{equation}
then, for $0 \le j \le q$, the functions $f_j,g_j$ are well-defined on $\T_{a'}\times \D_{b'}$ and
\begin{equation}\label{eq:composed residue norms}
  |g_j|_{a',b'} \le j\,|g|_{a,b},\qquad
  |f_j|_{a',b'} \le j\,|f|_{a,b} + \frac{j(j-1)}{2}\,|g|_{a,b}.
\end{equation}
\end{lemma}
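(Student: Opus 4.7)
The plan is to prove both assertions simultaneously by induction on $j$: for each $0 \le j \le q$, I will establish that $F^j$ is a well-defined analytic map on $\T_{a'}\times \D_{b'}$ into $\T_a\times \D_b$ (so that $f\circ F^j$ and $g\circ F^j$ make sense with sup-norm bounded by $|f|_{a,b}$ and $|g|_{a,b}$ respectively), and that the residues $f_j, g_j$ satisfy \eqref{eq:composed residue norms}. The base case $j=0$ is immediate from $f_0 = g_0 = 0$ and $F^0 = \id$, together with the trivial inclusion $\T_{a'}\times\D_{b'}\subset \T_a\times\D_b$.

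For the inductive step, suppose the statement holds for some $j$ with $0\le j\le q-1$. Using the representation \eqref{eq:Fj-def} and the fact that $\omega\in\R$, for $(x,y)\in\T_{a'}\times\D_{b'}$ the image $F^j(x,y)$ has first-coordinate imaginary part bounded by
\begin{equation*}
    a' + j\,b' + |f_j|_{a',b'} \;\le\; a' + j\,b' + j|f|_{a,b} + \frac{j(j-1)}{2}|g|_{a,b} \;\le\; a,
\end{equation*}
where the last inequality uses the first line of \eqref{eq:dom def composed residue conditions} and the monotonicity of the left-hand side in $j$ up to $q$. Similarly, the second coordinate satisfies $|y+g_j(x,y)|\le b' + j|g|_{a,b}\le b$ by the second line of \eqref{eq:dom def composed residue conditions}. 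Hence $F^j(\T_{a'}\times\D_{b'})\subset \T_a\times\D_b$ and $f\circ F^j$, $g\circ F^j$ are well-defined on $\T_{a'}\times\D_{b'}$ with sup-norms at most $|f|_{a,b}$ and $|g|_{a,b}$. The recursion \eqref{eq:j comp ind} then yields
\begin{equation*}
    |g_{j+1}|_{a',b'} \;\le\; |g_j|_{a',b'} + |g|_{a,b} \;\le\; (j+1)|g|_{a,b},
\end{equation*}
and
\begin{equation*}
    |f_{j+1}|_{a',b'} \;\le\; |f_j|_{a',b'} + |g_j|_{a',b'} + |f|_{a,b} \;\le\; (j+1)|f|_{a,b} + \frac{j(j+1)}{2}|g|_{a,b},
\end{equation*}
which closes the induction.

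I do not anticipate any genuine obstacle: the bounds in \eqref{eq:dom def composed residue conditions} are designed to absorb precisely the worst-case growth predicted by \eqref{eq:composed residue norms} at step $j=q$, with the coefficient $\frac{q(q-1)}{2}$ reflecting the triangular contribution coming from summing $|g_j|_{a',b'}\le j|g|_{a,b}$ into the recursion for $f_{j+1}$. The only point requiring minor care is that $\omega$ is real, so the term $j\omega$ in the first coordinate of $F^j$ contributes nothing to the imaginary-part estimate and therefore does not appear in the hypothesis \eqref{eq:dom def composed residue conditions}; this explains the asymmetry between the two lines of that hypothesis.
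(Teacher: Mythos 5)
Your proof is correct and follows essentially the same inductive argument as the paper: verify $F^j(\T_{a'}\times\D_{b'})\subset\T_a\times\D_b$ using the inductive bounds on $|f_j|_{a',b'}$ and $|g_j|_{a',b'}$ together with conditions~\eqref{eq:dom def composed residue conditions}, then close the recursion~\eqref{eq:j comp ind} by a triangle inequality. Your closing remark on the role of $\omega\in\R$ is a correct observation, consistent with the paper's standing assumption in Appendix~\ref{sec:near int maps}.
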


\begin{proof}
Induction on $j$. The case $j=0$ is trivial since $F^0=\id$.  
Assume $0\le j<q$ and $(x_j,y_j):=F^j(x,y)$ is defined for every $(x,y) \in \T_{a'}\times\D_{b'}$ with \eqref{eq:composed residue norms}. 
Then
\begin{equation*}
  |y_j| \le b' + j\,|g|_{a,b},\qquad
  |\Im x_j|
  \le a' + j\,b' + j\,|f|_{a,b} + \frac{j(j-1)}{2}\,|g|_{a,b}.
\end{equation*}
By \eqref{eq:dom def composed residue conditions}, $|y_j|<b$ and $|\Im x_j|<a$, so the compositions in \eqref{eq:j comp ind} are well-defined; hence $f_{j+1}$ and $g_{j+1}$ are well-defined there. 
Taking suprema in \eqref{eq:j comp ind} and using $|f\circ F^j|_{a',b'}\le |f|_{a,b},$ and $|g\circ F^j|_{a',b'}\le |g|_{a,b}$, we obtain 
\begin{gather*} 
    |g_{j+1}|_{a',b'} \le |g_j|_{a',b'} + |g|_{a,b} \le (j+1)\,|g|_{a,b},\\ 
    |f_{j+1}|_{a',b'} \le |f_j|_{a',b'} + |g_j|_{a',b'} + |f|_{a,b} \le (j+1)\,|f|_{a,b} + \frac{j(j+1)}{2}\,|g|_{a,b}, 
\end{gather*} 
which is exactly \eqref{eq:composed residue norms} with $j$ replaced by $j+1$. This completes the induction
\end{proof}
From now on, set $$\omega = \frac{p}{q}$$ for some coprime $(p, q)$ with $q>0$, and assume $|f|_{a,b}$, $|g|_{a,b}$ are small enough that Lemma~\ref{lem:composability} applies for some $a'<a$, $b'<b$.  
In particular, $f_q$ and $g_q$ are defined on $\T_{a'}\times\D_{b'}$ via \eqref{eq:j comp ind}.
By \eqref{eq:Fj-def}, a point $(x,y)$ belongs to a $(p,q)$–periodic orbit of $F$ iff
\begin{equation*}
    \begin{cases}
        qy + f_q(x,y) = 0,\\
        g_q(x,y) = 0
    \end{cases}
\end{equation*}
for some lift of $f_q$ to the universal cover.
To solve the first equation we look for $\gamma:\T_{a'}\to\C$ such that 
\begin{equation*}
    \gamma(x) = -\frac{1}{q}\, f_q(x, \gamma(x)).
\end{equation*}
Then it suffices to find $x_0$ with
\begin{equation*}
    g_q(x_0,\gamma(x_0)) = 0.
\end{equation*}
The point $(x_0, \gamma(x_0))$ will be a $(p,q)$-orbit under $F$.
Let us formalize this argument  with quantitative details in the next lemma.

\begin{lemma}\label{lem:gam}
Consider a map $F$ of the form \eqref{eq:near integrable maps def} with $\omega = p/q$ and suppose, for some $a' < a$, that
\begin{equation}\label{eq:existence of gamma cond}
   b + |f|_{a,b} \le \frac{a- a'}{q}
   \quad \text{and}\quad 
   |f|_{a,b} + \frac{3q-1}{2}\,|g|_{a,b} < b.
\end{equation}
Put $b' := b - q|g|_{a,b}$ and $b'' = |f|_{a,b} + \frac{q-1}{2}|g|_{a,b}$.
Then all $(p,q)$-periodic orbits of $F|_{\T_{a'} \times \D_{b'}}$ lie in the strip $\T_{a'} \times \D_{b''}$.
More precisely, there exist analytic functions $\gamma,\Gamma:\T_{a'}\to\C$ depending analytically on $f$ and $g$ with
\begin{equation}\label{eq:gam sup}
    |\gamma|_{a'} \le  |f|_{a,b} + \frac{q-1}{2}|g|_{a,b},
    \qquad 
    |\Gamma|_{a'} \le q\,|g|_{a,b}
\end{equation}
such that the following holds.
For $(x,y)\in\T_{a'}\times\D_{b'}$, the point $(x,y)$ is a $(p,q)$–periodic orbit of $F$ iff
\begin{equation*}
  y=\gamma(x)\quad\text{and}\quad \Gamma(x)=0.
\end{equation*}
Moreover, with $\bar a = 2 q|f|_{a,b} + q(q-1)|g|_{a,b}$ and $\bar b = |f|_{a,b} + (3q-1)|g|_{a,b}/2$, we have 
\begin{equation}\label{eq:Gam approx}
    \verts{\Gamma - \sum_{k = 0}^{q-1} g\Big( \cdot + \frac{k}{q}, 0 \Big)}_{a'} \leq q \Big(\frac{\bar a}{a - a' - \bar a} + \frac{\bar b}{b - \bar b} \Big) |g|_{a,b} 
\end{equation}
and
\begin{equation}\label{eq:lyap bnd}
  \Bigl|\det\!\bigl(d_{(x,\gamma(x))} F^q - \id \bigr) + q\,\dot\Gamma(x)\Bigr|
  \leq \frac{\bar a}{2} \frac{|\dot \Gamma(x)|}{b' - |\gamma(x)|}.
\end{equation}
\end{lemma}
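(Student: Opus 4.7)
The plan is to convert the periodic-orbit equations $F^q(x,y) = (x+p, y)$ into two coupled scalar equations — $qy + f_q(x,y) = 0$ from the first coordinate (after passing to a suitable lift) and $g_q(x,y) = 0$ from the second — then solve the first for $y = \gamma(x)$ via the contraction mapping principle and reduce the search for periodic points to zeros of $\Gamma(x) := g_q(x, \gamma(x))$. Lemma~\ref{lem:composability} applied with $(a', b')$ as in the statement delivers $|f_q|_{a', b'} \leq qb''$ and $|g_q|_{a', b'} \leq q|g|_{a,b}$; the first of these already forces every $(p,q)$-orbit in $\T_{a'} \times \D_{b'}$ to satisfy $|y| \leq b''$, because $qy = -f_q(x,y)$ on such an orbit. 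The map $T: \gamma \mapsto -(1/q) f_q(\cdot, \gamma(\cdot))$ sends the closed ball $\{|\gamma|_{a'} \leq b''\}$ into itself (by reapplying Lemma~\ref{lem:composability} on $\T_{a'} \times \D_{b''}$), and becomes a contraction once the second inequality of \eqref{eq:existence of gamma cond} is invoked through Cauchy's estimate on $\partial_y f_q$ between the annuli $\D_{b'}$ and $\D_{b''}$. The unique fixed point $\gamma$ together with $\Gamma := g_q(\cdot, \gamma)$ then satisfies \eqref{eq:gam sup}, and depends analytically on $(f, g)$ through the parametric form of Banach's theorem.

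To prove the approximation \eqref{eq:Gam approx}, the plan is to use the telescoping recursion from \eqref{eq:j comp ind} to write $g_q = \sum_{k=0}^{q-1} g \circ F^k$ and then compare each summand $g(F^k(x, \gamma(x)))$ to $g(x + kp/q, 0)$. At $(x, \gamma(x))$, the $k$-th iterate is displaced from $(x + kp/q, 0)$ by at most $\bar a$ in the first coordinate and $\bar b$ in the second (using $|\gamma|_{a'} \leq b''$ together with the bounds on $f_k, g_k$ from Lemma~\ref{lem:composability}). Since $(p,q)$ is coprime, the set $\{kp/q \bmod 1 : 0 \leq k < q\}$ equals $\{k/q : 0 \leq k < q\}$, so after reindexing the sum I would compare directly with $\sum_k g(x + k/q, 0)$. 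The mean value theorem together with Cauchy's estimate applied to $g$ on the shrunk domain $\T_{a - \bar a} \times \D_{b - \bar b}$ bounds each of the $q$ summands by $(\bar a/(a - a' - \bar a) + \bar b/(b - \bar b))|g|_{a,b}$, yielding \eqref{eq:Gam approx}.

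For \eqref{eq:lyap bnd}, the key is the algebraic identity $\det(d_{(x, \gamma(x))} F^q - \id) + q \dot\Gamma(x) = -\partial_y f_q(x, \gamma(x)) \cdot \dot\Gamma(x)$. Differentiating the defining relation $q\gamma + f_q(\cdot, \gamma) = 0$ gives $\dot\gamma = -\partial_x f_q/(q + \partial_y f_q)$; substituting into $\dot\Gamma = \partial_x g_q + \partial_y g_q \cdot \dot\gamma$ and comparing with the direct expansion
\begin{equation*}
    \det\!\begin{pmatrix} \partial_x f_q & q + \partial_y f_q \\ \partial_x g_q & \partial_y g_q \end{pmatrix} = \partial_x f_q \partial_y g_q - (q + \partial_y f_q)\partial_x g_q
\end{equation*}
yields the identity after a one-line rearrangement. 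A Cauchy estimate $|\partial_y f_q(x, \gamma(x))| \leq |f_q|_{a', b'}/(b' - |\gamma(x)|) \leq (\bar a/2)/(b' - |\gamma(x)|)$ then delivers \eqref{eq:lyap bnd}.

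The main obstacle is the second step: one must track how the cumulative displacements $\bar a$ and $\bar b$ accumulate through the iterates and verify that they remain within the analyticity strip of $g$ for all $0 \leq k \leq q-1$, which is precisely the role of the first inequality in \eqref{eq:existence of gamma cond}. The first step is a standard Banach fixed-point argument calibrated to the smallness hypotheses, and the third is essentially a two-line linear-algebra computation once $\dot\gamma$ is obtained by implicit differentiation.
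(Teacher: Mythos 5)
Your overall architecture matches the paper's: set up $f_q, g_q$ via Lemma~\ref{lem:composability}, solve $qy + f_q(x,y) = 0$ to obtain $\gamma$, set $\Gamma := g_q(\cdot,\gamma)$, prove \eqref{eq:Gam approx} by telescoping $g_q = \sum_{k<q} g\circ F^k$ with a displacement-plus-Cauchy comparison, and derive \eqref{eq:lyap bnd} from the algebraic identity $\det(d_{(x,\gamma(x))}F^q - \id) + q\dot\Gamma = -\partial_y f_q\cdot\dot\Gamma$ together with a Cauchy estimate on $\partial_y f_q$. Your derivation of that identity by implicit differentiation of $q\gamma + f_q(\cdot,\gamma)=0$ and direct expansion of the Jacobian determinant is a perfectly sound alternative to the paper's basis-change computation in the frame $\{(1,\dot\gamma),(0,1)\}$; they are algebraically equivalent.

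The genuine gap is in the construction of $\gamma$. You propose to run a Banach fixed-point argument on $T\colon \gamma\mapsto -\tfrac1q f_q(\cdot,\gamma)$ over the ball $\{|\gamma|_{a'}\le b''\}$, with the Lipschitz constant coming from Cauchy's estimate on $\partial_y f_q$ between $\D_{b''}$ and $\D_{b'}$. The self-map property is fine, but the contraction constant obtained this way is
\begin{equation*}
  \sup_{|y|\le b''}\Bigl|\tfrac1q\partial_y f_q(x,y)\Bigr| \;\le\; \frac{|f_q|_{a',b'}}{q\,(b'-b'')} \;\le\; \frac{b''}{b'-b''},
\end{equation*}
which is strictly less than $1$ only when $2b'' < b'$, i.e.\ $2|f|_{a,b} + (2q-1)|g|_{a,b} < b$. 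The second inequality of \eqref{eq:existence of gamma cond} is exactly $b'' < b'$ (one can check $b'-b'' = b - |f|_{a,b} - \tfrac{3q-1}{2}|g|_{a,b}$), which is strictly weaker: for example $g\equiv 0$ and $|f|_{a,b}=0.9b$ satisfies \eqref{eq:existence of gamma cond} but makes the contraction constant $9$. So under the stated hypotheses your map $T$ need not contract, and the fixed-point argument does not go through. The paper sidesteps this by applying Rouch\'e's theorem to $z\mapsto z+\tfrac1q f_q(x,z)$ on the circle $|z|=b'$, which only requires $|f_q|_{a',b'}/q \le b'' < b'$ --- precisely what the hypothesis delivers --- and still yields a unique simple zero $\gamma(x)\in\D_{b'}$ with the bound $|\gamma(x)|\le b''$ recovered a posteriori from $\gamma = -f_q(\cdot,\gamma)/q$. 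You should replace the contraction step with Rouch\'e (or strengthen the smallness hypothesis to $2b''<b'$, which the rest of the paper does not assume).
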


\begin{proof}
Define $b' = b - q|g|_{a,b}$ as in the statement. 
The second inequality of condition \eqref{eq:existence of gamma cond} imply $q|g|_{a,b}<b$, hence $b'>0$. Using the first inequality in \eqref{eq:existence of gamma cond} and definition of $b'$,
\begin{equation*}
a' + qb' + q|f|_{a,b} + \frac{q(q-1)}{2}|g|_{a,b} 
\leq a' + qb + q|f|_{a,b} \leq a,
\end{equation*}
so Lemma~\ref{lem:composability} applies, and $f_q,g_q$ are defined on $\T_{a'}\times\D_{b'}$ with
\begin{equation}\label{eq:fgq bounds}
    |f_q|_{a',b'} \le q|f|_{a,b} + \frac{q(q-1)}{2}|g|_{a,b},
    \qquad
    |g_q|_{a',b'} \le q|g|_{a,b}.
\end{equation}
By the second inequality in \eqref{eq:existence of gamma cond},
\begin{equation*}
 |f|_{a,b} + \frac{q-1}{2}|g|_{a,b}
 \;<\; b - q|g|_{a,b} \;=\; b',
\end{equation*}
so for each fixed $x\in\T_{a'}$, on $|z|=b'$ we have
\begin{equation*}
  \Bigl|\frac{1}{q}f_q(x,z)\Bigr|
  \;\le\; \frac{1}{q}|f_q|_{a',b'}
  \;<\; b' \;=\; |z|.
\end{equation*}
By Rouch\'e’s theorem, $z\mapsto z+\frac{1}{q}f_q(x,z)$ has exactly one zero in $\D_{b'}$, denoted $\gamma(x)$, depending analytically on $x$.  
The bound \eqref{eq:gam sup} on $|\gamma|_{a'}$ follows from $\gamma(x) = -f_q(x,\gamma(x))/q$ and  \eqref{eq:fgq bounds}.  
Define $\Gamma(x):=g_q(x,\gamma(x))$, so $|\Gamma|_{a'}\le |g_q|_{a',b'}\le q|g|_{a,b}$.

By construction, the lift of $F$ and $(x, \gamma(x))$ into the universal cover satisfies
\begin{equation}\label{eq:gam in F}
  F^q(x,\gamma(x)) = \bigl(x+p,\,\gamma(x)+\Gamma(x)\bigr),
\end{equation}
and uniqueness of the zero implies the first coordinate of $F^q(x,y')$ equals $x+p$ iff $y'=\gamma(x)$; thus $(x,y)$ is a $(p,q)$-orbit under $F$ iff $y=\gamma(x)$ and $\Gamma(x)=0$.

For \eqref{eq:Gam approx}, we have by \eqref{eq:j comp ind} that 
\begin{equation*}
    \Gamma(x) = g_q(x, \gamma(x)) = \sum_{j = 0}^{q-1} g \circ F^j(x, \gamma(x))
\end{equation*}
where, by \eqref{eq:Fj-def} with $\omega = p/q$,
\begin{equation*}
    g \circ F^j(x, \gamma(x)) = g(x + jp/q + j\gamma(x) + f_j(x,\gamma(x)), \gamma(x) + g_j(x,\gamma(x))).
\end{equation*}
Since $j \leq q$, the estimates \eqref{eq:gam sup} and \eqref{eq:composed residue norms} imply that
\begin{equation*}
    |j\gamma(x) + f_j(x,\gamma(x))| \leq \bar a; \qquad |\gamma(x) + g_j(x,\gamma(x))| \leq \bar b.
\end{equation*}
Hence, by the mean value theorem and Cauchy's estimate,
\begin{equation*}
    \begin{aligned}
        |g \circ F^j(x, \gamma(x)) - g(x + jp/q, 0)| 
        \leq  \verts{\frac{\partial g}{\partial x}}_{a' + \bar a, \bar b} \bar a + \verts{\frac{\partial g}{\partial y}}_{a' + \bar a, \bar b} \bar b
        \leq   \Big(\frac{\bar a}{a - a' - \bar a} + \frac{\bar b}{b - \bar b} \Big) |g|_{a,b}.
    \end{aligned}
\end{equation*}
Summing over $j$, we obtain \eqref{eq:Gam approx}.

For \eqref{eq:lyap bnd}, differentiate \eqref{eq:gam in F} to get
\begin{equation*}
  \frac{d}{dt}F^q(x+t,\gamma(x+t))\big|_{t=0}=(1,\dot\gamma(x)+\dot\Gamma(x)).
\end{equation*}
Also,
\begin{equation*} 
    \frac{\partial F^q}{\partial y}(x,\gamma(x)) = \Big(q + \frac{\partial f_q}{\partial y}(x, \gamma(x)), 1 + \frac{\partial g_q}{\partial y}(x, \gamma(x))\Big) 
\end{equation*}
In the basis $\{(1,\dot\gamma(x)),(0,1)\}$ the Jacobian is
\begin{equation*}
    d_{(x, \gamma(x))}F^q = \begin{pmatrix} 1 & q + \frac{\partial f_q}{\partial y}(x, \gamma(x)) \\ \dot\Gamma(x) & 1 + \frac{\partial g_q}{\partial y}(x, \gamma(x)) - \dot\gamma(x) \left( q + \frac{\partial f_q}{\partial y}(x, \gamma(x))\right) \end{pmatrix}.
\end{equation*}
A direct computation yields
\begin{equation*}
  \det\!\bigl(d_{(x,\gamma(x))}F^q-\id\bigr)+q\,\dot\Gamma(x)
  \;=\; -\,\dot\Gamma(x)\,\partial_y f_q(x,\gamma(x)).
\end{equation*}
Cauchy’s estimate and \eqref{eq:fgq bounds} give
\begin{equation*}
  |\partial_y f_q(x,\gamma(x))|
  \;\le\; \frac{|f_q|_{a',b'}}{\,b'-|\gamma(x)|\,}
  \;\le\; \frac{q|f|_{a,b}+\frac{q(q-1)}{2}|g|_{a,b}}{\,b'-|\gamma(x)|\,},
\end{equation*}
which implies \eqref{eq:lyap bnd}.
\end{proof}

\section{Properties of the Fourier-weighted norm}

Recall that for $u: \T_a \to \C$ analytic, the Fourier-weighted norm $\|u\|_a$ is defined to be 
\begin{equation*}
    \|u\|_a := \sum_{k \in \Z} |\hat u_k| \eu^{2\pi a |k|},
\end{equation*}
where $\hat u_k$ is the $k$th Fourier coefficient of $u$.
In this appendix, we collect some elementary properties of this norm used in the article.

\begin{lemma}\label{lem:banach-alg}
For fixed $a>0$, the space of analytic functions $f:\T_a \to \C$ with $\|f\|_a<\infty$ is a Banach algebra under the norm $\|\cdot\|_a$.
\end{lemma}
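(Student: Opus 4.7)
The plan is to identify the space $\CA_a := \{f : \T_a \to \C \text{ analytic}, \|f\|_a < \infty\}$ with the weighted $\ell^1$ space
\begin{equation*}
    \ell^1(\Z, w_a), \qquad w_a(k) = \eu^{2\pi a |k|},
\end{equation*}
via the Fourier-coefficient map $f \mapsto (\hat f_k)_{k \in \Z}$, and then verify the three requirements of a Banach algebra: it is a normed vector space, it is complete, and the norm is submultiplicative.

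First I would observe that $\|\cdot\|_a$ is a norm: positivity, homogeneity, and the triangle inequality are immediate from the corresponding properties of $\ell^1$. For completeness, I would take a Cauchy sequence $(f^{(n)})_n$ in $\CA_a$. The Fourier coefficients form a Cauchy sequence in the weighted $\ell^1$-space, which is complete, so there exists $(c_k)_{k \in \Z}$ with $\sum_k |c_k|\, \eu^{2\pi a |k|} < \infty$ and $\hat f^{(n)}_k \to c_k$ in the weighted $\ell^1$-norm. The series $f(x) := \sum_k c_k \eu^{2\pi \iu k x}$ then defines an analytic function on $\T_a$ (by absolute convergence of the weighted series), and by construction $\|f - f^{(n)}\|_a \to 0$.

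The key point is submultiplicativity. Given $f,g \in \CA_a$, the Fourier coefficients of the product are given by the convolution $\widehat{fg}_k = \sum_{j \in \Z} \hat f_j\, \hat g_{k-j}$, which converges absolutely since the coefficients of $f,g$ are even in $\ell^1$ (dropping the weight). Applying the triangle inequality $|k| \le |j| + |k-j|$ in the weight yields
\begin{equation*}
    \|fg\|_a \;=\; \sum_{k \in \Z} |\widehat{fg}_k|\, \eu^{2\pi a |k|}
    \;\le\; \sum_{k,j \in \Z} |\hat f_j|\, \eu^{2\pi a |j|}\, |\hat g_{k-j}|\, \eu^{2\pi a |k-j|}
    \;=\; \|f\|_a\, \|g\|_a,
\end{equation*}
where the final equality is Fubini together with the change of variables $m = k - j$. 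This proves $\|fg\|_a \le \|f\|_a\, \|g\|_a$ and, in particular, that $fg \in \CA_a$, so $\CA_a$ is closed under multiplication.

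I do not anticipate any serious obstacle: the result is a standard Wiener-type algebra argument, and the only slightly delicate step is justifying the interchange of summation in the submultiplicativity computation, which is immediate once one notes that the double sum of nonnegative terms factors as $\|f\|_a \|g\|_a < \infty$, so Tonelli applies.
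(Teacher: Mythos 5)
Your proof is correct and, on the decisive point (submultiplicativity via the convolution formula, the triangle inequality $|k|\le|j|+|k-j|$ applied to the weight, and Tonelli to justify the rearrangement), it is the same argument the paper gives. The paper in fact only writes out the submultiplicativity step and takes the norm axioms and completeness as read; your additional verification of completeness via the identification with weighted $\ell^1(\Z)$ is correct and fills that (routine) gap.
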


\begin{proof}
It suffices to show $\|fg\|_a \leq \|f\|_a \|g\|_a$.
Recall that the Fourier coefficient $\wh{fg}_n$ of the product function $fg$ can be given in terms of a convolution $\sum_{k \in \Z} \wh{f}_{n - k} \wh{g}_k$.
To prove the lemma, the key observation is the triangle inequality $|n| \le |n-k|+|k|$ and the following computation:
\begin{equation*}
\begin{aligned}
  \|fg\|_a
  &= \sum_{n\in\Z} |\wh{fg}_n|\, \eu^{2\pi a |n|}
   \le \sum_{n\in\Z}\sum_{k\in\Z} |\wh f_{\,n-k}|\,|\wh g_k|\, \eu^{2\pi a |n|} \\
  &\le \sum_{n\in\Z}\sum_{k\in\Z} |\wh f_{\,n-k}|\, \eu^{2\pi a |n-k|}\; |\wh g_k|\, \eu^{2\pi a |k|}
   = \sum_{k\in\Z} \Big(\sum_{n\in\Z} |\wh f_{\,n-k}|\, \eu^{2\pi a |n-k|}\Big)\, |\wh g_k|\, \eu^{2\pi a |k|} \\
  &\le \sum_{k\in\Z} \Big(\sum_{m\in\Z} |\wh f_m|\, \eu^{2\pi a |m|}\Big)\, |\wh g_k|\, \eu^{2\pi a |k|}
   = \|f\|_a \, \|g\|_a.
\end{aligned}
\end{equation*}
The interchange of sums is justified since all series involve non-negative terms and converge absolutely. This completes the proof of the lemma.
\end{proof}

\begin{lemma}\label{lem:fourier MVT}
    Let $\delta \in \R$. 
    For $u: \T_a \to \C$ analytic, define $\CD_\delta u(x):= u(x + \delta) - u(x)$.
    Then the function $\CD_\delta u$ satisfies
    \begin{align}
        \norm{\CD_\delta u}_a \leq& |\delta| \norm{\dot u}_a\\
        \norm{\CD_\delta u - \delta \dot u}_a  \leq& \frac{1}{2}\verts{\delta}^2 \norm{\ddot u}_a.
    \end{align}
\end{lemma}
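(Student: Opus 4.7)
My plan is to reduce both inequalities to pointwise bounds on the Fourier coefficients and then sum. The key observation is that translation by $\delta$ is diagonal in the Fourier basis: if $u(x)=\sum_k \wh u_k \eu^{2\pi \iu k x}$, then
\begin{equation*}
  \wh{\CD_\delta u}_k \;=\; \wh u_k \bigl(\eu^{2\pi \iu k \delta}-1\bigr),
  \qquad
  \wh{\dot u}_k \;=\; 2\pi \iu k\, \wh u_k,
  \qquad
  \wh{\ddot u}_k \;=\; -(2\pi k)^2\, \wh u_k.
\end{equation*}

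For the first inequality, I use the elementary bound $|\eu^{\iu \theta}-1|\leq |\theta|$ valid for all real $\theta$ (from $\eu^{\iu\theta}-1=\iu\int_0^\theta \eu^{\iu s}\,\du s$). Applied with $\theta = 2\pi k \delta$ this gives $|\eu^{2\pi \iu k \delta}-1|\leq 2\pi|k||\delta|$, hence
\begin{equation*}
  \|\CD_\delta u\|_a \;=\; \sum_{k\in\Z} |\wh u_k|\,|\eu^{2\pi \iu k \delta}-1|\,\eu^{2\pi a |k|}
  \;\leq\; |\delta|\sum_{k\in\Z} |\wh u_k|\,2\pi|k|\,\eu^{2\pi a |k|}
  \;=\; |\delta|\,\|\dot u\|_a.
\end{equation*}

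For the second inequality, note that $\wh{\CD_\delta u - \delta \dot u}_k = \wh u_k\bigl(\eu^{2\pi \iu k \delta}-1-2\pi \iu k \delta\bigr)$. The analogous Taylor remainder estimate $|\eu^{\iu\theta}-1-\iu\theta|\leq \theta^2/2$ for $\theta\in\R$ (which follows from writing $\eu^{\iu\theta}-1-\iu\theta = -\int_0^\theta (\theta-s)\eu^{\iu s}\,\du s$ and taking absolute values) yields $|\eu^{2\pi \iu k \delta}-1-2\pi \iu k \delta|\leq \frac{1}{2}(2\pi k)^2\delta^2$. Summing against $\eu^{2\pi a|k|}$ gives the desired bound
\begin{equation*}
  \|\CD_\delta u - \delta \dot u\|_a
  \;\leq\; \frac{\delta^2}{2}\sum_{k\in\Z} |\wh u_k|\,(2\pi k)^2\,\eu^{2\pi a |k|}
  \;=\; \frac{|\delta|^2}{2}\,\|\ddot u\|_a.
\end{equation*}

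There is no real obstacle here: the proof is essentially a routine transfer of the scalar inequalities $|\eu^{\iu\theta}-1|\leq|\theta|$ and $|\eu^{\iu\theta}-1-\iu\theta|\leq\theta^2/2$ to Fourier-weighted $\ell^1$ via the diagonal action of $\CD_\delta$ on Fourier modes. The only mild point to mention is convergence of the resulting series, which is automatic because $\|\dot u\|_a$ and $\|\ddot u\|_a$ bound those sums from above by hypothesis (if either is infinite the inequalities are trivial).
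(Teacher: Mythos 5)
Your proof is correct and follows essentially the same route as the paper: compute the Fourier coefficients of $\CD_\delta u$ and $\CD_\delta u - \delta\dot u$, apply the scalar bounds $|\eu^{\iu\theta}-1|\le|\theta|$ and $|\eu^{\iu\theta}-1-\iu\theta|\le\theta^2/2$ with $\theta=2\pi k\delta$, and sum against the weight $\eu^{2\pi a|k|}$. The only (harmless) addition is that you supply the integral-remainder justifications for the two scalar inequalities, which the paper simply quotes.
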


\begin{proof}
    Since $|\eu^{\iu \theta} - 1| \leq |\theta|$ for all $\theta \in \R$ and $\hat{\dot u}_k = 2\pi \iu k \hat u_k$, 
    \begin{equation*}
        \begin{aligned}
            \norm{\CD_\delta u}_a = &  \sum_{k \in \Z} \left|\left(e^{2 \pi \iu k \delta} - 1\right) \hat u_k \right| e^{2 \pi a |k|} 
            \leq  \sum_{k \in \Z} |\delta|\left|2 \pi k\hat u_k \right| e^{2 \pi a |k|} 
            = |\delta| \norm{\dot u}_a.
        \end{aligned}
    \end{equation*}
    On the other hand, since $|e^{\iu \theta} - 1 - \iu \theta| \leq \theta^2/2$ for all $\theta \in \R$ and $\hat{\ddot u}_k = (2 \pi \iu k)^2 \hat u_k$,
    \begin{equation*}
        \begin{aligned}
            \norm{\CD_\delta u - \delta \dot u}_a =&  \sum_{k \in \Z} \left|\left(e^{2 \pi \iu k \delta} - 1 - 2 \pi \iu k \delta \right) \hat u_k \right| e^{2 \pi a |k|}\\
            \leq& \sum_{k \in \Z} \frac{1}{2} \verts{\delta}^2 |2\pi k|^2  \left| \hat u_k \right| e^{2 \pi a |k|} 
            = \frac{1}{2}\verts{\delta}^2 \norm{\ddot u}_a. 
        \end{aligned}
    \end{equation*}
    This completes the proof of the lemma.
\end{proof}

\begin{lemma}[Fourier separation lemma]\label{lem:Fourier sep}
    Suppose  $M > N$ and $u, f : \T_a \to \C$ are analytic functions. Then we have
    \begin{equation}\label{eq:Fourier sep}
        \norm{\crochet{f \crochet{u}_q}_q^N}_a 
        \leq  \norm{f}_{a} \norm{\crochet{u}_q^{M} +  \Big(\frac{2N}{q}+1\Big) \eu^{-4 \pi a (M-N)}  \crochet{u}_q^{\geq M}}_{a}.
    \end{equation}
\end{lemma}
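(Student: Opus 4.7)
The plan is to split the inner factor into a low-mode piece and a high-mode piece and to handle them separately, relying on the Banach algebra estimate for the first and on a pointwise convolution bound for the second. Concretely, I would write
\begin{equation*}
   \crochet{u}_q \;=\; \crochet{u}_q^{M} + \crochet{u}_q^{>M},
\end{equation*}
(interpreting $\crochet{u}_q^{\geq M}$ in the statement as $\crochet{u}_q^{>M}$ up to a harmless convention at $|k|=M$), so that by linearity
\begin{equation*}
   \crochet{f\,\crochet{u}_q}_q^N
   \;=\; \crochet{f\,\crochet{u}_q^M}_q^N \;+\; \crochet{f\,\crochet{u}_q^{>M}}_q^N.
\end{equation*}
For the first term I would simply use that the projection $\crochet{\,\cdot\,}_q^N$ does not increase the Fourier-weighted norm and the Banach algebra property (Lemma~\ref{lem:banach-alg}) to get $\|\crochet{f\crochet{u}_q^M}_q^N\|_a \le \|f\|_a \|\crochet{u}_q^M\|_a$.

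The key step is bounding the second term. Here I would count the output modes explicitly: $\crochet{f\crochet{u}_q^{>M}}_q^N$ is supported on at most $2N/q+1$ frequencies $n$, each with $|n|\le N$, and for such $n$
\begin{equation*}
   \bigl|\widehat{f\crochet{u}_q^{>M}}_n\bigr|
   \;\le\; \sum_{\substack{q\mid k\\ |k|>M}} |\hat f_{n-k}|\,|\hat u_k|.
\end{equation*}
The essential observation is the triangle inequality: for $|n|\le N$ and $|k|>M\ge N$ one has $|n-k|\ge M-N$, hence
\begin{equation*}
   |n-k|+|k| \;\ge\; (M-N)+M \;=\; 2M-N.
\end{equation*}
Multiplying and dividing each summand by $\eu^{2\pi a(|n-k|+|k|)}$ and using this bound yields
\begin{equation*}
   \bigl|\widehat{f\crochet{u}_q^{>M}}_n\bigr|
   \;\le\; \eu^{-2\pi a(2M-N)}\,\|f\|_a\,\|\crochet{u}_q^{>M}\|_a.
\end{equation*}
Summing the contributions $|\hat{\cdot}_n|\eu^{2\pi a|n|}$ over the $\le 2N/q+1$ output modes produces a prefactor $\eu^{2\pi aN}$, which combines with $\eu^{-2\pi a(2M-N)}$ to give exactly $\eu^{-4\pi a(M-N)}$:
\begin{equation*}
   \|\crochet{f\crochet{u}_q^{>M}}_q^N\|_a
   \;\le\; \Bigl(\tfrac{2N}{q}+1\Bigr)\,\eu^{-4\pi a(M-N)}\,\|f\|_a\,\|\crochet{u}_q^{>M}\|_a.
\end{equation*}

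Finally, adding the two estimates and using that $\crochet{u}_q^M$ and $\crochet{u}_q^{>M}$ have disjoint Fourier support (so that their $\|\cdot\|_a$-norms simply add up inside the sum on the right-hand side of \eqref{eq:Fourier sep}) produces the claimed inequality. The only non-routine step is the double-counting trick above; everything else is either a triangle inequality on Fourier supports or the Banach algebra property already established. I do not anticipate any serious obstacle.
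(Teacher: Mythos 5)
Your proposal is correct and follows essentially the same route as the paper's proof: split $\crochet{u}_q$ into $\crochet{u}_q^M$ (handled via the Banach algebra property) and the high-frequency tail, count the at most $2N/q+1$ output modes $|n|\le N$, and harvest the exponential decay from the frequency gap $|n-k|\ge M-N$ together with $|k|\ge M$. The paper keeps the estimate inside a single double sum while you bound each output Fourier coefficient first and then sum over $n$; this is a difference in bookkeeping, not in substance.
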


\begin{proof}
    Using $|k| - |n| \leq |n-k| \leq |k| + |n|$, we have
       \begin{equation*}
        \begin{aligned}
            &\norm{\crochet{ f \crochet{u}_q^{\geq M}}_q^N}_{a}  = \sum_{\substack{q \mid n \\ |n| \leq N}}  \sum_{\substack{q \mid k \\ |k| \geq M}} \verts{\hat f_{n-k} \hat u_k} e^{2 \pi a |n|}
            \leq  \sum_{\substack{q \mid n \\ |n| \leq N}}  \sum_{\substack{q \mid k \\ |k| \geq M}} \verts{f}_{a} e^{-2\pi a |n - k|} \verts{\hat u_k} e^{2 \pi a |n|}\\
            \leq & \sum_{\substack{q \mid n \\ |n| \leq N}}  \sum_{\substack{q \mid k \\ |k| \geq M}} \verts{f}_{a} e^{-2\pi a |n - k|}  e^{2 \pi a (|n| - |k|)} \verts{\hat u_k}e^{2 \pi a |k|}
            \leq  \sum_{\substack{q \mid n \\ |n| \leq N}}  \sum_{\substack{q \mid k \\ |k| \geq M}} \verts{f}_{a} e^{-4\pi a (|k| - |n|)}  \verts{\hat u_k}e^{2 \pi a |k|}\\
            \leq & \Big(\frac{2N}{q}+1\Big)   \sum_{\substack{q \mid k \\ |k| \geq M}} \verts{f}_{a} e^{-4\pi a (M - N)}  \verts{\hat u_k}e^{2 \pi a |k|}
            \leq  \Big(\frac{2N}{q}+1\Big)    e^{-4\pi a (M - N)}  \verts{f}_{a} \norm{\crochet{u}_q^{\geq M}}_{a}.
        \end{aligned}
    \end{equation*}
    Thus, 
    \begin{equation*}
        \begin{aligned}
            \norm{\crochet{f \crochet{u}_q}_q^N}_a \leq& \norm{\crochet{f\crochet{u}_q^{M}}_q^N}_a  + \norm{\crochet{f \crochet{u}_q^{\geq M}}_q^N}_a \\
            \leq& \norm{f}_a \left(\norm{\crochet{u}_q^{M}}_a  + \Big(\frac{2N}{q}+1\Big)   e^{-4\pi a (M - N)} \norm{\crochet{u}_q^{\geq M}}_{a} \right)\\
            \leq& \norm{f}_a \norm{\crochet{u}_q^{M}  + \Big(\frac{2N}{q}+1\Big)   e^{-4\pi a (M - N)}\crochet{u}_q^{\geq M}}_{a} 
        \end{aligned}
    \end{equation*}
    which proves \eqref{eq:Fourier sep}.
    This completes the proof of the lemma.
\end{proof}

\medskip

\noindent \textbf{Acknowledgements.} The author would like to thank Vadim Kaloshin for initiating this project and for his guidance throughout its development. The author also thanks Abed Bounemoura, Bassam Fayad, Mathieu Helfter, Comlan Edmond Koudjinan, Illya Koval, Yi Pan, and Daniel Tsodikovich for many helpful discussions. The financial support of the ERC grant \textit{SPERIG \#885707} is gratefully acknowledged.

\medskip

\printbibliography

\end{document}